\theoremstyle{plain} 
\newtheorem{theorem}{Theorem}[section]
\newtheorem*{theorem*}{Theorem}
\newtheorem{lemma}[theorem]{Lemma}
\newtheorem*{lemma*}{Lemma}
\newtheorem{corollary}[theorem]{Corollary}
\newtheorem*{corollary*}{Corollary}
\newtheorem{proposition}[theorem]{Proposition}
\newtheorem*{proposition*}{Proposition}
\newtheorem{assumptions}[theorem]{Assumptions}
\newtheorem{assumption}[theorem]{Assumption}
\newtheorem*{assumption*}{Assumption}
\newtheorem{definition}[theorem]{Definition}
\newtheorem*{definition*}{Definition}
\newtheorem*{example*}{Example}
\newtheorem{remark}[theorem]{Remark}
\newtheorem*{remark*}{Remark}
\newtheorem*{remarks*}{Remarks}
\newcommand{\E}{\mathbb{E}}
\newcommand{\N}{\mathbb{N}}
\newcommand{\R}{\mathbb{R}}
\renewcommand{\P}{\mathbb{P}}
\newcommand{\x}{\mathbf{x}}
\newcommand{\y}{\mathbf{y}}
\newcommand{\mcal}[1]{\mathcal{#1}}
\newcommand{\mscr}[1]{\mathscr{#1}}
\newcommand{\ii}{\mathrm{i}}
\newcommand{\msf}[1]{\mathsf{#1}}
\newcommand{\mrm}[1]{\mathrm{#1}}
\newcommand{\vertiii}[1]{{\left\vert\kern-0.25ex\left\vert\kern-0.25ex\left\vert #1 
    \right\vert\kern-0.25ex\right\vert\kern-0.25ex\right\vert}}
\DeclareMathOperator{\var}{Var}
\DeclareMathOperator{\Span}{span}
\renewcommand{\leq}{\leqslant}
\renewcommand{\geq}{\geqslant}
\renewcommand{\epsilon}{\varepsilon}
\newcommand{\red}[1]{{\textcolor{red}{#1}}}
\newcommand{\blue}[1]{{\textcolor{blue}{#1}}}
\newcommand{\defi }{\overset{(\mrm{def})}{=}}
\def\R{{\mathbb R}}
\author{Charlie Dworaczek Guera\footnote{Universit\'e de Lyon, ENSL, CNRS,  France \newline
\textit{email:} chadg@kth.se}, Ronan Memin\footnote{Institut de mathématiques de Toulouse, Toulouse, France \newline
\textit{email:} ronan.memin@math.univ-toulouse.fr}}
\title{CLT for real $\beta$-ensembles at high temperature\thanks{This project   has received funding from the European Research Council (ERC) under the European Union
Horizon 2020 research and innovation program (grant agreement No. 884584).}}
\begin{document}
\date{}
\maketitle



\begin{abstract}We establish a central limit theorem for the fluctuations of the linear statistics in the $\beta$-ensemble of dimension $N$ at a temperature proportional to $N$ and with confining smooth potential. In this regime, the particles do not accumulate in a compact set as in the fixed $\beta>0$ case which results in an equilibrium measure supported on the whole real line. The space of test functions for which the CLT holds includes bounded $\mcal{C}^2$ functions. The method that we use is based on a change of variables in the partition function introduced in \cite{johansson} and allows to deduce the convergence of the Laplace transform of the recentred linear statistics towards the Laplace transform of the normal distribution. It is obtained by the inversion of the master operator, which is the main contribution of the present paper, by following the scheme developed in \cite{hardy2021clt} in the compact case. In the high-temperature regime, the master operator contains an additional differential term due to entropic effects which makes it an unbounded operator. The techniques used in this article involve Schrödinger operators theory as well as concentration of measure. 
\end{abstract}

\tableofcontents

\section{Introduction and main result}
\subsection{Setting of the problem}
The $\beta$-ensemble of dimension $N\geq 1$ with parameter $\beta>0$ and potential $V$, is the probability measure on $\R^N$ given by:
\begin{equation}
\label{eq:betaEnsemble}
    d\mathbf{P}^{\beta,V}_N(x_1,\ldots, x_N)=\frac{1}{Z_N(V,\beta)}\prod_{i<j}|x_i-x_j|^\beta e^{-\frac{N\beta}{2}\sum_{i=1}^NV(x_i)}d x_1\ldots d x_N\,.
\end{equation}
The potential $V$ has to be chosen so that the partition function
$$Z_N(V,\beta) = \int_{\R^N}\prod_{i<j}|x_i-x_j|^\beta e^{-\frac{N\beta}{2}\sum_{i=1}^NV(x_i)}d x_1\ldots d x_N $$
is finite. This is the case for example if
\begin{equation}
\label{condition de base sur le potentiel}
  \liminf_{|x|\to \infty}\frac{V(x)}{2\ln|x|}>1\,,
\end{equation}
see \cite[equation (2.6.2)]{AGZ}. The parameter $\beta$, which is allowed to depend on $N$, is the so-called inverse temperature.

Under the special choice of the quadratic potential $V_G(x)=x^2/2$, the measure \eqref{eq:betaEnsemble} can be seen as the joint law of the (unordered) eigenvalues of certain matrix models:
\begin{itemize}
    \item For $\beta=1$ (resp. $\beta=2$), it is the law of the eigenvalues of the Gaussian Orthogonal Ensemble (resp. Gaussian Unitary Ensemble), rescaled by a factor $1/\sqrt{N}$, see \cite[Theorem 2.5.2]{AGZ}.
    \item For general $\beta>0$, potentially depending on $N$, it is the law of the spectrum of certain tri-diagonal random matrices as shown by Dumitriu and Edelman in \cite{dued}.
\end{itemize}

We consider here the \textit{high temperature regime} where $\beta$ scales as $1/N$, and write $\beta=2P/N$ for some $P>0$. Up to absorbing a factor $2P$ in the potential, the corresponding measure is therefore
\begin{equation}
    \label{coulomb}
    d \P^{V,P}_N(x_1,\ldots,x_N) \defi  \frac{1}{\mcal{Z}_N^{P}[V]}\prod_{i<j}|x_i-x_j|^\frac{2P}{N} e^{-\sum_{i=1}^NV(x_i)}d x_1\ldots d x_N\,,
\end{equation}
with partition function 
\begin{equation}
    \label{fonction partition}
    \mcal{Z}_N^{P}[V]\defi \int_{\R^N}\prod_{i<j}|x_i-x_j|^\frac{2P}{N} e^{-\sum_{i=1}^NV(x_i)}d x_1\ldots d x_N\,.
\end{equation}
It was shown in \cite{Garcia} that under $\P^{V,P}_N$, the sequence of empirical measures 
$$\hat{\mu}_N = \frac{1}{N}\sum_{i=1}^N\delta_{x_i} $$
satisfies a large deviation principle at speed $N$ with strictly convex, good rate function. This result can be seen as the generalization of Sanov and Varadhan theorems dealing with the singular behavior of the logarithmic interaction. As a consequence, $\hat{\mu}_N$ converges almost surely in distribution towards a deterministic measure $\mu^P_V$ as $N$ goes to infinity, meaning that almost surely, for every bounded continuous $f:\R \to \R$,
$$ \int_\R f(x)d \hat{\mu}_N(x) \underset{N\rightarrow\infty}{\longrightarrow}\int_\R f(x)d \mu^P_V(x)\,.$$
The limiting measure $\mu^P_V$ can be seen to have a density $\rho^P_V$ which satisfies for almost every $x\in \R$
\begin{equation}
\label{eq:mesure equilibre}
V(x) - 2P\int_\R \ln|x-y|\rho^P_V(y)dy + \ln \rho^P_V(x) = \lambda_V^P\,,
\end{equation}
where $\lambda_V^P$ is constant (see \cite[Lemma 3.2]{GMToda} for example).

\subsection{Connection with the literature}
\paragraph{The high-temperature regime}
The $\beta$-ensemble in the regime $\beta N=2P>0$ has drawn a lot of attention from the random matrix and statistical physics communities lately. This regime was first considered by \cite{cepa1997diffusing} with the study of Dyson Brownian motion with vanishing repulsive coefficient scaled like $N^{-1}$. Gases of vortices were also studied with temperature proportional to $N$ in \cite{bodineau1999stationary}. The limiting density was then described in the case of the quadratic potential in \cite{AllezBouchaudGuionnet}, as a crossover between the Wigner semicircle law (fixed $\beta>0$ case) and the Gaussian density (case $\beta=0$). The local statistics of the system in the bulk were studied for quadratic potentials in \cite{benaych2015poisson,NakanoTrinh} and later generalized to more general potentials in \cite{NakanoTrinh20}. The edge was also studied in \cite{pakzad2018poisson}. These results both in the bulk and at the edge were generalized in \cite{lambert2021poisson} to more general geometric settings and interactions. In both cases the limiting microscopic behavior is shown to be Poissonian which can be seen as the behavior dictated by the Sine-$\beta$ and Airy $\beta$ process in the fixed $\beta>0$ regime when $\beta\rightarrow0$ \cite{allez2014sine}. Instead, at high temperature we recover the microscopic behavior of independent identically distributed particles, \textit{i.e.} at the microscale we do not see the Wigner-Gauss crossover mentioned above. At the edge, which is, up to corrections, the same as in the iid case \cite{lambert2021poisson}, it was also shown that the fluctuations follow a Gumbel distribution as opposed to the Tracy-Widom distribution in the fixed $\beta$-case which is again in accordance with \cite{allez2014tracy}. At intermediary temperatures when $\beta\gg N^{-1}$, the large deviation principle was established in \cite{pakzad2020large}. Finally the fluctuations of the linear statistics were shown to be Gaussian in the case of quadratic potentials in \cite{NakanoTrinh}. Their proof relied on the tridiagonal representation and martingale methods. In the circular case with general potential, a quantitative CLT was shown in \cite{hardy2021clt} relying on the inversion of the master operator and Stein's method. It is the only result with the present article where an explicit expression for the variance is given. In the circular case, the authors were also able to show that the high temperature variance interpolates between the $H^{1/2}$-norm which is the variance in the $\beta$-fixed case and the $L^2$-norm as in the iid case.

\paragraph{Coulomb gases} The equilibrium measure characterized by \eqref{eq:mesure equilibre} is often called the \textit{thermal equilibrium measure} in the context of Coulomb gases due to the study of \cite{armstrong2022thermal} and later in \cite{padilla2023concentration,garcia2024generalized} in the case of more general interactions. Its investigation is motivated by the fact that this measure captures more of the behavior of the system when $N$ is large and that this description is better as the temperature grows. The description obtained for this measure were then shown to be useful in this literature \cite{armstrong2021local, serfaty2023gaussian, padilla2023asymptotes, padilla2024emergence}.

\paragraph{Link with integrable systems}

Recently, Spohn uncovered in \cite{Spohn1} a link between the study of the \textit{classical Toda chain} -- a $N$ particles Hamiltonian \textit{integrable} system, in the sense that it has $N$ independent conserved quantities (in opposition to generic Hamiltonian systems, that only present a few conservation laws) -- and the $\beta$-ensemble in the high temperature regime, showing that the limiting density of states of the \textit{Lax matrix} of the classical Toda  chain, distributed according to a \textit{generalized Gibbs ensemble} with polynomial potential, can be computed by means of the limiting empirical measure of the $\beta$-ensemble at high temperature. In \cite{mazzuca2021mean}, the author established this relation using the matrix representation of the $\beta$-ensemble and a moment method, and in \cite{GMToda} the authors proved a large deviation principle for the empirical measure of the Toda chain, establishing the previous result for potentials with polynomial growth. See also \cite{spohn2022hydrodynamicAL,grava2023generalized,mazzucamemin} for a similar link between the Ablowitz-Ladik lattice and the circular $\beta$-ensemble at high temperature.\\
The relation between the statistical properties of the Toda chain and the high temperature $\beta$ ensemble can be further pushed to compute the limiting averages of the \textit{currents} of the Toda chain through the central limit theorem for the empirical measure in the $\beta$ ensemble. The computation of these currents is a crucial step in the derivation of a hydrodynamic equation for the Toda chain, and to the analysis of the correlations of the locally conserved quantities at equilibrium through \textit{linearized hydrodynamics}. The derivation of the currents via the central limit theorem is justified in the recent paper \cite{MazzucaMeminCLT}, where the authors show that on the one hand, the Lax matrix of the Toda chain, distributed with respect to the generalized Gibbs ensemble with pressure parameter $P$ and polynomial potential $V$, and on the other hand the $\beta$-ensemble at inverse temperature $2P/N$, both satisfy a central limit theorem of the form
$$ \sqrt{N}\left( \int_\R \phi(x) d\hat\mu_N(x) - \int_\R \phi(x) d\mu_{\infty}(x) \right) \overset{(d)}{\underset{N\to \infty}{\longrightarrow}}\mathcal{N}(0,\sigma^2(\phi,V,P))\,.$$
This result holds for polynomial test functions $\phi$, and $\hat\mu_N$ denotes, depending on the model, the empirical measure of eigenvalues of the Toda Lax matrix or the empirical measure of the $\beta$ ensemble at high temperature, and $\mu_\infty$ denotes the limiting measure (depending on $V,P$) of the corresponding model. The authors prove that the limiting variances are linked through the formula
$$ \sigma^2_\text{Toda}(\phi,V,P)=\partial_P\bigg(P\sigma^2_\text{HT}(\phi,V,P)\bigg)\,,$$
giving access to the limiting variance $\sigma^2_\text{Toda}(\phi,V,P)$, directly linked with the average of the currents of the Toda chain. Indeed, \cite[Lemma 2.5, Remark 2.7]{MazzucaMeminCLT} show that for integer $n$, the $n$-th limiting average current $J^{[n]}$ can be expressed \cite[(6.10)]{Spohn4}, denoting $h_n(x)=x^n$, by the limiting covariance in the $\beta$-ensembles at inverse temperature $\beta=2P/N$ and polynomial potential $V$
$$ J^{[n]}= P\lim_{N\rightarrow+\infty} \mathrm{Cov}_{V}^P\bigg(\sqrt{N}\int_\R h_1(x) d\hat\mu_N(x),\sqrt{N}\int_\R h_n(x) d\hat\mu_N(x)\bigg)$$
which can then be recast as
\begin{equation}\label{eq:formule courants}
    J^{[n]}=\frac{P}{2}\left(\sigma^2_\text{HT}(h_1+h_n,V,P)-\sigma^2_\text{HT}(h_1,V,P)-\sigma^2_\text{HT}(h_n,V,P)\right)\,.
\end{equation}
However, an explicit form for $\sigma^2_\text{HT}(\phi,V,P)$ is still lacking and the present paper goes in this direction, by computing it for a family of bounded test functions $\phi$. The interested reader can refer to \cite{Spohn4}, where the author explains the strategy to derive the hydrodynamic equations for the Toda lattice.\\

\paragraph{CLT's for $\beta$-ensembles with fixed $\beta$}

The Central Limit Theorem for the fluctuations of the linear statistics of $\beta$-ensembles was first established by \cite{johansson} for polynomial potentials, then generalized and further developed in \cite{BoG1} in the one-cut case by obtaining the asymptotic expansion of the partition function via the analysis of the so-called loop equations. The same approach was used to obtain the fluctuations in \cite{Shc2,BoG2} for the multi-cut case. Some assumptions have to be made to ensure that the fluctuations are indeed Gaussian due to the additional order 1 effects of the tunneling of the particles between the different cuts. In \cite{bekerman2018clt}, the authors proposed a new proof based on a transport method, with relaxed assumptions on $V$ and which allow for criticality behavior in the bulk and at the edge. In \cite{LambertLedouxWebb}, the authors manage, by Stein's method arguments, to obtain quantitative bounds between the fluctuations and the normal distribution in Wassertein distance. This approach was also simplified with optimal rates of convergences in \cite{angst2024sharp}. Finally, note that the fluctuations were obtained: for irregular test-functions, \textit{i.e.} when the usual $H^{1/2}$-norm variance blows up in \cite{bourgade2022optimal}, at mesoscopic scales in \cite{bekerman2018mesoscopic, peilen2024local} and by determinantal structure based methods in the case $\beta=2$ \cite{lambert1511clt,breuer2017central}.
\ \\ 

In this paper, we adapt part of the arguments of \cite{hardy2021clt} to our setup. More precisely, we show that for a class of regular potentials $V$ satisfying a growth condition of the type 
$$\lim_{|x|\to +\infty} V(x)=\lim_{|x|\to +\infty} |V'(x)| = +\infty,\quad \lim_{|x|\to \infty}\frac{V''(x)}{V'(x)^2} = 0,$$
denoting $\mrm{fluct}_N = \hat{\mu}_N-\mu^P_V$ and considering test functions $f$ belonging to the range of a certain integro-differential operator, the rescaled fluctuations of $\hat{\mu}_N$, defined by
\begin{equation}
    \label{def:fluct}
    \sqrt{N}\mrm{fluct}_N(f) \defi  \sqrt{N}\left(\int_\R f(x)d \hat{\mu}_N(x)-\int_\R f(x)d \mu^P_V(x)\right)\,,
\end{equation}
converge in law towards a centered Gaussian law with variance depending on $f$.

It is shown in \cite[Theorem 2.6.1]{AGZ} that the empirical measure satisfies a large deviation principle, and the limiting measure is characterized in \cite[Lemma 2.6.2]{AGZ} by an equation similar to \eqref{eq:mesure equilibre}. In fact, the entropic term $\ln\rho^P_V$ in the left-hand side of \eqref{eq:mesure equilibre} is the only difference in the equation characterizing the limiting measure in the fixed $\beta$ case, due to the subdominant role of the entropy. We point out the very similar characterization of the equilibrium measure corresponding to the minimization problem arising in \cite{BGK2}. There again, the limiting measure is compactly supported. The entropic term is of prime importance because its presence implies that the support of $\rho^P_V$ is the whole real line. This fact implies several technicalities such as dealing with weighted Sobolev spaces of the form
$$H^k(\mu^P_V)\defi \Big\{f\in L^2(\mu^P_V),\,f^{(i)}\in L^2(\mu^P_V),\,\forall i \in\llbracket 1,k\rrbracket\Big\}\,.$$

\subsection{Main result}

We now present the assumptions imposed on the potential function $V$. First, recall that a probability measure $\mu$ supported on $\R$ satisfies the Poincaré inequality if there exists $C>0$ such that for all $f\in\mathcal{C}^1(\R)$ with compact support:
\begin{equation}
    \label{def:PoincareMesureProba}
    \mathrm{Var}_\mu(f)\defi \int_\R \left(f(x)-\int_\R f(y) d\mu(y)\right)^2d\mu(x) \leq C\int_\R |f'(x)|^2 d\mu(x)\,.
\end{equation}
To state the main results, we will also need the definition of the so-called \textit{master operator} $\Xi$ which acts on sufficiently smooth functions $\phi$ as:
\begin{equation}
    \label{def:Xi}
    \Xi[\phi](x) \defi 2P\int_\R \frac{\phi(x)-\phi(y)}{x-y}d\mu^P_V(y)+\phi'(x) - V'(x)\phi(x)-2P\int_\R \mcal{H}[\phi\rho^P_V](y)d\mu^P_V(y)\,.
\end{equation}
Showing that this operator is invertible with regular inverse will be of prime importance in the following. To achieve this task, we will need the following Hilbert-space $\msf{H}$:
\begin{equation}\label{def:H}
    \mathsf{H}=\left\{u\in L^2(\mu^P_V)\ \Big|\ u'\in L^2(\mu^P_V), \int_\R u(x)d\mu^P_V(x)=0\right\},\qquad \Braket{u,v}_\mathsf{H}=\Braket{u',v'}_{L^2(\mu^P_V)}\,.
\end{equation}

\begin{assumptions}
\label{assumptions}
The potential $V$ satisfies:
\begin{enumerate}[label=(\roman*)]
    \item\label{ass1} $V\in \mathcal{C}^3(\R)$, $V(x)\underset{|x|\rightarrow+\infty}{\longrightarrow}+\infty$, $|V'(x)|\underset{|x|\rightarrow+\infty}{\longrightarrow}+\infty$ and is such that $\mu^P_V$ satisfies the Poincaré inequality \eqref{def:PoincareMesureProba}.
    \item\label{ass2} For all polynomial $Q\in\R[X]$ and $\alpha>0$, $Q\bigg(V'(x)\bigg)e^{-V(x)}=\underset{|x|\to \infty}{o}(x^{-\alpha})$\,.
    \item\label{ass3} Furthermore, for any sequence $(x_N)_N$ such that $|x_N|$ goes to infinity, and for all real $a<b$,
    $$ \frac{1}{V'(x_N)^2}\sup_{a\leq x\leq b}|V''(x_N+x)| \underset{N\rightarrow\infty}{\longrightarrow} 0\,.$$
    \item\label{ass4} The function $x\mapsto V'(x)^{-2}$ is integrable at infinity, $\dfrac{V''(x)}{V'(x)}=\underset{|x|\rightarrow\infty}{O}(1)$ and $\dfrac{V^{(3)}(x)}{V'(x)}=\underset{|x|\rightarrow\infty}{O}(1)$.
\end{enumerate}
\end{assumptions}

Since we need another assumption to state our result, we postpone to Remark \ref{remarkConv} explicit examples of potentials admissible for our main result. We now discuss the previous assumptions.

\begin{itemize}
    \item Because \textit{\ref{ass1}} implies that $V$ goes to infinity faster than linearly, we will see that it ensures exponential decay at infinity of $\rho^P_V$. Recalling the sufficient condition for $\mathbb{P}^{V,P}_N$ of equation \eqref{condition de base sur le potentiel} to be defined, this first assumption implies that
    $$\liminf_{|x|\to \infty}\frac{V(x)}{|x|} =+\infty.$$
    This guarantees in particular that the $\beta$-ensemble \eqref{coulomb} is well-defined for all $N\geq 1$ and $P\geq0$. Finally, we will use the fact that $\mu^P_V$ satisfies the Poincaré inequality to ensure that the space $\mathsf{H}$ endowed with $\braket{\cdot,\cdot}_{\mathsf{H}}$ is a Hilbert space. 
    \item The second assumption ensures that any power of $V'$ (and of $V''$ by \textit{\ref{ass4}}) is in $L^2(\mu^P_V)$ and that $\rho^P_V$, which behaves like $e^{-V}$ up to a sub-exponential factor, belongs to the Sobolev space $H^2(\R)\subset\mcal{C}^1(\R)$. Indeed, for $k\leq2$, using \textit{\ref{ass4}}, $\rho^P_V\,^{(k)}$ behaves at infinity like $(V')^k\rho^P_V$ as shown in Lemma \ref{lem:regularitedensite} which is in $L^2(\R)$ by assumption \textit{ii).} 
    \item Assumption \textit{\ref{ass3}} will be used to localize the minimum/maximum point of a typical configuration $(x_1,\ldots,x_N)$ following the law $\P^{V,P}_N$: this will be done in Corollary \ref{cor:edge}, which comes as a consequence of \cite[Theorem 3.4]{lambert2021poisson}. More precisely, Corollary \ref{cor:edge} establishes that for some specific sequences $(\alpha_N^+)_N,(\alpha_N^-)_N$ and $(E_N^-)_N, (E_N^+)_N$, all going to infinity, the random variables
    $$ \alpha_N^+ \left( \max_{1\leq j\leq N} x_j - E_N^+ \right)\qquad\text{and }\qquad \alpha_N^- \left( \max_{1\leq j\leq N} x_j - E_N^- \right) $$
    converge in distribution. For large $N$, the scalars $E_N^+$ and $E_N^-$ can thus be seen as the edges of a typical configuration, and verify
    \begin{equation}
        \label{eq:equivalentEdge}
        V(E_N^\pm)\underset{N\rightarrow\infty}{\sim} \ln N\,.
    \end{equation}
    We refer to Section \ref{section:localization} for detailed statements, which we use to lift the result of Proposition \ref{prop:transfo laplace} from compactly supported functions to more general functions. 
    \item Finally, we use assumption \textit{\ref{ass4}} to control integral remainders in the proof of Theorem \ref{thmreg}, ensuring that $\Xi^{-1}$ is regular enough \textit{i.e.} that for sufficiently smooth functions $f$, $\Xi^{-1}[f]$ is also smooth.
\end{itemize}

We will need another technical assumption to ensure that Taylor remainders arising in the proof of Theorem \ref{thm:LaplaceGeneral} are negligible.

\begin{assumption}
\label{assumption2}
With the notations of Theorem \ref{thm:poissonLambert}, we have
$$ \sup_{d(x,I_N)\leq 1} \left|V^{(3)}(x)\right|=\underset{N\rightarrow\infty}{o}(\sqrt{N})\,,$$
where $I_N=\left[E_N^- -2;E_N^+ +2\right]$.
\end{assumption}

\begin{remark}
\label{remarkConv}
    Taking $V=V_\mathrm{conv}+\phi$ with $V_\mathrm{conv},\,\phi\in\mathcal{C}^3(\R)$ such that $\phi^{(k)}$ is bounded for $k=0,\dots,3$, $V_\mathrm{conv}$ is convex with $|V_\mathrm{conv}'|\to +\infty$ at infinity, satisfying hypotheses \textit{\ref{ass2}}, \textit{\ref{ass3}}, \textit{\ref{ass4}} and assumption \ref{assumption2}, such that there exists $\varepsilon>0$ 
 such that $V_\mrm{conv}-2Pf_\varepsilon$ is convex (see Lemma \ref{lem:logpotential}), then $V$ satisfies Assumptions \ref{assumptions} and \ref{assumption2}. The main point that needs to be checked is that the measure $\mu^P_V$ satisfies the Poincaré inequality, this will be done in Proposition \ref{prop:poincare}. The function $f_\varepsilon$ is introduced as a function that behaves like $\ln|x|$ at infinity, but has second derivative which is as small as desired.
\end{remark}

The type of functions $V_\mrm{conv}$ that one can consider is typically the convex polynomials or $\cosh(\alpha x)$. On the other hand, a \textit{composed} potential like $e^{x^2}$ has a derivative growing faster at infinity than the original function itself and so doesn't satisfy assumptions \textit{\ref{ass3}} and \textit{\ref{ass4}}.
We are now able to state the main result, \textit{i.e.} the central limit theorem for bounded smooth functions.
\begin{theorem}
\label{thm:thmppal}
Assume that $V$ satisfies Assumptions \ref{assumptions} and Assumption \ref{assumption2}. Then for all $\phi\in \mcal{C}^2(\R)$ with $\phi$, $\phi'$ and $\phi''$ bounded, we have the convergence in law towards a normal distribution
\begin{equation}
\label{eq:TCLppal}
\sqrt{N}\mrm{fluct}_N(\phi)\overset{\mrm{law}}{\underset{N\rightarrow+\infty}{\longrightarrow}} \mathcal{N}\Big(0,(\sigma^P_V)^2(\phi)\Big)\,,
\end{equation}
where $\mrm{fluct}_N(\phi)$ is defined in \eqref{def:fluct} and where the limiting variance $(\sigma^P_V)^2(\phi)$ is given by

\begin{multline}
\label{eq:variance}
    (\sigma^P_V)^2(\phi)=\int_\R \phi'(x)\Xi^{-1}\circ\mcal{X}[\phi](x)d\mu_V^P(x)=\int_\R\Bigg(\Xi^{-1}[\phi]'(x)^2+V''(x)\Xi^{-1}[\phi](x)^2\Bigg)d\mu^P_V(x)\\+P\iint_{\R^2}\Bigg(\frac{\Xi^{-1}[\phi](x)-\Xi^{-1}[\phi](y)}{x-y}\Bigg)^2d\mu^P_V(x)d\mu^P_V(y)\,.
\end{multline}
where $\mcal{X}[\phi]\defi\phi-\displaystyle\int_\R\phi(x)d\mu_V^P(x)$.

\end{theorem}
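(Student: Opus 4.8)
The strategy is the one pioneered by Johansson \cite{johansson} and adapted by \cite{hardy2021clt}: show that the Laplace transform $\E_{\P^{V,P}_N}\!\left[\exp\!\left(t\sqrt{N}\,\mrm{fluct}_N(\phi)\right)\right]$ converges, as $N\to\infty$, to $\exp\!\left(\tfrac{t^2}{2}(\sigma^P_V)^2(\phi)\right)$, the Laplace transform of the Gaussian $\mathcal{N}(0,(\sigma^P_V)^2(\phi))$. Since pointwise convergence of Laplace transforms on a neighbourhood of $0$ (together with the uniform integrability that the concentration estimates of Section \ref{section:localization} provide) implies convergence in law, this yields \eqref{eq:TCLppal}. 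The variance is then read off as $(\sigma^P_V)^2(\phi)=\int_\R \phi'\,\Xi^{-1}\circ\mcal{X}[\phi]\,d\mu^P_V$, and the second equality in \eqref{eq:variance} is an integration-by-parts identity expressing $\braket{\phi',\Xi^{-1}\circ\mcal{X}[\phi]}_{L^2(\mu^P_V)}$ in terms of the quadratic form naturally attached to $\Xi$ on the space $\msf{H}$ — essentially $\braket{\Xi^{-1}[\phi],\Xi^{-1}[\phi]}$ paired through $\Xi$, using the explicit form \eqref{def:Xi} of the master operator.

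\medskip

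First I would perform the Johansson-type change of variables $x_i \mapsto x_i + \tfrac{1}{\sqrt N}\psi(x_i)$ in the partition function $\mcal{Z}^P_N[V]$, where $\psi$ is a smooth, suitably decaying function to be chosen later (it will be $\psi = \Xi^{-1}\circ\mcal{X}[\phi]$ up to the $t$-scaling). Expanding the Jacobian, the Vandermonde interaction, and the potential term to second order in $1/\sqrt N$, and using the equilibrium equation \eqref{eq:mesure equilibre} to cancel the leading $O(\sqrt N)$ term, one obtains an identity of the form
\begin{equation*}
1 = \E_{\P^{V,P}_N}\!\left[\exp\!\left(\sqrt{N}\,\mrm{fluct}_N\!\big(\Xi[\psi]\big) + \text{(deterministic }O(1)\text{ term)} + \text{error}\right)\right],
\end{equation*}
where the deterministic $O(1)$ term converges to $-\tfrac12$ of the quadratic form in \eqref{eq:variance} evaluated at $\psi$, and the error term is a remainder involving Taylor expansions of $V$ (controlled by Assumption \ref{assumption2}), the linear statistics of $\psi'$-type functions against $\mrm{fluct}_N$ (controlled by the Poincaré inequality and concentration), and boundary contributions near the edge (controlled by Corollary \ref{cor:edge} and \eqref{eq:equivalentEdge}). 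This is the content of Proposition \ref{prop:transfo laplace} / Theorem \ref{thm:LaplaceGeneral} referenced in the excerpt, first for compactly supported $\psi$ and then lifted to the general case via the localization of the extreme particles.

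\medskip

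Next, to turn this into the statement for a given test function $\phi$, I would solve $\Xi[\psi] = t\,\mcal{X}[\phi]$, i.e. set $\psi = t\,\Xi^{-1}\circ\mcal{X}[\phi]$; this is where the invertibility and regularity of $\Xi$ established in Theorem \ref{thmreg} is essential, and where one must check $\psi$ lies in the class of test functions for which the change-of-variables identity was proven (smoothness, decay, membership in $\msf{H}$). Substituting, the random linear statistic becomes $t\sqrt N\,\mrm{fluct}_N(\mcal{X}[\phi]) = t\sqrt N\,\mrm{fluct}_N(\phi)$ (since $\mcal{X}[\phi]$ and $\phi$ differ by a constant, which $\mrm{fluct}_N$ kills), and the deterministic term converges to $-\tfrac{t^2}{2}(\sigma^P_V)^2(\phi)$ by the variance identity. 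Rearranging gives $\E_{\P^{V,P}_N}[\exp(t\sqrt N\,\mrm{fluct}_N(\phi))] \to \exp(\tfrac{t^2}{2}(\sigma^P_V)^2(\phi))$, uniformly for $t$ in a neighbourhood of $0$, which is exactly what is needed. Finally I would verify the two equivalent expressions for the variance by an integration by parts against \eqref{def:Xi}: pairing $\Xi[\psi]$ with $\psi$ in $L^2(\mu^P_V)$, the first two terms of $\Xi$ produce, after symmetrization, the double-integral term and $\int (\psi')^2 d\mu^P_V$; the $-V'\psi$ term combines with the $\phi'$ via $\int \psi' \psi\, d\mu^P_V$-type manipulations and the identity $(\ln\rho^P_V)' = -V' + 2P\,\mcal H[\rho^P_V]$ coming from differentiating \eqref{eq:mesure equilibre}, yielding the $V''$ term; and the last, constant term of $\Xi$ integrates to zero against $\psi$ because $\int \psi\, d\mu^P_V = 0$ by definition of $\msf{H}$.

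\medskip

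\textbf{Main obstacle.} The crux — and the part the authors flag as their main contribution — is not the change of variables itself but controlling the error terms uniformly, which requires (a) the inversion theorem for the \emph{unbounded} operator $\Xi$ with quantitative regularity estimates on $\Xi^{-1}[\phi]$ and its derivatives (Theorem \ref{thmreg}, proved via Schrödinger-operator techniques), so that $\psi$ and $\psi'$ have enough decay and integrability, and (b) the passage from compactly supported test functions to merely bounded $\mathcal C^2$ ones, which needs the sharp localization of $\max_j x_j$ and $\min_j x_j$ near $E_N^\pm$ with $V(E_N^\pm)\sim \ln N$, together with concentration of $\mrm{fluct}_N$ from the Poincaré inequality, to show the truncation error is $o(1)$. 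Handling the extra first-order differential term $\phi'(x)$ in $\Xi$ — absent in the fixed-$\beta$ and circular cases — is precisely what makes $\Xi$ unbounded and forces the weighted Sobolev space framework $H^k(\mu^P_V)$ throughout.
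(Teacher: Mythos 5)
Your proposal reproduces the paper's argument faithfully: Johansson change of variables with the equilibrium equation killing the $O(\sqrt N)$ term (Proposition \ref{prop:transfo laplace}), concentration of the anisotropy $\zeta_N$ (Corollary \ref{cor:controleZeta}) plus edge localization (Corollary \ref{cor:edge}) to lift from compactly supported to bounded $\mathcal{C}^2$ test functions (Theorem \ref{thm:LaplaceGeneral}), inversion of the master operator via the Schrödinger-operator decomposition $-\mcal L=\mcal A+2P\mcal W$ and regularity of $\mcal L^{-1}$ (Theorems \ref{thm:InverL}, \ref{thmreg}), and the variance reformulation by integration by parts (Lemma \ref{lem:formuleVarianceL}), which is precisely how Remark \ref{rem:thmValidePourT} assembles the result. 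The only cosmetic differences are that the paper inverts $\mcal L[\phi]=\Xi[\phi']$ rather than $\Xi$ directly (with $\mcal L^{-1}[\phi]'=\Xi^{-1}[\phi]$ reconciling the two) and switches to characteristic functions rather than Laplace transforms in the lifting step to sidestep integrability issues.
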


\begin{remark}\ \\
\begin{enumerate}
    \item  In particular, let $\phi_1,\ldots,\phi_d \in \mcal{C}^2(\R)$, all satisfying the hypotheses of Theorem \ref{thm:thmppal}. Considering for $\mathbf{t}=(t_1,\ldots,t_d)\in \R^d$ the function $\phi=\sum_{j=1}^d t_j\phi_j$, $\phi$ also satisfies the hypotheses. Then we have
    $$ \E\left[e^{\ii \sqrt{N}(t_1\mrm{fluct}_N(\phi_1)+\ldots+t_d\mrm{fluct}_N(\phi_d)}\right]= \E\left[e^{i\sqrt{N}\mrm{fluct}_N(\phi)}\right]\,,$$
    and the last expectation converges towards the characteristic function of a centered Gaussian variable. Thus, we see that the vector $\sqrt{N}(\mrm{fluct}_N(\phi_1),\ldots,\mrm{fluct}_N(\phi_d))$ converges towards a centered Gaussian vector whose covariance matrix is given by $\left(\braket{\phi_i',\Xi^{-1}\circ\mathcal{X}[\phi_j]}_{L^2(\mu_V^P)}\right)_{1\leq i,j\leq d}$.
    \item Note that the formula of the variance we obtain, namely
      $$\int_\R \phi'(x)\cdot\Xi^{-1}\circ\mcal{X}[\phi](x)d\mu_V^P(x)\,,$$
      has the same structure as the limiting variance in the $\beta$ fixed case \cite[Theorem 1]{bekerman2018clt}. Indeed, if one uses instead the equilibrium measure $\mu_{\mrm{fix}}$ and the inverse of the master operator $\Xi_{\mrm{fix}}$  of this regime, where the latter is defined by 
      $$\Xi_{\mrm{fix}}[\phi](x)\defi \beta\int_\R \frac{\phi(x)-\phi(y)}{x-y}d\mu_\mathrm{fix}(y) - V'(x)\phi(x)\,,$$
      one recovers the variance in the fixed temperature regime.
      \item The CLT was also established for polynomial test-functions, at least in the case of polynomial potentials \cite{NakanoTrinh,MazzucaMeminCLT}. Since $\sigma_V^P(h_k)$ makes sense for polynomial test-function $h_k(x)=x^k$, this defines a conjectured expression for the averages of the limiting currents described in \eqref{eq:formule courants}.
\end{enumerate}
\end{remark}

A crucial step in our proof is to control the usual \textit{anisotropy term} given by
\begin{equation}
    \label{def:anisotropie}
    \zeta_N(\phi) \defi \iint_{\R^2} \frac{\phi(x)-\phi(y)}{x-y}d\left(\hat\mu_N - \mu_V^P\right)(x)d\left(\hat\mu_N - \mu_V^P\right)(y)\,.
\end{equation}
For this, we establish a concentration inequality for the empirical measure. This inequality is stated in terms of the following distance over the set of probability distributions $\mu,\mu'\in \mcal{M}_1(\R)$ .

\begin{equation}
\label{def:distance lip 1/2}
    d(\mu,\mu')= \sup_{\substack{\|f\|_{\text{Lip}}\leq 1\\ \|f\|_{1/2}\leq 1}}\left\{ \left|\int_\R f(x)d\mu(x) - \int_\R f(x)d\mu'(x)\right| \right\}\,,
\end{equation}
where $\|f\|_{\text{Lip}}$ denotes the Lipschitz constant of $f$, and $\|f\|_{1/2}^2 = \displaystyle\int_\R |t|\left|\mcal{F}[f](t)\right|^2d t$, where $\mcal{F}$ denotes the Fourier transform on $L^2(\R)$, expressed as $\mcal{F}[f](x)\defi\displaystyle\int_\R f(t)e^{-\ii tx}dt$ for $f$ in $L^1(\R)\cap L^2(\R)$.

This result, which is the analog of \cite[Theorem 1.4]{hardy2021clt}, goes as follows:
\begin{proposition}
    \label{thm:concentration}
    There exists $K\in \R$ (depending on $P$ and on $V$), such that for any $N\geq 1$ and $r>0$,
    \begin{equation}
    \label{ineq:concentration}\mathbb{P}^{V,P}_N\left(d(\hat{\mu}_N,\mu^P_V)>r\right) \leq e^{-Nr^2\frac{P\pi^2}{2} + 5P\ln N + K}\,.
    \end{equation}
\end{proposition}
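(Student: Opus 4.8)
The plan is to follow the strategy of \cite[Theorem 1.4]{hardy2021clt}, adapted to the real line. The starting point is the classical energy-rewriting of the density \eqref{coulomb}. Writing the joint law in terms of the empirical measure $\hat\mu_N$, one has
$$ d\P^{V,P}_N \propto \exp\left(-N\,\mcal{E}_N(\hat\mu_N)\right)\,,$$
where $\mcal{E}_N$ is (up to the diagonal correction) the energy functional
$$ \mcal{E}(\mu) = \iint_{x\neq y}\left(-P\ln|x-y|\right)d\mu(x)d\mu(y) + \int_\R V(x)d\mu(x)\,,$$
whose unique minimiser over $\mcal{M}_1(\R)$ is $\mu^P_V$. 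The first step is therefore to establish, by the usual argument, a lower bound $\mcal{E}_N(\hat\mu_N)\geq \mcal{E}(\mu^P_V) + c\,d(\hat\mu_N,\mu^P_V)^2 - (\text{error})$, where the quadratic gain comes from the strict convexity of the logarithmic energy. Concretely, one decomposes $\mcal{E}_N(\hat\mu_N)-\mcal{E}(\mu^P_V)$ using the obstacle-problem characterisation \eqref{eq:mesure equilibre}, so that the difference is controlled below by the logarithmic energy $-\iint \ln|x-y|\,d\nu(x)d\nu(y)$ of the signed measure $\nu = \hat\mu_N-\mu^P_V$. One then invokes the Fourier/Plancherel identity $-\iint\ln|x-y|\,d\nu(x)d\nu(y) = \frac{1}{2\pi}\int_\R \frac{|\mcal{F}[\nu](t)|^2}{|t|}dt$ (valid for $\nu$ of mass zero), and this is exactly the quantity dual to the $\|\cdot\|_{1/2}$ norm appearing in \eqref{def:distance lip 1/2}; this is what produces the constant $P\pi^2/2$ in the exponent. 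The only subtlety here, specific to the non-compact setting, is that the logarithmic energy of $\hat\mu_N$ needs regularisation near the diagonal (the $x_i$ are not separated): one replaces $\hat\mu_N$ by its convolution $\hat\mu_N * \chi_{\delta}$ at a small scale $\delta$, uses the Lipschitz bound to pay only $O(\delta)$ in the distance, and the $\ln(1/\delta)$ cost is absorbed into the $\ln N$ terms — optimising $\delta$ of order $1/N$ gives the $5P\ln N$ term.

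The second step is the partition-function lower bound: one needs $\mcal{Z}_N^P[V]\geq e^{-N\mcal{E}(\mu^P_V) - C\ln N - C'}$, obtained by restricting the integral to configurations where the $x_i$ are placed near the quantiles of $\mu^P_V$ (with a small box of size $\sim 1/N$ around each), and bounding the interaction and potential energy of such configurations by $\mcal{E}(\mu^P_V)$ up to lower-order corrections — here one uses that $\rho^P_V$ has exponential tails (a consequence of Assumption \ref{assumptions}\ref{ass1}) so that the potential does not blow up at the extreme quantiles; this also contributes to the $\ln N$ count. Combining Step 1 and Step 2 and integrating the pointwise bound on $d\P^{V,P}_N$ over the event $\{d(\hat\mu_N,\mu^P_V)>r\}$ yields \eqref{ineq:concentration}.

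The main obstacle I expect is the non-compactness: in \cite{hardy2021clt} the particles live on the circle, so there is no escape-to-infinity issue and the diagonal regularisation is cleaner. Here one must (a) control the contribution of configurations with some $x_i$ very large — this is where the growth condition $\liminf_{|x|\to\infty} V(x)/|x| = +\infty$ from Assumption \ref{assumptions}\ref{ass1} is used, to show such configurations have super-exponentially small probability and do not spoil the energy lower bound — and (b) handle the fact that the test functions in the definition of $d$ are not compactly supported, so the duality $-\iint\ln|x-y|d\nu^2 \asymp \sup\{\cdots\}$ must be justified for $\nu$ a difference of probability measures with potentially heavy tails; controlling $\int |t|^{-1}|\mcal{F}[\nu](t)|^2 dt$ near $t=0$ requires that $\nu$ has zero mass (which it does) and enough tail decay, again furnished by the exponential decay of $\rho^P_V$ and an a priori confinement estimate on $\hat\mu_N$. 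Apart from these tail issues the argument is a fairly direct transcription of the circular case.
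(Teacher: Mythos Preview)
Your overall architecture --- energy bound, diagonal regularisation, Fourier duality between $D$ and $d$ --- matches the paper's, but there is a genuine conceptual gap around the entropy term that is specific to the high-temperature regime.

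You write the density as $\exp(-N\mcal{E}_N(\hat\mu_N))$ with $\mcal{E}(\mu)=-P\iint\ln|x-y|\,d\mu^2+\int V\,d\mu$ and assert that $\mu_V^P$ minimises this functional. It does not: the minimiser of that $\mcal{E}$ is the compactly-supported equilibrium measure of the fixed-$\beta$ theory. The measure $\mu_V^P$ minimises the functional $\mcal{E}_V^P$ of \eqref{def:energy}, which contains the additional entropy term $\int\ln\frac{d\mu}{dx}\,d\mu$, and it is this entropy that is responsible for the characterisation \eqref{eq:mesure equilibre} you want to invoke. Concretely, if you carry out your decomposition using \eqref{eq:mesure equilibre} you do not get $\mcal{E}_N(\hat\mu_N)-\mcal{E}(\mu_V^P)\ge P\,D^2(\hat\mu_N,\mu_V^P)$ but rather that quantity \emph{plus} $-\int\ln\rho_V^P\,d(\hat\mu_N-\mu_V^P)$; since $-\ln\rho_V^P\sim V$ at infinity this extra term is neither small nor of definite sign and cannot be discarded.

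The paper works throughout with the entropic energy $\mcal{E}_V^P$. The key identity is Lemma~\ref{lemme:diffEnergies}: for absolutely continuous $\mu$,
\[
\mcal{E}_V^P(\mu)-\mcal{E}_V^P(\mu_V)=P\,D^2(\mu,\mu_V)+\int\ln\frac{d\mu}{d\mu_V}\,d\mu\,.
\]
When this is applied to the regularised $\widetilde\mu_N$, the relative-entropy term combines with the potential so that the entropy of $\widetilde\mu_N$ cancels, leaving only $N\int(V+\ln\rho_V)\,d(\widetilde\mu_N-\hat\mu_N)$. By \eqref{eq:mesure equilibre} this equals $-2PN\int U^{\rho_V}\,d(\widetilde\mu_N-\hat\mu_N)$; since $U^{\rho_V}$ is globally Lipschitz (Lemma~\ref{lemme:boundedHilbert}) and $\widetilde\mu_N$ is an $O(N^{-2})$-perturbation of $\hat\mu_N$, this remainder is $O(1)$. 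This single mechanism replaces both your ``escape-to-infinity'' concern (a) and the heavy-tail concern (b): no separate confinement estimate is needed.

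Two smaller points. The paper's partition-function lower bound is a one-line Jensen inequality after changing the reference measure to $\mu_V^{\otimes N}$ --- again exploiting that $V+\ln\rho_V$ is, by \eqref{eq:mesure equilibre}, a constant plus a bounded logarithmic potential --- rather than a quantile construction. And the regularisation is two-step: Definition~\ref{def:regularized} first spaces the ordered points by at least $N^{-3}$, giving $|x_i-x_j|\le|y_i-y_j|$ and hence the correct inequality direction $\sum_{i\neq j}\ln|x_i-x_j|\le\sum_{i\neq j}\ln|y_i-y_j|$, and only \emph{then} does one convolve by $\lambda_{N^{-5}}$. A single convolution of $\hat\mu_N$ at scale $\delta$ does not obviously yield this monotonicity.
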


\subsection{Strategy}
Our strategy is based on a change of variables in the partition function $\mcal{Z}_N^{P}[V]$ \eqref{fonction partition}, used for the $\beta$-ensemble at fixed temperature introduced in \cite{johansson} \cite{shcherbina2014change, bekerman2015transport, bekerman2018clt, leble2018fluctuations}, and used in \cite{BGK2, guionnet2019asymptotics} to derive the loop equations. The outline of the argument goes as follows:
Take $\phi:\R \to \R$ smooth, vanishing fast enough at infinity, and do the change of variables in $\mcal{Z}_N^{P}[V]$, $x_i=y_i+\frac{t}{\sqrt{N}}\phi(y_i)$, $1\leq i\leq N$, to get:
\begin{equation*}
    \mcal{Z}_N^{P}[V]= \int_{\R^N} \prod_{i<j}\left|y_i-y_j+ \frac{t}{\sqrt{N}}(\phi(y_i)-\phi(y_j))\right|^{2P/N}e^{-\sum_{i=1}^NV\left(y_i + \frac{t}{\sqrt{N}}\phi(y_i)\right)}\prod_{i=1}^N\left(1+\frac{t}{\sqrt{N}}\phi'(y_i) \right)d^N\y\,.
\end{equation*}
Expanding the different terms in this integral, one gets:
\begin{align*}
    \mcal{Z}_N^{P}[V] = \int_{\R^N}\prod_{i<j}|y_i-y_j|^\frac{2P}{N} e^{-\sum_{i=1}^NV(y_i)}e^{\frac{t}{\sqrt{N}}\left[\frac{2P}{N}\sum_{i<j}\frac{\phi(y_i)-\phi(y_j)}{y_i-y_j} + \sum_{i=1}^N \left(\phi'(y_i)-V'(y_i)\phi(y_i) \right) \right]}e^{-\frac{t^2}{2}q_N(\phi)}d^N\y\,,
\end{align*}
where the term $q_N(\phi)$ converges towards a limiting variance $q(\phi)$ depending on $\phi$, $P$ and $V$. After dividing both parts of the equation by $\mcal{Z}_N^{P}[V]$, and because of Equation \eqref{eq:mesure equilibre} characterizing $\mu^P_V$, one deduces the convergence of the Laplace transform
\begin{equation}
\label{eq heuristique}
    \E_N^{V,P}\left[ e^{t\sqrt{N}\left(\mrm{fluct}_N(\Xi [\phi]) + \text{error term}\right)}\right]\underset{N\rightarrow\infty}{\longrightarrow} \exp\Big(\frac{t^2}{2}q(\phi)\Big)\,,
\end{equation}
where $\Xi$ is the so-called \textit{master operator}, a linear operator acting on test functions and defined in \eqref{def:Xi}.
The error term involves the anisotropy term $\zeta_N$ given by \eqref{def:anisotropie} which is shown in Corollary \ref{cor:controleZeta} to be negligible at the scale of the CLT, as a consequence of Proposition \ref{thm:concentration}.
Equation \eqref{eq heuristique} then establishes the central limit theorem for test functions of the form $\Xi [\phi]$. The goal is then to invert $\Xi$.

Following \cite{hardy2021clt}, the operator $\mathcal{L}$ given by
\begin{equation}
    \label{def:operateurL}
    \mathcal{L}[\phi] = \Xi[\phi']
\end{equation}
can be inverted using Hilbert space techniques. In particular, the operator $\mathcal{L}$, seen as an unbounded operator on the Hilbert space $\mathsf{H}$ defined in \eqref{def:H}, can be decomposed as
$$-\mathcal{L} = \mathcal{A} + 2P\mathcal{W}\,,$$
where $\mathcal{A}$ is a positive Sturm-Liouville operator and $\mathcal{W}$ is positive and self-adjoint. Such a writing allows us to show that $-\mathcal{L}$ is invertible, see Theorem \ref{thm:InverL} and hence that so is $\Xi$.

\medbreak
\textbf{The paper is organized as follows.} In Section \ref{section:regularity_eq_measure} we discuss the regularity of the equilibrium density $\rho^P_V$ under Assumption \ref{assumptions}. In Section \ref{section:concentration}, we prove the concentration inequality Proposition \ref{thm:concentration}. Section \ref{section:localization} is dedicated to the localization of the edge of a typical configuration, mentioned in the discussion preceding the statement of Assumption \ref{assumption2}. We next prove in Section \ref{section:laplace_transform} the convergence of the Laplace transform of $\sqrt{N}\mrm{fluct}_N(\mcal{L}[\phi])$ for general functions $\phi$ which establishes Theorem \ref{thm:thmppal} for functions in the range of $\mcal{L}$. Section \ref{section:spectral_theory} is dedicated to the diagonalization and inversion of $\mathcal{L}$ given by \eqref{def:operateurL}. In Section \ref{s7}, we show regularity properties of $\mcal{L}^{-1}$ to establish Theorem \ref{thm:thmppal}. We detail in Appendix \ref{app A} elements of proof for the spectral theory of Schrödinger operators, used in Section \ref{section:spectral_theory}.
\medbreak

\textbf{Acknowledgements} The authors wish to thank Alice Guionnet, Karol Kozlowski and an anonymous referee for their helpful suggestions. We also thank Arnaud Debussche  for pointing out the link with Schrödinger operators theory and Gautier Lambert for pointing out \cite{lambert2021poisson}. We would also like to thank Jeanne Boursier, Corentin Le Bihan and Jules Pitcho for their intuition about the regularity of the inverse operator. We would like to thank Jean-Christophe Mourrat for telling us about a more general framework for Poincaré inequalities.

\section{Regularity of the equilibrium measure and Hilbert transform}
\label{section:regularity_eq_measure}
In this section, we discuss the regularity properties of the equilibrium density $\rho^P_V$, namely its decay at infinity and its smoothness, and give formulas for its two first derivatives.

The Hilbert transform, whose definition we recall, plays a central role in the analysis of the equilibrium measure. It is first defined on the Schwartz class through $\forall\phi\in \mcal{S}(\R),\,\forall x \in\R,\;$

\begin{equation}
\label{def:HilbertTransform}
\mcal{H}[\phi](x)\defi \fint_\R\dfrac{\phi(t)}{t-x}d t=\lim_{\varepsilon\downarrow 0}\int_{|t-x|>\varepsilon}\dfrac{\phi(t)}{t-x} d t=\int_0^{+\infty}\dfrac{\phi(x+t)-\phi(x-t)}{t}d t,
\end{equation}
where $\displaystyle\fint$ denotes the Cauchy principal value integral, and then extended to $L^2(\R)$ thanks to property \textit{ii)} of Lemma \ref{lemma:HilbertPropriétés}: $\|f\|_{L^2(\R)}=\dfrac{1}{\pi}\| \mathcal{H}[f] \|_{L^2(\R)}$. The last expression in \eqref{def:HilbertTransform} is a definition where the integral converges in the classical sense. We also recall the definition of the logarithmic potential $U^f$ of a density of probability $f:\R \to \R$, given for $x\in\R$ by
\begin{equation}
\label{def:potentielLogarithmique}
	U^{f}(x)\defi-\int_\R\ln|x-y|f(y)dy\,.
\end{equation}
Because we assume $f\in L^1(\R)$ to be nonnegative, $U^f$ takes values in $[-\infty,+\infty)$. If $f$ integrates the function $\ln$, \textit{i.e} $\int_\R \ln|x|f(x)d x <+\infty$, then $U^f$ takes real values. Additionally, one can check that the logarithmic potential and the Hilbert transform of $f$ are linked through the distributional identity $\big(U^{f}\big)'=\mathcal{H}[f]$.

We recall in the next lemma some properties of the Hilbert transform that we will use in the rest of the paper.

\begin{lemma}[Properties of the Hilbert transform]\ 
\label{lemma:HilbertPropriétés}
\begin{itemize}
    \item[i)] Fourier transform: For all $\phi\in L^2(\R)$,  $\mcal{F}\Big[\mcal{H}[\phi]\Big](\omega)=\ii\pi\text{sgn}(\omega)\mcal{F}[\phi](\omega)$ for all $\omega\in\R$.
    \item[ii)] As a consequence, $\dfrac{1}{\pi}\mathcal{H}$ is an isometry of $L^2(\R)$, and $\mathcal{H}$ satisfies on $L^2(\R)$ the identity $\mathcal{H}^2=-\pi^2\msf{id}$.
    \item[iii)] Derivative: For any $f\in H^1(\R)$, $\mathcal{H}[f]$ is also $H^1(\R)$ and $\mcal{H}[f]'=\mcal{H}[f']$.
    \item[iv)] For all $p>1$, the Hilbert transform can be extended as a bounded operator $\mathcal{H}:L^p(\R)\to L^p(\R)$.
    \item[v)] Skew-self adjointness: For any $f,g\in L^2(\R)$, $\Braket{\mcal{H}[f],g}_{L^2(\R)}=-\langle f,\mcal{H}[g] \rangle_{L^2(\R)}$.
\end{itemize}
\end{lemma}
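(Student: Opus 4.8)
The plan is to derive everything from the Fourier–multiplier identity \textit{i)}: once it is in hand, \textit{ii)}, \textit{iii)} and \textit{v)} become one-line Plancherel computations, while \textit{iv)} is the classical M.~Riesz theorem, which I would simply quote from Calderón--Zygmund theory. For \textit{i)} I would argue by truncation: for $\phi\in\mcal{S}(\R)$, write the truncated transform $\mcal{H}_\varepsilon[\phi](x)=\int_{|s|>\varepsilon}s^{-1}\phi(x+s)\,ds$ as a convolution $\phi\ast k_\varepsilon$ with $k_\varepsilon(y)=-\mathbf{1}_{|y|>\varepsilon}\,y^{-1}\in L^2(\R)$, and compute directly
$$\mcal{F}[k_\varepsilon](\omega)=-\int_{|y|>\varepsilon}\frac{e^{-\ii y\omega}}{y}\,dy=2\ii\,\text{sgn}(\omega)\int_{\varepsilon|\omega|}^{+\infty}\frac{\sin u}{u}\,du\;\tend{\varepsilon\downarrow 0}\;\ii\pi\,\text{sgn}(\omega),$$
the convergence being pointwise in $\omega$ with $(\mcal{F}[k_\varepsilon])_{\varepsilon>0}$ uniformly bounded. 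One then checks, using the last expression in \eqref{def:HilbertTransform} and the Schwartz decay of $\phi$, that $\mcal{H}_\varepsilon[\phi]\to\mcal{H}[\phi]$ in $L^2(\R)$: pointwise convergence is immediate, and an $\varepsilon$-uniform $L^2$ majorant follows by splitting the integral near $s=0$ (where $|\phi(x+s)-\phi(x-s)|\leq 2|s|\,\Norm{\phi'}_{L^\infty(\R)}$) and at infinity (where the rapid decay of $\phi$ controls the tail). Passing to the limit in $\mcal{F}[\mcal{H}_\varepsilon[\phi]]=\mcal{F}[k_\varepsilon]\,\mcal{F}[\phi]$ (left side in $L^2$ by Plancherel, right side by dominated convergence) gives $\mcal{F}[\mcal{H}[\phi]](\omega)=\ii\pi\,\text{sgn}(\omega)\mcal{F}[\phi](\omega)$ on $\mcal{S}(\R)$, and since $\mcal{S}(\R)$ is dense and both sides are continuous for the $L^2$ topology, the identity extends to all of $L^2(\R)$. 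Equivalently, \textit{i)} is the statement that the Fourier transform of the tempered distribution $\mathrm{p.v.}(1/x)$ equals $-\ii\pi\,\text{sgn}$.

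With the convention $\mcal{F}[f](\omega)=\int_\R f(t)e^{-\ii t\omega}\,dt$, Plancherel reads $\prsc{f}{g}_{L^2(\R)}=\frac{1}{2\pi}\prsc{\mcal{F}[f]}{\mcal{F}[g]}_{L^2(\R)}$, and the remaining items follow by inspecting the multiplier $m(\omega)=\ii\pi\,\text{sgn}(\omega)$. For \textit{ii)}, $|m(\omega)|=\pi$ a.e. gives $\Norm{\mcal{H}[\phi]}_{L^2(\R)}=\pi\Norm{\phi}_{L^2(\R)}$, while $m(\omega)^2=-\pi^2$ a.e. gives $\mcal{H}^2=-\pi^2\msf{id}$. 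For \textit{iii)}, if $f\in H^1(\R)$ then $\mcal{F}[f']=\ii\omega\,\mcal{F}[f]\in L^2(\R)$, hence $\ii\omega\,\mcal{F}[\mcal{H}[f]]=m(\omega)\,\ii\omega\,\mcal{F}[f]=m(\omega)\mcal{F}[f']=\mcal{F}[\mcal{H}[f']]\in L^2(\R)$, so $\mcal{H}[f]\in H^1(\R)$ and, by injectivity of $\mcal{F}$, $\mcal{H}[f]'=\mcal{H}[f']$. For \textit{v)}, polarizing the isometry of \textit{ii)} gives $\prsc{\mcal{H}[u]}{\mcal{H}[w]}_{L^2(\R)}=\pi^2\prsc{u}{w}_{L^2(\R)}$; choosing $w=\mcal{H}[g]$ and using $\mcal{H}^2=-\pi^2\msf{id}$ yields $-\pi^2\prsc{\mcal{H}[f]}{g}_{L^2(\R)}=\pi^2\prsc{f}{\mcal{H}[g]}_{L^2(\R)}$, i.e. $\prsc{\mcal{H}[f]}{g}_{L^2(\R)}=-\prsc{f}{\mcal{H}[g]}_{L^2(\R)}$ (alternatively, this is immediate from $m$ being purely imaginary).

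For \textit{iv)}, the kernel $x\mapsto 1/x$ is an odd Calderón--Zygmund kernel and $\mcal{H}$ is bounded on $L^2(\R)$ by \textit{ii)}, so the Calderón--Zygmund theory provides a weak $(1,1)$ bound; Marcinkiewicz interpolation then yields boundedness on $L^p(\R)$ for $1<p\leq 2$, and duality combined with the skew-adjointness \textit{v)} extends it to $2\leq p<\infty$. I would simply cite a standard reference for this. Overall, the genuine work is confined to the limiting argument in \textit{i)} — justifying the $L^2$-convergence of the truncations and the interchange of limit and Fourier transform; \textit{iv)} is the deep statement but is only quoted, and the rest is routine Plancherel bookkeeping. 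So I expect the treatment of \textit{i)} to be the main, though standard, obstacle.
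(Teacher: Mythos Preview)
Your proposal is correct and in fact considerably more detailed than the paper's own treatment: the paper does not prove this lemma at all but simply writes ``We refer to \cite{Hilberttransforms} for the proofs of these properties.'' Your sketch --- deriving \textit{i)} via the truncated convolution and the Fourier transform of $\mathrm{p.v.}(1/x)$, then reading off \textit{ii)}, \textit{iii)}, \textit{v)} from the multiplier $m(\omega)=\ii\pi\,\mathrm{sgn}(\omega)$, and quoting Calder\'on--Zygmund/M.~Riesz for \textit{iv)} --- is the standard route and is fine.
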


\begin{proof}
We refer to \cite{Hilberttransforms} for the proofs of these properties.
\end{proof}

As a consequence of \cite{Garcia}, $\hat{\mu}_N$ converges almost surely under $\P^{V,P}_N$ towards the unique minimizer of the energy-functional $\mathcal{E}_V^P$, defined for $\mu\in \mathcal{P}(\R)$ by
\begin{equation}
\label{def:energy}
\mathcal{E}_V^P(\mu)\defi\begin{cases}\displaystyle\int_\R\Big[ V+\ln\Big(\dfrac{d\mu}{dx}\Big)\Big]d\mu-P\iint_{\R^2}\ln\big|x-y\big|d\mu(x)d\mu(y)\text{ if }\mu\ll dx
\\+\infty\text{ otherwise }
\end{cases}\,.
\end{equation}
(Here we wrote $\mu \ll dx$ for "$\mu$ is absolutely continuous with respect to Lebesgue measure") 

Consequently, following \cite[Lemma 3.2]{GMToda}, the density $\rho^P_V$ of $\mu^P_V$ satisfies equation \eqref{eq:mesure equilibre}, which we rewrite here for convenience.

\begin{equation}
\label{eq:mesure equilibre rappel}
V(x) - 2P\int_\R \ln|x-y|\rho^P_V(y)dy + \ln \rho^P_V(x) = \lambda_V^P\,,
\end{equation}
where $\lambda_V^P$ is a constant (depending on $V$ and $P$).
Using this equation, we will show in the next lemma that $\rho^P_V$ decays exponentially and is twice continuously differentiable via the representation:

$$\forall x\in\R,\;\rho^P_V(x)=\exp\Big(-V(x)-2PU^{\rho^P_V}(x)-\lambda_P^V\Big)$$
In the Gaussian potential case \textit{i.e.} $V_G(x)=\dfrac{x^2}{2}$, an explicit formula has been found first in \cite{AllezBouchaudGuionnet}, and in \cite{duy2015mean} via a different method:
$$\rho_{V_G}^P(x)=\dfrac{e^{-\frac{x^2}{2}}}{\sqrt{2\pi}}\dfrac{1}{|\hat{f_\alpha}(x)|^2},\hspace{2cm}\hat{f_\alpha}(x)\defi  \sqrt{\dfrac{P}{\Gamma(P)}}\displaystyle\int_0^{+\infty}t^{P-1}e^{-\frac{t^2}{2}+\ii xt}dt.$$
It has been established in \cite{benaych2015poisson} that $\sqrt{P+1}\rho_P^{V_G}(\sqrt{P+1}x)$ converges to the Gaussian distribution when $P$ goes to zero and the semi-circle law when $P$ goes to infinity. So in the Gaussian case, $\mu^P_V$ can be seen as an interpolation between the Gaussian distribution and the semi-circular one. In fact, this interpolation result holds true for general potentials, see \cite[Remark 2.1]{NakanoTrinh20}.
\ \\ \ \\
\textbf{We now lighten the notations $\rho^P_V$ and $\mu^P_V$ and write $\rho_V$ and $\mu_V$ for convenience.} \ \\

In the next lemma, we prove that $\rho_V$ has the same regularity as $V$.

\begin{lemma}
\label{lem:regularitedensite}
Under Assumption \ref{assumptions},
\begin{itemize}
    \item The support of $\mu_V$ is $\R$ and there exists a constant $C^V_P$ such that for all $x\in \R$, 
    $$ \rho_V(x) \leq C^V_P (1+|x|)^{2P}e^{-V(x)}\,.$$
    \item The density $\rho_V$ is in $\mcal{C}^3(\R)$ and we have
\begin{equation}\label{deriv1}
    \rho_V'=-\Big(V'+2 P\mcal{H}[\rho_V]\Big)\rho_V
\end{equation}
and
\begin{equation}\label{deriv2}
   \rho_V''=\Big(-2P\mcal{H}[\rho_V]'-V''+V'^2+4P^2\mcal{H}[\rho_V]^2+4 PV'\mcal{H}[\rho_V]\Big)\rho_V\,.
\end{equation}
\end{itemize}

\end{lemma}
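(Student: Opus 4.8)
The plan is to exploit the fixed-point representation $\rho_V(x)=\exp\bigl(-V(x)-2PU^{\rho_V}(x)-\lambda_V^P\bigr)$ coming from \eqref{eq:mesure equilibre rappel}, and to bootstrap regularity from a priori information on $\rho_V$ as an $L^1$ probability density. First I would establish the exponential decay bound: since $\rho_V$ is a probability density, $U^{\rho_V}(x)=-\int_\R \ln|x-y|\rho_V(y)\,dy$ satisfies $U^{\rho_V}(x)\geq -\ln(1+|x|)+O(1)$ by splitting the integral over $|y|\leq 2|x|$ and $|y|>2|x|$ (using that $\rho_V$ integrates $\ln$, which follows from Assumption \ref{assumptions}\ref{ass1} forcing superlinear growth of $V$, hence integrability against $\ln$ once we know the rough bound; one argues by a continuity/bootstrap on the set where $\rho_V$ is already controlled). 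Plugging this into the representation gives $\rho_V(x)\leq C^V_P(1+|x|)^{2P}e^{-V(x)}$. Because \ref{ass1} gives $\liminf V(x)/|x|=+\infty$, this forces $\rho_V$ to decay faster than any exponential up to the polynomial correction, so in particular $\rho_V\in L^p(\R)$ for all $p\geq 1$ and $\rho_V$ integrates $\ln|x|$, making $U^{\rho_V}$ finite and continuous. That the support is all of $\R$ is then immediate: the representation exhibits $\rho_V$ as a strictly positive continuous function (once continuity is known), so it cannot vanish anywhere.

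Next I would upgrade regularity. Knowing $\rho_V\in L^1\cap L^\infty$ (from the decay bound) and $\rho_V\in L^p$ for all $p>1$, Lemma \ref{lemma:HilbertPropriétés}(iv) gives $\mcal{H}[\rho_V]\in L^p(\R)$ for all $p>1$; combined with the fact that $U^{\rho_V}$ is locally bounded and $(U^{\rho_V})'=\mcal{H}[\rho_V]$ in the distributional sense, $U^{\rho_V}$ is continuous, hence $\rho_V=e^{-V-2PU^{\rho_V}-\lambda_V^P}$ is continuous, and since $V\in\mcal{C}^3$, $\rho_V$ inherits $\mcal{C}^1$ as soon as $U^{\rho_V}\in\mcal{C}^1$, i.e. as soon as $\mcal{H}[\rho_V]$ is continuous. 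For this I would use Lemma \ref{lemma:HilbertPropriétés}(iii): if $\rho_V\in H^1(\R)$ then $\mcal{H}[\rho_V]\in H^1(\R)\subset\mcal{C}^0(\R)$. So the scheme is the standard elliptic-type bootstrap: show $\rho_V\in H^1$, deduce $\mcal{H}[\rho_V]\in\mcal{C}^0$, deduce $U^{\rho_V}\in\mcal{C}^1$, deduce $\rho_V\in\mcal{C}^1$; then differentiating the representation gives $\rho_V'=-(V'+2P\mcal{H}[\rho_V])\rho_V$, which is \eqref{deriv1}. To get $\rho_V'\in L^2$ (closing the $H^1$ step) one uses that $V'\rho_V$ and $V''\rho_V$ decay like $e^{-V}$ times a polynomial by Assumption \ref{assumptions}\ref{ass2} and \ref{ass4} (the hypothesis $Q(V')e^{-V}=o(x^{-\alpha})$ is exactly what makes powers of $V'$ square-integrable against $\rho_V\sim e^{-V}$), and that $\mcal{H}[\rho_V]$ is bounded near infinity (it is $O(1/|x|)$ there since $\rho_V$ is a compactly-concentrated-looking integrable density with all moments). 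Iterating once more: $\rho_V'\in L^2$ together with $\rho_V\in L^2$ gives $\rho_V\in H^1$, hence $\mcal{H}[\rho_V]\in H^1$, hence $\mcal{H}[\rho_V]'=\mcal{H}[\rho_V']$ makes sense pointwise; differentiating \eqref{deriv1} once and substituting $\rho_V'$ back yields \eqref{deriv2}. A third iteration, using $V\in\mcal{C}^3$ and the bounds $V''/V'=O(1)$, $V^{(3)}/V'=O(1)$ from \ref{ass4}, promotes $\rho_V$ to $\mcal{C}^3$.

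Concretely the steps in order: (1) a priori $L^1$ bound and $\ln$-integrability via a bootstrap on a growing family of intervals, giving the decay estimate and positivity/full support; (2) $L^p$ and $H^1$ membership of $\rho_V$ using the decay estimate and Assumption \ref{assumptions}\ref{ass2}; (3) continuity and then $\mcal{C}^1$ regularity of $\mcal{H}[\rho_V]$ and $U^{\rho_V}$ via Lemma \ref{lemma:HilbertPropriétés}(iii)–(iv), yielding $\rho_V\in\mcal{C}^1$ and formula \eqref{deriv1}; (4) bootstrap to $\mcal{C}^2$ and $\mcal{C}^3$ by repeatedly differentiating the representation and the identity $\mcal{H}[\rho_V]'=\mcal{H}[\rho_V']$, using \ref{ass4} to control the new terms, yielding \eqref{deriv2}.

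The main obstacle I anticipate is the very first step: controlling $U^{\rho_V}$ — and hence closing the exponential decay bound — before one knows any decay of $\rho_V$ at all, since the representation is circular ($\rho_V$ appears on both sides). The clean way around this is a continuity argument: the right-hand side $e^{-V-2PU^{\rho_V}-\lambda_V^P}$ is automatically $\leq e^{-V+2P\ln(1+|x|)-\lambda_V^P+C}$ once one has the \emph{lower} bound $U^{\rho_V}(x)\geq -\ln(1+|x|)-C$, and this lower bound requires only that $\rho_V$ be a probability measure (no decay needed) — it follows from $-\ln|x-y|\geq -\ln(1+|x|)-\ln(1+|y|)$ plus the finiteness of $\int \ln(1+|y|)\rho_V(y)\,dy$, which one gets by first proving $\rho_V$ has a finite first moment (itself a consequence of the a priori bound $\rho_V(x)\leq e^{-V(x)-2PU^{\rho_V}(x)-\lambda}$ on any region where $U^{\rho_V}$ is already known finite, combined with superlinear growth of $V$). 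So the delicate point is organizing this into a genuinely non-circular argument, which I would do by first establishing local boundedness of $U^{\rho_V}$ on compacts (trivial, as convolution of $L^1$ with locally-$L^1$ kernel) and then propagating the bound outward. Once past this, the rest is a routine regularity bootstrap.
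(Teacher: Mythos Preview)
Your proposal is essentially correct and follows the same bootstrap structure as the paper: derive the distributional identity $\rho_V'=-(V'+2P\mcal{H}[\rho_V])\rho_V$ from the representation $\rho_V=e^{-V-2PU^{\rho_V}-\lambda_V^P}$, show $\rho_V\in H^1(\R)$ so that $\mcal{H}[\rho_V]\in H^1(\R)\subset\mcal{C}^0(\R)$, conclude $\rho_V\in\mcal{C}^1(\R)$ and that \eqref{deriv1} holds classically, then iterate to $\mcal{C}^2$ and $\mcal{C}^3$.

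The one place where the paper differs is precisely the step you flag as the main obstacle. You propose to break the circularity in the decay estimate by a continuity/propagation argument; the paper instead short-circuits this entirely by invoking an external a~priori bound: \cite[Lemma~3.2]{GMToda} already gives $\rho_V(x)\leq K^V_P(1+|x|)^{-2}$ (and full support) directly from the variational characterization of $\mu_V$. This immediately yields $\int\ln(1+|y|)\rho_V(y)\,dy<\infty$, after which the inequality $\ln|x-y|\leq\ln(1+|x|)+\ln(1+|y|)$ plugged into \eqref{eq:mesure equilibre rappel} gives the claimed bound $\rho_V(x)\leq C^V_P(1+|x|)^{2P}e^{-V(x)}$ in one line. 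So your self-contained argument for step~(1) is more work than needed here but would give an independent proof; the rest of your bootstrap matches the paper's proof essentially line for line.
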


\begin{proof}
For the first point, \cite[Lemma 3.2]{GMToda} establishes that the support of $\mu_V$ is the whole real axis, and that under the first condition of Assumptions \ref{assumptions}, we have the bound, valid for all $x\in \R$
\begin{equation}
    \label{ineq:densite a priori}
    \rho_V(x) \leq \frac{K^V_P}{(1+|x|)^2}\,,
\end{equation}
with $K^V_P$ a positive constant. Using \eqref{eq:mesure equilibre rappel} and the fact that
$$\ln|x-y|\leq\ln\big(1+|x|\big)+\ln\big(1+|y|\big)\,,$$
we see that for all $ x\in\R$, 
    \begin{equation}
        \label{ineq:exp bound density}
        \rho_V(x)\leq C^V_P \exp\Big(-V(x)+2P\ln(1+|x|)\Big)\,,
    \end{equation}
with 
$$C^V_P=\exp\Big(2P\int_\R \ln(1+|y|)\rho_V(y)d y +\lambda_P^V\Big)$$ which is indeed finite by \eqref{ineq:densite a priori}.

	For the second point, we use that $\big(U^{\rho_V}\big)'=\mcal{H}[\rho_V]$ weakly and equation \eqref{eq:mesure equilibre rappel} to conclude on the distributional identity
	\begin{equation*}
	\rho_V'=\Big(-V'-2P\mcal{H}[\rho_V]\Big)\rho_V\,.
	\end{equation*}
	By the second point of Assumption \ref{assumptions}, $V'(x)e^{-V(x)+2P\ln(1+|x|)}=o(x^{-1})$ as $|x|\rightarrow\infty$, thus by \eqref{ineq:exp bound density}, $V'\rho_V\in L^2(\R)$. Also since $\rho_V$ is $L^2(\R)$ and bounded, we deduce, by using that $\mathcal{H}\big[L^2(\R)\big]= L^2(\R)$, that $\mcal{H}[\rho_V]\rho_V\in L^2(\R)$. Adding up these terms we get $\rho_V\in H^1(\R)$. Because $\mcal{H}[\rho_V]'=\mcal{H}[\rho_V']$ in a weak sense by Lemma \ref{lemma:HilbertPropriétés}, $\mathcal{H}[\rho_V]\in H^1(\R)$. By the classical fact that $H^1(\R)$ is contained in the set of $1/2$-Hölder functions $\mathcal{C}^{1/2}(\R)$, we have $\mcal{H}[\rho_V]\in\mcal{C}^{1/2}(\R)$ and so $U^{\rho_V}\in\mcal{C}^{1,1/2}(\R)$, the set of functions in $\mcal{C}^1(\R)$ with derivative of class $1/2$-Hölder.\\
	Using the fact that $V$ is continuously differentiable, the previous equation for the weak derivative of $\rho_V$ then ensures that $\rho_V\in\mcal{C}^{1}(\R)$ and equation \eqref{deriv1} holds in the strong sense.
	
	Differentiating (in a weak sense) equation \eqref{deriv1} we obtain
	\begin{equation*}
	\rho_V''=\Big(-2P\mcal{H}[\rho_V]'-V''+V'^2+4P^2\mcal{H}[\rho_V]^2+4PV'\mcal{H}[\rho_V]\Big)\rho_V\,.
	\end{equation*}
	The three first terms belong to $L^2(\R)$ for the same reasons as before. Since $\rho_V\in H^1(\R)$n by Lemma \ref{lemma:HilbertPropriétés}\textit{iii)} so is $\mcal{H}[\rho_V]\in H^1(\R)$, it is then bounded over $\R$ hence the two last terms are in $L^2(\R)$ when multiplied by $\rho_V$. Finally, we can conclude that $\rho_V\in H^2(\R)$ and so that $\mcal{H}[\rho_V]\in H^2(\R)$ with $\mcal{H}[\rho_V]''=\mcal{H}[\rho_V'']$ (in a weak sense). As before, we conclude that $\rho_V\in\mcal{C}^2(\R)$ and that equation \eqref{deriv2} holds in a strong sense. By the exact same method, we can show that $\rho_V\in \mcal{C}^3(\R)$.
\end{proof}

We next show that the Hilbert transform of $\rho_V$ is continuous and decays at infinity.

\begin{lemma}
\label{lemme:boundedHilbert}
Let $u\in L^2(\R)$ such that $\int_\R u(t)dt\defi \lim_{A\to \infty}\int_{-A}^A u(t)dt$ exists and $f:t\mapsto tu(t)\in H^1(\R)$ then $$\mcal{H}[u](x)\underset{|x|\rightarrow\infty}{\sim}\displaystyle\dfrac{-\int_\R u(t)dt}{x}.$$
Moreover if  $\displaystyle\int_\R u(t)dt=0$, $\int_\R f(t)dt$ exists and $g:t\mapsto t^2u(t)\in H^1(\R)$, then $$\mcal{H}[u](x)\underset{|x|\rightarrow\infty}{\sim}\displaystyle\dfrac{-\int_\R tu(t)dt}{x^2}.$$
 As a consequence, we obtain that $\mcal{H}[\rho_V](x)\underset{|x|\rightarrow\infty}{\sim}-x^{-1}$ and the logarithmic potential $U^{\rho_V}$ is Lipschitz bounded, with bounded derivative $\mathcal{H}[\rho_V]$.
\end{lemma}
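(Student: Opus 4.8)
The plan is to establish the two asymptotic expansions by an elementary splitting of the principal-value integral, and then to apply them to $u=\rho_V$ using the regularity already obtained in Lemma~\ref{lem:regularitedensite}. First I would write, for the last expression in \eqref{def:HilbertTransform},
$$\mcal{H}[u](x)=\int_\R\frac{u(t)}{t-x}\,dt=\frac1x\int_\R u(t)\,dt+\int_\R\frac{u(t)}{t-x}\cdot\frac{t}{x}\,dt=\frac{-\left(-\int_\R u\right)}{x}\cdot(-1)+\frac1x\int_\R\frac{t\,u(t)}{t-x}\,dt,$$
being careful with the principal-value prescription; more precisely, from $\frac{1}{t-x}=-\frac1x+\frac{t}{x(t-x)}$ one gets $\mcal{H}[u](x)=-\frac1x\int_\R u(t)\,dt+\frac1x\mcal{H}[f](x)$ where $f(t)=t\,u(t)$. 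Since $f\in H^1(\R)$, property (iii)--(iv) of Lemma~\ref{lemma:HilbertPropriétés} (indeed $H^1(\R)\subset\mathcal{C}^{1/2}(\R)\cap L^\infty(\R)$ and $\mcal{H}[f]\to 0$ at infinity for $f\in H^1$) give $\mcal{H}[f](x)=o(1)$, hence $\mcal{H}[u](x)\sim -x^{-1}\int_\R u$. For the second-order expansion, when $\int_\R u=0$ the leading term vanishes and $\mcal{H}[u](x)=\frac1x\mcal{H}[f](x)$; iterating the same identity on $f$ (legitimate since now $\int_\R f=\int_\R t\,u(t)\,dt$ exists and $g(t)=t^2u(t)=t f(t)\in H^1(\R)$) yields $\mcal{H}[f](x)=-\frac1x\int_\R f+\frac1x\mcal{H}[g](x)$, and $\mcal{H}[g](x)=o(1)$ again, so $\mcal{H}[u](x)\sim -x^{-2}\int_\R t\,u(t)\,dt$.

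Next I would verify the hypotheses for $u=\rho_V$. By Lemma~\ref{lem:regularitedensite}, $\rho_V\in L^2(\R)$, it is a probability density so $\int_\R\rho_V=1$ (the improper integral clearly exists), and the exponential-type bound $\rho_V(x)\leq C^V_P(1+|x|)^{2P}e^{-V(x)}$ together with Assumption~\ref{assumptions}\,\textit{\ref{ass2}} forces $t\mapsto t\rho_V(t)$ and all its derivatives (computed from \eqref{deriv1}, using $\dfrac{V''}{V'}=O(1)$ from \textit{\ref{ass4}} to control the derivative of $\mcal{H}[\rho_V]$, which is itself bounded by the first part of the present lemma applied in a bootstrap) to decay faster than any polynomial; in particular $f(t)=t\rho_V(t)\in H^1(\R)$. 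Applying the first expansion gives $\mcal{H}[\rho_V](x)\sim -1/x$ as $|x|\to\infty$. Consequently $\mcal{H}[\rho_V]$ is continuous on $\R$ (being in $H^1(\R)\subset\mathcal{C}^{1/2}(\R)$) and bounded, since it is continuous and tends to $0$ at $\pm\infty$; therefore $U^{\rho_V}$, whose distributional derivative is $\mcal{H}[\rho_V]$, is globally Lipschitz with bounded derivative.

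The main obstacle I anticipate is purely bookkeeping rather than conceptual: making the principal-value manipulations rigorous (the decomposition $\frac{1}{t-x}=-\frac1x+\frac{t}{x(t-x)}$ must be justified inside $\lim_{\epsilon\downarrow0}\int_{|t-x|>\epsilon}$, which is fine since the added term $-\frac1x\int_{|t-x|>\epsilon}u$ has a genuine limit by the existence of $\int_\R u$), and checking that $\mcal{H}[f]$ vanishes at infinity for $f\in H^1(\R)$ — this follows for instance from writing $\mcal{H}[f](x)=\mcal{H}[f](x)$ with $f\in L^2$ so $\mcal{H}[f]\in L^2$, combined with $\mcal{H}[f]'=\mcal{H}[f']\in L^2$, hence $\mcal{H}[f]\in H^1(\R)$ which vanishes at infinity. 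A secondary point requiring care is the bootstrap used to check $t\rho_V(t)\in H^1(\R)$: one needs $\mcal{H}[\rho_V]$ and $\mcal{H}[\rho_V]'$ to be bounded before differentiating \eqref{deriv1}, but the boundedness of $\mcal{H}[\rho_V]$ already follows from the first, self-contained part of the lemma, and that of $\mcal{H}[\rho_V]'=\mcal{H}[\rho_V']$ from $\rho_V'\in H^1(\R)$ (established in Lemma~\ref{lem:regularitedensite}), so there is no circularity.
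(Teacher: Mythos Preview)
Your approach is essentially the same as the paper's: both derive the identity $x\,\mcal{H}[u](x)+\int_\R u=\mcal{H}[f](x)$ with $f(t)=t\,u(t)$, conclude via $\mcal{H}[f]\in H^1(\R)$ vanishing at infinity, and then iterate. The only minor difference is that the paper obtains this identity by working with the convergent symmetric form $\mcal{H}[u](x)=\int_0^{+\infty}\frac{u(x+t)-u(x-t)}{t}\,dt$, which sidesteps the principal-value bookkeeping you flag as an obstacle.
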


\begin{proof}
    Let $u\in L^2(\R)$, such that $\int_\R u(t)dt$ exists and $f:t\mapsto tu(t)\in H^1(\R)$. Then
    \begin{equation*}
x\mcal{H}[u](x)+\int_\R u(t)dt=\int_\R\Big[\dfrac{xu(x+t)-xu(x-t)}{2t}+\dfrac{u(x+t)}{2}+\dfrac{u(x-t)}{2}\Big]dt=\mcal{H}[f](x).
\end{equation*}
Since $f\in H^1(\R)$, so is $\mcal{H}[f]$, proving that it goes to zero at infinity. Hence $$\mcal{H}[u](x)\underset{|x|\rightarrow\infty}{\sim}\displaystyle\dfrac{-\int_\R u(t)dt}{x}$$
Moreover if $\displaystyle\int_\R u(t)dt=0$, $\int_\R f(t)dt$ exists and $g:t\mapsto t^2u(t)\in H^1(\R)$, then by the same argument:
$$x^2\mcal{H}[u](x)=x\mcal{H}[f](x)=\mcal{H}[g](x)-\int_\R f(t)dt$$
where $g(t)=t^2u(t)$. We deduce that $\mcal{H}[u](x)\underset{|x|\rightarrow\infty}{\sim}\displaystyle\dfrac{-\int_\R tu(t)dt}{x^2}$ since $\mcal{H}[g]$ goes to zero at infinity.
\end{proof}

\begin{lemma}[Asymptotic of the logarithmic potential]
    \label{lem:logpotential}
    We have the following asymptotic expansion at infinity $U^{\rho_V}(x)=-\ln|x|+\underset{|x|\rightarrow\infty}{O}(x^{-1})$.
\end{lemma}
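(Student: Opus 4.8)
The strategy is to peel off the logarithmic singularity at the origin and control the remaining integral. Write $U^{\rho_V}(x) = -\int_\R \ln|x-y|\rho_V(y)\,dy$ and, for $|x|$ large, decompose $\ln|x-y| = \ln|x| + \ln|1-y/x|$. Since $\rho_V$ is a probability density, $\int_\R \ln|x|\,\rho_V(y)\,dy = \ln|x|$, so it remains to estimate $R(x) \defi -\int_\R \ln\left|1-\tfrac{y}{x}\right|\rho_V(y)\,dy$ and show $R(x) = O(x^{-1})$.

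First I would use the decay of $\rho_V$ from Lemma \ref{lem:regularitedensite}: we have $\rho_V(y) \leq C^V_P(1+|y|)^{2P}e^{-V(y)}$, and by Assumption \ref{assumptions}\textit{\ref{ass2}}, this is $o(|y|^{-\alpha})$ for every $\alpha>0$; in particular $\rho_V$ has finite moments of all orders, and $\int_\R |y|\,\rho_V(y)\,dy < \infty$. The natural approach is to split the integral defining $R(x)$ into the region $|y| \leq |x|/2$, where $\left|\ln|1-y/x|\right| \leq C|y/x|$ by a Taylor estimate (the argument of the log is bounded away from $0$), giving a contribution bounded by $\tfrac{C}{|x|}\int_\R |y|\rho_V(y)\,dy = O(x^{-1})$; and the region $|y| > |x|/2$, where I bound $\left|\ln|1-y/x|\right|$ crudely by $C(1 + \ln(1+|y|) + \big|\ln|x-y|\big|)$ and use that $\rho_V$ decays faster than any polynomial on this region (since $|y|$ is comparable to or larger than $|x|$), so that this tail contribution is $O(|x|^{-\alpha})$ for any $\alpha$, hence certainly $O(x^{-1})$. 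The only mild subtlety is the integrable singularity of $\ln|x-y|$ near $y = x$ inside the far region, which is harmless since $\ln$ is locally $L^1$ and $\rho_V$ is bounded there.

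Alternatively, and perhaps more cleanly, I would deduce the expansion by integrating the already-established asymptotic for the derivative: Lemma \ref{lemme:boundedHilbert} gives $\big(U^{\rho_V}\big)'(x) = \mcal{H}[\rho_V](x) \sim -x^{-1}$ as $|x|\to\infty$, and in fact the proof there shows $\mcal{H}[\rho_V](x) = -x^{-1} + O(x^{-2})$ provided the relevant second-moment hypotheses hold (which they do, by the moment bounds above, applying the refined statement of Lemma \ref{lemme:boundedHilbert} with $\int_\R y\,\rho_V(y)\,dy$ the next coefficient). Then I would write, for $x > 0$ large,
$$ U^{\rho_V}(x) = U^{\rho_V}(x_0) + \int_{x_0}^x \mcal{H}[\rho_V](t)\,dt = U^{\rho_V}(x_0) - \ln x + \ln x_0 + \int_{x_0}^x\left(\mcal{H}[\rho_V](t) + \tfrac{1}{t}\right)dt, $$
and the last integral converges as $x\to\infty$ because the integrand is $O(t^{-2})$, with tail $O(x^{-1})$; matching the constant against the known value of $U^{\rho_V}$ at a fixed point pins down that the additive constant is $0$ — or rather, one observes directly from the first decomposition above that $U^{\rho_V}(x) + \ln|x| \to 0$, which forces the constant to vanish. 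The symmetric argument handles $x \to -\infty$.

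\textbf{Main obstacle.} The principal point requiring care is the far-field region $|y| \gtrsim |x|$ in the first approach: there the logarithm $\ln|1-y/x|$ is no longer small and even becomes singular at $y=x$, so one cannot use a naive Taylor bound and must instead lean on the super-polynomial decay of $\rho_V$ furnished by Assumption \ref{assumptions}\textit{\ref{ass2}} together with local integrability of $\ln$. In the second approach the delicate point is instead justifying the refined $O(x^{-2})$ remainder in the asymptotics of $\mcal{H}[\rho_V]$ rather than the mere leading order $\sim -x^{-1}$ stated in Lemma \ref{lemme:boundedHilbert}; this needs the hypothesis that $t\mapsto t^2\rho_V(t) \in H^1(\R)$, which again follows from the decay estimate of Lemma \ref{lem:regularitedensite} combined with the derivative formula \eqref{deriv1} and Assumption \ref{assumptions}\textit{\ref{ass2}}, \textit{\ref{ass4}}. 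Either way the estimate is elementary once the decay of $\rho_V$ is in hand.
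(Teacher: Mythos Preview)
Your proposal is correct, and the paper takes precisely your second route: it records the refinement $\mcal{H}[\rho_V](x)=-x^{-1}+O(x^{-2})$ (obtained by applying Lemma~\ref{lemme:boundedHilbert} once more, to $t\mapsto t\rho_V(t)$) and then integrates $t\mapsto \mcal{H}[\rho_V](t)+1/t$ in a neighborhood of infinity. The paper's argument is terser than yours and does not explicitly address the integration constant; your observation that the direct decomposition $U^{\rho_V}(x)+\ln|x|=-\int_\R\ln|1-y/x|\,\rho_V(y)\,dy\to 0$ forces the constant to vanish is exactly the missing sentence. Your first approach (splitting into $|y|\le |x|/2$ and $|y|>|x|/2$) is a valid self-contained alternative that bypasses the Hilbert-transform machinery entirely.
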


\begin{proof}
    Since $\mcal{H}[\rho_V](x)=-x^{-1}+\underset{|x|\rightarrow\infty}{O}(x^{-2})$, and recalling that $U^{\rho_V}$ (defined by \eqref{def:potentielLogarithmique}) satisfies $(U^{\rho_V})'(x)=\mathcal{H}[\rho_V](x)$, we deduce the result by integrating $t\mapsto \mathcal{H}[\rho_V](t)+1/t$ in a neighborhood of infinity.
\end{proof}

We conclude this section by stating the Poincaré inequality for the measure $\mu_V$ under the assumption that $V$ is a bounded perturbation of a strictly convex potential $V_\mathrm{conv}$. 

\begin{lemma}
\label{lem:fonctionconvexe}
    Let $\varepsilon>0$, there exists a function $f\in \mathcal{C}^2(\R)$ such that $ f_\varepsilon(x)+\ln|x|=\underset{|x|\rightarrow\infty}{O}(1)$, and $\|f_\varepsilon''\|_{\infty}\leq \varepsilon$.
\end{lemma}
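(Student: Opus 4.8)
The plan is to exhibit an explicit $\mathcal{C}^\infty$ smoothing of $-\ln|x|$ at a large length scale. For $a>0$, I would set
\[
f_a(x)\defi-\tfrac12\ln\!\bigl(x^2+a^2\bigr)\,,
\]
which is smooth and even, and observe that $f_a(x)+\ln|x|=-\tfrac12\ln\!\bigl(1+a^2/x^2\bigr)\to 0$ as $|x|\to\infty$; in particular $f_a(x)+\ln|x|=\underset{|x|\to\infty}{O}(1)$ for every value of $a$. It then only remains to pick $a=a(\varepsilon)$ so that $\|f_a''\|_\infty\le\varepsilon$, and to set $f_\varepsilon\defi f_{a(\varepsilon)}$.

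For that step I would just differentiate twice: $f_a'(x)=-x/(x^2+a^2)$ and $f_a''(x)=(x^2-a^2)/(x^2+a^2)^2$. Writing $t=x^2\ge0$ and $g(t)=(t-a^2)/(t+a^2)^2$, one has $g(0)=-a^{-2}$, $g(t)\to 0$ as $t\to+\infty$, and $g'(t)=(3a^2-t)/(t+a^2)^3$, so $g$ increases on $[0,3a^2]$ up to $g(3a^2)=1/(8a^2)$ and decreases afterwards. Hence
\[
\|f_a''\|_\infty=\sup_{t\ge0}\lvert g(t)\rvert=a^{-2}\,,
\]
the supremum being attained at $x=0$. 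Choosing $a=\varepsilon^{-1/2}$ gives $\|f_a''\|_\infty=\varepsilon$, and $f_\varepsilon\defi f_{\varepsilon^{-1/2}}$ answers the lemma.

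There is no genuine obstacle here: the only thing worth noticing is that $\sup_\R\lvert f_a''\rvert$ is governed by the behaviour of $f_a$ near the origin --- the point where the target function $-\ln|x|$ is singular --- and not at infinity, which is exactly what forces the regularisation scale to be of order $\varepsilon^{-1/2}$; everything else is elementary one-variable calculus.
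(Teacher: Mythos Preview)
Your proof is correct and is essentially the same as the paper's: the paper takes $f_\varepsilon(x)=-\log\sqrt{\varepsilon^{-1}+x^2}$, which is exactly your $f_a$ with $a=\varepsilon^{-1/2}$, and checks directly that $|f_\varepsilon''(x)|=\bigl|(\varepsilon^{-1}-x^2)/(\varepsilon^{-1}+x^2)^2\bigr|\le\varepsilon$. You give a slightly more detailed analysis of the supremum, but the construction and argument are identical.
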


\begin{proof}

    Indeed, for fixed $\varepsilon>0$, let $$f_\varepsilon(x)\defi-\log\left(\sqrt{\varepsilon^{-1}+x^2}\right)\,.$$ 
    It is straightforward to verify that for all $x\in \R$,
    \begin{equation*} |f_\varepsilon''(x)|=\left|\frac{\varepsilon^{-1}-x^2}{(\varepsilon^{-1}+x^2)^2}\right| \leq \varepsilon\,.\end{equation*}
    \end{proof}\begin{proposition}
\label{prop:poincare}
Assume that $V=V_\mathrm{conv}+\phi$, where $V_\mathrm{conv}\in\mathcal{C}^3(\R)$ with $V_\mathrm{conv}$ convex and $\phi$ bounded. Assume that there exists $\varepsilon>0$ such that $V_\mathrm{conv}-2Pf_\varepsilon$ is convex ($f_\varepsilon$ being given by Lemma \ref{lem:fonctionconvexe}).
Then, the measure $\mu_V$ satisfies the Poincaré inequality: there exists a constant $C>0$ such that for all $f\in \mathcal{C}^1_c(\R)$, 
\begin{equation}
\label{ineq:Poincare}
    \var_{\mu_V}(f)\defi\int_\R \bigg(f(x)-\int_\R f(y) d\mu_V(y)\bigg)^2 d\mu_V(x)\leq C\int_\R |f'|^2(x)d \mu_V(x)\,.
\end{equation}
\end{proposition}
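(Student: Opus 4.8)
The strategy is to reduce the Poincaré inequality for $\mu_V$ to the Poincaré inequality for a related log-concave measure, using the boundedness of $\phi$ together with the Bakry--Émery criterion applied to $V_{\mathrm{conv}}-2Pf_\varepsilon$. First I recall the explicit form of the density: by \eqref{eq:mesure equilibre rappel}, $\rho_V(x)=\exp\left(-V(x)-2PU^{\rho_V}(x)-\lambda_V^P\right)$, and by Lemma \ref{lem:logpotential}, $U^{\rho_V}(x)=-\ln|x|+O(x^{-1})$ at infinity. The key observation is that $-2PU^{\rho_V}(x)$ and $2Pf_\varepsilon(x)=-P\ln(\varepsilon^{-1}+x^2)$ differ by a \emph{bounded} function on all of $\R$: both behave like $2P\ln|x|$ at infinity (with an $O(x^{-1})$, resp.\ $O(x^{-2})$, correction), and both are continuous, hence bounded on compacts. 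Therefore we may write
$$ \rho_V(x) = e^{-W(x)}\,e^{\psi(x)}, \qquad W \defi V_{\mathrm{conv}} - 2Pf_\varepsilon, $$
where $\psi = -\phi + \left(-2PU^{\rho_V} - 2Pf_\varepsilon\right) - \lambda_V^P$ is bounded (say $\|\psi\|_\infty \leq M$), since $\phi$ is bounded by hypothesis and the remaining bracket is bounded by the discussion above.

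Next, $W=V_{\mathrm{conv}}-2Pf_\varepsilon$ is convex by assumption and $W\in\mathcal{C}^3(\R)$; moreover $W$ goes to infinity faster than linearly (because $V_{\mathrm{conv}}$ does, and $-2Pf_\varepsilon$ grows only logarithmically), so $e^{-W}$ is integrable and, after normalization, defines a probability measure $d\nu = \frac{1}{Z}e^{-W(x)}dx$. A convex potential need not be \emph{strictly} convex, so rather than invoking Bakry--Émery directly I would note that $W$ still satisfies a Poincaré inequality: this follows from classical results on one-dimensional log-concave measures (e.g.\ the Muckenhoupt criterion, or Bobkov--Götze), which guarantee that every log-concave probability measure on $\R$ with finite variance satisfies a Poincaré inequality. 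Call its Poincaré constant $C_\nu$. Then a standard bounded-perturbation argument (Holley--Stroock) transfers the inequality to $\mu_V$: for any $f\in\mathcal{C}^1_c(\R)$,
$$ \var_{\mu_V}(f) \leq \int_\R\left(f-\int f\,d\nu\right)^2 d\mu_V \leq e^{2M}\int_\R\left(f-\int f\,d\nu\right)^2 d\nu \leq e^{2M}C_\nu\int_\R |f'|^2\,d\nu \leq e^{4M}C_\nu\int_\R |f'|^2\,d\mu_V, $$
using $e^{-M}\frac{Z}{Z_V} \leq \frac{d\mu_V}{d\nu} \leq e^{M}\frac{Z}{Z_V}$ (the normalization constants also contribute only a bounded factor, absorbed into $M$ up to relabeling). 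This gives \eqref{ineq:Poincare} with $C=e^{4M}C_\nu$.

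The main obstacle is the claim that $-2PU^{\rho_V}(x) - 2Pf_\varepsilon(x)$ is globally bounded: this requires knowing the asymptotics of $U^{\rho_V}$ (supplied by Lemma \ref{lem:logpotential}) \emph{and} that $U^{\rho_V}$ has no singularities in the interior — i.e.\ that $\ln\rho_V$ is finite everywhere, equivalently that $\rho_V>0$ on all of $\R$, which is exactly the content of ``$\mathrm{supp}\,\mu_V = \R$'' from Lemma \ref{lem:regularitedensite}. A minor subtlety is that we have used $\mu_V$ both to \emph{define} $W$ (through the a priori knowledge that $\rho_V$ exists and satisfies \eqref{eq:mesure equilibre rappel}) and to conclude about it; there is no circularity, since the existence, positivity and asymptotics of $\rho_V$ were established in Section \ref{section:regularity_eq_measure} independently of the Poincaré inequality — indeed those results only used the first, non-Poincaré, part of Assumption \ref{assumptions}\ref{ass1}. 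One should also double-check that the hypothesis ``$V_{\mathrm{conv}}-2Pf_\varepsilon$ convex'' is genuinely needed rather than just ``$V_{\mathrm{conv}}$ convex''; it is, because we want a \emph{convex} comparison potential whose difference with $-2PU^{\rho_V}$ is bounded, and $V_{\mathrm{conv}}$ alone would leave the unbounded term $-2PU^{\rho_V}\sim 2P\ln|x|$ unmatched.
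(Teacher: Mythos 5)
Your argument is essentially the paper's: compare $\mu_V$ with a log-concave reference measure $\nu \propto e^{-W}$ where $W = V_{\mathrm{conv}} - 2Pf_\varepsilon$, invoke the one-dimensional log-concave Poincar\'e inequality (the paper cites \cite{PoincareBBCG}; you cite Muckenhoupt/Bobkov--G\"otze, which supply the same fact), and transfer it to $\mu_V$ via the Holley--Stroock bounded-perturbation comparison. So the decomposition, the key lemma and the transfer mechanism all match the paper's proof.

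There is, however, one sign you should fix, because it makes the load-bearing claim of your write-up literally false as stated. You assert that $2Pf_\varepsilon(x) = -P\ln(\varepsilon^{-1}+x^2)$ ``behaves like $2P\ln|x|$ at infinity''; in fact $-P\ln(\varepsilon^{-1}+x^2) \sim -2P\ln|x|$, while $-2PU^{\rho_V}(x) \sim +2P\ln|x|$, so the sum $-2PU^{\rho_V} - 2Pf_\varepsilon \sim 4P\ln|x|$ is \emph{not} bounded and the $\psi$ you wrote down would be unbounded, breaking the Holley--Stroock step. What actually makes the comparison work is the convention $f_\varepsilon \sim +\ln|x|$, so that $W \approx V_{\mathrm{conv}} - 2P\ln|x|$ and $e^{-W}(x) \approx |x|^{2P}e^{-V_{\mathrm{conv}}(x)}$, matching the decay $\rho_V(x) \asymp (1+|x|)^{2P}e^{-V(x)}$ from Lemma~\ref{lem:regularitedensite}. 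To be fair, this ambiguity is already in the paper — Remark~\ref{remarkConv} describes $f_\varepsilon$ as ``a function that behaves like $\ln|x|$ at infinity'', while the explicit formula in the proof of Lemma~\ref{lem:fonctionconvexe} gives $f_\varepsilon(x)=-\log\sqrt{\varepsilon^{-1}+x^2}\sim -\ln|x|$ — so you inherited it rather than created it; but since the boundedness of $\psi$ is exactly what you need, you should pin down the sign convention explicitly rather than rely on two sign errors cancelling.
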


\begin{proof}
  We use the fact that if $\mu_1,\mu_2$ are two absolutely continuous probability measures supported on $\R$ such that  $\dfrac{1}{C}\leq\dfrac{d\mu_1}{d\mu_2}\leq C$ for some $C>0$ and $\mu_1$ satisfies Poincaré inequality with constant $C_1$ then so does $\mu_2$ for some other constant. Indeed, in that case let $f\in\mcal{C}_c^1(\R)$, we have
  \begin{align*}
  \var_{\mu_2}(f)=\inf_a \int_\R \left(f(x)-a\right)^2d\mu_2(x) &\leq C\var_{\mu_1}(f)\leq C^2C_1\int_\R f'(x) d\mu_2(x).
  \end{align*}
  Here we take $d\mu_2(x)\defi\rho_V(x)dx$ and we want to compare it to a measure $\mu_1$ supported on $\R$ defined by $d\mu_1(x)=\dfrac{1}{Z}\exp\big(-W(x)\big)dx$ for some convex function $W$. The measure $\mu_1$ then clearly verifies the Poincaré inequality: this fact comes as a direct consequence of \cite[Corollary 1.9]{PoincareBBCG}, which states that if a probability measure $\mu$ has a log-concave density on $\R$, then it satisfies \eqref{ineq:Poincare}. With the definition $W\defi V_\mathrm{conv}-2Pf_\varepsilon$ with $\varepsilon>0$ such that $V_\mrm{conv}-2Pf_\varepsilon$ is convex, $W-V-2PU^{\rho_V}$ is bounded on $\R$. It is then not hard to see that $\dfrac{1}{C}\leq\dfrac{d\mu_1}{d\mu_V}\leq C$ for some $C>0$ which allows to conclude that $\mu_V$ satisfies the Poincaré inequality.
\end{proof}
\begin{remark}
    The previous proof is easily extended to potentials $V=V_\mathrm{conv} + \phi$, with $V_\mathrm{conv}$ satisfying: there exists $\alpha>0$ such that for $|x|$ big enough, $V_\mathrm{conv}''(x)>\alpha$; such as $V(x)=x^{2n}+\phi(x)$ with $\phi$ bounded and $n\geq1$. This potential indeed fails to satisfy the condition $V''-2P f_\varepsilon''\geq0$, and we need another trick.\\ 
    On the other side, 
    one can notice that $x\mapsto V(x)-2P f_\varepsilon(x+x_0)$ for $x_0$ big enough will be convex. Indeed, the "lack of convexity" of $-f_\varepsilon$ will be compensated since $f_\varepsilon''(x+x_0)>0$ will occur in the region where $V_\mathrm{conv}''(x)>\alpha$. 
    Therefore we can use the same argument as the one of Proposition \ref{prop:poincare} to conclude that $\mu_V$ satisfies Poincaré inequality.
\end{remark}

\begin{remark}
\label{remark:poincare}
We will apply later inequality \eqref{ineq:Poincare} to more general functions than $\mathcal{C}^1_c(\R)$, namely functions of the weighted Sobolev space $H^1(\mu_V)$, defined in Section \ref{section:spectral_theory}; which can be seen as the completion of $\mathcal{C}^\infty_c(\R)$ with respect to the norm $\|u\|_{L^2(\mu_V)}+\|u'\|_{L^2(\mu_V)}$.
\end{remark}

\section{Concentration inequality, proof of Proposition \ref{thm:concentration}}
\label{section:concentration}
\subsection{Concentration inequality}
We prove in this section the concentration Proposition \ref{thm:concentration}. Its proof is a direct adaptation of Theorem 1.4 of \cite{hardy2021clt}, which shows the analogous estimate in the circular setup. It is inspired by \cite{maida2014free} and based on a comparison between a configuration $\mathbf{x}_N = (x_1,\ldots,x_N)$ sampled with $\mathbb{P}_N^{V,P}$ and a regularized version $\mathbf{y}_N=(y_1,\ldots,y_N)$, which we describe here.

\begin{definition}
\label{def:regularized}
$y_1 \defi x_1$, and for $0\leq k \leq N-1$, $y_{k+1}\defi y_k + \max\{x_{k+1}-x_k, N^{-3}\}.$
\end{definition}

Note that the configuration $\mathbf{y}_N$ given by the previous definition satisfies $y_{k+1}-y_k\geq N^{-3}$, and $\mathbf{y}_N$ is close to $\mathbf{x}_N$ in the sense that 
\begin{equation}
\label{ineq:boundedDifferences}
    \sum_{k=1}^N |x_k-y_k| \leq\dfrac{1}{2N}\,.
\end{equation}
Indeed, by construction we have $|x_k-y_k| = y_k-x_k \leq (k-1)N^{-3}$, and we get the bound by summing these inequalities.\\
The key point of the proof of Proposition \ref{thm:concentration} is comparing the empirical measure $\hat{\mu}_N = \frac{1}{N}\sum_{i=1}^N \delta_{x_i},$ where $\mathbf{x}_N$ follows $\mathbb{P}_N^{V,P}$, to the regularized measure 
\begin{equation}
    \label{eq:tildeMu}
    \widetilde{\mu}_N\defi\lambda_{N^{-5}} \ast \frac{1}{N}\sum_{i=1}^N \delta_{y_i},
\end{equation}
\textit{i.e.} the convolution of $\lambda_{N^{-5}}$ and the empirical measure, where $\lambda_{N^{-5}}$ is the uniform measure on $[0,N^{-5}]$. The interest of introducing the measure $\widetilde{\mu}_N$ is that it is close to $\hat{\mu}_N$, while having a finite energy $\mathcal{E}_V^P(\widetilde{\mu}_N)$, given by \eqref{def:energy}. Finally, notice that the empirical measure doesn't change when reordering $x_1,\ldots, x_N$, and thus we do not lose in generality for our purposes in assuming that $x_1\leq \ldots \leq x_N$ in definition \ref{def:regularized}.

We now introduce a distance on $\mathcal{P}(\R)$ which is well-suited to our context.

\begin{definition}
For $\mu,\mu'\in\mathcal{P}(\R)$ we define the distance (possibly infinite) $D(\mu,\mu')$ by 
\begin{align}
\label{def:distanceMesures}
    D(\mu,\mu') &\defi \left( -\int_{\R^2} \ln|x-y|d(\mu-\mu')(x)d(\mu-\mu')(y) \right)^{1/2}\\
                &= \left( \int_0^{+\infty} \frac{1}{t}\big|\mcal{F}[\mu-\mu'](t) \big|^2 d t \right)^{1/2}.
\end{align}
where the Fourier transform of a signed measure $\nu$ is defined by $\mcal{F}[\nu](x)\defi\displaystyle\int_\R e^{- \ii tx}d(\mu-\mu')(x)$
\end{definition}
Let $f:\R\to \R$ with finite $1/2$ norm $\|f\|_{1/2}\defi\left(\int_\R |t|\left|\mcal{F}[f](t)\right|^2d t\right)^{1/2}$. By Plancherel theorem and Hölder inequality, for any $\mu,\mu'\in \mathcal{P}(\R)$, setting $\nu=\mu-\mu'$,
$$\left|\int_\R f(x)d\mu(x) -\int_\R f(x)d\mu'(x)\right|^2 = \left|\dfrac{1}{2\pi}\int_\R |t|^{1/2}\mcal{F}[f](t)\frac{
\overline{\mcal{F}[\nu](t)}}{|t|^{1/2}}d t \right|^2 \leq \dfrac{1}{2\pi ^2}\|f\|_{1/2}^2D^2(\mu,\mu').$$
Therefore the metric $d$ defined in \eqref{def:distance lip 1/2} is dominated by $D$:
\begin{equation}
    \label{ineq:distancesComparaison}
    d(\mu,\mu')\leq \dfrac{1}{\sqrt{2}\pi}D(\mu,\mu').
\end{equation}
The following lemma shows how the distance $D$ is related to the energy-functional $\mathcal{E}_V^P$ defined in \eqref{def:energy}, we will write $\mathcal{E}_V$ for simplicity.

\begin{lemma}
\label{lemme:diffEnergies}
We have for any absolutely continuous $\mu\in \mathcal{P}(\R)$ with finite energy $\mathcal{E}_V(\mu)$,
\begin{equation}
\label{eq:diffEnergies}
    \mathcal{E}_V(\mu)-\mathcal{E}_V(\mu_V) = PD^2(\mu,\mu_V) + \int_\R \ln\frac{d \mu}{d \mu_V}(x)d \mu(x)\, .
\end{equation}
\end{lemma}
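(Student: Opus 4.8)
The plan is to compute $\mathcal{E}_V(\mu)-\mathcal{E}_V(\mu_V)$ by directly expanding the definition \eqref{def:energy} of the energy functional and using the Euler--Lagrange equation \eqref{eq:mesure equilibre rappel} that characterizes $\mu_V$. Write $\nu=\mu-\mu_V$. First I would split the difference into the ``potential + entropy'' part and the ``logarithmic interaction'' part:
\begin{equation*}
\mathcal{E}_V(\mu)-\mathcal{E}_V(\mu_V)=\int_\R\Big(V+\ln\tfrac{d\mu}{dx}\Big)d\mu-\int_\R\Big(V+\ln\tfrac{d\mu_V}{dx}\Big)d\mu_V-P\iint_{\R^2}\ln|x-y|\,\big(d\mu(x)d\mu(y)-d\mu_V(x)d\mu_V(y)\big).
\end{equation*}
For the double integral, the standard bilinear trick gives $d\mu\,d\mu-d\mu_V\,d\mu_V=d\nu\,d\nu+2\,d\mu_V\,d\nu$, so the interaction part becomes $-P\iint\ln|x-y|\,d\nu(x)d\nu(y)-2P\iint\ln|x-y|\,d\mu_V(x)d\nu(y)$; by definition \eqref{def:distanceMesures} the first term is exactly $PD^2(\mu,\mu_V)$, which produces the announced term. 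The remaining job is to show that the cross term $-2P\iint\ln|x-y|\,d\mu_V(x)d\nu(y)$ combines with the potential/entropy difference to leave precisely $\int_\R\ln\frac{d\mu}{d\mu_V}\,d\mu$.

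For that, I would use \eqref{eq:mesure equilibre rappel}: for $\mu_V$-a.e.\ $x$, $-2P\int_\R\ln|x-y|\,d\mu_V(y)=\lambda_V^P-V(x)-\ln\rho_V(x)$, i.e.\ $-2PU^{\rho_V}$ is affine-related to $V+\ln\rho_V$. Integrating this identity against $d\nu$ (which is legitimate because both $\mu$ and $\mu_V$ integrate $V$ and $\ln(1+|\cdot|)$, so all terms are finite, and $\int d\nu=0$ kills the constant $\lambda_V^P$) yields
\begin{equation*}
-2P\iint_{\R^2}\ln|x-y|\,d\mu_V(x)d\nu(y)=-\int_\R\big(V(y)+\ln\rho_V(y)\big)d\nu(y).
\end{equation*}
Adding this to the potential/entropy difference $\int_\R(V+\ln\rho_\mu)d\mu-\int_\R(V+\ln\rho_V)d\mu_V$ (writing $\rho_\mu=d\mu/dx$): the $\int V\,d\mu_V$ terms cancel against $-\int V\,d\nu$, leaving $\int_\R V\,d\mu-\int_\R V\,d\mu$... more carefully, the $V$ contributions sum to $\int V\,d\mu-\int V\,d\mu_V-\int V\,d\nu=0$, and the entropy contributions sum to $\int_\R\ln\rho_\mu\,d\mu-\int_\R\ln\rho_V\,d\mu_V-\int_\R\ln\rho_V\,d\nu=\int_\R\ln\rho_\mu\,d\mu-\int_\R\ln\rho_V\,d\mu=\int_\R\ln\frac{d\mu}{d\mu_V}\,d\mu$, which is the claimed remainder. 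Collecting the pieces gives \eqref{eq:diffEnergies}.

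The main obstacle is purely one of integrability/justification rather than algebra: one must make sure every integral written above converges absolutely and that splitting the double integral and integrating the Euler--Lagrange identity against the signed measure $\nu$ are legitimate. This is where the hypotheses enter — $\mathcal{E}_V(\mu)<\infty$ forces $\int V\,d\mu<\infty$ and $\iint|\ln|x-y||\,d\mu\,d\mu<\infty$; the growth assumption on $V$ (Assumption \ref{assumptions}\ref{ass1}, giving $V(x)\gtrsim|x|$) together with the exponential decay of $\rho_V$ from Lemma \ref{lem:regularitedensite} ensures $\mu_V$ integrates $V$ and $\ln(1+|\cdot|)$, so $U^{\rho_V}$ is finite and bounded (Lemma \ref{lemme:boundedHilbert}); and $\int_\R\ln\frac{d\mu}{d\mu_V}\,d\mu=\mathrm{Ent}(\mu\mid\mu_V)\in[0,+\infty]$ is well-defined in $[0,\infty]$ by Jensen, with finiteness equivalent to finiteness of the left-hand side. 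Once these finiteness facts are in place, each manipulation is justified by Fubini and linearity, and the identity follows.
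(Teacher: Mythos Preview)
Your proof is correct and follows essentially the same approach as the paper: both expand $\mathcal{E}_V(\mu)-\mathcal{E}_V(\mu_V)$ directly and use the Euler--Lagrange equation \eqref{eq:mesure equilibre rappel} integrated against $\nu=\mu-\mu_V$ to eliminate the cross term, with only a cosmetic difference in the order of operations (you split the bilinear form first, the paper substitutes for $\int V\,d\nu$ first). Your additional discussion of the integrability issues is a welcome addition that the paper leaves implicit.
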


\begin{proof}[Proof of Lemma \ref{lemme:diffEnergies}]
    Subtracting $\mathcal{E}_V(\mu)-\mathcal{E}_V(\mu_V)$ we find
    \begin{multline}
        \label{eq:diffcarac}
        \mathcal{E}_V(\mu)-\mathcal{E}_V(\mu_V)=\int_\R V(x)d(\mu-\mu_V)(x) + \int_\R\ln \frac{d \mu}{d x}(x)d\mu(x) - \int_\R\ln \rho_V(x)d\mu_V(x)\\- P\iint_{\R^2} \ln|x-y|d \mu(x)d\mu(y) + P\iint \ln|x-y|d \mu_V(x)d\mu_V(y)\,.
    \end{multline}
    Now, if $\nu$ is a signed measure of mass zero, integrating \eqref{eq:mesure equilibre rappel} we get
    $$\int_\R V(x)d \nu(x) - 2P\iint_{\R^2} \ln|x-y|d \nu(x)d\mu_V(y) + \int_\R \ln\rho_V(x)d \nu(x) = 0\,.$$
    We take $\nu=\mu-\mu_V$, and get
    \begin{multline*}
        \int_\R V(x)d(\mu-\mu_V)(x) = 2P\iint_{\R^2} \ln|x-y|d \mu(x)d\mu_V(y) - 2P\iint_{\R^2} \ln|x-y|d \mu_V(x)d\mu_V(y) \\- \int_\R \ln\rho_V(x)d \mu(x) + \int_\R \ln\rho_V(x)d \mu_V(x)\, .
    \end{multline*}
    Plugging this last identity in \eqref{eq:diffcarac}, we find
    $$\mathcal{E}_V(\mu)-\mathcal{E}_V(\mu_V)=-P\iint_{\R^2}\ln|x-y|d\nu(x)d\nu(y)+\int_\R \ln\frac{d \mu}{d \mu_V}(x)d \mu(x)$$
    which establishes the result.
\end{proof}

\begin{proof}[Proof of Proposition \ref{thm:concentration}]
We first give a lower bound for the partition function $\mcal{Z}_N^{P}[V]$ \eqref{fonction partition} of $\mathbb{P}_N^{V,P}$. We rewrite it as
$$	\mcal{Z}_N^{P}[V]=\int_{\R^N}\exp\Bigg(\dfrac{2P}{N}\sum_{i<j}\ln|x_i-x_j|-\sum_{i=1}^N\Big[V(x_i)+\ln\rho_V(x_i)\Big]\Bigg)d\mu_V(x_1)\dots d\mu_V(x_N)\, ,$$
	and apply Jensen inequality to obtain:
	\begin{align*}
	\ln \mcal{Z}_N^{P}[V]&\geq\int_{\R^N}\Bigg(\dfrac{2P}{N}\sum_{i<j}\ln|x_i-x_j|-\sum_{i=1}^N\Big[V(x_i)+\ln\rho_V(x_i)\Big]\Bigg)d\mu_V(x_1)\dots d\mu_V(x_N)\\&\geq P(N-1)\displaystyle\iint_{\R^2}\ln|x-y|d\mu_V(x)d\mu_V(y)-N\int_\R\Big[V+\ln\rho_V\Big](x)d\mu_V(x)
	\\&\geq -N\mathcal{E}_V^P\big[\mu_V\big]-P\iint_{\R^2}\ln|x-y|d\mu_V(x)d\mu_V(y).
	\end{align*}
	Using this estimate and the fact that for $1\leq i,j \leq N$ we have $|x_i-x_j|\leq |y_i-y_j|$, with $\y_N=(y_1,\ldots,y_N)$ of definition \ref{def:regularized}, we deduce the bound on the density of probability
	\begin{equation}
	\label{ineq:boundDensity}
	   \frac{d\mathbb{P}^{V,P}_N}{d\x}(x_1,\ldots,x_N)\leq e^{N\mathcal{E}_V(\mu_V)+P\iint_{\R^2} \ln|x-y|d\mu_V(x)d\mu_V(y)+\frac{P}{N}\sum_{i\neq j}\ln|y_i-y_j|- \sum_{i=1}^N V(x_i)}\, . 
	\end{equation}
	Recalling \eqref{eq:tildeMu}, we now show the following estimate
	\begin{equation}
	\label{ineq:estimeeEnergie}
	    \sum_{i\neq j}\ln|y_i-y_j|\leq2+N^2\iint_{\R^2} \ln|x-y|d\widetilde{\mu}_N(x)d\widetilde{\mu}_N(y) + 5N\ln N + \frac{3}{2}N\, .
	\end{equation}
	Let $i\neq j$ and $u,v\in [0,N^{-5}]$. Since for $x\neq 0$ and $|h|\leq \frac{|x|}{2}$, we have $\big|\ln|x+h| - \ln|x|\big|\leq \frac{2|h|}{|x|}$, we deduce
	$$\big|\ln|y_i-y_j +u- v|-\ln|y_i-y_j|\big| \leq \frac{2|u-v|}{|y_i-y_j|} \leq \frac{2N^{-5}}{N^{-3}} =\dfrac{2}{N^2}.$$
Thus, summing over $i\neq j$ and integrating with respect to $u$ and $v$, we get
	
	\begin{align*}
	    \sum_{i\neq j}\ln|y_i-y_j|&\leq 2+ \sum_{i\neq j}\iint_{\R^2}\ln|y_i-y_j+u-v| d\lambda_{N^{-5}}(u)d\lambda_{N^{-5}}(v) \\
	&= 2+ N^2\iint_{\R^2} \ln|x-y|d \widetilde{\mu}_N(x)d\widetilde{\mu}_N(y) - N\iint_{\R^2} \ln|u-v|d\lambda_{N^{-5}}(u)d\lambda_{N^{-5}}(v)\, .
	\end{align*}
	The last integral is equal to $ -\frac{3}{2} - 5\ln N$, so we deduce \eqref{ineq:estimeeEnergie}. We now combine \eqref{ineq:boundDensity} and \eqref{ineq:estimeeEnergie}. Recall \eqref{def:energy} and set
	$$c_N=P\left(\displaystyle\iint_{\R^2} \ln|x-y|d\mu_V(x)d\mu_V(y) + 3/2+2/N\right)\,.$$
 Then we get
	\begin{align*}
	   \frac{d\mathbb{P}^{V,P}_N}{d\x}(x_1,\ldots,x_N) &\leq   e^{c_N+5P\ln N}\exp{\left[N\left\{ \mathcal{E}_V(\mu_V)-\mathcal{E}_V(\widetilde{\mu}_N) + \int_\R \left(V + \ln\frac{d \widetilde{\mu}_N}{d x}\right)(x)d\widetilde{\mu}_N(x)\right\} - \sum_{i=1}^N V(x_i) \right]} \\
	   &= e^{c_N+5P\ln N}\exp{\left[-NPD^2(\widetilde{\mu}_N,\mu_V) + N\int_\R\left(V+\ln\rho_V\right)(x)d \widetilde{\mu}_N(x) -\sum_{i=1}^N V(x_i)\right]}\,
	\end{align*}
	where we used equation \eqref{eq:diffEnergies} in the last equality. Using again equation \eqref{eq:mesure equilibre rappel} we then see that the density $\dfrac{d\mathbb{P}^{V,P}_N}{d\x}(x_1,\ldots,x_N)$ is bounded by
	$$ e^{c_N+ 5P\ln N}\exp{\left[-NPD^2(\widetilde{\mu}_N,\mu_V)+2PN\iint_{\R^2} \ln|x-y|d(\widetilde{\mu}_N-\hat{\mu}_N)(x)d \mu_V(y) \right]}\prod_{i=1}^N\rho_V(x_i)\, .$$
Recalling \eqref{def:potentielLogarithmique}, we used that $\displaystyle\iint_{\R^2} \ln|x-y|d(\widetilde{\mu}_N-\hat{\mu}_N)(x)d \mu_V(y) = -\int U^{\rho_V}(x)d(\widetilde{\mu}_N-\hat{\mu}_N)(x)$. 
	As a consequence of the bound on the density $\dfrac{d\mathbb{P}^{V,P}_N}{d\x}(x_1,\ldots,x_N)$ we established, we have for all $r>0$
	\begin{equation}
	\label{ineq intermediaire}
	    \mathbb{P}^{V,P}_N\left( D^2(\widetilde{\mu}_N,\mu_V)>r \right) \leq e^{-NPr+c_N+5P\ln N}\int_{\R^N}\exp\left\{ -2PN\int_\R U^{\rho_V}(x)d(\widetilde{\mu}_N-\hat{\mu}_N)(x)\right\}\prod_{i=1}^N\rho_V(x_i)d x_i\, .
	\end{equation}
	Next, we show that $-N\int_\R U^{\rho_V}d(\widetilde{\mu}_N-\hat{\mu}_N)$ is bounded. By Lemma \ref{lemme:boundedHilbert}, $U^{\rho_V}$ is differentiable with bounded derivative $\mathcal{H}[\rho_V]$ on $\R$. As a consequence,
	\begin{align*}
	    \left|N\int_\R U^{\rho_V}d(\widetilde{\mu}_N-\hat{\mu}_N)\right| &\leq \sum_{i=1}^N \int_\R\left|U^{\rho_V}(y_i+u)-U^{\rho_V}(x_i)\right|d \lambda_{N^{-5}}(u)\\
	                &\leq \|\mathcal{H}[\rho_V]\|_{\infty}\left(\sum_{i=1}^N |y_i-x_i| + N\int_\R ud \lambda_{N^{-5}}(u) \right)\\
	                &\leq \|\mathcal{H}[\rho_V]\|_{\infty} \Big(\dfrac{1}{2N} + N^{-4}/2\Big),
	\end{align*}
	where we used \eqref{ineq:boundedDifferences} in the last inequality. Therefore, we deduce from \eqref{ineq intermediaire}
	\begin{equation}
	\label{ineq:D^2}
	    \mathbb{P}^{V,P}_N\left(D^2(\widetilde{\mu}_N,\mu_V)>r\right) \leq e^{-NPr +c_N +5P\ln N + \frac{2P}{N}\|\mathcal{H}[\rho_V]\|_{\infty}} = e^{-NPr + 5P\ln N + K_N}
	\end{equation}
	with $K_N\defi c_N+\dfrac{2P}{N}\|\mathcal{H}\left[\rho_V\right]\|_{\infty}$. Since $(c_N)_N$ is bounded, so is $(K_N)_N$.\\
	Finally, let $f$ be a Lipschitz bounded function with $\|f\|_{\text{Lip}}\leq 1$, then, we have (as we did for $U^{\rho_V}$)
	$$\left| \int_\R f(x)d \hat{\mu}_N(x) - \int_\R f(x)d \widetilde{\mu}_N(x) \right| \leq N^{-2}\,. $$
	Thus by  \eqref{ineq:distancesComparaison}
	$$d(\hat{\mu}_N,\mu_V)\leq d(\hat{\mu}_N,\widetilde{\mu}_N) + d(\widetilde{\mu}_N,\mu_V)\leq N^{-2} + \frac{1}{\sqrt{2}\pi}D(\widetilde{\mu}_N,\mu_V)\,,$$
	and for any $N$ such that $r-N^{-2}\geq r/2$ (in particular $r-N^{-2}>0$) we get
	\begin{align*}
	    \mathbb{P}^{V,P}_N\left( d(\hat{\mu}_N,\mu_V) >r \right) \leq \mathbb{P}^{V,P}_N\left( \frac{1}{2\pi^2}D^2(\widetilde{\mu}_N,\mu_V) >(r-N^{-2})^2 \right) &\leq \mathbb{P}^{V,P}_N\left( \frac{1}{2\pi^2}D^2(\widetilde{\mu}_N,\mu_V) >r^2/4 \right)\, ,
	    \end{align*}
	    and the last term is bounded by $e^{-Nr^2\frac{P\pi^2}{2} + 5P\ln N + K}$ for some $K$ large enough, which concludes the proof.
\end{proof}

\subsection{Bound on the anistropy term}

As a consequence of Proposition \ref{thm:concentration}, we are able to control the anisotropy term, whose definition we recall here for convenience,
\begin{equation}
    \label{def:zeta}
    \zeta_N(\phi) = \iint_{\R^2}\frac{\phi(x)-\phi(y)}{x-y}d(\hat{\mu}_N-\mu_V)(x)d(\hat{\mu}_N-\mu_V)(y)
\end{equation}
for a certain class of test functions $\phi$.

\begin{corollary}
\label{cor:controleZeta}
There exists $C,K>0$ such that for all $\phi\in\mcal{C}^2(\R)\cap H^2(\R)$ with bounded second derivative, we have for $\varepsilon>0$ and $N$ large enough, $$\P_N^{V,P}\left(\sqrt{N}|\zeta_N(\phi)|\leq N^{-1/2+\varepsilon}\right) \geq 1 - \exp\left\{-\frac{ PN^\varepsilon}{2C\|\psi\|_{H^2(\R)}} + 5P\ln N + K\right\}\,$$
with $N_2(\phi)= \|\phi'\|_{L^2(\R)}+\|\phi''\|_{L^2(\R)}$.
\end{corollary}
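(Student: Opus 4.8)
The plan is to express $\zeta_N(\phi)$ as a double integral against $d(\hat\mu_N - \mu_V)$ of the divided-difference kernel $\Psi_\phi(x,y) = \frac{\phi(x)-\phi(y)}{x-y}$, symmetrize it, and then bound it using the distance $d$ from \eqref{def:distance lip 1/2} together with the concentration estimate of Proposition \ref{thm:concentration}. The key analytic input is that the function $y\mapsto \Psi_\phi(x,y)$, and in fact the "diagonal" behavior of $\Psi_\phi$, is controlled by the $\mathcal{C}^2$/$H^2$ norms of $\phi$: writing $\Psi_\phi(x,y) = \int_0^1 \phi'(x + t(y-x))\,dt$ shows that $\Psi_\phi$ is Lipschitz in each variable with constant bounded by $\|\phi''\|_\infty$, and bounded by $\|\phi'\|_\infty$. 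First I would fix $x$ and view $g_x := \Psi_\phi(x,\cdot)$ as a test function; I need both $\|g_x\|_{\mathrm{Lip}}$ and $\|g_x\|_{1/2}$ under control uniformly in $x$. The Lipschitz bound is immediate. For the $1/2$-norm, I would use that $\|g\|_{1/2} \lesssim \|g\|_{H^1(\R)} = \|g\|_{L^2} + \|g'\|_{L^2}$ (since $\int |t||\mathcal F[g](t)|^2 dt \leq \int (1+t^2)|\mathcal F[g](t)|^2 dt$ up to constants), and then bound $\|g_x\|_{H^1(\R)}$ in terms of $\|\phi'\|_{L^2(\R)} + \|\phi''\|_{L^2(\R)} = N_2(\phi)$, uniformly in $x$, using the integral representation of $\Psi_\phi$ and its $y$-derivative $\partial_y \Psi_\phi(x,y) = \int_0^1 t\,\phi''(x+t(y-x))\,dt$ together with a change of variables $s = x + t(y-x)$ to convert $\int_\R |\partial_y\Psi_\phi(x,y)|^2 dy$ into an integral of $|\phi''|^2$ — this is where the constant $C$ and the norm $\|\psi\|_{H^2(\R)}$ (i.e. $N_2(\phi)$) enter.

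Granting a uniform bound $\|g_x\|_{\mathrm{Lip}} \leq C' N_2(\phi)$ and $\|g_x\|_{1/2} \leq C' N_2(\phi)$, I would write
\begin{equation*}
\zeta_N(\phi) = \int_\R \left( \int_\R g_x(y)\, d(\hat\mu_N - \mu_V)(y) \right) d(\hat\mu_N - \mu_V)(x),
\end{equation*}
and apply the defining property of $d$ to the inner integral: $\left| \int_\R g_x\, d(\hat\mu_N - \mu_V) \right| \leq C' N_2(\phi)\, d(\hat\mu_N, \mu_V)$. This shows the inner integral, as a function of $x$, is bounded by $C' N_2(\phi)\, d(\hat\mu_N,\mu_V)$; but I also need it to be an admissible test function in $x$ to apply $d$ a second time. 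One checks, again via the integral representation, that $x \mapsto \int_\R g_x(y)\, d(\hat\mu_N-\mu_V)(y)$ has Lipschitz constant and $1/2$-norm (in $x$) also controlled by $C' N_2(\phi)$ (with an extra harmless factor, since $\hat\mu_N - \mu_V$ is a difference of probability measures and $\partial_x g_x$, $\partial_x\partial_y g_x$ are controlled by $\phi''$, $\phi^{(3)}$ — here I should be careful and instead symmetrize and use the bound directly, or simply note $|\zeta_N(\phi)| \leq C'' N_2(\phi)^2\, d(\hat\mu_N,\mu_V)^2$ by two applications of the Plancherel–Hölder inequality underlying \eqref{ineq:distancesComparaison}, treating $\Psi_\phi$ as a kernel with finite mixed $1/2$-norm). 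The cleaner route is: by Plancherel, $\zeta_N(\phi) = \frac{1}{(2\pi)^2}\iint \widehat{\Psi_\phi}(s,t)\, \overline{\mathcal F[\hat\mu_N-\mu_V](s)}\, \mathcal F[\hat\mu_N - \mu_V](t)\, ds\, dt$ and a Cauchy–Schwarz in the weight $|s|^{1/2}|t|^{1/2}$ bounds this by (a multiple of) $\big(\iint \frac{|\widehat{\Psi_\phi}(s,t)|^2}{|s||t|}\, ds\, dt\big)^{1/2} \cdot D(\hat\mu_N,\mu_V)^2$, but this requires a double-$1/2$-norm of $\Psi_\phi$ — it is simpler to iterate the single-variable estimate. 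Either way I obtain
\begin{equation*}
|\zeta_N(\phi)| \leq C\, N_2(\phi)^2\, d(\hat\mu_N,\mu_V)^2
\end{equation*}
for a universal constant $C$ (absorbing factors of $\pi$), where I allow $C$ to depend also on $\|\phi\|_\infty$-type quantities only through $N_2(\phi)$ if needed — but the $H^2(\R)$ assumption makes all of these finite, and the stated corollary writes $\|\psi\|_{H^2(\R)}$ for this combined norm.

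Finally I would combine with Proposition \ref{thm:concentration}. On the event $\{\sqrt N |\zeta_N(\phi)| > N^{-1/2+\varepsilon}\}$ we have $d(\hat\mu_N,\mu_V)^2 > \frac{N^{-1/2+\varepsilon}}{\sqrt N\, C\, N_2(\phi)^2} = \frac{N^{\varepsilon-1}}{C N_2(\phi)^2}$, i.e. $d(\hat\mu_N,\mu_V) > r_N$ with $r_N^2 = \frac{N^{\varepsilon-1}}{C N_2(\phi)^2}$, so $N r_N^2 = \frac{N^{\varepsilon}}{C N_2(\phi)^2}$. Plugging $r = r_N$ into \eqref{ineq:concentration} gives
\begin{equation*}
\P_N^{V,P}\!\left(\sqrt N |\zeta_N(\phi)| > N^{-1/2+\varepsilon}\right) \leq \exp\!\left(-\frac{P\pi^2}{2}\cdot\frac{N^\varepsilon}{C N_2(\phi)^2} + 5P\ln N + K\right),
\end{equation*}
which is the claimed bound after renaming constants (absorbing $\pi^2/2$ into $C$ and relabeling $\|\psi\|_{H^2(\R)} := N_2(\phi)^2$ as in the statement; one should double-check whether the corollary intends $N_2(\phi)$ or $N_2(\phi)^2$ in the exponent and adjust $C$ accordingly). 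The main obstacle is the uniform-in-$x$ control of $\|\Psi_\phi(x,\cdot)\|_{1/2}$ by $N_2(\phi)$ — i.e. showing the divided-difference operation is bounded from $H^2(\R)$ (or $\mathcal C^2 \cap H^2$) into the space of functions with uniformly bounded $1/2$-norm; once that functional-analytic estimate is in hand, the rest is a direct substitution into the concentration inequality. A secondary technical point is justifying the Plancherel/Fubini manipulations for the signed measure $\hat\mu_N - \mu_V$, which is legitimate because $\mu_V$ has a Schwartz-like (exponentially decaying, $\mathcal C^3$) density by Lemma \ref{lem:regularitedensite} and $\hat\mu_N$ is a finite atomic measure, so all Fourier transforms involved are bounded continuous functions and the kernel $\Psi_\phi$ decays suitably when $\phi \in H^2(\R)$.
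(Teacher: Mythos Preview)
Your approach has a genuine gap, and it is precisely the step you yourself flag as ``the main obstacle.'' Neither of your two routes to the quadratic bound $|\zeta_N(\phi)|\leq C\,d(\hat\mu_N,\mu_V)^2$ goes through with the assumed regularity.

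For the Plancherel/Cauchy--Schwarz route, note that $\hat\mu_N$ is purely atomic, so $|\mathcal F[\hat\mu_N-\mu_V](t)|$ does not decay at infinity and $\int_\R |t|\,|\mathcal F[\hat\mu_N-\mu_V](t)|^2\,dt=+\infty$; hence any estimate that produces a factor $D(\hat\mu_N,\mu_V)^2$ is vacuous. For the iteration route, after the first application you set $h(x)=\int g_x\,d(\hat\mu_N-\mu_V)$ and need $\|h\|_{\mathrm{Lip}}$ and $\|h\|_{1/2}$ to be controlled by $d(\hat\mu_N,\mu_V)$. But $h'(x)=\int \partial_x\Psi_\phi(x,y)\,d(\hat\mu_N-\mu_V)(y)$ with $\partial_x\Psi_\phi(x,y)=\int_0^1(1-t)\phi''(x+t(y-x))\,dt$, and to bound $|h'(x)|$ by a multiple of $d(\hat\mu_N,\mu_V)$ you would need $y\mapsto\partial_x\Psi_\phi(x,y)$ to be Lipschitz uniformly in $x$, which requires a bound on $\phi'''$---not available under $\phi\in\mathcal C^2\cap H^2$. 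If instead you bound $|h'(x)|$ crudely by total variation, you only get $|\zeta_N(\phi)|\lesssim d(\hat\mu_N,\mu_V)$, which is linear and insufficient.

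The paper resolves both issues by first passing to the regularized empirical measure $\widetilde\mu_N$ from \eqref{eq:tildeMu}: one has the deterministic bound $|\zeta_N(\phi)-\widetilde\zeta_N(\phi)|\leq 2N^{-2}\|\phi''\|_\infty$, and $\widetilde\mu_N$ is absolutely continuous so $D(\widetilde\mu_N,\mu_V)<\infty$. Then, instead of a 2D kernel argument, the paper uses the one-dimensional Fourier inversion of $\phi$ together with the factorization $\frac{e^{itx}-e^{ity}}{x-y}=it\int_0^1 e^{i\alpha tx+i(1-\alpha)ty}\,d\alpha$ to write $\widetilde\zeta_N(\phi)$ as an integral over $t$ and $\alpha$ of products $\mathcal F[\widetilde\mu_N-\mu_V](\alpha t)\,\mathcal F[\widetilde\mu_N-\mu_V]((1-\alpha)t)$. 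A Cauchy--Schwarz in $\alpha$ followed by the change of variables $u=\alpha t$ produces $D^2(\widetilde\mu_N,\mu_V)$ directly and yields $|\widetilde\zeta_N(\phi)|\leq C\,N_2(\phi)\,D^2(\widetilde\mu_N,\mu_V)$, linear in $N_2(\phi)$. One then concludes by applying the intermediate concentration estimate \eqref{ineq:D^2} for $D^2(\widetilde\mu_N,\mu_V)$ (rather than Proposition~\ref{thm:concentration} for $d$), which also explains why the exponent in the statement carries $\|\phi\|_{H^2(\R)}$ to the first power.
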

\begin{proof}
We follow the proof given in \cite[Cor. 4.16]{guionnet2019asymptotics} and adapt it to our setting. Let us denote by $\widetilde{\zeta_N}(\phi)$ the quantity
$$\iint_{\R^2}\frac{\phi(x)-\phi(y)}{x-y}d(\widetilde{\mu}_N-\mu_V)(x)d(\widetilde{\mu}_N-\mu_V)(y)\,.$$
We have the almost sure inequality, by a Taylor estimate
\begin{equation}
\label{ineq:estimationTaylor}
    |\zeta_N(\phi)-\widetilde{\zeta_N}(\phi)|\leq 2N^{-2}\|\phi''\|_{\infty}\,.
\end{equation}
Thus, for any $\delta>0$, 

\begin{align*}
    \P_N^{V,P}\left( |\zeta_N(\phi)|>\delta \right) &\leq \P_N^{V,P}\left( |\zeta_N(\phi)-\widetilde{\zeta_N}(\phi)|>\delta/2 \right) + \P_N^{V,P}\left( |\widetilde{\zeta_N}(\phi)|>\delta/2 \right)\\
    &\leq \P_N^{V,P}\left(2N^{-2}\|\phi''\|_{\infty}>\delta/2\right)+\P_N^{V,P}\left( |\widetilde{\zeta_N}(\phi)|>\delta/2 \right)\,,
\end{align*}
where the first term of the right-hand side is either $0$ or $1$. With $\delta=N^{-1+\varepsilon}$, $\varepsilon>0$, it is zero for $N$ large enough. For such a choice of $\delta$, and for $N$ large enough, $$\P_N^{V,P}\left( |\zeta_N(\phi)|>N^{-1+\varepsilon} \right)\leq \P_N^{V,P}\left( |\widetilde{\zeta_N}(\phi)|>\frac{1}{2}N^{-1+\varepsilon} \right)\,. $$
We next show that, for some $C>0$ independent of $\phi$, we have 
\begin{equation}
    \label{ineq:controleZetaTilde}
    |\widetilde{\zeta_N}(\phi)|\leq CD^2(\widetilde{\mu}_N,\mu_V)\|\phi\|_{H^2(\R)}\,.
\end{equation}
We begin by showing this inequality for $\psi \in \mathcal{S}(\R)$.
By using the inverse Fourier transform we have
\begin{align*}
    \widetilde{\zeta}_N(\psi)&=\dfrac{1}{2\pi}\iint_{\R^2} \dfrac{\int_\R d t\mcal{F}[\psi](t)e^{\ii tx}-\int_\R d t\mcal{F}[\psi](t)e^{\ii ty}}{x-y}d\big(\widetilde{\mu}_N-\mu_V\big)(x)d\big(\widetilde{\mu}_N-\mu_V\big)(y)
    \\&=\dfrac{1}{2\pi}\int_\R dt\ii t\mcal{F}[\psi](t)\iint_{\R^2} e^{\ii ty}\dfrac{e^{\ii t(x-y)}-1}{\ii t(x-y)}d\big(\widetilde{\mu}_N-\mu_V\big)(x)d\big(\widetilde{\mu}_N-\mu_V\big)(y)
    \\&=\dfrac{1}{2\pi}\int_\R dt\ii t\mcal{F}[\psi](t)\iint_{\R^2} e^{\ii ty}\int_0^1d\alpha e^{\ii\alpha t(x-y)}d\big(\widetilde{\mu}_N-\mu_V\big)(x)d\big(\widetilde{\mu}_N-\mu_V\big)(y)
    \\&=\dfrac{1}{2\pi}\int_\R dt\ii t\mcal{F}[\psi](t)\int_0^1d\alpha\int_\R e^{\ii\alpha tx}d\big(\widetilde{\mu}_N-\mu_V\big)(x)\int_\R e^{\ii (1-\alpha) ty}d\big(\widetilde{\mu}_N-\mu_V\big)(y)
\end{align*}

We then apply in order the triangular inequality, Cauchy-Schwarz inequality, a change of variable and the fact that $\left|\mcal{F}\left[\widetilde{\mu}_N-\mu_V\right]\right|^2$ is an even function.
\begin{align*}
    |\widetilde{\zeta}_N(\psi)|&\leq\dfrac{1}{2\pi}\int_\R dt\left|t\mcal{F}[\psi](t)\right|\int_0^1 d\alpha \left|\mcal{F}\left[\widetilde{\mu}_N-\mu_V\right](\alpha t)\right|.\left|\mcal{F}\left[\widetilde{\mu}_N-\mu_V\right]\big((1-\alpha) t\big)\right|
    \\&\leq\dfrac{1}{2\pi}\int_\R dt\left|t\mcal{F}[\psi](t)\right|\Big(\int_0^1 d\alpha \left|\mcal{F}\left[\widetilde{\mu}_N-\mu_V\right](\alpha t)\right|^2\Big)^{\frac{1}{2}}\Big(\int_0^1 d\alpha\left|\mcal{F}\left[\widetilde{\mu}_N-\mu_V\right]\big((1-\alpha) t\big)\right|^2\Big)^{\frac12}
    \\&\leq\dfrac{1}{2\pi}\int_\R dt\left|t\mcal{F}[\psi](t)\right|\int_0^1 d\alpha \left|\mcal{F}\left[\widetilde{\mu}_N-\mu_V\right](\alpha t)\right|^2
    \\&\leq\dfrac{1}{2\pi}\int_0^{+\infty} dt\left|t\mcal{F}[\psi](t)\right|\int_0^1 \dfrac{td\alpha}{t\alpha} \left|\mcal{F}\left[\widetilde{\mu}_N-\mu_V\right](\alpha t)\right|^2+\dfrac{1}{2\pi}\int_{-\infty}^{0} dt\left|t\mcal{F}[\phi](t)\right|\int_0^1 \dfrac{-td\alpha}{-t\alpha} \left|\mcal{F}\left[\widetilde{\mu}_N-\mu_V\right](\alpha t)\right|^2
    \\&\leq\dfrac{1}{2\pi}\int_\R dt \left|t\mcal{F}[\psi](t)\right|D^2(\widetilde{\mu}_N,\mu_V)
    \\&\leq\dfrac{1}{2\pi}\Big(\int_\R dt \left|t\mcal{F}[\psi](t)\right|^2(1+t^2)\Big)^{\frac12}\Big(\int_\R\dfrac{dt}{1+t^2}\Big)^{\frac12}D^2(\widetilde{\mu}_N,\mu_V)
    \\&\leq\dfrac{1}{2\sqrt{\pi}} D^2(\widetilde{\mu}_N,\mu_V)N_2(\psi)
    \\&\leq\dfrac{1}{2\sqrt{\pi}} D^2(\widetilde{\mu}_N,\mu_V)\|\psi\|_{H^2(\R)}
\end{align*}
By density of $\mcal{S}(\R)$ in $H^2(\R)$, and since $\widetilde{\zeta}_N: \Big(H^2(\R),\|\cdot\|_{H^2(\R)}\Big)\rightarrow\R$ is continuous, the inequality still holds for $\phi$. Thus, using equation \eqref{ineq:D^2},
$$\P_N^{V,P}\left( |\widetilde{\zeta_N}(\phi)|>\frac{1}{2}N^{-1+\varepsilon} \right)\leq \P_N^{V,P}\left( D^2(\widetilde{\mu}_N,\mu_V)>\frac{N^{-1+\varepsilon}}{2C\|\phi\|_{H^2(\R)}}\right) \leq \exp\left\{-P\frac{N^\varepsilon}{2C\|\phi\|_{H^2(\R)}} + 5P\ln N + K\right\}\,,$$
which concludes the proof.
\end{proof}

\section{Localization of the edge of a configuration}
\label{section:localization}
In \cite[Theorem 1.8, Theorem 3.4]{lambert2021poisson}, the author was able to control the edge (\textit{i.e} the minimum and the maximum) of a typical configuration $(x_1,\ldots,x_N)$ distributed according to $\P^{V,P}_N$, by showing that the random measure $$ \Xi_N \defi \sum_{j=1}^N \delta_{\varphi_N^{-1}(x_j)} $$
converges in distribution towards a Poisson point process for a function $\varphi_N$ which takes the form
$$ \varphi_N(x)\defi E_N + \alpha_N^{-1}x\,.$$
Before being more precise on the construction of $(E_N)_N$ and $(\alpha_N)_N$, we explain, following \cite{lambert2021poisson}, how one can use this convergence to localize the edge of a typical configuration $(x_1,\ldots, x_N)$. Let us assume for a moment that $\Xi_N$ converges towards a Poisson point process with intensity $\theta(x)=e^{-x}$, with $E_N\to +\infty$. In particular, the random variable
$$ \Xi_N(t,+\infty)$$
converges in distribution towards a Poisson random variable with mean $\int_t^{+\infty} e^{-x}d x$. Combined with the equalities
\begin{align*}
    \P^{V,P}_N\bigg( \Xi_N(t,+\infty) =0 \bigg)&=\P^{V,P}_N\bigg( \forall\ 1\leq j \leq N,\ \varphi_N^{-1}(x_j) = \alpha_N(x_j-E_N) \leq t \bigg)\\
    &=\P^{V,P}_N\bigg( \alpha_N\left(\max_{1\leq j\leq N} x_j - E_N\right) \leq t \bigg)\,,
\end{align*}
we deduce that for all $t\in \R$
$$ \P^{V,P}_N\left( \alpha_N\left(\max_{1\leq j\leq N} x_j - E_N\right) \leq t \right)\underset{N\rightarrow\infty}{\longrightarrow} \exp(-e^{-t})\,.$$
Therefore, the random variable 
$$\alpha_N\left(\max_{1\leq j\leq N} x_j - E_N\right)$$
converges in distribution to the Gumbel law, showing that the maximum of a configuration is of order $E_N$. Furthermore, as will be clear from the construction of $\alpha_N$ and $E_N$, $\alpha_N$ is positive, and goes to infinity as $N$ goes to infinity.

Replacing in the previous analysis $\theta(x)=e^{x}$ and $E_N\to -\infty$, we would have deduced in the same fashion that $$\alpha_N\left(\min_{1\leq j\leq N} x_j - E_N\right)$$ converges in law.

With the above notations, we can apply \cite[Theorem 3.4]{lambert2021poisson} to our context.

\begin{theorem}
\label{thm:poissonLambert}
Let $v=\pm$. There exists $(E_N^v)_N,\,(\alpha_N^v)_N$ sequences of real numbers with $|E_N^v|\to +\infty$, $\alpha_N^v>0$ for large enough $N$, satisfying $V'(E_N^v)=\alpha_N^v v$, such that:
\begin{itemize}
    \item[a)] $\dfrac{Ne^{-V(E_N^v)+2P\ln |E_N^v|+\lambda_V^P}}{\alpha_N^v} \underset{N\rightarrow\infty}{\longrightarrow} 1$ (recall $\lambda_V^P$ is defined through equation \eqref{eq:mesure equilibre}),
    \item[b)] $\frac{\ln(\alpha_N^v)}{N} \underset{N\rightarrow\infty}{\longrightarrow} 0$ and $\alpha_N^v|E_N^v|\underset{N\rightarrow\infty}{\longrightarrow} +\infty$ ,
    \item[c)] For all compact $K\subset \R$, 
    $$ (\alpha_N^v)^{-2}\sup_{x\in K}\left| V''(\varphi_N(x)) \right| \underset{N\rightarrow\infty}{\longrightarrow} 0\,.$$
\end{itemize}
As a consequence, the random measure $\Xi_N$ converges in distribution as $N\to \infty$ to a Poisson point process with intensity $\theta(x)=e^{-vx}$.
\end{theorem}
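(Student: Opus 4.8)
The plan is to construct the edge sequences $(E_N^v)_N$ and $(\alpha_N^v)_N$ explicitly, check the three asymptotic estimates \textit{a)}, \textit{b)}, \textit{c)}, and then feed them into \cite[Theorem 3.4]{lambert2021poisson}, whose remaining hypotheses will be seen to reduce to our Assumptions \ref{assumptions} and to the regularity and decay of $\rho_V$ already established. The backbone is the identity $\rho_V(x)=\exp\!\big(\lambda_V^P-V(x)-2PU^{\rho_V}(x)\big)$ coming from \eqref{eq:mesure equilibre rappel}, combined with $U^{\rho_V}(x)=-\ln|x|+O(|x|^{-1})$ (Lemma \ref{lem:logpotential}), $\mcal{H}[\rho_V](x)\sim -x^{-1}$ (Lemma \ref{lemme:boundedHilbert}), and the exponential decay and $\mcal{C}^3$-regularity of $\rho_V$ (Lemma \ref{lem:regularitedensite}).

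Treat $v=+$ (the case $v=-$ is symmetric, with $E_N^-\to-\infty$). By Assumption \ref{assumptions}\ref{ass1}, $V'$ is eventually positive and diverges to $+\infty$; on a neighbourhood of $+\infty$ I would set $h_N(x)=N\rho_V(x)/V'(x)$ and observe that $h_N(x_0)\to+\infty$ for fixed $x_0$ (as $\rho_V>0$ on all of $\R$) while $h_N(x)\to0$ as $x\to+\infty$ (exponential decay of $\rho_V$ against $V'\to\infty$). The intermediate value theorem then yields, for $N$ large, a point $E_N^+$ with $N\rho_V(E_N^+)=V'(E_N^+)$ — the heuristic condition that $O(1)$ particles lie beyond $E_N^+$ — and one sets $\alpha_N^+:=V'(E_N^+)>0$. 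A compactness argument ($\rho_V$ bounded below, $V'$ bounded above on compacts) forces $E_N^+\to+\infty$, and taking logarithms in $N\rho_V(E_N^+)=\alpha_N^+$ gives
\[
 V(E_N^+)=\ln N+\lambda_V^P+2P\ln|E_N^+|-\ln V'(E_N^+)+O(|E_N^+|^{-1}).
\]
Here Assumption \ref{assumptions}\ref{ass4} enters: $V''/V'=O(1)$ gives $\ln V'(x)=O(|x|)=o(V(x))$ since $V(x)/|x|\to\infty$, and $\ln|x|=o(V(x))$ as well; hence $V(E_N^+)\sim\ln N$ and $|E_N^+|=o(\ln N)$.

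Granting this construction, the three estimates follow quickly. Estimate \textit{a)} is essentially built in: dividing $N\rho_V(E_N^+)=\alpha_N^+$ by $Ne^{-V(E_N^+)+2P\ln|E_N^+|+\lambda_V^P}$ leaves the factor $e^{O(|E_N^+|^{-1})}\to1$. For \textit{b)}, $\ln\alpha_N^+=\ln V'(E_N^+)=O(|E_N^+|)=O(\ln N)$, so $N^{-1}\ln\alpha_N^+\to0$, while $\alpha_N^+|E_N^+|=V'(E_N^+)|E_N^+|\to+\infty$. For \textit{c)}, since $\alpha_N^+\to+\infty$ one has $\varphi_N(x)=E_N^++(\alpha_N^+)^{-1}x\in[E_N^+-1,E_N^++1]$ for $x$ in a fixed compact $K$ and $N$ large, so $(\alpha_N^+)^{-2}\sup_{x\in K}|V''(\varphi_N(x))|\leq V'(E_N^+)^{-2}\sup_{|u|\leq1}|V''(E_N^++u)|$, which vanishes by Assumption \ref{assumptions}\ref{ass3} applied to $x_N=E_N^+$.

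It then remains to verify that these sequences, together with the regularity, positivity and decay of $\rho_V$ recorded above, place us within the scope of \cite[Theorem 3.4]{lambert2021poisson} for the ensemble \eqref{coulomb} at inverse temperature $2P/N$; the cited theorem then delivers the convergence of $\Xi_N$ to a Poisson point process of intensity $e^{-vx}$. I expect the genuine difficulty to lie precisely in this last reconciliation — translating the abstract confinement and regularity hypotheses of \cite{lambert2021poisson}, phrased there in a broader geometric framework, into Assumptions \ref{assumptions} — and, secondarily, in the asymptotic bookkeeping comparing $\ln V'(E_N^v)$ and $\ln|E_N^v|$ to $V(E_N^v)\sim\ln N$, which is where all four parts of Assumptions \ref{assumptions} are jointly used.
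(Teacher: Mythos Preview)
Your proposal is correct and follows essentially the same approach as the paper. The only cosmetic differences are that the paper defines $E_N^+$ directly by $f(E_N^+)=-\ln N$ with $f(x)=-V(x)+2P\ln|x|+\lambda_V^P-\ln|V'(x)|$ (i.e.\ the asymptotic form of $\ln(N\rho_V/V')$ without the $O(|x|^{-1})$ correction), whereas you use the exact condition $N\rho_V(E_N^+)=V'(E_N^+)$; and the paper invokes Assumption~\ref{assumptions}\ref{ass2} to get $\ln|V'|=o(V)$ while you obtain it from $V''/V'=O(1)$ in Assumption~\ref{assumptions}\ref{ass4}.
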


\begin{proof}
We prove it in the case $v=+$, the case where $v=-$ being similar. We show that there exists a sequence $(E_N^+)_N$ going to $+\infty$ satisfying $f(E_N^+)=-\ln N$, where we defined the function $f$ by 
$$f(x)=-V(x)+2P\ln |x|+\lambda_P^V-\ln |V'(x)|\,.$$\\
Recalling Assumption \ref{assumptions} \textit{\ref{ass1}}, $|V'|$ goes to infinity at infinity, thus $\alpha_N^+=V'(E_N^+)\to +\infty$ (in the case $v=-1$ we would have looked for a sequence $(E_N^-)_N$ going to $-\infty$ and $\alpha_N^-=-V'(E_N^-)$). \\
As a consequence of Assumptions \textit{\ref{ass2}}, one shows that $\ln|V'|$ is negligible with respect to $V$ at infinity. Therefore, because $\dfrac{\ln |x|}{V(x)}\underset{|x|\rightarrow\infty}{\longrightarrow}0$,
\begin{equation}
    \label{eq:equivalentf}
    f(x)=-V(x)+\underset{x\rightarrow+\infty}{o}(V(x))\,.
\end{equation}
Because $f(x)\underset{x\rightarrow+\infty}{\longrightarrow} -\infty$ there exists $(E_N^+)_N$ going to infinity such that for all $N\geq 1$, $f(E_N^+)=-\ln N$. Setting $x=E_N^+$ in \eqref{eq:equivalentf}, we obtain that $-V(E_N^+)\sim f(E_N^+)= -\ln N$.
Property c) follows from assumption \textit{\ref{ass3}}, along with the fact that $\alpha_N^{-1}$ stays bounded.

It remains to show that $\dfrac{\ln(\alpha_N^+)}{N}= \dfrac{\ln|V'(E_N^+)|}{N}\underset{N\rightarrow\infty}{\longrightarrow} 0$. By construction, we have
\begin{equation*}
    \dfrac{\ln|V'(E_N^+)|}{N}=\dfrac{\ln\Big(Ne^{-V(E_N^+)+2P\ln E_N^+ +\lambda_P}\Big)}{N}=-\dfrac{V(E_N^+)}{N}+o(1)\,.
\end{equation*}
Using that $V(E_N^+)\sim \ln N$, we can conclude that $\ln| V'(E_N^+)|=o(N)$ which concludes the proof.
\end{proof}

By the discussion preceding Theorem \ref{thm:poissonLambert}, we deduce

\begin{corollary}[Edge of a configuration]
\label{cor:edge}
Let $E_N^\pm$, $\alpha_N^\pm \defi |V'(E_N^\pm)|$ be the sequences of Theorem \ref{thm:poissonLambert} associated with $v=\pm 1$.  Then, both random variables 
$$ \alpha_N^+\left( \max_{1\leq j \leq N}x_j - E_N^+ \right)  $$
and 
$$ \alpha_N^-\left( \min_{1\leq j \leq N}x_j - E_N^- \right) $$
converge to a Gumbel law, whose distribution function is given for $t\geq 0$ by $\mathcal{G}([0,t]) = \exp(e^{-t})$. Furthermore, $V(E_N^\pm)\sim \ln N$ and $\alpha_N^\pm \underset{N\rightarrow\infty}{\longrightarrow} \pm\infty$.
\end{corollary}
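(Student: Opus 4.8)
The plan is to derive Corollary~\ref{cor:edge} directly from Theorem~\ref{thm:poissonLambert} by making rigorous the extreme-value dictionary explained in the paragraph preceding that theorem. I would start with the maximum and the case $v=+$. Theorem~\ref{thm:poissonLambert} provides sequences $(E_N^+)_N,(\alpha_N^+)_N$ with $E_N^+\to+\infty$, $\alpha_N^+=V'(E_N^+)>0$ for $N$ large, and the convergence in distribution $\Xi_N\to\Pi^+$, where $\Pi^+$ is a Poisson point process on $\R$ with intensity $e^{-x}\,dx$ and $\Xi_N=\sum_{j=1}^N\delta_{\varphi_N^{-1}(x_j)}$ with $\varphi_N(x)=E_N^++(\alpha_N^+)^{-1}x$. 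Since $e^{-x}\,dx$ has finite total mass $e^{-t}$ on each half-line $(t,+\infty)$ and no atoms, the point-process convergence yields, for every fixed $t\in\R$, the convergence in law $\Xi_N\big((t,+\infty)\big)\to\Pi^+\big((t,+\infty)\big)\sim\mathrm{Poisson}(e^{-t})$; in particular $\mathbb{P}_N^{V,P}\big(\Xi_N((t,+\infty))=0\big)\to e^{-e^{-t}}$.

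Next I would translate the event $\{\Xi_N((t,+\infty))=0\}$. Because $\alpha_N^+>0$, the map $\varphi_N$ is increasing with inverse $\varphi_N^{-1}(y)=\alpha_N^+(y-E_N^+)$, so this event coincides with $\{\forall\,1\le j\le N,\ \alpha_N^+(x_j-E_N^+)\le t\}=\{\alpha_N^+(\max_j x_j-E_N^+)\le t\}$. Combining with the previous step gives $\mathbb{P}_N^{V,P}\big(\alpha_N^+(\max_j x_j-E_N^+)\le t\big)\to e^{-e^{-t}}$ for all $t$, which is convergence in distribution to the Gumbel law. For the minimum I would run the same argument with $v=-$: now $\Xi_N$ converges to a Poisson point process of intensity $e^{x}\,dx$, one tests against the half-line $(-\infty,t)$ (mass $e^{t}$), and using $\varphi_N^{-1}(y)=\alpha_N^-(y-E_N^-)$ one gets $\{\Xi_N((-\infty,t))=0\}=\{\alpha_N^-(\min_j x_j-E_N^-)\ge t\}$, hence $\mathbb{P}_N^{V,P}\big(\alpha_N^-(\min_j x_j-E_N^-)\ge t\big)\to e^{-e^{t}}$, i.e.\ $\alpha_N^-(\min_j x_j-E_N^-)$ converges in law to the reflection of a Gumbel variable.

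Finally, the two asymptotic claims are essentially already contained in the proof of Theorem~\ref{thm:poissonLambert}: there one shows $f(E_N^\pm)=-\ln N$ with $f(x)=-V(x)+o(V(x))$, hence $V(E_N^\pm)\sim\ln N$; and $\alpha_N^\pm=|V'(E_N^\pm)|\to+\infty$ follows from $|E_N^\pm|\to\infty$ together with $|V'|\to\infty$ at infinity (Assumption~\ref{assumptions}~\textit{\ref{ass1}}).

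The only genuinely delicate point is the passage from convergence of $\Xi_N$ as a point process to convergence of the masses $\Xi_N((t,+\infty))$ and $\Xi_N((-\infty,t))$ on \emph{unbounded} intervals: this requires ruling out escape of mass to $\pm\infty$, which is not automatic from vague convergence alone but is built into \cite[Theorem~3.4]{lambert2021poisson} (and is precisely why Theorem~\ref{thm:poissonLambert} is phrased in terms of the full random measure $\Xi_N$ rather than its restriction to compacts). Apart from this, the rest is a routine rewriting of events.
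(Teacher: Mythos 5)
Your argument is correct and follows exactly the paper's own route: Corollary~\ref{cor:edge} is derived from Theorem~\ref{thm:poissonLambert} via the extreme-value dictionary in the paragraph preceding that theorem, which you have simply made precise. You also rightly write the Gumbel CDF as $e^{-e^{-t}}$ (the formula $\exp(e^{-t})$ printed in the corollary is a typo, being always $\geq 1$), rightly observe that the minimum converges to the \emph{reflection} of a Gumbel rather than a Gumbel, and rightly conclude that both $\alpha_N^\pm=|V'(E_N^\pm)|$ tend to $+\infty$ (the corollary's ``$\alpha_N^\pm\to\pm\infty$'' is incompatible with $\alpha_N^-$ being an absolute value).
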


\begin{remark}
    Note that\cite[Theorem 3.4]{lambert2021poisson} applies for $V$ of class $\mathcal{C}^2$ outside of a compact set, allowing to take $V(x)=|x|^a$ for $a>1$. In this case, we find $E_N^\pm \sim \pm(\ln N)^{1/a}$.
     If $V(x)=\cosh(x)$, we find $E_N^+ \sim -E_N^- \sim \arg\cosh(\ln N)\sim\ln\ln N$.

\end{remark}

The next lemma will be convenient in the proof of Theorem \ref{thm:LaplaceGeneral} when dealing with error terms.
\begin{lemma}
\label{lem:mesureE_N}
With the notations of Corollary \ref{cor:edge}, we have 
$$\mu_V([E_N^-,E_N^+]^c) = o(N^{-1/2})\,.$$
\end{lemma}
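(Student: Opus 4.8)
The plan is to show that the mass that $\mu_V$ puts outside $[E_N^-, E_N^+]$ decays faster than $N^{-1/2}$, using the exponential decay of $\rho_V$ established in Lemma~\ref{lem:regularitedensite} together with the localization $V(E_N^\pm)\sim \ln N$ from Corollary~\ref{cor:edge}. First I would write
$$\mu_V\big([E_N^-,E_N^+]^c\big) = \int_{E_N^+}^{+\infty}\rho_V(x)\,dx + \int_{-\infty}^{E_N^-}\rho_V(x)\,dx\,,$$
and treat the right tail; the left tail is handled symmetrically. By Lemma~\ref{lem:regularitedensite}, $\rho_V(x)\leq C^V_P(1+|x|)^{2P}e^{-V(x)}$, so it suffices to bound $\int_{E_N^+}^{+\infty}(1+x)^{2P}e^{-V(x)}\,dx$.

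The key step is to convert this integral bound into something controlled by $e^{-V(E_N^+)}$, which by Corollary~\ref{cor:edge} and Theorem~\ref{thm:poissonLambert}(a) is comparable to $\alpha_N^+/N = |V'(E_N^+)|/N$. Since $|V'|\to+\infty$ and $V$ is eventually increasing (as $V(x)\to+\infty$ with $|V'(x)|\to+\infty$, $V'$ is eventually positive near $+\infty$), I would perform the change of variables $u = V(x)$ on $[E_N^+,+\infty)$, giving
$$\int_{E_N^+}^{+\infty}(1+x)^{2P}e^{-V(x)}\,dx = \int_{V(E_N^+)}^{+\infty}\frac{(1+V^{-1}(u))^{2P}}{V'(V^{-1}(u))}e^{-u}\,du\,.$$
Using Assumption~\ref{assumptions}\ref{ass2} (any polynomial in $V'$ times $e^{-V}$ is $o(x^{-\alpha})$) one controls the integrand: in fact it is cleaner to argue directly that $(1+x)^{2P}e^{-V(x)/2}$ is bounded on $\R$ (again by \ref{ass2}, since $V\to\infty$), so that
$$\int_{E_N^+}^{+\infty}(1+x)^{2P}e^{-V(x)}\,dx \leq C\int_{E_N^+}^{+\infty}e^{-V(x)/2}\,dx\,.$$
Then, since $V$ is increasing on $[E_N^+,\infty)$ for large $N$ and $V(x)\geq V(E_N^+)$ there, and using integrability of $V'(x)^{-2}$ at infinity from \ref{ass4} (or directly that $e^{-V/2}$ is integrable near $+\infty$ because $V$ grows faster than linearly), a further splitting or a crude bound $e^{-V(x)/2} = e^{-V(x)/4}e^{-V(x)/4} \leq e^{-V(E_N^+)/4}e^{-V(x)/4}$ yields
$$\int_{E_N^+}^{+\infty}e^{-V(x)/2}\,dx \leq e^{-V(E_N^+)/4}\int_{\R}e^{-V(x)/4}\,dx = C\,e^{-V(E_N^+)/4}\,.$$

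Finally I would combine these: $\mu_V\big([E_N^-,E_N^+]^c\big) \leq C\,e^{-V(E_N^+)/4} + C\,e^{-V(E_N^-)/4}$, and since $V(E_N^\pm)\sim\ln N$ by Corollary~\ref{cor:edge}, each term is $\leq C\,N^{-1/4+o(1)}$. This gives $o(N^{-1/2})$ once we use a larger fraction of $V$: rather than $e^{-V/4}$ we extract $e^{-V(E_N^\pm)\cdot(1-\delta)}$ for any $\delta>0$, which is legitimate because $e^{-\delta V}$ remains integrable (as $V$ grows super-linearly) and $(1+|x|)^{2P}e^{-\delta V(x)}$ is bounded by \ref{ass2}; this produces a bound $C\,N^{-(1-\delta)+o(1)} = o(N^{-1/2})$ for $\delta < 1/2$. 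The main obstacle is purely bookkeeping: making sure the polynomial prefactor $(1+|x|)^{2P}$ and the asymptotic $V(E_N^\pm)\sim\ln N$ (which is only an equivalence, not an equality) are absorbed without losing the decisive exponent, which is why one keeps a comfortable fraction of $V$ in reserve via \ref{ass2}.
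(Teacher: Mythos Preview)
Your argument is correct and follows essentially the same route as the paper: both bound the tail by splitting the density into an integrable factor and a factor controlled by $e^{-c\,V(E_N^\pm)}$ with $c>1/2$, then invoke $V(E_N^\pm)\sim\ln N$. The paper's version is slightly cleaner, writing $\rho_V=\rho_V^{\delta}\rho_V^{1-\delta}$ directly rather than first passing through the bound $\rho_V\leq C(1+|x|)^{2P}e^{-V}$ and then splitting $e^{-V}$, which lets you skip the detours through the change of variables and the insufficient $e^{-V/4}$ extraction.
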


\begin{proof}
Let $0<\delta<1$, to be specified later. We have
\begin{align*}
    \int_{E_N^+}^{+\infty} \rho_V(x) dx = \int_{E_N^+}^{+\infty} \rho_V(x)^\delta\rho_V(x)^{1-\delta} dx &\leq \int_\R \rho_V(x)^\delta dx  \sup_{[E_N^+,+\infty[}\rho_V^{1-\delta}\,.
\end{align*}
By the first inequality of Lemma \ref{lem:regularitedensite}, the integral is finite. Also from the same inequality, we have for some constant $C'$ and $x$ big enough $\rho_V(x) \leq  C'e^{-\frac{3}{4}V(x)}$. Because $V$ is increasing in a neighborhood of $+\infty$, we get for $N$ large enough
$$\sup_{[E_N^+,+\infty[} \rho_V^{1-\delta} \leq C'^{1-\delta}e^{-(1-\delta)\frac{3}{4}V(E_N^+)}\,. $$
Taking $\delta>0$ such that $\frac{1}{2}-(1-\delta)\frac{3}{4}=:-\gamma <0$ and using that $V(E_N^+)= \ln N+o(\ln N)$ (established in the proof of Theorem \ref{thm:poissonLambert}),
\begin{align*}
\sqrt{N}\int_{E_N^+}^{+\infty} \rho_V(x) dx \leq Ke^{-\gamma\ln N + (1-\delta)\frac{3}{4}o(\ln N)}\,, 
\end{align*}
and the right-hand side goes to zero as $N$ goes to infinity. We deal with the integral $\int_{-\infty}^{E_N^-} \rho_V dx$ in the same way.

\end{proof}

\section{Laplace transform for smooth test functions, proof of Theorem \ref{thm:thmppal}}
 
Section \ref{section:concentration} allows us to justify in Proposition \ref{prop:transfo laplace} the heuristics we gave in equation \eqref{eq heuristique} for $\phi$ having compact support. We will then extend in Theorem \ref{thm:LaplaceGeneral} this result to a more general set of functions, by an approximation by compactly supported functions, using Corollary \ref{cor:edge}.

\label{section:laplace_transform}
\begin{proposition}
\label{prop:transfo laplace}
For $\phi \in \mathcal{C}^1(\R,\R)$ with compact support, we have for any real $t$,
\begin{equation}
\label{eq:transfo laplace}
\E^{V,P}_N\left[ \exp\Big(t\sqrt{N}\mrm{fluct}_N(\Xi[\phi])\Big)\right] \underset{N\rightarrow\infty}{\longrightarrow} \exp\left\{\frac{t^2}{2}q(\phi)\right\},
\end{equation}
where $\Xi[\phi]$ is given by equation \eqref{def:Xi}, and $q(\phi)$ is given by 
\begin{equation}
\label{eq:variance2}
q(\phi)\defi\int_\R\bigg( \phi'(x)^2+V''(x)\phi(x)^2\bigg)d\mu_V(x)+P\iint_{\R^2}\Big(\frac{\phi(x)-\phi(y)}{x-y}\Big)^2d\mu_V(x)d\mu_V(y)\,.
\end{equation}
\end{proposition}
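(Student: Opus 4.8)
The plan is to carry out the change of variables sketched in the Strategy section and then to identify the resulting expansion with $\mrm{fluct}_N(\Xi[\phi])$. Fix $t\in\R$ and $\phi\in\mcal{C}^1(\R,\R)$ with compact support. For $N$ large enough that $|t|\,\|\phi'\|_\infty<\sqrt N$, the map $y\mapsto y+\frac{t}{\sqrt N}\phi(y)$ is an increasing $\mcal{C}^1$-diffeomorphism of $\R$ that is the identity off a fixed compact set, so the substitution $x_i=y_i+\frac{t}{\sqrt N}\phi(y_i)$ in $\mcal{Z}_N^{P}[V]$ is licit, with Jacobian $\prod_i\bigl(1+\frac{t}{\sqrt N}\phi'(y_i)\bigr)>0$. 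I would take logarithms of the three factors (Vandermonde, potential, Jacobian) and Taylor-expand each to second order in $t/\sqrt N$. The point is that all third-order remainders are \emph{deterministically} $O(N^{-1/2})$: this uses $V\in\mcal{C}^3$ (so $V^{(3)}$ is bounded on a fixed neighbourhood of $\mathrm{supp}\,\phi$), the Lipschitz bound $|\phi(y_i)-\phi(y_j)|\le\|\phi'\|_\infty|y_i-y_j|$, and the observation that a summand vanishes unless some $y_i$ lies in $\mathrm{supp}\,\phi$, so each relevant sum has at most $O(N^2)$ nonzero terms of size $O(N^{-3/2})$. Dividing by $\mcal{Z}_N^{P}[V]$ gives, for $N$ large,
\begin{equation}\label{eq:cvid}
1=\E^{V,P}_N\!\left[\exp\!\left(t\sqrt N\,\mcal{T}_N[\phi]-\tfrac{t^2}{2}\,q_N(\phi)+O(N^{-1/2})\right)\right],
\end{equation}
where $\mcal{T}_N[\phi]=\int_\R(\phi'-V'\phi)\,d\hat\mu_N+P\iint\frac{\phi(x)-\phi(y)}{x-y}\,d\hat\mu_N(x)\,d\hat\mu_N(y)$ and $q_N(\phi)=\int_\R\bigl((\phi')^2+V''\phi^2\bigr)d\hat\mu_N+P\iint\bigl(\frac{\phi(x)-\phi(y)}{x-y}\bigr)^2 d\hat\mu_N\,d\hat\mu_N$, the diagonal corrections coming from replacing $\sum_{i\ne j}$ by $\sum_{i,j}$ being again deterministic and contributing $O(N^{-1})$ to the exponent.

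Next I would establish the algebraic identity $\mcal{T}_N[\phi]=\mrm{fluct}_N(\Xi[\phi])+P\,\zeta_N(\phi)+O(N^{-1})$. Expanding $\hat\mu_N=\mu_V+(\hat\mu_N-\mu_V)$ in the double integral, and using that the kernel $\frac{\phi(x)-\phi(y)}{x-y}$ is bounded and symmetric, the term quadratic in $\hat\mu_N-\mu_V$ is exactly $P\,\zeta_N(\phi)$ (see \eqref{def:anisotropie}) and the cross term equals $2P\int_\R\Psi\,d(\hat\mu_N-\mu_V)$ with $\Psi(y)=\int_\R\frac{\phi(x)-\phi(y)}{x-y}\,d\mu_V(x)=\mcal{H}[\phi\rho_V](y)-\phi(y)\mcal{H}[\rho_V](y)$. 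Since by symmetry $\int_\R\frac{\phi(x)-\phi(y)}{x-y}\,d\mu_V(y)=\Psi(x)$, comparison with \eqref{def:Xi} shows $2P\Psi+\phi'-V'\phi=\Xi[\phi]-c_\phi$, where $c_\phi=-2P\int_\R\mcal{H}[\phi\rho_V]\,d\mu_V$ is a constant and hence disappears from $\int_\R(\cdot)\,d(\hat\mu_N-\mu_V)$; and the leftover constant $\int_\R(\phi'-V'\phi)\,d\mu_V+P\iint\frac{\phi(x)-\phi(y)}{x-y}\,d\mu_V\,d\mu_V$ vanishes, as one sees by integrating $\int_\R\phi'\,d\mu_V$ by parts via $\rho_V'=-(V'+2P\mcal{H}[\rho_V])\rho_V$ (equation \eqref{deriv1}) and invoking skew-adjointness of $\mcal{H}$ on $L^2(\R)$ (Lemma \ref{lemma:HilbertPropriétés}(v)). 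Thus \eqref{eq:cvid} becomes $\E^{V,P}_N\!\bigl[e^{tW_N-\frac{t^2}{2}q_N(\phi)+O(N^{-1/2})}\bigr]=1$ with $W_N:=\sqrt N\bigl(\mrm{fluct}_N(\Xi[\phi])+P\,\zeta_N(\phi)\bigr)$; one also records that $q_N(\phi)\to q(\phi)$ almost surely (weak convergence $\hat\mu_N\to\mu_V$ applied to the bounded continuous integrands, the last of which extends continuously across the diagonal) and $|q_N(\phi)|\le C_\phi$ deterministically.

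Finally one must remove the errors and interchange limit and expectation, which I expect to be the main obstacle. Re-running the change of variables with an arbitrary parameter $s$ in place of $t$ and using $|q_N(\phi)|\le C_\phi$ yields the bootstrap bound $\sup_N\E^{V,P}_N[e^{sW_N}]<\infty$ for every fixed $s$ (write $e^{sW_N}=e^{sW_N-\frac{s^2}{2}q_N(\phi)}e^{\frac{s^2}{2}q_N(\phi)}$ and note $\E^{V,P}_N[e^{sW_N-\frac{s^2}{2}q_N(\phi)}]=e^{O(N^{-1/2})}$). Hence $\{e^{tW_N}\}_N$ is bounded in every $L^p$, in particular uniformly integrable, and since $e^{-\frac{t^2}{2}q_N(\phi)}\to e^{-\frac{t^2}{2}q(\phi)}$ a.s.\ and is bounded, Cauchy--Schwarz plus dominated convergence turn \eqref{eq:cvid} into $\E^{V,P}_N[e^{tW_N}]\to e^{\frac{t^2}{2}q(\phi)}$. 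To pass from $W_N$ to $\sqrt N\,\mrm{fluct}_N(\Xi[\phi])$, I would write $e^{t\sqrt N\,\mrm{fluct}_N(\Xi[\phi])}=e^{tW_N}e^{-tP\sqrt N\,\zeta_N(\phi)}$ and invoke Corollary \ref{cor:controleZeta} together with the $D^2$-concentration estimate \eqref{ineq:D^2}: these give $\sqrt N\,\zeta_N(\phi)\to 0$ almost surely and $\sup_N\E^{V,P}_N[e^{s\sqrt N|\zeta_N(\phi)|}]<\infty$ (the factor $e^{5P\ln N}$ in \eqref{ineq:D^2} is exactly compensated by $e^{-NPr}$ at the critical scale $r\sim N^{-1}\ln N$), so that $e^{-tP\sqrt N\zeta_N(\phi)}\to1$ in $L^2$ while $e^{tW_N}$ is bounded in $L^2$; one more application of Cauchy--Schwarz then gives $\E^{V,P}_N[e^{t\sqrt N\,\mrm{fluct}_N(\Xi[\phi])}]\to e^{\frac{t^2}{2}q(\phi)}$, which is \eqref{eq:transfo laplace}. (If $\phi$ is only assumed $\mcal{C}^1$ with compact support, the $\zeta_N$-control is obtained by first treating a smooth approximant and transferring, or by redoing the Fourier estimates of Corollary \ref{cor:controleZeta} directly; this regularity bookkeeping, and the integrability needed for the limit interchange, are the genuinely delicate points of the argument.)
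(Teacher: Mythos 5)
Your change-of-variables expansion, the algebraic identification $\mcal{T}_N[\phi]=\mrm{fluct}_N(\Xi[\phi])+P\,\zeta_N(\phi)+O(N^{-1})$, and the vanishing of the two constants via \eqref{deriv1} and skew-adjointness of $\mcal{H}$ all line up with the paper's computation; the skeleton is the same. Where you genuinely depart is in the final interchange of limit and expectation, and there your version is tighter than the paper's. The paper splits $\E^{V,P}_N[e^{t\sqrt N\Lambda_N+\frac{t^2}{2}F(\hat\mu_N)}]$ along the event $\{d_{\text{Lip}}(\hat\mu_N,\mu_V)>\delta\}$, kills the bad event by the LDP of \cite{Garcia} against the crude deterministic bound $e^{O(\sqrt N)}$, sends $\delta\to 0$, and concludes that $\sqrt N\Lambda_N$ converges in law; $\zeta_N$ is then removed by Slutsky's lemma, which only delivers convergence in distribution, whereas \eqref{eq:transfo laplace} asserts convergence of the Laplace transform, and the paper does not make explicit the uniform integrability needed to upgrade it. Your bootstrap --- rerun the change of variables at a free scale $s$ to get $\sup_N\E^{V,P}_N[e^{sW_N}]<\infty$ for every $s$, then couple this with the deterministic bound $|q_N(\phi)|\le C_\phi$ and the moment estimate $\E^{V,P}_N[e^{s\sqrt N|\zeta_N(\phi)|}]\to 1$ (your critical-scale observation $r_N\sim N^{-1}\ln N$ is correct: the layer-cake integral balances $e^{5P\ln N-NPr_N}\sim 1$ against $e^{s\sqrt N r_N}\to 1$) --- yields the moment-generating-function convergence directly, which is what the proposition claims. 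So your route buys the actual Laplace-transform statement, at the price of one extra pass through the change of variables.

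Two remarks. When you write that $q_N(\phi)\to q(\phi)$ ``almost surely,'' what you have, and all you need, is convergence in probability coming from the LDP, since the $N$-point laws $\P^{V,P}_N$ live on different spaces; your bounded-convergence step goes through unchanged with that weaker input, which is also what the paper uses. You are also right to flag that Corollary \ref{cor:controleZeta} is stated for $\phi\in\mcal{C}^2(\R)\cap H^2(\R)$ while Proposition \ref{prop:transfo laplace} allows $\phi\in\mcal{C}^1_c(\R)$; the paper invokes the corollary without comment. The cleanest fix is to read the proposition with hypothesis $\phi\in\mcal{C}^2_c(\R)$, which is all that is used when passing to Theorem \ref{thm:LaplaceGeneral}; your alternative of redoing the $\|\cdot\|_{1/2}$ estimate directly for $\phi\in H^1$ also works.
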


\begin{proof} Let $\phi \in \mathcal{C}_c^1(\R,\R)$, and let $t\in \R$. We perform in equation \eqref{fonction partition} the change of variables \\$x_i=y_i+\frac{t}{\sqrt{N}}\phi(y_i)$, $1\leq i \leq N$, which is a diffeomorphism for $N$ big enough. We thus have
\begin{multline}
\label{eq changement de variable}
    \mcal{Z}_N^{P}[V]= \int_{\R^N} \prod_{1\leq i<j\leq N}\left|y_i-y_j+ \frac{t}{\sqrt{N}}\big(\phi(y_i)-\phi(y_j)\big)\right|^{2P/N}.e^{-\sum_{i=1}^NV\left(y_i + \frac{t}{\sqrt{N}}\phi(y_i)\right)}
    \\\times\prod_{i=1}^N\left(1+\frac{t}{\sqrt{N}}\phi'(y_i) \right)d^N\mathbf{y},
\end{multline}
and we develop separately the different terms of this integral. The first term can be written as:
$$\prod_{i<j}\left|y_i-y_j\right|^{2P/N}\prod_{i<j}\left|1+\frac{t}{\sqrt{N}}\frac{\phi(y_i)-\phi(y_j)}{y_i-y_j} \right|^{2P/N},$$
The second product above, setting $\Delta\phi_{i,j} \defi \frac{\phi(y_i)-\phi(y_j)}{y_i-y_j}$ and using Taylor-Lagrange theorem, equals
$$ \exp \bigg(\frac{2P}{N}\sum_{i<j}\ln\left|1+\frac{t}{\sqrt{N}}\frac{\phi(y_i)-\phi(y_j)}{y_i-y_j}\right|\bigg) = \exp\bigg(\frac{2P}{N}\sum_{i<j}\left(\frac{t}{\sqrt{N}}\Delta\phi_{i,j}-\frac{t^2}{2N}(\Delta\phi_{i,j})^2 + R_{N,1}(i,j)\right)\bigg),$$
where we noticed that $1+\frac{t}{\sqrt{N}}\Delta\phi_{i,j}\geq 1- \frac{t}{\sqrt{N}}\| \phi' \|_{\infty} >0$ if $N$ is big enough, and where 
$$|R_{N,1}(i,j)| \leq \frac{|t|^3}{3N^{3/2}}\|\phi'\|_{\infty}^3.$$\\
Again by Taylor-Lagrange theorem, the second term in \eqref{eq changement de variable} equals 
$$ \exp\bigg( -\sum_{i=1}^N\left( V(y_i)+ \frac{t}{\sqrt{N}}V'(y_i)\phi(y_i)+\frac{t^2}{2N}V''(y_i)\phi(y_i)^2+ R_{N,2}(i)\right)  \bigg) $$
where $R_{N,2}(i) = \frac{t^3}{6N^{3/2}}V^{(3)}\left(y_i+\frac{t\theta_i}{\sqrt{N}}\phi(y_i)\right)\phi(y_i)^3$ for some $\theta_i \in [0,1]$, thus for $N$ large enough
$$|R_{N,2}(i)|\leq \frac{|t|^3}{6N^{3/2}}\|\phi\|_{\infty}^3\sup_{d(x,\text{supp }\phi)\leq 1}|V^{(3)}(x)|. $$
The last term reads
$$ \prod_{i=1}^N\left(1+\dfrac{t}{\sqrt{N}}\phi'(y_i)\right)=\exp\bigg( \sum_{i=1}^N \left(\frac{t}{\sqrt{N}}\phi'(y_i) -\frac{t^2}{2N}\phi'(y_i)^2+ R_{N,3}(i)\right) \bigg), $$
with $|R_{N,3}(i)|\leq \frac{t^3}{3N^{3/2}}\|\phi'\|_{\infty}^3$. Dividing both sides of equation \eqref{eq changement de variable} by $\mcal{Z}_N^{P}[V]$ we get
\begin{multline*}
    \E_N^{V,P}\bigg[\exp\left\{t\sqrt{N}\bigg( P\iint_{\R^2} \frac{\phi(x)-\phi(y)}{x-y}d\hat{\mu}_N(x)d\hat{\mu}_N(y) + \int_\R (\phi'-V'\phi)(x)d\hat{\mu}_N(x)\bigg)\right\}\times\exp\left\{K_N(t,\phi)\right\}\\
    \times\exp\left\{\frac{t^2}{2}\left(-P\iint_{\R^2}\left(\frac{\phi(x)-\phi(y)}{x-y}\right)^2d\hat{\mu}_N(x)d\hat{\mu}_N(y)-\int_\R(V''\phi^2 + \phi'^2)(x)d\hat{\mu}_N(x) \right)\right\}\bigg] = 1,
\end{multline*}
with $|K_N(t,\phi)| \leq \frac{c(t,\phi)}{\sqrt{N}}$ where $c(t,\phi)\geq 0$ is independent of $N$. This bound implies that taking the limit $N\to \infty$ we can get rid of $K_N$:
\begin{multline*}
   \lim_{N\to \infty} \E_N^{V,P}\bigg[\exp\left\{t\sqrt{N}\bigg( P\iint_{\R^2} \frac{\phi(x)-\phi(y)}{x-y}d\hat{\mu}_N(x)d\hat{\mu}_N(y) + \int_\R (\phi'-V'\phi)(x)d\hat{\mu}_N(x)\bigg)\right\} \\\times\exp\left\{\frac{t^2}{2}\left(-P\iint_{\R^2}\left(\frac{\phi(x)-\phi(y)}{x-y}\right)^2d\hat{\mu}_N(x)d\hat{\mu}_N(y)-\int_\R(V''\phi^2 + \phi'^2)(x)d\hat{\mu}_N(x) \right)\right\}\bigg] = 1.
\end{multline*}
Using Fubini's theorem (the function $(x,y)\mapsto \frac{\phi(x)-\phi(y)}{x-y}$ being bounded continuous on $\R^2$), the first line in the expectation value can be rewritten as $e^{t\sqrt{N}\Lambda_N}$ with
\begin{equation}
\label{def Lambda}
    \Lambda_N \defi 2P\iint_{\R^2}\frac{\phi(x)-\phi(y)}{x-y}d\mu_V(x)d(\hat{\mu}_N-\mu_V)(y) + \int_\R (\phi'-V'\phi)(x)d(\hat{\mu}_N-\mu_V)(x) + P\zeta_N(\phi)
\end{equation}
where $\zeta_N(\phi)$ is given by \eqref{def:zeta}, and where we differentiated Equation \eqref{eq:mesure equilibre} ---every term involved being differentiable by the results of Section \ref{section:regularity_eq_measure}--- to deduce 
$$ \int_\R\big( V'(x)\phi(x)-\phi'(x)\big)d\mu_V(x)-P\iint_{\R^2}\frac{\phi(x)-\phi(y)}{x-y}d\mu_V(x)d\mu_V(y) =0\,.$$
Let $F:\mathcal{P}(\R)\to \R$ be defined by
\begin{equation}
    \label{def:fonctionF}
    F(\mu)=-P\iint_{\R^2}\left(\frac{\phi(x)-\phi(y)}{x-y}\right)^2d\mu(x)d\mu(y)-\int_\R(V''\phi^2 + \phi'^2)(x)d\mu(x)\,.
\end{equation}
It is continuous for the topology of weak convergence since all the functions in the integrals are bounded continuous. So far we have established that 
\begin{equation}
    \label{eq:esperance=1}
    \lim_{N\to \infty}\E_N^{V,P}\left[ e^{t\sqrt{N}\Lambda_N + \frac{t^2}{2}F(\hat{\mu}_N)} \right]=1,
\end{equation}
with $\Lambda_N$ given by \eqref{def Lambda}. We now replace in the latter equation the term $F(\hat{\mu}_N)$ by its limiting expression, $F(\mu_V)$. Fix a metric that is compatible with the weak convergence of probability measures on $\R$. For example,
\begin{equation}
\label{def:FortetMourier}
    d_{\text{Lip}}(\mu,\nu) = \sup\left| \int_\R f(x) d\mu(x) - \int_\R f(x) d\nu(x) \right|\,, 
\end{equation}
where the supremum runs over $f:\R \to \R$ bounded and Lipschitz with $\|f\|_{\infty}\leq 1$ and Lipschitz constant $|f|_{\text{Lip}}\leq 1$. By the large deviations principle for $(\hat{\mu}_N)_{N}$ under the probability \eqref{coulomb} established by \cite[Theorem 1.1]{Garcia}, for all $\delta>0$ the event $A_N(\delta)\defi\{d_{\text{Lip}}(\hat{\mu}_N,\mu_V) >\delta \}$ has (for $N$ big enough) probability smaller than $e^{-Nc_\delta}$ where $c_\delta>0$. Hence, 
decomposing the expectation of equation \eqref{eq:esperance=1} into
$$ \E^{V,P}_N\left[ \mathbf{1}_{A_N(\delta)}e^{\sqrt{N}\Lambda_N+\frac{t^2}{2}F(\hat\mu_N)} \right]+\E^{V,P}_N\left[ \mathbf{1}_{A_N(\delta)^c}e^{\sqrt{N}\Lambda_N+\frac{t^2}{2}F(\hat\mu_N)} \right]=:\mcal{I}^{(1)}+\mcal{I}^{(2)}\,,$$
we can bound the second term by using Cauchy-Schwarz inequality:
$$ 0\leq \mcal{I}^{(2)} \leq \P(A_N(\delta)^c)^\frac{1}{2}\E^{V,P}_N\left[ e^{2\sqrt{N}\Lambda_N+t^2F(\hat\mu_N)}\right]^{1/2} = e^{-Nc_\delta/2}\underset{N\to +\infty}{O}\left(e^{\sqrt{N}C(\phi,V,t)}\right)\,;$$
where we used the rough bound
$$ |\sqrt{N}\Lambda_N + \frac{t^2}{2}F(\hat\mu_N)|\leq \sqrt{N}c_1(\phi,V) + \frac{t^2}{2}c_2(V,\phi)$$
for some $c_1(\phi,V)$, $c_2(\phi,V)>0$ independent of $N$.
\ \\
By taking the limit $N\to +\infty$, we conclude that

$$1=\lim_{N\to \infty} \E_N^{V,P}\left[ e^{t\sqrt{N}\Lambda_N + \frac{t^2}{2}F(\hat{\mu}_N)} \right] = \lim_{N\to \infty}\E_N^{V,P}\left[\mathbf{1}_{\{d_{\text{Lip}}(\hat{\mu}_N,\mu_V)\leq \delta\}} e^{t\sqrt{N}\Lambda_N + \frac{t^2}{2}F(\hat{\mu}_N)} \right].$$
By continuity of $F$ there is some $\varepsilon(\delta)$ which goes to $0$ as $\delta\to 0$ such that, for $d_{\text{Lip}}(\nu,\mu_V)\leq \delta$, we have $|F(\nu)-F(\mu_V)|\leq \varepsilon(\delta)$. Taking the (decreasing) limit as $\delta$ goes to zero we deduce
$$\lim_{N\to \infty} \E_N^{V,P}\left[ e^{t\sqrt{N}\Lambda_N + \frac{t^2}{2}F(\hat{\mu}_N)} \right] = \lim_{\delta\to 0}\lim_{N\to \infty}\E_N^{V,P}\left[\mathbf{1}_{\{d_{\text{Lip}}(\hat{\mu}_N,\mu_V)\leq \delta\}} e^{t\sqrt{N}\Lambda_N} \right]e^{\frac{t^2}{2}F(\mu_V)}.$$
But the same large deviations argument shows that
$$\lim_{\delta\to 0}\lim_{N\to \infty}\E_N^{V,P}\left[\mathbf{1}_{\{d_{\text{Lip}}(\hat{\mu}_N,\mu_V)\leq \delta\}} e^{t\sqrt{N}\Lambda_N} \right] = \lim_{N\to \infty}\E_N^{V,P}\left[ e^{t\sqrt{N}\Lambda_N} \right]. $$
Thus, we have shown that
\begin{equation}
    \label{presque TCL}
    \lim_{N\to \infty} \E_N^{V,P}\left[e^{t\sqrt{N}\left(2P\iint_{\R^2}\frac{\phi(x)-\phi(y)}{x-y}d\mu_V(x)d(\hat{\mu}_N-\mu_V)(y) + \int_\R (\phi'-V'\phi)(x)d(\hat{\mu}_N-\mu_V)(x) + P\zeta_N(\phi) \right)}\right] = e^{-\frac{t^2}{2}F(\mu_V)}\,,
\end{equation}
which establishes, that $\sqrt{N}\Lambda_N= \sqrt{N}\Big(\mrm{fluct}_N(\Xi[\phi]) + P\zeta_N(\phi)\Big)$ converges in law towards a centered Gaussian random variable with announced variance. We finally get rid of the remaining term $\zeta_N(\phi)$, using Corollary \ref{cor:controleZeta}: taking $\varepsilon = 1/4$ for example, we see in particular that $\sqrt{N}\zeta_N(\phi)$ converges in probability towards zero. The conclusion follows from Slutsky's lemma.
\end{proof}

We now extend the result of Proposition \ref{prop:transfo laplace} to a more general set of functions. With the notations of Proposition \ref{prop:transfo laplace}, we have

\begin{theorem}
\label{thm:LaplaceGeneral}
Let $\phi\in H^2(\R)\cap \mathcal{C}^2(\R)$ such that $\phi''$ is bounded. Additionally, suppose that $V^{(3)}\phi^2$, $V''\phi\phi'$, $V''\phi^2$ and $V'\phi$ are bounded. Then, recalling \eqref{eq:variance2}, we have the following convergence in distribution,
$$ \sqrt{N}\mrm{fluct}_N(\Xi[\phi]) \underset{N\rightarrow\infty}{\overset{\mrm{law}}{\longrightarrow}} \mathcal{N}(0,q(\phi))\,.$$
\end{theorem}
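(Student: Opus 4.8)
The plan is to reduce to the compactly supported case of Proposition~\ref{prop:transfo laplace} by truncating $\phi$ at the scale of a typical edge. First I would fix a smooth cutoff $\chi_N$ equal to $1$ on $[E_N^--1,E_N^++1]$, supported in $I_N=[E_N^--2,E_N^++2]$, with $\|\chi_N^{(k)}\|_\infty$ bounded uniformly in $N$ for $k=0,1,2$ (glue rescaled copies of a fixed bump near $E_N^\pm$; the transition regions have fixed width), and set $\phi_N:=\chi_N\phi\in\mathcal{C}^2_c(\R)$. Since $\phi,\phi'\in L^\infty(\R)$ (as $\phi\in H^2(\R)$) and $V'\phi$, $V''\phi^2$, $V''\phi\phi'$, $\phi''$ are bounded, the quantities $\|\phi_N\|_\infty$, $\|\phi_N'\|_\infty$, $\|\phi_N''\|_\infty$, $\|\phi_N\|_{H^2(\R)}$, $\|V'\phi_N\|_\infty$ and $\|V''\phi_N^2\|_\infty$ are bounded uniformly in $N$, and $q(\phi)<\infty$ for the same reasons. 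I would also record at the outset the identity $\int_\R\Xi[\psi]\,d\mu_V=0$, valid for all sufficiently integrable $\psi$ (expand \eqref{def:Xi}, then use the skew-self-adjointness of $\mcal{H}$ on $L^2(\R)$ and the identity obtained by differentiating \eqref{eq:mesure equilibre rappel}); in particular $\mrm{fluct}_N(\Xi[\psi])=\int_\R\Xi[\psi]\,d\hat\mu_N$.

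\textbf{Step 1 (CLT for the truncation).} Because $\phi_N$ has compact support, the change of variables $x_i=y_i+\tfrac{t}{\sqrt N}\phi_N(y_i)$ is a diffeomorphism of $\R^N$ for large $N$, and one reruns the expansion in the proof of Proposition~\ref{prop:transfo laplace} with $\phi$ replaced by $\phi_N$, landing on an exact identity
$$\E_N^{V,P}\!\left[\exp\!\Big(t\sqrt N\big(\mrm{fluct}_N(\Xi[\phi_N])+P\zeta_N(\phi_N)\big)+\tfrac{t^2}{2}F(\hat\mu_N;\phi_N)+K_N\Big)\right]=1,$$
where $F(\mu;\psi)=-P\iint\big(\tfrac{\psi(x)-\psi(y)}{x-y}\big)^2d\mu\,d\mu-\int(V''\psi^2+\psi'^2)d\mu$ and $K_N$ gathers the Taylor remainders. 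The new point is that $K_N\to0$ \emph{deterministically}: the remainders from the Vandermonde and Jacobian factors are $O(\|\phi_N'\|_\infty^3N^{-1/2})=O(N^{-1/2})$, while $R_{N,2}(i)=\tfrac{t^3}{6N^{3/2}}V^{(3)}\!\big(y_i+\tfrac{t\theta_i}{\sqrt N}\phi_N(y_i)\big)\phi_N(y_i)^3$ vanishes unless $y_i\in\mathrm{supp}\,\phi_N\subset I_N$, in which case the argument of $V^{(3)}$ is within distance $1$ of $I_N$, so $|R_{N,2}(i)|\leq\tfrac{|t|^3\|\phi\|_\infty^3}{6N^{3/2}}\sup_{d(x,I_N)\leq 1}|V^{(3)}(x)|=o(N^{-1})$ by Assumption~\ref{assumption2}, whence $\sum_i|R_{N,2}(i)|=o(1)$. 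Thus $\E_N^{V,P}[e^{t\sqrt N(\mrm{fluct}_N(\Xi[\phi_N])+P\zeta_N(\phi_N))+\frac{t^2}{2}F(\hat\mu_N;\phi_N)}]\to1$. Next, $F(\hat\mu_N;\phi_N)\to F(\mu_V;\phi)=-q(\phi)$ in probability: the integrands of $F(\cdot;\phi_N)$ are bounded uniformly in $N$ and coincide with those of $F(\cdot;\phi)$ on $[E_N^--1,E_N^++1]$, where all particles sit with probability $\to1$ by Corollary~\ref{cor:edge}, while $F(\mu_V;\phi_N)\to F(\mu_V;\phi)$ by dominated convergence. Replacing $F(\hat\mu_N;\phi_N)$ by the constant $-q(\phi)$ in the exponential is then carried out exactly as in Proposition~\ref{prop:transfo laplace}, via the large deviation principle of \cite{Garcia} and a Cauchy--Schwarz splitting; crucially, the crude a priori bound $|\mrm{fluct}_N(\Xi[\phi_N])+P\zeta_N(\phi_N)|\leq c_1$ now holds \emph{uniformly in $N$} because $\|\Xi[\phi_N]\|_\infty$ is uniformly bounded --- here the hypothesis $V'\phi\in L^\infty$ is essential, controlling the term $V'(x)\phi_N(x)$. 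This yields $\E_N^{V,P}[e^{t\sqrt N(\mrm{fluct}_N(\Xi[\phi_N])+P\zeta_N(\phi_N))}]\to e^{\frac{t^2}{2}q(\phi)}$ for every $t\in\R$, and since $\sqrt N\zeta_N(\phi_N)\to0$ in probability by Corollary~\ref{cor:controleZeta} (its bound is uniform in $N$, $\|\phi_N''\|_\infty$ and $\|\phi_N\|_{H^2(\R)}$ being bounded), Slutsky's lemma gives $\sqrt N\,\mrm{fluct}_N(\Xi[\phi_N])\overset{\mrm{law}}{\longrightarrow}\mathcal{N}(0,q(\phi))$.

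\textbf{Step 2 (removing the truncation).} It remains to show $\sqrt N\,\mrm{fluct}_N(\Xi[\psi_N])\to0$ in probability, where $\psi_N:=\phi-\phi_N=(1-\chi_N)\phi$ is supported in $[E_N^--1,E_N^++1]^c$; then Slutsky concludes. On the event $\Omega_N=\{x_1,\dots,x_N\in[E_N^--1,E_N^++1]\}$, of probability $\to1$ by Corollary~\ref{cor:edge}, one has $\psi_N(x_i)=\psi_N'(x_i)=0$, so $\Xi[\psi_N](x_i)=2P\,\mcal{H}[\psi_N\rho_V](x_i)+c_N$ with $c_N=-2P\int_\R\mcal{H}[\psi_N\rho_V]\,d\mu_V$; using $\int\Xi[\psi_N]d\mu_V=0$,
$$\sqrt N\,\mrm{fluct}_N(\Xi[\psi_N])=\frac{2P}{\sqrt N}\sum_{i=1}^N\mcal{H}[\psi_N\rho_V](x_i)+\sqrt N\,c_N\qquad\text{on }\Omega_N.$$
Both summands are $o(1)$: since $\psi_N\rho_V$ is supported where $\rho_V$ decays exponentially (Lemma~\ref{lem:regularitedensite}) and $V(E_N^\pm)\sim\ln N$, splitting the defining integral of $\mcal{H}[\psi_N\rho_V](x)$ into a part near $I_N$ (controlled by the Lipschitz bound on $1-\chi_N$ and $\sup_{[E_N^++1,E_N^++2]}\rho_V=N^{-1+o(1)}$, and symmetrically near $E_N^-$) and a far part (controlled by Lemma~\ref{lem:mesureE_N}) gives $\sup_{x\in[E_N^--1,E_N^++1]}|\mcal{H}[\psi_N\rho_V](x)|=o(N^{-1/2})$, hence $\tfrac{1}{\sqrt N}\sum_i|\mcal{H}[\psi_N\rho_V](x_i)|\leq\sqrt N\cdot o(N^{-1/2})=o(1)$; and $|c_N|\leq 2P\|\mcal{H}[\rho_V]\|_\infty\|\phi\|_\infty\,\mu_V([E_N^--1,E_N^++1]^c)=o(N^{-1/2})$ by Lemma~\ref{lemme:boundedHilbert} and Lemma~\ref{lem:mesureE_N}. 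This proves the theorem.

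\textbf{Main obstacle.} The delicate point is Step~1: running the change of variables for the \emph{$N$-dependent} cutoff $\phi_N$ forces every error estimate to be uniform in $N$, and the only remainder not automatically so is the one containing $V^{(3)}$ --- controlling it is exactly what Assumption~\ref{assumption2} is for. Note that, although the cutoff localizes the integrand, after the change of variables the $y_i$ still range over all of $\R$, so this bound must hold pointwise in $y_i$; this is why $\phi$ is cut off \emph{inside} $I_N$ rather than merely on the bulk. A secondary nuisance is the uniform estimate $\mcal{H}[\psi_N\rho_V]=o(N^{-1/2})$ on the bulk in Step~2, which hinges on the interplay between the exponential decay of $\rho_V$ and the location $V(E_N^\pm)\sim\ln N$ of the edge.
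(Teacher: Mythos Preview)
Your proof is correct and follows essentially the same approach as the paper: truncate $\phi$ by a cutoff at the scale of the edge, rerun the change-of-variables argument of Proposition~\ref{prop:transfo laplace} for $\phi_N$ with all Taylor remainders now controlled uniformly in $N$ (Assumption~\ref{assumption2} handling the $V^{(3)}$ term), then remove the truncation using the edge localization of Corollary~\ref{cor:edge} and the tail estimate of Lemma~\ref{lem:mesureE_N}.

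The only cosmetic difference is in Step~2. The paper works with $\widetilde\Xi$ and compares $\int\widetilde\Xi[\phi_N]\,d\hat\mu_N$ to $\int\widetilde\Xi[\phi]\,d\hat\mu_N$ (and similarly against $\mu_V$) by splitting the explicit double integral over $J_N^2$ and $(J_N^2)^c$; since the finite-difference kernel is bounded by $\|\phi_N'\|_\infty$, only the bound $\mu_V(J_N^c)=o(N^{-1/2})$ is needed. You instead use the identity $\int\Xi[\psi_N]\,d\mu_V=0$ to write $\Xi[\psi_N](x_i)=2P\,\mcal H[\psi_N\rho_V](x_i)+c_N$ on $\Omega_N$ and then bound $\sup_{J_N}|\mcal H[\psi_N\rho_V]|$ directly. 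This is a slightly sharper statement, requiring your Lipschitz argument on $1-\chi_N$ near the transition; note that you can avoid the pointwise claim $\sup_{[E_N^++1,E_N^++2]}\rho_V=N^{-1+o(1)}$ and simply bound the near part by $C\int_{E_N^+}^{\infty}\rho_V=o(N^{-1/2})$ via Lemma~\ref{lem:mesureE_N}, which is all the paper uses.
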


\begin{proof}
For $N\geq 1$, let $E_N^-, E_N^+$ be given by Corollary \ref{cor:edge}. Let $\chi_N:\R\to [0,1]$ be $\mathcal{C}^2$ with compact support such that 
$$\chi_N(x)=1\text{ for }x\in [E_N^--1,E_N^++1]\text{ and }\chi_N(x)=0\text{ for }x\in [E_N^--2,E_N^++2]^c$$
and such that, denoting $\phi_N=\phi\chi_N$, $\sup_N\|\phi_N^{(k)}\|_{\infty}+\|\phi_N^{(k)}\|_{L^2(\R)}<+\infty$ for $k=0,1,2$ (we assumed $\phi\in H^2(\R)$ and $\|\phi''\|_{\infty}<+\infty$, in particular $\phi$, $\phi'$ and $\phi''$ are bounded and such a $\chi_N$ exists). The point of cutting $\phi$ outside the set $[E_N^--1,E_N^++1]$ is that with high probability, the empirical measure $\hat{\mu}_N$ doesn't see the difference between $\phi$ and $\phi_N$.

The support of $\phi_N$ is then contained in $[E_N^--2,E_N^++2]$, and we now argue that the proof of Proposition \ref{prop:transfo laplace} can be adapted so that
\begin{equation}
\label{eq:convergenceIntermediaire}
    \sqrt{N}\mrm{fluct}_N(\Xi[\phi_N]) \to \mathcal{N}(0,q(\phi))\,.
\end{equation}

Similarly as in Proposition \ref{prop:transfo laplace}, we perform in $\mcal{Z}_N^{P}[V]$ the change of variables
$x_i=y_i+\frac{t}{\sqrt{N}}\phi_N(y_i)$, $1\leq i \leq N$, which is the same as before, but with $\phi$ replaced by $\phi_N$. First, with $I_N\defi [E_N^--2,E_N^++2]$, the error term 
$$K_N(t,\phi_N) \leq 2\frac{t^3}{3N^{1/2}}\|\phi_N'\|_{\infty}^3+\frac{t^3}{6N^{1/2}}\|\phi_N\|_{\infty}\sup_{d(x,I_N)\leq 1}|V^{(3)}(x)|$$ 
of the proof of Proposition \ref{prop:transfo laplace} is still going to zero, because of our choice of $\chi_N$ and Assumption \ref{assumption2}. As previously, we then have
\begin{equation}
\label{eq:limiteLaplacePhi_N}
    \lim_{N\to \infty}\E_N^{V,P}\left[ e^{t\sqrt{N}\Lambda_N(\phi_N) + \frac{t^2}{2}F_N(\hat{\mu}_N)} \right]=1
\end{equation}
with 
$$ \Lambda_N(\phi_N) \defi  2P\iint_{\R^2}\frac{\phi_N(x)-\phi_N(y)}{x-y}d\mu_V(x)d(\hat{\mu}_N-\mu_V)(y) + \int_\R (\phi_N'-V'\phi_N)(x)d(\hat{\mu}_N-\mu_V)(x) + P\zeta_N(\phi_N)\,,$$
where $\zeta_N$ is given by \eqref{def:zeta}, and 
$$ F_N(\hat\mu_N)=-P\iint_{\R^2}\left(\frac{\phi_N(x)-\phi_N(y)}{x-y}\right)^2d\hat\mu_N(x)d\hat\mu_N(y)-\int_\R(V''\phi_N^2 + \phi_N'^2)(x)d\hat\mu_N(x)\,.$$
Taking again the distance $d_{\text{Lip}}$ defined in \eqref{def:FortetMourier}, one can check that for $\mu$, $\nu$ probability measures over $\R$, 
$$ \left| F_N(\mu)-F_N(\nu) \right| \leq C_N d_{\text{Lip}}(\mu,\nu)\,, $$
where $C_N$ is a term depending on the norms $\|\phi_N'\|_{\infty},\|\phi_N''\|_{\infty}$, $\| V''\phi_N^2\|_{\infty}$ and $\|(V''\phi_N^2)'\|_{\infty}$. The choice of $\chi_N$ and the fact that $\phi$ is chosen so that $V^{(3)}\phi^2$ and $V''\phi\phi'$ are bounded guarantee that $\|(V''\phi_N^2)'\|_{\infty}$ is bounded in $N$. The other norms are easily bounded by hypothesis. Therefore $C_N$ can be seen to be uniformly bounded in $N$, and we find some $C\geq 0$ independent of $N$ such that 
$$ \left| F_N(\mu)-F_N(\nu) \right| \leq C d_{\text{Lip}}(\mu,\nu)\,. $$
As in proposition \ref{prop:transfo laplace}, we use the large deviation principle for $(\hat{\mu}_N)$ to deduce
$$ \lim_{N\to +\infty} \E^{V,P}_N\left[ e^{t\sqrt{N}\Lambda_N(\phi_N)+\frac{t^2}{2}F_N(\hat{\mu}_N)} \right] = \lim_{N\to +\infty} \E^{V,P}_N\left[ e^{t\sqrt{N}\Lambda_N(\phi_N)} \right]e^{\frac{t^2}{2}F_N(\mu_V)}\,.$$
By dominated convergence, $F_N(\mu_V)$ converges to $F(\mu_V)$, the function $F$ being given by \eqref{def:fonctionF}.
This shows the convergence as $N$ goes to infinity
$$ \lim_{N\to +\infty} \E^{V,P}_N\left[ e^{t\sqrt{N}\Lambda_N(\phi_N)} \right]= e^{-\frac{t^2}{2}F(\mu_V)}\,, $$ 
and we deduce that $\sqrt{N}\Big(\mrm{fluct}_N(\Xi[\phi_N]) + P\zeta_N(\phi_N)\Big)$ converges towards a centered Gaussian variable with variance $-F(\mu_V) = q(\phi)$. Because $\sup_N \|\phi_N\|_{H^2(\R)}$ is finite, we can apply again Corollary \ref{cor:controleZeta} to deduce the convergence in law \eqref{eq:convergenceIntermediaire}. We now have the ingredients to conclude, by showing that the characteristic function 
\begin{equation*}
   \E^{V,P}_N\left[e^{\ii t\sqrt{N}\mrm{fluct}_N(\Xi[\phi])}\right]
\end{equation*}
converges to the characteristic function of a Gaussian variable with appropriate variance. Denoting
$$ \widetilde \Xi[\phi] (x) \defi  2P\int_\R \frac{\phi(x)-\phi(y)}{x-y}d\mu_V(y) + \phi'(x)-V'(x)\phi(x) = \Xi[\phi](x) + 2P\int_\R \mathcal{H}[\phi\rho_V](y)d\mu_V(y)\,,$$
we have 
$$ \mrm{fluct}_N(\Xi[\phi]) = \mrm{fluct}_N(\widetilde \Xi[\phi])$$
and we aim at establishing that
$$
\E^{V,P}_N\left[\exp\Big(\ii t\sqrt{N}\mrm{fluct}_N(\widetilde\Xi[\phi])\Big)\right]=\E^{V,P}_N\left[\exp\Big(\ii t\sqrt{N}\int_\R \widetilde\Xi[\phi](x) d\hat{\mu}_N(x)\Big)\right].\exp\Big(-\ii t\sqrt{N}\int_\R\widetilde\Xi[\phi](x) d\mu_V(x)\Big)  $$
converges towards the appropriate characteristic function.

By Corollary \ref{cor:edge}, the probability under $\P^{V,P}_N$ of the event $\mathcal{E}_N=\bigg\{ x_1,\ldots,x_N \in [E_N^--1,E_N^++1] \bigg\}$ converges to $1$. Along with the convergence \eqref{eq:convergenceIntermediaire}, we deduce
\begin{multline*}
    e^{-\frac{t^2}{2}q(\phi)}=\lim_{N\rightarrow\infty}\E^{V,P}_N\left[\exp\Big(\ii t\sqrt{N}\int_\R \widetilde\Xi[\phi_N](x) d\hat{\mu}_N(x)\Big)\right].\exp\Big(-\ii t\sqrt{N}\int_\R\widetilde\Xi[\phi_N](x) d\mu_V(x)\Big)
    \\= \lim_{N\rightarrow\infty}\E^{V,P}_N\left[\mathbf{1}_{\mathcal{E}_N}\exp\Big(\ii t\sqrt{N}\int_\R \widetilde\Xi[\phi_N](x) d\hat{\mu}_N(x)\Big)\right].\exp\Big(-\ii t\sqrt{N}\int_\R\widetilde\Xi[\phi_N](x) d\mu_V(x)\Big)\,,
\end{multline*}
Where we used 
$$\left|\E^{V,P}_N\left[\mathbf{1}_{\mathcal{E}_N^c}\exp\Big(\ii t\sqrt{N}\int_\R \widetilde\Xi[\phi_N](x) d\hat{\mu}_N(x)\Big)\right].\exp\Big(-\ii t\sqrt{N}\int_\R\widetilde\Xi[\phi_N](x) d\mu_V(x)\Big)\right|\leq \P^{V,P}_N(\mathcal{E}_N^c) \underset{N\rightarrow\infty}{\longrightarrow} 0\,.$$
Using that $\phi_N=\phi$ on $J_N={[E_N^--1,E_N^++1]}$,
\begin{align*}
    \int_\R \widetilde\Xi[\phi_N](x) d\mu_V(x) &= 2P\iint_{\R^2}\frac{\phi_N(x)-\phi_N(y)}{x-y}d\mu_V(x)d\mu_V(y) + \int_\R(\phi_N'-V'\phi_N)(x)d\mu_V(x) \\
    &=2P\iint_{J_N^2}\frac{\phi(x)-\phi(y)}{x-y}d\mu_V(x)d\mu_V(y) + 2P\iint_{(J_N^2)^c}\frac{\phi_N(x)-\phi_N(y)}{x-y}d\mu_V(x)d\mu_V(y) 
    \\&+ \int_{J_N} (\phi'-V'\phi)(x) d\mu_V(x) + \int_{J_N^c} (\phi\chi_N'+\phi'\chi_N-V'\phi\chi_N)(x)d\mu_V(x) \,.
\end{align*}
By boundedness of $(\|\phi_N'\|_{\infty})_N$, the second term is bounded by $$C_P\iint_{(J_N^2)^c}d\mu_V(x)d\mu_V(y) \leq 2C_P\mu_V(J_N^c) = o(N^{-1/2})\,,$$
where we used the union bound and Lemma \ref{lem:mesureE_N}. By the same estimate and the fact that $\chi_N$ can be chosen so that $(\|\chi_N'\|_{\infty})_N$ is bounded, and because $\phi'$, $V'\phi$ are bounded, the last integral term is also $o(N^{-1/2})$.
By the previous arguments, we also conclude that
$$ 2P\iint_{(J_N^2)^c}\frac{\phi(x)-\phi(y)}{x-y}d\mu_V(x)d\mu_V(y) + \int_{J_N^c}(\phi'-V'\phi)(x)d\mu_V(x) = o(N^{-1/2})\,,$$
thus
\begin{align*}
    \int_{\R} \widetilde\Xi[\phi_N](x) d\mu_V(x) = \int_\R \widetilde\Xi[\phi](x) d\mu_V(x) + o(N^{-1/2})\,,
\end{align*}
and so far we have 
\begin{equation*}
    \exp\Big(-\frac{t^2}{2}q(\phi)\Big) =\lim_{N\rightarrow\infty}\E^{V,P}_N\left[\mathbf{1}_{\mathcal{E}_N}\exp\Big(\ii t\sqrt{N}\int_\R \widetilde\Xi[\phi_N](x) d\hat{\mu}_N(x)\Big)\right].\exp\Big(-\ii t\sqrt{N}\int_\R\widetilde\Xi[\phi](x) d\mu_V(x)\Big)\,. 
\end{equation*}
Finally, using that on $\mathcal{E}_N$ the measure $\hat{\mu}_N$ is supported on $J_N$ (because each $x_i\in J_N$), using $\phi_N=\phi$ on $J_N$,
\begin{align*}
    \int_\R\widetilde\Xi[\phi_N](x)d\hat{\mu}_N(x) &= 2P\iint_{J_N^2}\frac{\phi(x)-\phi(y)}{x-y}d\mu_V(x)d\hat{\mu}_N(y) + 2P\iint_{(J_N^2)^c} \frac{\phi_N(x)-\phi_N(y)}{x-y}d\mu_V(x)d\hat{\mu}_N(y)
    \\&+ \int_{J_N} (\phi' - V'\phi)(x)d\hat{\mu}_N(x)
    \\&= 2P\iint_{\R^2} \frac{\phi(x)-\phi(y)}{x-y}d\mu_V(x)d\hat{\mu}_N(y) + \int_\R (\phi' - V'\phi)(x)d\hat{\mu}_N(x) + o(N^{-1/2})\,,
\end{align*}
Where in the second line we used, using Lemma \ref{lem:mesureE_N} again, that
$$\iint_{(J_N^2)^c} \frac{\phi_N(x)-\phi_N(y)}{x-y}d\mu_V(x)d\hat{\mu}_N(y) = \iint_{J_N \times J_N^c} \frac{\phi_N(x)-\phi_N(y)}{x-y}d\mu_V(x)d\hat{\mu}_N(y) = o(N^{-1/2})\,,$$
and the same estimate holds for $\phi_N$ replaced by $\phi$.
Therefore,
\begin{equation*}
    \exp\Big(-\frac{t^2}{2}q(\phi)\Big) = \\\lim_{N\rightarrow\infty}\E^{V,P}_N\left[\mathbf{1}_{\mathcal{E}_N}\exp\Big(\ii t\sqrt{N}\int_\R \widetilde\Xi[\phi](x) d\hat{\mu}_N(x)\Big)\right].\exp\Big(-\ii t\sqrt{N}\int_\R\widetilde\Xi[\phi](x) d\mu_V(x)\Big)\,.
\end{equation*}
This establishes that
$$ \lim_{N\rightarrow+\infty}\E^{V,P}_N\left[ \exp\Big(\ii t\sqrt{N}\mrm{fluct}_N(\Xi[\phi])\Big)\right] = \exp\Big(-\frac{t^2}{2}q(\phi)\Big)\,, $$
which concludes the proof.
\end{proof}

\begin{remark}
\label{rem:thmValidePourT}
Taking $\phi$ such that $\phi'$ satisfies the conditions of Theorem \ref{thm:LaplaceGeneral}, we then have
\begin{equation}
    \label{prooffinalthm}
    \E_N^{V,P}\left[e^{t\sqrt{N}\mrm{fluct}_N(\mathcal{L}[\phi])} \right] \underset{N\rightarrow\infty}{\longrightarrow} \exp\left\{\frac{t^2}{2}q(\phi')\right\},
\end{equation}
where the operator $\mathcal{L}$ is defined as $\mathcal{L}[\phi] \defi  \Xi[\phi']$, \textit{i.e.}
\begin{equation}
    \label{eq:premiere def L}
    \mathcal{L}[\phi](x) =2P\int_\R \frac{\phi'(x)-\phi'(y)}{x-y}d\mu_V(y)+\phi''(x) - V'(x)\phi'(x)-2P\int_\R\mcal{H}[\phi'\rho_V](y)d\mu_V(y)\,.
\end{equation}
Note that $q(\phi')=\big(\sigma_V^P\big)^2(\mcal{L}[\phi])$ 
where $\sigma_V^P$ is defined in \eqref{eq:variance}.
By Theorem \ref{thmreg}, the class of functions in $\mcal{L}^{-1}(\mcal{T})$ where
\begin{equation}
\label{def:espaceT}
\mcal{T}\defi \left\{f\in\mcal{C}^2(\R)\, |\, f,f',f''\  \mathrm{are\ bounded}, \int_\R f(x)d\mu_V(x)=0\right\}
\end{equation}
satisfies (\ref{prooffinalthm}). This proves Theorem \ref{thm:thmppal} when noticing that $\mcal{L}^{-1}[\phi]'=\Xi^{-1}[\phi]$ and that for any $\phi\in\mcal{C}^2(\R)$ such that $\, \phi,\phi',\phi''\  \mathrm{are\ bounded}$, $\mrm{fluct}_N(\phi)=\mrm{fluct}_N(\mcal{X}[\phi])$ and $\mcal{X}[\phi]\in\mcal{T}$. Hence, we can conclude about the fluctuations of linear statistics with general $\mcal{C}^2$ functions with bounded derivatives.
\end{remark}

\section{Inversion of \texorpdfstring{$\mathcal{L}$}{TEXT}}
\label{section:spectral_theory}

This section is dedicated to the definition of $\mcal{L}$ given by \eqref{def:operateurL} and its domain and then we focus on its inversion. We rely heavily on results of Appendix \ref{app A}: the diagonalization of the operator $\mcal{A}$ by the use of the theory of Schrödinger operators.\\

We reintroduce the operators $\mathcal{A}$ and $\mathcal{W}$, acting on sufficiently smooth functions of $L^2(\mu_V)$, by
\begin{equation}
    \label{def:operateurs A et W}
    \mathcal{A}[\phi]=-\dfrac{\Big(\phi'\rho_V\Big)'}{\rho_V} = - \left(\phi''+\frac{\rho_V'}{\rho_V}\phi'\right) \quad \text{and} \quad 
     \mathcal{W}[\phi]=-\mcal{H}\big[\phi'\rho_V\big]+\int_\R\mcal{H}\big[\phi'\rho_V\big](x)d\mu_V(x)\,.
\end{equation}
One can show that the operator $\mcal{A}$ corresponds to the operator verifying:
$$\braket{\phi,\psi}_\msf{H}=\int_\R\phi'(x)\psi'(x)d\mu_V(x)=\int_\R\phi(x)\mcal{A}[\psi](x) d\mu_V(x)=\braket{\phi,\mcal{A}[\psi]}_{L^2(\mu_V)}$$
We first show the following decomposition of $\mathcal{L}$.

\begin{lemma} For $\phi$ twice differentiable we have the following pointwise identity
\begin{equation} -\mathcal{L}[\phi]=\mathcal{A}[\phi]+2P\mathcal{W}[\phi]\,.
	\end{equation}
\end{lemma}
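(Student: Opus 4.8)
The plan is to verify the claimed identity $-\mathcal{L}[\phi]=\mathcal{A}[\phi]+2P\mathcal{W}[\phi]$ by a direct computation, starting from the explicit formula \eqref{eq:premiere def L} for $\mathcal{L}$ and matching it term by term against the definitions \eqref{def:operateurs A et W} of $\mathcal{A}$ and $\mathcal{W}$. First I would recall that, by \eqref{eq:premiere def L},
\begin{equation*}
\mathcal{L}[\phi](x)=2P\int_\R \frac{\phi'(x)-\phi'(y)}{x-y}d\mu_V(y)+\phi''(x)-V'(x)\phi'(x)-2P\int_\R\mcal{H}[\phi'\rho_V](y)d\mu_V(y)\,,
\end{equation*}
so that $-\mathcal{L}[\phi](x)=-\phi''(x)+V'(x)\phi'(x)-2P\int_\R \frac{\phi'(x)-\phi'(y)}{x-y}d\mu_V(y)+2P\int_\R\mcal{H}[\phi'\rho_V](y)d\mu_V(y)$.

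The key step is to identify the singular-integral term with a Hilbert transform. Writing $\mu_V(dy)=\rho_V(y)dy$, one has the pointwise identity
\begin{equation*}
\int_\R \frac{\phi'(x)-\phi'(y)}{x-y}\rho_V(y)\,dy = \phi'(x)\,\fint_\R\frac{\rho_V(y)}{x-y}dy - \fint_\R\frac{\phi'(y)\rho_V(y)}{x-y}dy = -\phi'(x)\mcal{H}[\rho_V](x)+\mcal{H}[\phi'\rho_V](x)\,,
\end{equation*}
where the principal values are justified because $\rho_V$ and $\phi'\rho_V$ are in $H^1(\R)\subset\mcal{C}^{1/2}(\R)$ by Lemma \ref{lem:regularitedensite} and the decay hypotheses on $\phi$, and where I use the sign convention \eqref{def:HilbertTransform}, $\mcal{H}[f](x)=\fint_\R\frac{f(t)}{t-x}dt$. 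Substituting this into the expression for $-\mathcal{L}[\phi]$ gives
\begin{equation*}
-\mathcal{L}[\phi](x)=-\phi''(x)+V'(x)\phi'(x)+2P\phi'(x)\mcal{H}[\rho_V](x)-2P\mcal{H}[\phi'\rho_V](x)+2P\int_\R\mcal{H}[\phi'\rho_V](y)d\mu_V(y)\,.
\end{equation*}

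Now I would recognize the pieces. By \eqref{deriv1} of Lemma \ref{lem:regularitedensite}, $\rho_V'=-(V'+2P\mcal{H}[\rho_V])\rho_V$, so $\frac{\rho_V'}{\rho_V}=-V'-2P\mcal{H}[\rho_V]$, and therefore
\begin{equation*}
-\phi''(x)+V'(x)\phi'(x)+2P\phi'(x)\mcal{H}[\rho_V](x)=-\phi''(x)-\frac{\rho_V'(x)}{\rho_V(x)}\phi'(x)=\mathcal{A}[\phi](x)\,,
\end{equation*}
using the second expression for $\mathcal{A}$ in \eqref{def:operateurs A et W}. The remaining two terms are exactly $2P\big(-\mcal{H}[\phi'\rho_V](x)+\int_\R\mcal{H}[\phi'\rho_V](y)d\mu_V(y)\big)=2P\mathcal{W}[\phi](x)$ by the definition of $\mathcal{W}$ in \eqref{def:operateurs A et W}. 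Summing, $-\mathcal{L}[\phi]=\mathcal{A}[\phi]+2P\mathcal{W}[\phi]$, which is the claim.

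I do not expect a genuine obstacle here — this is a bookkeeping identity — but the one point requiring a little care is the splitting of the principal-value integral $\int_\R\frac{\phi'(x)-\phi'(y)}{x-y}\rho_V(y)dy$ into two separate principal values: each of $\fint\frac{\rho_V(y)}{x-y}dy$ and $\fint\frac{\phi'(y)\rho_V(y)}{x-y}dy$ exists individually (not merely their difference) precisely because $\rho_V\in\mcal{C}^{1/2}$ and decays fast enough, so the manipulation is legitimate; the combined integrand $(\phi'(x)-\phi'(y))/(x-y)$ is in fact bounded and continuous, so no principal value is needed on the left side, and the identity is just additivity of the (convergent) integrals once the split is validated. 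One should also note the sign bookkeeping coming from the convention in \eqref{def:HilbertTransform} where the kernel is $1/(t-x)$ rather than $1/(x-t)$; tracking this is what produces the $+2P\phi'\mcal{H}[\rho_V]$ term with the correct sign to combine with $V'\phi'$ into $\mathcal{A}[\phi]$.
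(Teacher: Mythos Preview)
Your proof is correct and follows essentially the same route as the paper: split the integral $\int_\R\frac{\phi'(x)-\phi'(y)}{x-y}\rho_V(y)\,dy$ into $-\phi'(x)\mcal{H}[\rho_V](x)+\mcal{H}[\phi'\rho_V](x)$, then use \eqref{deriv1} to recognize $-\phi''-\frac{\rho_V'}{\rho_V}\phi'=\mcal{A}[\phi]$ and identify the remaining terms as $2P\mcal{W}[\phi]$. The only difference is cosmetic---you work with $-\mcal{L}[\phi]$ throughout and add some commentary on the legitimacy of the principal-value split, which the paper omits.
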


\begin{proof}
	We write for $x\in \R$
	\begin{equation}
	2P\int_\R \frac{\phi'(x)-\phi'(y)}{x-y}\rho_V(y)dy=-2P\phi'(x)\mcal{H}[\rho_V](x)+2P\mcal{H}[\phi'\rho_V](x)\,.
	\end{equation}
	Then,
	$$\mathcal{L}[\phi] = \phi''-V'\phi'-2P\phi'\mathcal{H}[\rho_V] +2P\mathcal{H}\big[\phi'\rho_V\big]-2P\int_\R \mathcal{H}[\phi'\rho_V](y)d\mu_V(y)\,.$$
	By \eqref{deriv1} we have
	$-V'-2P\mcal{H}[\rho_V]=\frac{\rho_V'}{\rho_V}$, which concludes the proof.
\end{proof}

In order to state the next theorem, whose proof we detail in the Appendix, we introduce the following Sobolev-type spaces.
Let
$$ H^1_{V'}(\R)\defi \Big\{u\in H^1(\R),\,uV'\in L^2(\R)\Big\}\,.$$
As explained in the Appendix, $\mcal{A}$, defined in \eqref{def:operateurs A et W}, is linked to a Schrödinger operator $\mcal{S}\defi -\Delta+w_V$ via $\mcal{A}=\rho_V^{-1/2}\mcal{S}\Big[.\rho_V^{1/2}\Big]$. Here $w_V$ is defined in \eqref{eq:w_V}, continuous bounded by below and grows at infinity like $\dfrac{V'\,^2}{4}$. This operator $\mcal{S}$ is defined on the following space:
$$\mcal{D}(\mcal{S})=\Big\{u\in H^1_{V'}(\R),\sqrt{\rho_V}\mcal{A}\Big[\dfrac{u}{\sqrt{\rho_V}}\Big]\in L^2(\R)\Big\}.$$
Now, let
$$ \mathcal{D}_{L^2(\mu_V)}(\mathcal{A})\defi  \rho_V^{-1/2}\mcal{D}(\mcal{S})$$
and its homogeneous counterpart
\begin{equation}\label{def:dl2_0(A)}
     \mcal{D}_{L^2_0(\mu_V)}(\mcal{A})\defi  \Big\{u\in \mcal{D}_{L^2(\R)}(\mcal{A}),\, \int_\R u(x)d\mu_V(x) = 0 \Big\}\,.
\end{equation}
Finally, we let $L_0^2(\mu_V)$ be the subset of $L^2(\mu_V)$ of zero mean functions with respect to $\rho_V$.

We detail the proof of the following theorem in Appendix \ref{app A} which is based on Schrödinger operators theory.
\begin{theorem}[Diagonalization of $\mathcal{A}$ in $L_0^2(\mu_V)$]
\label{thm:DiagA}
There exists a sequence $0<\lambda_1<\lambda_2<\ldots $ going to infinity, and a complete orthonormal set $(\phi_n)_{n\geq 1}$ of $L_0^2(\mu_V)$ of associated eigenfunctions for $\mathcal{A}$, meaning that
\begin{itemize}
    \item $\Span\{\phi_n,\, n\geq 1\}$ is dense in $L_0^2(\mu_V)$,
    \item For all $i,j$,  $\Braket{\phi_i,\phi_j}_{L^2(\mu_V)}=\delta_{i,j}$,
    \item For all $n\geq 1$, $\mathcal{A}[\phi_n] = \lambda_n \phi_n$.
\end{itemize}
Furthermore, each $\phi_n$ is in $\mcal{D}_{L^2_0(\mu_V)}(\mcal{A})$, $\mathcal{A}:\mcal{D}_{L^2_0(\mu_V)}(\mcal{A})\to L_0^2(\mu_V)$ is bijective, and we have the writing, for $u\in L_0^2(\mu_V)$
$$ \mathcal{A}^{-1}[u] = \sum_{n\geq 1} \lambda_n^{-1} \Braket{u,\phi_n}_{L^2(\mu_V)}\phi_n\,.$$
\end{theorem}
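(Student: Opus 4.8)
The plan is to derive the statement entirely from the spectral theory of the Schrödinger operator $\mcal{S}=-\Delta+w_V$ developed in Appendix \ref{app A}, transported to $L^2(\mu_V)$ through the map
$$U:L^2(\R)\longrightarrow L^2(\mu_V),\qquad U[f]=\rho_V^{-1/2}f\,.$$
This $U$ is an isometric isomorphism, since $\int_\R|\rho_V^{-1/2}f|^2\,d\mu_V=\int_\R|f|^2\,dx$, and, by the very definition $\mcal{D}_{L^2(\mu_V)}(\mcal{A})=\rho_V^{-1/2}\mcal{D}(\mcal{S})$ together with the relation $\mcal{A}=\rho_V^{-1/2}\mcal{S}[\,\cdot\,\rho_V^{1/2}]$, it conjugates one operator into the other: $\mcal{A}=U\mcal{S}U^{-1}$ on $\mcal{D}_{L^2(\mu_V)}(\mcal{A})$. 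Thus any spectral resolution of $\mcal{S}$ transfers verbatim to $\mcal{A}$.

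First I would invoke Appendix \ref{app A}: $w_V$ is continuous, bounded below, and $w_V(x)\to+\infty$ as $|x|\to\infty$ (it grows like $V'^2/4$, and $|V'|\to\infty$ by Assumptions \ref{assumptions} \textit{\ref{ass1}}), so $\mcal{S}$ is self-adjoint on $\mcal{D}(\mcal{S})$ and has compact resolvent. Consequently its spectrum is a sequence $\mu_1\le\mu_2\le\cdots\to+\infty$ of eigenvalues admitting a complete orthonormal system $(\psi_n)_{n\ge1}$ of eigenfunctions in $L^2(\R)$; moreover, being a one-dimensional Schrödinger operator in the limit-point case at both $\pm\infty$, its eigenspaces are one-dimensional, so in fact $\mu_1<\mu_2<\cdots$. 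Applying $U$, the functions $\widetilde\phi_n:=U[\psi_n]$ form a complete orthonormal set of $L^2(\mu_V)$, lie in $\mcal{D}_{L^2(\mu_V)}(\mcal{A})$, and satisfy $\mcal{A}[\widetilde\phi_n]=\mu_n\widetilde\phi_n$.

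It then remains to peel off the ground state. Since $\mcal{A}[1]=-(\mathbf 1'\rho_V)'/\rho_V=0$ and $1\in L^2(\mu_V)$ ($\mu_V$ being a probability measure), the function $\sqrt{\rho_V}=U^{-1}[1]$ satisfies $\mcal{S}[\sqrt{\rho_V}]=0$; that $\sqrt{\rho_V}\in\mcal{D}(\mcal{S})$ is a routine consequence of Lemma \ref{lem:regularitedensite} and Assumptions \ref{assumptions} \textit{\ref{ass2}} (which give in particular $\sqrt{\rho_V}\in H^1_{V'}(\R)$). As $\sqrt{\rho_V}>0$ on $\R$, it cannot be orthogonal to the strictly positive ground state eigenfunction of $\mcal{S}$, hence it \emph{is} that ground state: $\mu_1=0$ and $\widetilde\phi_1=1$. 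Therefore $\mu_n>0$ for $n\ge2$, and relabelling $\lambda_n:=\mu_{n+1}$ and $\phi_n:=\widetilde\phi_{n+1}$ yields $0<\lambda_1<\lambda_2<\cdots\to+\infty$ together with an orthonormal family of eigenfunctions of $\mcal{A}$ whose span is dense in the orthogonal complement of $1$ in $L^2(\mu_V)$, that is, in $L^2_0(\mu_V)$. Each $\phi_n$ lies in $\mcal{D}_{L^2_0(\mu_V)}(\mcal{A})$, because $\int_\R\phi_n\,d\mu_V=\Braket{\widetilde\phi_{n+1},1}_{L^2(\mu_V)}=\Braket{\psi_{n+1},\psi_1}_{L^2(\R)}=0$. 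Finally, the restriction of $\mcal{A}$ to $\mcal{D}_{L^2_0(\mu_V)}(\mcal{A})$ is the part of the self-adjoint operator $\mcal{A}$ in $L^2_0(\mu_V)$, with spectrum $\{\lambda_n\}_{n\ge1}\subset[\lambda_1,+\infty)$ and $\lambda_1>0$; hence $0$ lies in its resolvent set, it is a bijection onto $L^2_0(\mu_V)$, and the spectral theorem gives, for $u\in L^2_0(\mu_V)$,
$$\mcal{A}^{-1}[u]=\sum_{n\ge1}\lambda_n^{-1}\Braket{u,\phi_n}_{L^2(\mu_V)}\phi_n\,,$$
the series converging in $L^2(\mu_V)$.

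The only genuinely delicate points lie entirely in Appendix \ref{app A}: establishing the (essential) self-adjointness of $\mcal{S}$ and the compactness of its resolvent from the growth of $w_V$, and checking that conjugation by $\rho_V^{\pm1/2}$ maps the relevant Sobolev-type domains onto one another. Once these are granted, the argument above is just bookkeeping with the isometry $U$ and the identification of the ground state with the constants.
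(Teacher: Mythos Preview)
Your proof is correct and follows essentially the same route as the paper's Appendix \ref{app A}: both arguments conjugate $\mcal{A}$ to the Schr\"odinger operator $\mcal{S}=-\Delta+w_V$ via multiplication by $\rho_V^{\pm 1/2}$, use that $w_V\to+\infty$ to obtain compact resolvent and hence a discrete spectrum with a complete orthonormal eigenbasis, transfer this back to $L^2(\mu_V)$, and then peel off the constant function (equivalently, the ground state $\sqrt{\rho_V}$ of $\mcal{S}$) to land in $L^2_0(\mu_V)$. Your treatment is slightly more streamlined in two places: you identify the ground state by the positivity argument (two strictly positive $L^2$ functions cannot be orthogonal) rather than by the paper's direct computation of $\ker\mcal{A}$, and you justify the \emph{strict} inequalities $\lambda_1<\lambda_2<\cdots$ via simplicity of eigenvalues for one-dimensional Schr\"odinger operators in the limit-point case---a point the paper's statement asserts but its proof of Proposition \ref{prop:diagoL2} does not explicitly address.
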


We see the operators $\mathcal{A}$ and $\mathcal{W}$ as unbounded operators on the space 
$$ \mathsf{H}=\Big\{ u\in H^1(\mu_V)\,|\,\int_\R u(x)d\mu_V(x)=0 \Big\} $$
endowed with the inner product $\Braket{u,v}_{\mathsf{H}}=\Braket{u',v'}_{L^2(\mu_V)}$. This defines an inner product on $\mathsf{H}$ and makes it a complete space: it can be seen that $H^1(\mu_V)$ is the completion of $\mathcal{C}_c^{\infty}(\R)$ with respect to the inner product $\Braket{u,v}_{L^2(\mu_V)}+\Braket{u',v'}_{L^2(\mu_V)}$, see \cite{zhikov1998weighted}. The space $\mathsf{H}$ is then the kernel of the bounded (with respect to $\|\cdot\|_\msf{H}$) linear form, $\langle \widetilde{1}, \cdot \rangle_{L^2(\mu_V)}$ on $H^1(\mu_V)$, and both inner products are equivalent on $\mathsf{H}$ because of the Poincaré inequality, Proposition \ref{prop:poincare}. The use of $\msf{H}$ is motivated by the fact that both $\mcal{A}$ and $\mcal{W}$ are self-adjoint positive on this space as we show in Lemma \ref{lem:Wpositif}.

In the next proposition, we deduce from Theorem \ref{thm:DiagA} the diagonalization of $\mathcal{A}$ in $\mathsf{H}$.
\begin{proposition}[Diagonalization of $\mathcal{A}$ in $\mathsf{H}$]
\label{prop:diagAdansH}
With the same eigenvalues $0<\lambda_1<\lambda_2<\ldots$ as in Theorem \ref{thm:DiagA}, there exists a complete orthonormal set $(\varphi_n)_{n\geq 1}$ of $\mathsf{H}$ formed by eigenfunctions of $\mathcal{A}$. 
\end{proposition}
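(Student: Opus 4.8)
The plan is to transport the $L^2_0(\mu_V)$-orthonormal eigenbasis $(\phi_n)_{n\geq1}$ furnished by Theorem~\ref{thm:DiagA} into $\mathsf{H}$ by a scalar renormalization, and then to deduce completeness in $\mathsf{H}$ from completeness in $L^2_0(\mu_V)$. The identity that makes everything work is the weak formulation of $\mathcal{A}$:
$$ \Braket{\phi,\mathcal{A}[\psi]}_{L^2(\mu_V)} \;=\; \Braket{\phi',\psi'}_{L^2(\mu_V)}\;=\;\Braket{\phi,\psi}_{\mathsf{H}}, \qquad \phi\in\mathsf{H},\ \psi\in\mathcal{D}_{L^2_0(\mu_V)}(\mathcal{A})\,. $$
First I would establish this for $\phi\in\mathcal{C}^\infty_c(\R)$ by writing $\mathcal{A}[\psi]=-(\psi'\rho_V)'/\rho_V$ and integrating by parts (the boundary terms vanish since $\phi$ is compactly supported), and then extend it to all $\phi\in H^1(\mu_V)\supset\mathsf{H}$, using that $\mathcal{C}^\infty_c(\R)$ is dense in $H^1(\mu_V)$ and that both sides are continuous with respect to $\|\cdot\|_{H^1(\mu_V)}$ (the left side because $\mathcal{A}[\psi]\in L^2(\mu_V)$, the right side trivially). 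Along the way one has to check the inclusion $\mathcal{D}_{L^2_0(\mu_V)}(\mathcal{A})\subset\mathsf{H}$: writing $\psi=\rho_V^{-1/2}u$ with $u\in H^1_{V'}(\R)$ and using $\rho_V'/\rho_V=-(V'+2P\mathcal{H}[\rho_V])$ from \eqref{deriv1}, one gets $\psi'=\rho_V^{-1/2}\big(u'+\tfrac12(V'+2P\mathcal{H}[\rho_V])u\big)$, so $\|\psi'\|_{L^2(\mu_V)}^2\lesssim\|u'\|_{L^2(\R)}^2+\|V'u\|_{L^2(\R)}^2+\|u\|_{L^2(\R)}^2<\infty$ because $\mathcal{H}[\rho_V]$ is bounded by Lemma~\ref{lemme:boundedHilbert}.

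Granting the identity, applying it with $\phi=\phi_i$, $\psi=\phi_j$, $\mathcal{A}[\phi_j]=\lambda_j\phi_j$ and $\Braket{\phi_i,\phi_j}_{L^2(\mu_V)}=\delta_{ij}$ gives $\Braket{\phi_i,\phi_j}_{\mathsf{H}}=\lambda_j\delta_{ij}$; in particular $\|\phi_n\|_{\mathsf{H}}^2=\lambda_n>0$. Hence
$$ \varphi_n\defi\lambda_n^{-1/2}\,\phi_n $$
defines an orthonormal family in $\mathsf{H}$ which, by linearity of $\mathcal{A}$, still satisfies $\mathcal{A}[\varphi_n]=\lambda_n\varphi_n$. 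For density, I would take $u\in\mathsf{H}$ with $\Braket{u,\varphi_n}_{\mathsf{H}}=0$ for every $n$. By the weak identity, $0=\Braket{u,\mathcal{A}[\varphi_n]}_{L^2(\mu_V)}=\lambda_n\Braket{u,\varphi_n}_{L^2(\mu_V)}$, and since $\lambda_n\neq0$ this forces $\Braket{u,\phi_n}_{L^2(\mu_V)}=0$ for all $n$. But $u\in\mathsf{H}\subset L^2_0(\mu_V)$, and $(\phi_n)_{n\geq1}$ is a complete orthonormal set of $L^2_0(\mu_V)$ by Theorem~\ref{thm:DiagA}, so $u=0$ in $L^2(\mu_V)$, hence $u=0$ in $\mathsf{H}$. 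Thus $\Span\{\varphi_n,\ n\geq1\}$ is dense in $\mathsf{H}$, which proves the claim.

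I expect the only genuinely delicate step to be the justification of the weak identity — namely the vanishing of the boundary terms in the integration by parts (which rests on the exponential decay of $\rho_V$ from Lemma~\ref{lem:regularitedensite} together with membership of $\psi$ in the Schrödinger domain) and the inclusion $\mathcal{D}_{L^2_0(\mu_V)}(\mathcal{A})\subset\mathsf{H}$. Everything else is a formal consequence of Theorem~\ref{thm:DiagA} and of the Poincaré inequality (Proposition~\ref{prop:poincare}), the latter guaranteeing in particular that $\|\cdot\|_{\mathsf{H}}$ and $\|\cdot\|_{L^2(\mu_V)}$ are comparable on $\mathsf{H}$, which is what legitimizes the continuity arguments above.
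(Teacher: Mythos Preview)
Your proposal is correct and follows essentially the same route as the paper: rescale the $L^2_0(\mu_V)$-eigenbasis by $\lambda_n^{-1/2}$, use the identity $\Braket{\phi,\mathcal{A}[\psi]}_{L^2(\mu_V)}=\Braket{\phi,\psi}_{\mathsf{H}}$ to obtain orthonormality in $\mathsf{H}$, and then deduce completeness by showing that $\mathsf{H}$-orthogonality to all $\varphi_n$ forces $L^2(\mu_V)$-orthogonality to all $\phi_n$ and hence $u=0$. The paper's proof is terser---it writes the chain $\delta_{ij}=\tfrac{1}{\lambda_j}\Braket{\phi_i,\phi_j}_{\mathsf{H}}$ directly without explicitly justifying the integration by parts or the inclusion $\mathcal{D}_{L^2_0(\mu_V)}(\mathcal{A})\subset\mathsf{H}$---so your added care on those points is welcome rather than superfluous.
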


\begin{proof}
With $(\phi_n)_{n\geq 1}$ of Theorem \ref{thm:DiagA},
    \begin{align*}
        \delta_{i,j}=\langle \phi_i,\phi_j \rangle_{L^2(\mu_V)} &= \frac{1}{\lambda_j}\langle \phi_i,\mathcal{A}[\phi_j] \rangle_{L^2(\mu_V)}\\
                              &=\frac{1}{\lambda_j}\langle \phi_i',\phi_j' \rangle_{L^2(\mu_V)}\\
                     &= \frac{1}{\lambda_j}\langle \phi_i,\phi_j \rangle_{\mathsf{H}}.
    \end{align*}
    With $\varphi_n=\frac{1}{\sqrt{\lambda_n}}\phi_n$, $(\varphi_n)_{n\geq 1}$ is then orthonormal with respect to the inner product of $\mathsf{H}$. 
    To show that $\Span\{\varphi_n,n\geq 1\}$ is dense in $\mathsf{H}$, let $u\in\msf{H}$ be such that for all $j\geq 1$, $\langle u,\phi_j\rangle_{\msf{H}}=0$. In the last series of equalities, replace $\phi_i$ by $u$: we see that $u$ is orthogonal to each $\phi_j$ in $L^2(\mu_V)$, thus $u$ is a constant as shown in the proof of Lemma \ref{lemma:ortho}, and because $u\in \mathsf{H}$ it has zero mean against $\rho_V$. This shows that $u=0$.
\end{proof}

We set for what follows $\mathcal{D}(\mathcal{A}) \defi  \left\{ u\in \mathsf{H}\,|\, \mathcal{A}[u]\in \mathsf{H}\right\}$ and $\mathcal{D}(\mathcal{W}) \defi  \left\{ u\in \mathsf{H}\,|\, \mathcal{W}[u]\in \mathsf{H}\right\}$.
\begin{remark}
    One can notice that $\mcal{D}(\mcal{A})\subset\mcal{D}(\mcal{W})$. Indeed, it just suffices to see that whenever $u\in\mcal{D}(\mcal{A})$, then $$\mcal{W}[u]'=\mcal{H}\big[\rho_V\mcal{A}[u]\big]\in L^2(\mu_V).$$
    Since for all $u\in\msf{H}$, $\mcal{W}[u]\in L^2_0(\mu_V)$, then $\mcal{W}[u]\in\msf{H}$.
\end{remark}

\begin{lemma}
\label{lem:Wpositif}
The following properties hold:
\begin{itemize}
    \item The operator $\mathcal{W}:\mathcal{D}(\mathcal{W})\to \mathsf{H}$ is positive: for all $\phi\in \mathcal{D}(\mathcal{W})$,
$$\langle \mathcal{W}[\phi],\phi \rangle_{\mathsf{H}}=\dfrac{1}{2}\|\phi'\rho_V\|_{1/2}^2\geq 0 \,,$$
with equality only for $\phi=0$, where the $1/2$-norm of $u$ is given by 
$$ \|u\|_{1/2}^2=\int_\R |x|.\left|\mathcal{F}[u](x)\right|^2dx\,.$$
    \item Both $\mathcal{A}$ and $\mathcal{W}$ are self-adjoint for the inner product of $\mathsf{H}$.
\end{itemize}

\end{lemma}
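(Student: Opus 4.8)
The plan is to reduce both statements to Fourier–analytic identities on $L^2(\R)$, exploiting the description of $\mathcal{W}$ via the Hilbert transform and the fact that $\langle u,v\rangle_{\mathsf{H}}=\langle u',v'\rangle_{L^2(\mu_V)}$.

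First, for the positivity of $\mathcal{W}$, fix $\phi\in\mathcal{D}(\mathcal{W})$. Since the constant $\int_\R\mathcal{H}[\phi'\rho_V]\,d\mu_V$ drops out under differentiation, one has $\mathcal{W}[\phi]'=-\mathcal{H}[\phi'\rho_V]'$, so that $\langle\mathcal{W}[\phi],\phi\rangle_{\mathsf{H}}=\langle\mathcal{W}[\phi]',\phi'\rangle_{L^2(\mu_V)}=-\langle\mathcal{H}[g]',g\rangle_{L^2(\R)}$ with $g:=\phi'\rho_V\in L^2(\R)$. I would then compute this last pairing by Plancherel: using Lemma \ref{lemma:HilbertPropriétés}\,(i) together with $\mathcal{F}[g'](\omega)=\ii\omega\mathcal{F}[g](\omega)$ gives $\mathcal{F}\big[\mathcal{H}[g]'\big](\omega)=-\pi|\omega|\mathcal{F}[g](\omega)$, whence $\langle\mathcal{H}[g]',g\rangle_{L^2(\R)}=\tfrac{1}{2\pi}\int_\R(-\pi|\omega|)|\mathcal{F}[g](\omega)|^2\,d\omega=-\tfrac12\|g\|_{1/2}^2$. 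This yields $\langle\mathcal{W}[\phi],\phi\rangle_{\mathsf{H}}=\tfrac12\|\phi'\rho_V\|_{1/2}^2\geq 0$. Equality forces $\mathcal{F}[\phi'\rho_V]=0$ a.e., hence $\phi'\rho_V=0$; since $\rho_V>0$ on $\R$ (Lemma \ref{lem:regularitedensite}), $\phi'\equiv 0$, so $\phi$ is constant, and the zero-mean constraint in $\mathsf{H}$ forces $\phi=0$. The only care needed here is the justification of $\mathcal{W}[\phi]'=-\mathcal{H}[(\phi'\rho_V)']$ and of the Plancherel step for general $\phi\in\mathcal{D}(\mathcal{W})$: I would first establish the identity on a convenient core (e.g. $\phi$ with $\phi'\rho_V\in H^1(\R)$, using Lemma \ref{lemma:HilbertPropriétés}\,(iii)) and then extend by continuity of both sides in the relevant norms.

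For self-adjointness I would record the polarized version of the identity above: for $\phi,\psi\in\mathcal{D}(\mathcal{W})$,
$$\langle\mathcal{W}[\phi],\psi\rangle_{\mathsf{H}}=\tfrac12\int_\R|\omega|\,\mathcal{F}[\phi'\rho_V](\omega)\,\overline{\mathcal{F}[\psi'\rho_V](\omega)}\,d\omega,$$
which is real (both $\phi'\rho_V$ and $\psi'\rho_V$ being real-valued) and hence symmetric under $\phi\leftrightarrow\psi$; so $\mathcal{W}$ is symmetric, and one upgrades this to self-adjointness on the maximal domain $\mathcal{D}(\mathcal{W})=\{u\in\mathsf{H}:\mathcal{W}[u]\in\mathsf{H}\}$ by the standard argument that the nonnegative quadratic form $\phi\mapsto\tfrac12\|\phi'\rho_V\|_{1/2}^2$ is closed on $\mathsf{H}$ and that its Friedrichs operator coincides with $\mathcal{W}$. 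For $\mathcal{A}$, I would use that the inclusion $\iota:\mathsf{H}\hookrightarrow L^2_0(\mu_V)$ is bounded with dense range — boundedness being exactly the Poincaré inequality of Proposition \ref{prop:poincare} — and check, integrating by parts in the Sturm–Liouville form, that $\iota^{*}\iota=\mathcal{A}^{-1}$ on $\mathsf{H}$; since $\iota^{*}\iota$ is bounded, self-adjoint, nonnegative and injective, its inverse $\mathcal{A}$ is self-adjoint on $\mathsf{H}$ (alternatively this is immediate from the complete eigenbasis $(\varphi_n)$ of Proposition \ref{prop:diagAdansH}, on which $\mathcal{A}$ acts as multiplication by $\lambda_n>0$). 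The main obstacle I anticipate is not the symmetry computations, which are short, but the bookkeeping needed to make the integration-by-parts and Plancherel manipulations rigorous on the unbounded domains $\mathcal{D}(\mathcal{A})$ and $\mathcal{D}(\mathcal{W})$ — controlling boundary terms at infinity and identifying these maximal domains with the Friedrichs/form domains — which is precisely where the decay estimates of Section \ref{section:regularity_eq_measure} and the Schrödinger-operator analysis of Appendix \ref{app A} are needed.
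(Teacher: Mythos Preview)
Your positivity computation is exactly the paper's, and for $\mathcal{A}$ your $\iota^{*}\iota=\mathcal{A}^{-1}$ observation is a clean repackaging of the paper's eigenbasis argument.

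The one genuine difference is the self-adjointness of $\mathcal{W}$. The paper does \emph{not} go through a Friedrichs/closed-form argument. Instead, given $v\in\mathcal{D}(\mathcal{W}^{*})$ and the Riesz representative $f_v\in\mathsf{H}$ of $\varphi\mapsto\langle\mathcal{W}[\varphi],v\rangle_{\mathsf{H}}$, it tests against the explicit family
\[
\varphi_w(x)=\int_{-\infty}^{x}\frac{w(t)}{\rho_V(t)}\,dt-\int_\R\left(\int_{-\infty}^{y}\frac{w(t)}{\rho_V(t)}\,dt\right)d\mu_V(y),\qquad w\in\mathcal{C}^\infty_c(\R),
\]
for which $\varphi_w'\rho_V=w$ and $\mathcal{W}[\varphi_w]=-\mathcal{H}[w]+\text{const}\in\mathsf{H}$ is immediate. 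Plugging this into $\langle-\mathcal{H}[\varphi'\rho_V]',v'\rho_V\rangle_{L^2(\R)}=\langle\varphi'\rho_V,f_v'\rangle_{L^2(\R)}$ and using skew-adjointness of $\mathcal{H}$ gives the distributional identity $\mathcal{W}[v]'=-\mathcal{H}[v'\rho_V]'=-f_v'\in L^2(\mu_V)$, hence $\mathcal{W}[v]\in\mathsf{H}$, i.e.\ $\mathcal{D}(\mathcal{W}^{*})\subset\mathcal{D}(\mathcal{W})$.

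Your route via the closed form $\phi\mapsto\tfrac12\|\phi'\rho_V\|_{1/2}^2$ is not wrong, but the step you flag --- ``identifying the maximal domain with the Friedrichs domain'' --- is precisely the content here, and carrying it out would require an argument of the same flavor as the paper's test-function trick. The paper's direct approach is shorter and sidesteps any discussion of closability of the form on $\mathsf{H}$.
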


\begin{proof}
To prove the first point, let $\phi\in\mathcal{D}(\mathcal{W})$. Then,
\begin{multline*}
2\pi\Braket{\mcal{W}[\phi],\phi}_\msf{H}=-2\pi\Braket{\mcal{H}[\phi'\rho_V]',\phi'\rho_V}_{L^2(\R)}=-\Braket{ix\mcal{F}\big[\mcal{H}[\phi'\rho_V]\big],\mcal{F}[\phi'\rho_V]}_{L^2(\R)}\\=\pi\Braket{|x|\mcal{F}[\phi'\rho_V],\mcal{F}[\phi'\rho_V]}_{L^2(\R)}=\pi\|\phi'\rho_V\|_{1/2}^2\geq 0\,,
\end{multline*}
and because $\phi$ is in $\mathsf{H}$, this last quantity is zero if and only if $\phi$ vanishes.\\

For the second point, note that  both $\mcal{D}(\mcal{A})$ and $\mcal{D}(\mcal{W})$ contain $\mathrm{span}(\varphi_n,n\geq 1)$, so $\mcal{A}$ and $\mcal{W}$ are densely defined. Let $i,j\in \N$, then by orthonormality
$$\braket{\mcal{A}[\varphi_i],\varphi_j}_\msf{H}=\delta_{i,j}\lambda_i=\braket{\varphi_i,\mcal{A}[\varphi_j]}_\msf{H}$$ hence $\mcal{A}$ is symmetric.
Furthermore, we notice that $\mcal{D}(\mcal{A})=\{u\in\msf{H},\sum_{n\geq1}\lambda_n^2|\braket{u,\varphi_n}_\msf{H}|^2<+\infty\}$. Indeed, we extend $\mcal{A}$ on the space of the RHS by for all $v\in\{u\in\msf{H},\sum_{n\geq1}\lambda_n^2|\braket{u,\psi_n}_\msf{H}|^2<+\infty\}$, $$\mcal{A}[v]\defi \sum_{n\geq1}\lambda_n\braket{v,\psi_n}_\msf{H}\psi_n.$$
It is then clear that $\mcal{A}[v]\in\msf{H}$. The converse is straightforward. Moreover, it is standard that $\mcal{D}(\mcal{A})$ is therefore the completion of span $(\varphi_n,n\geq1)$ under the graph norm $\|u\|_\mcal{A}^2\defi \|u\|^2_\msf{H}+\|\mcal{A}[u]\|_\msf{H}^2$ which makes $\big(\mcal{A},\mcal{D}(\mcal{A})\big)$ a closed operator. Moreover, $\big(\mcal{A},\mcal{D}(\mcal{A})\big)$ is self-adjoint since for $v\in\mcal{D}(\mcal{A}^*)$, which is defined by:
$$\mcal{D}(\mcal{A}^*)\defi \Big\{v\in\msf{H}, \varphi\in\mcal{D}(\mcal{A})\mapsto\braket{\mcal{A}[\varphi],v}_\msf{H}\;\mrm{can}\;\mrm{be}\;\mrm{extended}\;\mrm{into}\;\mrm{a}\;\mrm{continuous}\;\mrm{linear}\;\mrm{form}\;\mrm{on}\;\msf{H}\Big\},$$ there exists, by Riesz theorem, an element $f_v\in\msf{H}$, such that for all $\varphi\in\mcal{D}(\mcal{A})$,
$$ \braket{\mcal{A}[\varphi],v}_\msf{H}=\braket{\varphi,f_v}_{\msf{H}}\,.$$
Thus for all $n\in\N^*$,
$$\lambda_n\braket{\varphi_n,v}_\msf{H}=\braket{\varphi_n,f_v}_\msf{H}.$$
The right hand side defines a sequence in $\ell^2(\N^*)$ because $f_v\in\msf{H}$ and the fact that $(\varphi_n)_n$ is a Hilbertian basis, thus $\sum_{n\in\N}\lambda_n^2|\braket{v,\psi_n}_\msf{H}|^2<+\infty$ which is the definition of $v\in\mcal{D}(\mcal{A})$. This concludes the fact that $\mcal{A}:\mcal{D}(\mcal{A})\rightarrow\msf{H}$ is self-adjoint.

Now let $u,v\in \mathcal{D}(\mathcal{W})$, using Plancherel's isometry and $\textit{i)}$ of Lemma \ref{lemma:HilbertPropriétés}, we see that
\begin{align*}
    \Braket{\mathcal{W}[u],v}_{\mathsf{H}} = \Braket{\mathcal{W}[u]',v'\rho_V}_{L^2(\R)}=\frac{1}{2}\Braket{|x|\mathcal{F}[u'\rho_V],\mathcal{F}[v'\rho_V]}_{L^2(\R)}\,,
\end{align*}
and this last expression is symmetric in $(u,v)$, so $\mcal{W}$ is symmetric. To see that $\mcal{W}$ is self-adjoint, one must show that $\mcal{D}(\mcal{W}^*)\subset\mcal{D}(\mcal{W})$. Let $v\in\mcal{D}(\mcal{W}^*)$, then the map $\varphi\in\mcal{D}(\mcal{W})\mapsto\braket{\mcal{W}[\varphi],v}_\msf{H}$ can be extended to a continuous linear functional on $\msf{H}$, so there exists $f_v\in\msf{H}$ such that for all $\varphi\in\mcal{D}(\mcal{W})$,
\begin{equation}\label{eq:Wauto-adjoint}
    \braket{-\mcal{H}[\varphi'\rho_V]',v'\rho_V}_{L^2(\R)}=\braket{\varphi'\rho_V,f_v'}_{L^2(\R)}.
\end{equation}
Let $w\in\mcal{C}_c^\infty(\R)$, we define $$\varphi_w(x)\defi\int_{-\infty}^x\dfrac{w(t)}{\rho_V(t)}dt-\int_\R d\mu_V(y)\int_{-\infty}^y\dfrac{w(t)}{\rho_V(t)}dt.$$ It is easy to check that $\varphi_w\in\msf{H}$ and that $$\mcal{W}[\varphi_w]=-\mcal{H}[w]+\int_\R\mcal{H}[w](t)d\mu_V(t)\in\msf{H},$$
therefore $\varphi_w\in \mathcal{D}(\mathcal{W})$.
Finally, by inserting the definition of $\varphi_w$ in \eqref{eq:Wauto-adjoint}, we obtain:
\begin{equation*}
    \braket{-\mcal{H}[w'],v'\rho_V}_{L^2(\R)}=\braket{w,f_v'}_{L^2(\R)}.
\end{equation*}
By using that $\mcal{H}$ is skew self-adjoint, we obtain that, in the sense of distributions, $\mcal{W}[v]'=-\mcal{H}[v'\rho_V]'=-f_v'\in L^2(\mu_V)$. Along with the fact that for all $\widetilde{v}\in\msf{H}$, $\mcal{W}[\widetilde{v}]\in L^2_0(\mu_V)$, we deduce that $\mcal{W}[v]\in\msf{H}$, therefore that $v\in\mcal{D}(\mcal{W})$. This allows to conclude that $\mcal{D}(\mcal{W}^*)\subset\mcal{D}(\mcal{W})$, hence that $\mcal{W}$ is self-adjoint.
This concludes the proof.
 
\end{proof}

\begin{definition}[Quadratic form associated to $-\mcal{L}$]
We define for all $u,v\in \msf{H}\cap\mcal{C}_c^\infty(\R)$ the quadratic form associated to $-\mcal{L}$ by
$$q_{-\mcal{L}}(u,v)=\braket{\mcal{A}[u],\mcal{A}[v]}_{L^2(\mu_V)}+P\braket{\mcal{F}[u'\rho_V],\mcal{F}[v'\rho_V]}_{L^2(|x|dx)}\,.$$
\end{definition}
\begin{remark}
    Note that for all $u,v\in \msf{H}\cap\mcal{C}_c^\infty(\R)$, $q_{-\mcal{L}}(u,v)=\braket{-\mcal{L}[u],v}_\msf{H}$ and that whenever $u\in\mcal{D}(\mcal{A})$,
\begin{equation}
    \label{cont:qL}
    q_{-\mcal{L}}(u,u)=\braket{\mcal{A}[u],u}_\msf{H}+2P\braket{\mcal{W}[u],u}_\msf{H}\geq\lambda_1(\mcal{A})\|u\|_\msf{H}^2
\end{equation}
 by Proposition \ref{prop:diagAdansH} and Lemma \ref{lem:Wpositif}. We can now extend $q_{-\mcal{L}}$ to its form domain $$Q(\mcal{L})\defi\Big\{u\in\msf{H},\mcal{A}[u]\in L^2(\mu_V),\,\mcal{F}[u'\rho_V]\in L^2(|x|dx)\Big\}=\mcal{D}_{L^2_0(\mu_V)}(\mcal{A}).$$
\end{remark}
 We now define $\mcal{D}(\mcal{L})$ the domain of definition of $-\mcal{L}$ by:
$$\mcal{D}(\mcal{L})\defi \Big\{v\in Q(\mcal{L})\ |\ \varphi\in Q(\mcal{L})\mapsto q_{-\mcal{L}}(\varphi,v)\text{ can be extended into a continuous linear functional on }\msf{H}\Big\}.$$

\begin{proposition}
    $\mcal{D}(\mcal{L})=\mcal{D}(\mcal{A}).$
\end{proposition}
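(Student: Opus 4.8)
The plan is to identify $-\mathcal{L}$ (the operator attached to the form $q_{-\mathcal{L}}$ through the Riesz representation, as in its definition) with the operator sum $\mathcal{A}+2P\mathcal{W}$ acting on $\mathcal{D}(\mathcal{A})$; the claimed equality of domains then follows. I would first dispatch the easy inclusion $\mathcal{D}(\mathcal{A})\subseteq\mathcal{D}(\mathcal{L})$. If $v\in\mathcal{D}(\mathcal{A})$ then $v\in\mathcal{D}(\mathcal{W})$ by the remark preceding Lemma~\ref{lem:Wpositif}, and one checks $v\in Q(\mathcal{L})$ since $\mathcal{A}[v]\in\mathsf{H}\subseteq L^2(\mu_V)$ and $v'\rho_V\in H^1(\mathbb{R})\subseteq H^{1/2}(\mathbb{R})$. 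For $\varphi\in Q(\mathcal{L})$ (so $\mathcal{A}[\varphi]\in L^2(\mu_V)$), the integration-by-parts identity $\langle\psi,\mathcal{A}[\chi]\rangle_{L^2(\mu_V)}=\langle\psi,\chi\rangle_{\mathsf{H}}$ applied with $\psi=\mathcal{A}[v]\in\mathsf{H}$, $\chi=\varphi$ gives $\langle\mathcal{A}[\varphi],\mathcal{A}[v]\rangle_{L^2(\mu_V)}=\langle\varphi,\mathcal{A}[v]\rangle_{\mathsf{H}}$, while the Plancherel computation of the proof of Lemma~\ref{lem:Wpositif} gives $P\langle\mathcal{F}[\varphi'\rho_V],\mathcal{F}[v'\rho_V]\rangle_{L^2(|x|dx)}=2P\langle\varphi,\mathcal{W}[v]\rangle_{\mathsf{H}}$. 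Hence $q_{-\mathcal{L}}(\varphi,v)=\langle\varphi,\mathcal{A}[v]+2P\mathcal{W}[v]\rangle_{\mathsf{H}}$, which is $\|\cdot\|_{\mathsf{H}}$-continuous in $\varphi$; so $v\in\mathcal{D}(\mathcal{L})$ and $-\mathcal{L}[v]=\mathcal{A}[v]+2P\mathcal{W}[v]$.

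The key ingredient for the converse, and what I expect to be the main obstacle, is that $\mathcal{W}$ is \emph{infinitesimally} $\mathcal{A}$-bounded. For $u\in\mathcal{D}(\mathcal{A})$ one has $\mathcal{A}[u]\in\mathsf{H}\subseteq L^2(\mu_V)$; since $\rho_V$ is bounded, both $u'\rho_V$ and $\rho_V\mathcal{A}[u]=-(u'\rho_V)'$ lie in $L^2(\mathbb{R})$, so $u'\rho_V\in H^1(\mathbb{R})$ and Lemma~\ref{lemma:HilbertPropriétés} gives $\mathcal{W}[u]'=-\mathcal{H}[u'\rho_V]'=-\mathcal{H}[(u'\rho_V)']=\mathcal{H}[\rho_V\mathcal{A}[u]]$. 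Using that $\pi^{-1}\mathcal{H}$ is an $L^2(\mathbb{R})$-isometry together with $\rho_V\leq\|\rho_V\|_{\infty}$, one gets $\|\mathcal{W}[u]\|_{\mathsf{H}}^2=\int_{\mathbb{R}}|\mathcal{H}[\rho_V\mathcal{A}[u]]|^2\rho_V\,dx\leq\pi^2\|\rho_V\|_{\infty}^2\|\mathcal{A}[u]\|_{L^2(\mu_V)}^2$, and since $\|\mathcal{A}[u]\|_{L^2(\mu_V)}^2=\langle\mathcal{A}[u],u\rangle_{\mathsf{H}}\leq\|\mathcal{A}[u]\|_{\mathsf{H}}\|u\|_{\mathsf{H}}$ by the same identity, Young's inequality yields, for every $\varepsilon>0$, $\|\mathcal{W}[u]\|_{\mathsf{H}}\leq\varepsilon\|\mathcal{A}[u]\|_{\mathsf{H}}+C_\varepsilon\|u\|_{\mathsf{H}}$. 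Choosing $\varepsilon$ with $2P\varepsilon<1$, the Kato--Rellich theorem shows that $\mathcal{A}+2P\mathcal{W}$ with domain $\mathcal{D}(\mathcal{A})$ is self-adjoint on $\mathsf{H}$; being $\geq\lambda_1>0$ (because $\mathcal{W}\geq0$ by Lemma~\ref{lem:Wpositif} and $\mathcal{A}\geq\lambda_1$ by Theorem~\ref{thm:DiagA}), it is a bijection of $\mathcal{D}(\mathcal{A})$ onto $\mathsf{H}$.

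Finally I would conclude as follows. Let $v\in\mathcal{D}(\mathcal{L})$ and let $g\in\mathsf{H}$ represent the continuous extension of $\varphi\mapsto q_{-\mathcal{L}}(\varphi,v)$, so $q_{-\mathcal{L}}(\varphi,v)=\langle\varphi,g\rangle_{\mathsf{H}}$ for all $\varphi\in Q(\mathcal{L})$. Set $u:=(\mathcal{A}+2P\mathcal{W})^{-1}[g]\in\mathcal{D}(\mathcal{A})\subseteq\mathcal{D}(\mathcal{L})$. By the first step, $q_{-\mathcal{L}}(\varphi,u)=\langle\varphi,\mathcal{A}[u]+2P\mathcal{W}[u]\rangle_{\mathsf{H}}=\langle\varphi,g\rangle_{\mathsf{H}}=q_{-\mathcal{L}}(\varphi,v)$ for all $\varphi\in Q(\mathcal{L})$, hence $q_{-\mathcal{L}}(\varphi,v-u)=0$ there. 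Taking $\varphi=v-u\in Q(\mathcal{L})$ and using the lower bound
$$q_{-\mathcal{L}}(w,w)\ \geq\ \|\mathcal{A}[w]\|_{L^2(\mu_V)}^2\ =\ \sum_{n\geq1}\lambda_n^2\,|\langle w,\phi_n\rangle_{L^2(\mu_V)}|^2\ \geq\ \lambda_1\|w\|_{\mathsf{H}}^2\qquad (w\in Q(\mathcal{L})=\mathcal{D}_{L^2_0(\mu_V)}(\mathcal{A})),$$
where the first equality is Theorem~\ref{thm:DiagA} and the last one comes from $\langle w,\phi_n\rangle_{L^2(\mu_V)}=\lambda_n^{-1/2}\langle w,\varphi_n\rangle_{\mathsf{H}}$ together with Parseval in $\mathsf{H}$ (this also reproves \eqref{cont:qL} on all of $Q(\mathcal{L})$), we obtain $v=u\in\mathcal{D}(\mathcal{A})$. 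This gives $\mathcal{D}(\mathcal{L})\subseteq\mathcal{D}(\mathcal{A})$, completing the proof; the only genuinely technical point is the relative-boundedness estimate of the second step, which hinges on rewriting $\mathcal{W}[u]'$ as a Hilbert transform of $\rho_V\mathcal{A}[u]$ and on carefully tracking the $L^2(\mu_V)$, $L^2(\mathbb{R})$ and $\mathsf{H}$ norms, everything else being a soft combination of Kato--Rellich, the Riesz representation theorem and the strict positivity of $q_{-\mathcal{L}}$.
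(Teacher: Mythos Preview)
Your proof is correct but takes a genuinely different route from the paper's. Both arguments handle the inclusion $\mcal{D}(\mcal{A})\subseteq\mcal{D}(\mcal{L})$ in the same way. For the converse, the paper is more direct: from $v\in\mcal{D}(\mcal{L})\subset Q(\mcal{L})$ it first observes that $\mcal{W}[v]\in\msf{H}$ (because $\mcal{A}[v]\in L^2(\mu_V)$ yields $\mcal{W}[v]'=\mcal{H}[\rho_V\mcal{A}[v]]\in L^2(\mu_V)$), then uses the self-adjointness of $\mcal{W}$ to rewrite $\langle\mcal{A}[\varphi],v\rangle_\msf{H}=\langle\varphi,-\mcal{L}[v]-2P\mcal{W}[v]\rangle_\msf{H}$ for $\varphi\in\mcal{D}(\mcal{A})$, and finally tests against the eigenfunctions $\phi_n$ to get $(\lambda_n\langle\phi_n,v\rangle_\msf{H})_n\in\ell^2(\N^*)$, which is precisely the spectral characterization of $\mcal{D}(\mcal{A})$. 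No Kato--Rellich and no bijectivity of $\mcal{A}+2P\mcal{W}$ are needed there. Your approach, via the infinitesimal $\mcal{A}$-boundedness of $\mcal{W}$ and Kato--Rellich, is the standard abstract perturbation-theoretic route; it has the pleasant side effect of proving directly that $\mcal{A}+2P\mcal{W}$ is self-adjoint on $\mcal{D}(\mcal{A})$ and a bijection onto $\msf{H}$, which the paper only obtains afterwards (Theorem~\ref{thm:InverL}) by a separate Lax--Milgram/Riesz argument on $q_{-\mcal{L}}$. In short: the paper's proof is shorter and purely spectral, while yours is a clean operator-theoretic argument that yields slightly more in one stroke.
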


\begin{proof}
    Let $v\in\mcal{D}(\mcal{L})$. By Riesz theorem, there exists $f_v\in\msf{H}$ such that for all  $\varphi\in Q(\mcal{L})$, $q_{-\mcal{L}}(\varphi,v)=\braket{u,f_v}_\msf{H}$. We set $-\mcal{L}[v]\defi f_v$, it is called the Friedrichs extension of $-\mcal{L}$. Then for all $\varphi\in \mcal{D}(\mcal{A})\subset\mcal{Q}(\mcal{L}),$
$$ \braket{\varphi,-\mcal{L}[v]}_\msf{H}=q_{-\mcal{L}}(\varphi,v)=\braket{\mcal{A}[\varphi],v}_\msf{H}+2P\braket{\mcal{W}[\varphi],v}_\msf{H}. $$
Since $v\in\mcal{D}(\mcal{L})\subset Q(\mcal{L})$, then $\mcal{W}[v]\in\msf{H}$, hence $\braket{\mcal{W}[\varphi],v}_\msf{H}=\braket{\varphi,\mcal{W}[v]}_\msf{H}$. We deduce that,
$$\braket{\mcal{A}[\varphi],v}_\msf{H}=\braket{\varphi,-\mcal{L}[v]-2P\mcal{W}[v]}_\msf{H}$$
Taking $\varphi=\phi_n$, one deduces that:
$$\lambda_n\braket{\phi_n,v}_\msf{H}=\braket{\phi_n,-\mcal{L}[v]-2P\mcal{W}[v]}_\msf{H}.$$
Since $-\mcal{L}[v]-2P\mcal{W}[v]\in\msf{H}$, the sequence $\Big(\lambda_n\braket{\phi_n,v}_\msf{H}\Big)_n=\Big(\braket{\phi_n,-\mcal{L}[v]-2P\mcal{W}[v]}_\msf{H}\Big)_n\in\ell^2(\N^*)$ which is the definition of $v\in\mcal{D}(\mcal{A})$. Hence $\mcal{D}(\mcal{L})\subset\mcal{D}(\mcal{A})$. Conversely, if $v\in\mcal{D}(\mcal{A})$, then for all $\varphi\in Q(\mathcal{L})$,
$$q_{-\mcal{L}}(\varphi,v)=\braket{\varphi,\mcal{A}[v]+2P\mcal{W}[u]}_\msf{H}$$
which provides a continuous extension of $\varphi\in Q(\mcal{L})\mapsto q_{-\mcal{L}}(\varphi,v)$ to $\msf{H}$, thus $v\in\mcal{D}(\mcal{L})$.

\end{proof}

We are now ready to state the main theorem of this section, that is the inversion of $\mathcal{L}$ on $\mcal{D}(\mcal{L})$.

\begin{theorem}[Inversion of $\mcal{L}$]\label{thm:InverL} $-\mcal{L}:\mcal{D}(\mcal{L})\longrightarrow\msf{H}$ is bijective. Furthermore, $(-\mcal{L})^{-1}$ is positive, continuous from $(\msf{H},\|.\|_\msf{H})$ to $(\mcal{D}(\mcal{L}),\|.\|_\msf{H})$. More precisely, for all $f\in\msf{H}$,
$$\|\mcal{L}^{-1}[f]\|_\msf{H}\leq\lambda_1(\mcal{A})^{-1}\|f\|_\msf{H}.$$
\end{theorem}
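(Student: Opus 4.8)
The plan is to use the quadratic form $q_{-\mathcal{L}}$ together with the Lax--Milgram theorem on the Hilbert space $\mathsf{H}$. First I would observe that by Lemma \ref{lem:Wpositif} the form $\mathcal{W}$ is positive, and by Proposition \ref{prop:diagAdansH} the operator $\mathcal{A}$ has spectrum bounded below by $\lambda_1(\mathcal{A})>0$; consequently, for $u,v$ in the form domain $Q(\mathcal{L})=\mathcal{D}_{L^2_0(\mu_V)}(\mathcal{A})$, the bilinear form $q_{-\mathcal{L}}(u,v)=\braket{\mathcal{A}[u],\mathcal{A}[v]}_{L^2(\mu_V)}+P\braket{\mathcal{F}[u'\rho_V],\mathcal{F}[v'\rho_V]}_{L^2(|x|dx)}$ satisfies the coercivity estimate \eqref{cont:qL}, namely $q_{-\mathcal{L}}(u,u)\geq \lambda_1(\mathcal{A})\|u\|_{\mathsf{H}}^2$. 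This is the key inequality: it says that $q_{-\mathcal{L}}$ controls the $\mathsf{H}$-norm from above with constant $\lambda_1(\mathcal{A})^{-1}$.

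Next, given $f\in\mathsf{H}$, the map $\varphi\mapsto\braket{\varphi,f}_{\mathsf{H}}$ is a continuous linear functional on $\mathsf{H}$, hence on $Q(\mathcal{L})$. To apply Lax--Milgram (or more simply, since $q_{-\mathcal{L}}$ is symmetric, the Riesz representation theorem applied to the Hilbert space $(Q(\mathcal{L}), q_{-\mathcal{L}}(\cdot,\cdot)^{1/2})$ — this is a genuine inner product by the coercivity bound and is complete because $Q(\mathcal{L})=\mathcal{D}_{L^2_0(\mu_V)}(\mathcal{A})$ is closed under the graph norm of $\mathcal{A}$), there exists a unique $v\in Q(\mathcal{L})$ such that $q_{-\mathcal{L}}(\varphi,v)=\braket{\varphi,f}_{\mathsf{H}}$ for all $\varphi\in Q(\mathcal{L})$. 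By the very definition of $\mathcal{D}(\mathcal{L})$ as the set of $v\in Q(\mathcal{L})$ for which $\varphi\mapsto q_{-\mathcal{L}}(\varphi,v)$ extends continuously to $\mathsf{H}$, this $v$ lies in $\mathcal{D}(\mathcal{L})$ and satisfies $-\mathcal{L}[v]=f$ by the Friedrichs-extension identification already set up before the theorem. Uniqueness of the solution in $\mathcal{D}(\mathcal{L})$ follows from coercivity: if $-\mathcal{L}[v]=0$ then $q_{-\mathcal{L}}(v,v)=\braket{v,-\mathcal{L}[v]}_{\mathsf{H}}=0$, so $\|v\|_{\mathsf{H}}=0$. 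This shows $-\mathcal{L}:\mathcal{D}(\mathcal{L})\to\mathsf{H}$ is a bijection.

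For the quantitative bound, I would take $\varphi=v=(-\mathcal{L})^{-1}[f]$ in the identity $q_{-\mathcal{L}}(v,v)=\braket{v,f}_{\mathsf{H}}$. Using \eqref{cont:qL} on the left and Cauchy--Schwarz on the right gives
$$\lambda_1(\mathcal{A})\|v\|_{\mathsf{H}}^2\leq q_{-\mathcal{L}}(v,v)=\braket{v,f}_{\mathsf{H}}\leq\|v\|_{\mathsf{H}}\|f\|_{\mathsf{H}}\,,$$
whence $\|v\|_{\mathsf{H}}\leq\lambda_1(\mathcal{A})^{-1}\|f\|_{\mathsf{H}}$, i.e. $\|\mathcal{L}^{-1}[f]\|_{\mathsf{H}}\leq\lambda_1(\mathcal{A})^{-1}\|f\|_{\mathsf{H}}$, which is both the continuity statement and the explicit operator-norm estimate. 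Positivity of $(-\mathcal{L})^{-1}$ is immediate: for $f\in\mathsf{H}$, writing $v=(-\mathcal{L})^{-1}[f]$, we have $\braket{(-\mathcal{L})^{-1}[f],f}_{\mathsf{H}}=\braket{v,-\mathcal{L}[v]}_{\mathsf{H}}=q_{-\mathcal{L}}(v,v)\geq 0$, with equality only if $v=0$, i.e. $f=0$.

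The main obstacle — and the point that needs the most care — is verifying that $(Q(\mathcal{L}),q_{-\mathcal{L}})$ is a complete Hilbert space, equivalently that $Q(\mathcal{L})=\mathcal{D}_{L^2_0(\mu_V)}(\mathcal{A})$ is closed under the norm $q_{-\mathcal{L}}(\cdot,\cdot)^{1/2}$, and that this norm is equivalent to the graph norm $\|u\|_{\mathcal{A}}^2=\|u\|_{\mathsf{H}}^2+\|\mathcal{A}[u]\|_{\mathsf{H}}^2$ used to describe $\mathcal{D}(\mathcal{A})$. The upper bound $q_{-\mathcal{L}}(u,u)\lesssim \|u\|_{\mathcal{A}}^2$ requires controlling the Hilbert-transform term $P\|\phi'\rho_V\|_{1/2}^2=2\pi P\braket{\mathcal{W}[u],u}_{\mathsf{H}}$ by $\|\mathcal{A}[u]\|_{\mathsf{H}}\|u\|_{\mathsf{H}}$ via the remark that $\mathcal{W}[u]'=\mathcal{H}[\rho_V\mathcal{A}[u]]$ together with the $L^2$-boundedness of $\mathcal{H}$; once this equivalence is in hand, completeness of $(Q(\mathcal{L}),q_{-\mathcal{L}})$ follows from closedness of $(\mathcal{A},\mathcal{D}(\mathcal{A}))$ already established in Lemma \ref{lem:Wpositif}, and the Riesz/Lax--Milgram argument goes through cleanly.
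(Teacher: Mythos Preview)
Your approach is essentially identical to the paper's: apply the Riesz representation theorem on the form domain $Q(\mathcal{L})=\mathcal{D}_{L^2_0(\mu_V)}(\mathcal{A})$ equipped with the inner product $q_{-\mathcal{L}}$, using the coercivity bound \eqref{cont:qL} to get existence, uniqueness, and the operator-norm estimate; the paper's proof is in fact more terse than yours and does not spell out the positivity or the Cauchy--Schwarz step.

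One small correction in your final paragraph: the graph norm you write, $\|u\|_{\mathsf{H}}^2+\|\mathcal{A}[u]\|_{\mathsf{H}}^2$, is the norm on $\mathcal{D}(\mathcal{A})=\{u\in\mathsf{H}:\mathcal{A}[u]\in\mathsf{H}\}$, which is \emph{strictly smaller} than $Q(\mathcal{L})=\mathcal{D}_{L^2_0(\mu_V)}(\mathcal{A})=\{u\in\mathsf{H}:\mathcal{A}[u]\in L^2_0(\mu_V)\}$, so the proposed norm equivalence cannot hold on all of $Q(\mathcal{L})$; the relevant completeness instead comes from closedness of $\mathcal{A}:\mathcal{D}_{L^2_0(\mu_V)}(\mathcal{A})\to L^2_0(\mu_V)$ (established via the Schr\"odinger operator in Theorem~\ref{thm:DiagA}/Proposition~\ref{prop:diagoL2}), together with the observation that $q_{-\mathcal{L}}(u,u)$ is squeezed between $\|\mathcal{A}[u]\|_{L^2(\mu_V)}^2$ and $\|\mathcal{A}[u]\|_{L^2(\mu_V)}^2 + C\|\mathcal{A}[u]\|_{L^2(\mu_V)}\|u\|_{\mathsf{H}}$ (your Hilbert-transform bound), which combined with coercivity gives equivalence with $\|u\|_{\mathsf{H}}^2+\|\mathcal{A}[u]\|_{L^2(\mu_V)}^2$.
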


\begin{proof}
    Let $f\in\msf{H}$, since $\braket{f,.}_\msf{H}$ is a linear form on $Q(\mcal{L})=\mcal{D}_{L^2_0(\mu_V)}(\mcal{A})$ which is, by (\ref{cont:qL}), continuous with respect to $q_{-\mcal{L}}$, one can applies Riesz theorem so there exists a unique $u_f\in \mcal{D}_{L^2_0(\mu_V)}(\mcal{A})$, such that for all $v\in Q(\mcal{L})$, $\braket{f,v}_\msf{H}=q_{-\mcal{L}}(u_f,v)$. Since, $u_f$ is clearly in $\mcal{D}(\mcal{L})$ by definition of the Friedrichs extension of $-\mcal{L}$, we have $-\mcal{L}[u_f]=f$.
    Finally, the continuity of $\mcal{L}^{-1}$ with respect to $\|.\|_\msf{H}$ follows from \eqref{cont:qL}.
\end{proof}

\begin{remark}
    We can diagonalize $\mcal{L}$ by the same argument we used in Appendix \ref{app A} to diagonalize $\mcal{A}$ in $L_0^2(\mu_V)$.
\end{remark}

We now prove, following Remark \ref{rem:thmValidePourT}, a more compact formula for the variance such as the one appearing in \cite{hardy2021clt}. With $\mathcal{T}$ defined in \eqref{def:espaceT}, we have

\begin{lemma}
\label{lem:formuleVarianceL}
	The following equality holds for all $\phi\in\mcal{T}$
		\begin{multline}
			\braket{\mcal{L}^{-1}[\phi],\phi}_\msf{H}=\big(\sigma_V^P\big)^2(\phi)\defi \int_\R\Bigg(\mcal{L}^{-1}[\phi]''(x)^2+V''(x)\mcal{L}^{-1}[\phi]'(x)^2\Bigg)d\mu_V(x)
			\\+P\iint_{\R^2}\Bigg(\frac{\mcal{L}^{-1}[\phi]'(x)-\mcal{L}^{-1}[\phi]'(y)}{x-y}\Bigg)^2d\mu_V(x)d\mu_V(y)
		\end{multline}
\end{lemma}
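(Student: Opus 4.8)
The plan is to unfold the definition of the $\mathsf{H}$-inner product and use the decomposition $-\mcal{L}=\mcal{A}+2P\mcal{W}$ established in the lemma preceding Theorem~\ref{thm:DiagA}. First I would write, for $\phi\in\mcal{T}$, setting $u\defi\mcal{L}^{-1}[\phi]$ (which belongs to $\mcal{D}(\mcal{L})=\mcal{D}(\mcal{A})$ by Theorem~\ref{thm:InverL}),
$$\braket{\mcal{L}^{-1}[\phi],\phi}_\msf{H}=\braket{u,-\mcal{L}[u]}_\msf{H}=\braket{u,\mcal{A}[u]}_\msf{H}+2P\braket{u,\mcal{W}[u]}_\msf{H}\,.$$
For the second term, I would invoke the first point of Lemma~\ref{lem:Wpositif}, which gives $\braket{\mcal{W}[u],u}_\msf{H}=\tfrac12\|u'\rho_V\|_{1/2}^2$; then I would re-express this $1/2$-norm as a double integral over $\R^2$. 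Indeed, by the definition of $D$ in \eqref{def:distanceMesures} and the identity relating $\|\cdot\|_{1/2}$ to the logarithmic energy, one has
$$\tfrac12\|u'\rho_V\|_{1/2}^2 = \tfrac12\iint_{\R^2}\frac{\big(u'(x)\rho_V(x)-u'(y)\rho_V(y)\big)\big(\ldots\big)}{\ldots}\,,$$
but more directly one uses the classical formula $\|g\|_{1/2}^2 = \tfrac{1}{2\pi}\iint_{\R^2}\Big(\frac{g(x)-g(y)}{x-y}\Big)^2\,? $; the cleaner route, which I would actually follow, is to first establish the pointwise identity $\mcal{W}[u]'=\mcal{H}[\rho_V\mcal{A}[u]]$ and integrate by parts, or alternatively to go back one step and note that $2P\braket{\mcal{W}[u],u}_\msf{H}$ should recombine with part of $\braket{\mcal{A}[u],u}_\msf{H}$ to reconstruct the Dirichlet-type energy of $\Xi^{-1}$. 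The honest approach: use that $-\mcal{L}[u]=\phi$ means $\mcal{A}[u]+2P\mcal{W}[u]=\phi$ pointwise, multiply by $u$ and integrate against $\mu_V$ after an integration by parts in the $\mcal{A}$-term.

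Concretely, I would proceed as follows. Write $\braket{u,\mcal{A}[u]}_\msf{H}=\braket{u',(\mcal{A}[u])'}_{L^2(\mu_V)}$. Using $\mcal{A}[u]=-\big(u''+\tfrac{\rho_V'}{\rho_V}u'\big)$ and $\tfrac{\rho_V'}{\rho_V}=-V'-2P\mcal{H}[\rho_V]$ from \eqref{deriv1}, differentiate: $(\mcal{A}[u])' = -u'''-\big(\tfrac{\rho_V'}{\rho_V}\big)'u'-\tfrac{\rho_V'}{\rho_V}u''$. Then $\braket{u',(\mcal{A}[u])'}_{L^2(\mu_V)}=\int_\R u'(x)(\mcal{A}[u])'(x)\rho_V(x)dx$; integrating by parts once moves a derivative to produce $\int_\R (u'\rho_V)'\cdot(-\mcal{A}[u])'/\ldots$ — this is getting circular, so instead I would use the self-adjointness of $\mcal{A}$ on $L^2(\mu_V)$ together with the relation $\braket{u,\mcal{A}[v]}_{L^2(\mu_V)}=\braket{u',v'}_{L^2(\mu_V)}$. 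Applying this with $v=\mcal{A}[u]$ is not licit unless $\mcal{A}[u]$ is regular enough, but by Theorem~\ref{thmreg} (regularity of $\mcal{L}^{-1}$) and the hypothesis $\phi\in\mcal{T}\subset\mcal{C}^2$, $u=\mcal{L}^{-1}[\phi]$ is $\mcal{C}^3$-ish, so $\mcal{A}[u]\in\mcal{C}^1$ and the manipulations are justified. Then $\braket{u,\mcal{A}[u]}_\msf{H}=\braket{u',(\mcal{A}[u])'}_{L^2(\mu_V)}=\braket{\mcal{A}[u],\mcal{A}[u]}_{L^2(\mu_V)}$ after one more integration by parts using $\braket{w,\mcal{A}[z]}_{L^2(\mu_V)}=\braket{w',z'}_{L^2(\mu_V)}$ with $w=z=$ appropriate functions — more precisely $\braket{u',(\mcal{A}[u])'}_{L^2(\mu_V)}=\braket{\mcal{A}[u],\mcal{A}[u]}_{L^2(\mu_V)}$ because $\braket{u',g'}_{L^2(\mu_V)}=\braket{\mcal{A}[u],g}_{L^2(\mu_V)}$ for $g\in H^1(\mu_V)$ with zero mean, applied to $g=\mcal{A}[u]$ (which has zero mean as noted in the excerpt). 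This produces $\int_\R \mcal{A}[u]^2\,d\mu_V = \int_\R\big(u''+\tfrac{\rho_V'}{\rho_V}u'\big)^2 d\mu_V$.

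It then remains to expand this square and recombine with the $2P\mcal{W}$-term to match the claimed right-hand side, namely $\int_\R\big(u''^2+V''u'^2\big)d\mu_V + P\iint\big(\tfrac{u'(x)-u'(y)}{x-y}\big)^2 d\mu_V d\mu_V$. Expanding $\big(u''+\tfrac{\rho_V'}{\rho_V}u'\big)^2 = u''^2 + 2\tfrac{\rho_V'}{\rho_V}u'u'' + \big(\tfrac{\rho_V'}{\rho_V}\big)^2 u'^2$, the cross term integrates (by parts against $\rho_V$, using $\big(\tfrac{\rho_V'}{\rho_V}u'^2\rho_V\big)=\rho_V'u'^2$) to $-\int\big(\tfrac{\rho_V''}{\rho_V}-\big(\tfrac{\rho_V'}{\rho_V}\big)^2\big)u'^2 d\mu_V$ up to boundary terms that vanish by the decay of $\rho_V$ and $u'$; combined with the $\big(\tfrac{\rho_V'}{\rho_V}\big)^2$ term this yields $-\int \tfrac{\rho_V''}{\rho_V}u'^2 d\mu_V + \int\big(\tfrac{\rho_V'}{\rho_V}\big)^2 u'^2 d\mu_V$, and using \eqref{deriv1}--\eqref{deriv2} for $\rho_V''/\rho_V$ one extracts the $V''$ term and a term involving $\mcal{H}[\rho_V]$, $\mcal{H}[\rho_V]'$, etc. Meanwhile $2P\braket{\mcal{W}[u],u}_\msf{H}$, written via the skew-adjointness of $\mcal{H}$ as $2P\braket{\mcal{H}[u'\rho_V],u'}_{L^2(\mathbb{R})}$ — wait, sign — as $-2P\braket{\mcal{H}[u'\rho_V]',u'\rho_V}_{L^2(\mathbb{R})}$, and the identity $\iint\big(\tfrac{u'(x)-u'(y)}{x-y}\big)^2\rho_V(x)\rho_V(y)\,dx\,dy = 2\int u'\rho_V\,\mcal{H}[\rho_V]u'\,dx - 2\int u'\rho_V\mcal{H}[u'\rho_V]\,dx$ (from expanding $\tfrac{u'(x)-u'(y)}{x-y}$ and recognizing Hilbert transforms, exactly as in the lemma preceding Theorem~\ref{thm:DiagA}) lets one match everything. \textbf{The main obstacle} is bookkeeping: carefully justifying every integration by parts (no boundary contributions, which follows from Lemma~\ref{lem:regularitedensite} giving exponential decay of $\rho_V$ and its derivatives together with $u',u''$ bounded or decaying via Theorem~\ref{thmreg}) and correctly tracking the $\mcal{H}[\rho_V]$-terms so that the "purely entropic" pieces cancel and only $\int(u''^2+V''u'^2)d\mu_V + P\iint(\ldots)^2$ survives. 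Once the algebra is organized around the identity $\mcal{A}[u]+2P\mcal{W}[u]=\phi$ and the $\rho_V''$-formula \eqref{deriv2}, the result follows.
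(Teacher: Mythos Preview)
Your approach is essentially the same as the paper's: reduce to computing $\braket{-\mcal{L}[u],u}_\msf{H}$ for $u\in\mcal{D}(\mcal{L})$, obtain $\|\mcal{A}[u]\|^2_{L^2(\mu_V)}$ for the $\mcal{A}$-part via one integration by parts, expand $(u''+\tfrac{\rho_V'}{\rho_V}u')^2$ and use the formula for $\rho_V''/\rho_V$ from \eqref{deriv2}, then combine with the $\mcal{W}$-part via the identity $\iint\big(\tfrac{u'(x)-u'(y)}{x-y}\big)^2d\mu_V d\mu_V=2\int(\mcal{H}[\rho_V]'u'^2-\mcal{H}[u'\rho_V]'u')d\mu_V$. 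Your cross-term integration by parts has a minor slip (it gives simply $-\int\tfrac{\rho_V''}{\rho_V}u'^2\,d\mu_V$, after which the extra $(\rho_V'/\rho_V)^2$ from the square remains), and the detour through the $\|\cdot\|_{1/2}$-formulation of $\mcal{W}$ is unnecessary, but the strategy matches the paper's.
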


\begin{proof}
	It suffices to show that $\big(\sigma_V^P\big)^2(\mcal{L}[\phi])=\braket{\mcal{L}[\phi],\phi}_\msf{H}$ for all $\phi\in \mathcal{D}(\mathcal{L})$.
			$$\braket{\mcal{L}[\phi],\phi}_\msf{H}=-\int_\R\Big(\dfrac{(\phi'\rho_V)'}{\rho_V}\Big)'(x)\phi'(x)d\mu_V(x)-2P\int_\R\mcal{H}[\phi'\rho_V]'(x)\phi'(x)d\mu_V(x)$$
Proceeding to integration by parts in the first integral leads to 
\begin{align*}
-\int_\R\Big(\dfrac{(\phi'\rho_V)'}{\rho_V}\Big)'(x)\phi'(x)d\mu_V(x)=\int_\R\Big(\dfrac{(\phi'\rho_V)'}{\rho_V}\Big)^2(x)d\mu_V(x)
	=\int_\R\Big[\phi''^2-\phi'^2\dfrac{\rho_V''}{\rho_V}+\phi'^2\big(\dfrac{\rho_V'}{\rho_V}\big)^2\Big](x)d\mu_V(x).
\end{align*}
Since $$\dfrac{\rho_V''}{\rho_V}=\Big(-V''-2P\mcal{H}[\rho_V]'+V'^2+4P^2\mcal{H}[\rho_V]^2+4PV'\mcal{H}[\rho_V]\Big)=-V''-2P\mcal{H}[\rho_V]'+\Big(\dfrac{\rho_V'}{\rho_V}\Big)^2,$$
we obtain
$$\braket{\mcal{L}[\phi],\phi}_\msf{H}
=\int_\R\big(\phi''^2+V''\phi'^2\big)(x)d\mu_V(x)-2P\int_\R\mcal{H}[\phi'\rho_V]'(x)\phi'(x)d\mu_V(x)+2P\int_\R\mcal{H}[\rho_V]'(x)\phi'^2(x)d\mu_V(x).$$
We conclude using the fact that for all $\phi\in \mathcal{D}(\mathcal{L})$, we have
$$\iint_{\R^2}\Big(\dfrac{\phi'(x)-\phi'(y)}{x-y}\Big)^2d\mu_V(x)d\mu_V(y)=2\int_\R\big(\mcal{H}[\rho_V]'\phi'^2-\mcal{H}[\phi'\rho_V]'\phi'\big)(x)d\mu_V(x).$$
\end{proof}

    We now state a result that could allow one to obtain an explicit formulation for $\mcal{L}^{-1}$ by the help of Fredholm determinant theory for Hilbert-Schmidt operators. The reader can refer to \cite{gohberg2012traces} for more details.
\begin{definition}[Fredholm determinant]
    Let $\mcal{K}$ be a kernel Hilbert-Schmidt operator on $L^2(\mu_V)$, we define the Fredholm 2-determinant of $\msf{id}+\mcal{K}$ by
    $$\det_2\left(\msf{id}+\mcal{K}\right)\defi 1+\sum_{n\geq1}\dfrac{1}{n!}\int_{\R^{n}}\begin{vmatrix}
		0&k(t_1,t_2)&\dots&k(t_1,t_n)
		\\k(t_2,t_1)&0&\dots&k(t_2,t_n)
		\\\vdots& & &\vdots
		\\k(t_n,t_1)&k(t_n,t_2)&\hdots&0
	\end{vmatrix}\prod_{i=1}^{n}d\mu_{V}(t_i).$$
    where $|.|$ is the usual determinant for matrices.
\end{definition}

\begin{theorem}[Explicit expression for $\mcal{L}^{-1}$]

    For all $u\in \msf{H}$, such that $x\mapsto \dfrac{1}{\rho_V(x)}\displaystyle\int_x^{+\infty}u(t)\rho_V(t)dt$ is integrable at $+\infty$, we have:
    \begin{equation}
        -\mcal{L}^{-1}[u]=\mcal{A}^{-1}[u]-\mcal{A}^{-1}\mcal{R}\big[u\big].
    \end{equation}
    In this expression $\mcal{A}^{-1}[u]$ is given by 
    $$\mcal{A}^{-1}[u](x)=-\int_x^{+\infty}\dfrac{ds}{\rho_V(s)}\int_s^{+\infty}u(t)\rho_V(t)dt+C$$
    where $C=\displaystyle\int_\R\rho_V(y)dy\int_y^{+\infty}\dfrac{ds}{\rho_V(s)}\int_s^{+\infty}u(t)\rho_V(t)dt$. The operator $\mcal{R}$ is the kernel operator defined for all $v\in L^2(\mu_V)$ by:
    \begin{equation*}
        \mcal{R}[v](x)=\int_\R r(x,y)v(y)d\mu_V(y)
    \end{equation*}
    where 

    \begin{equation}\label{eq:kernel R}
        r(x,y)=\dfrac{1}{\underset{2}{\det}(\msf{id}-\mcal{K})}\sum_{n\geq1}\dfrac{(-1)^n}{n!}\int_{\R^{n}}\begin{vmatrix}
		k(x,y)&k(x,t_1)&\dots&k(x,t_n)
		\\k(t_1,y)&0&\dots&k(t_1,t_n)
		\\\vdots& & &\vdots
		\\k(t_n,y)&k(t_n,t_1)&\hdots&0
	\end{vmatrix}\prod_{i=1}^{n}d\mu_{V}(t_i),
    \end{equation}
where $\mcal{K}$ is the kernel operator defined for all $w\in L^2(\mu_V)$ by:
\begin{equation}
    \label{def:oper K}
    \mcal{K}[w](x)=\int_\R k(x,y)w(y)dy
\end{equation}
with 
\begin{equation}
    \label{def:kernel K}k(x,y)=2P\ln|x-y|-2P\displaystyle\int_\R\ln|z-y|d\mu_V(z).
\end{equation}
Finally, $2P\mcal{W}\circ\mcal{A}^{-1}=-\mcal{K}$.
\end{theorem}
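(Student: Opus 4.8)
The statement identifies an explicit resolvent formula for $-\mathcal{L}$ via a Neumann-type series expressed through Fredholm $2$-determinants. The backbone of the argument is the factorization $-\mathcal{L} = \mathcal{A}(\msf{id} + \mathcal{A}^{-1}\circ 2P\mathcal{W})$, valid on $\mcal{D}(\mcal{L}) = \mcal{D}(\mcal{A})$ by the decomposition lemma $-\mathcal{L}=\mathcal{A}+2P\mathcal{W}$ together with Theorem \ref{thm:DiagA}. Hence formally $(-\mathcal{L})^{-1} = (\msf{id} + \mathcal{A}^{-1}2P\mathcal{W})^{-1}\mathcal{A}^{-1}$, and the whole task reduces to inverting $\msf{id} + \mathcal{A}^{-1}\circ 2P\mathcal{W}$ on $L^2(\mu_V)$ and recognizing $\mathcal{A}^{-1}\circ 2P\mathcal{W}$ as a Hilbert--Schmidt kernel operator $-\mcal{K}$.

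First I would establish the integral representation of $\mathcal{A}^{-1}$. Since $\mathcal{A}[\phi] = -(\phi'\rho_V)'/\rho_V$, solving $\mathcal{A}[\phi]=u$ amounts to integrating twice: $\phi'(x)\rho_V(x) = \int_x^{+\infty} u(t)\rho_V(t)\,dt$ (the constant of integration vanishes because $\phi'\rho_V\in L^2(\R)$ and the antiderivative must decay), then integrating again and fixing the remaining additive constant $C$ by the zero-mean condition $\int_\R \phi\, d\mu_V = 0$ that defines $L^2_0(\mu_V)$; this yields the stated formula for $\mathcal{A}^{-1}[u]$ and $C$, under the stated integrability hypothesis at $+\infty$ which guarantees the double integral converges. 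Next I would compute $2P\mathcal{W}\circ\mathcal{A}^{-1}$. Recall $\mathcal{W}[\phi] = -\mathcal{H}[\phi'\rho_V] + \int_\R \mathcal{H}[\phi'\rho_V]\,d\mu_V$, so with $\phi = \mathcal{A}^{-1}[w]$ we have $\phi'\rho_V(x) = \int_x^{+\infty} w(t)\rho_V(t)\,dt$, and since $\mathcal{H}[g]' = \mathcal{H}[g']$ one gets $\mathcal{H}[\phi'\rho_V]' = \mathcal{H}[(\phi'\rho_V)'] = \mathcal{H}[-w\rho_V] = -\mathcal{H}[w\rho_V]$. Recalling that $(U^{\rho_V})' = \mathcal{H}[\rho_V]$ and $\mathcal{H}[g](x) = \fint \tfrac{g(t)}{t-x}dt$, one identifies, after integrating back and subtracting the $\mu_V$-mean, that $2P\mathcal{W}\circ\mathcal{A}^{-1}[w](x) = -\int_\R k(x,y) w(y)\,dy$ with $k(x,y) = 2P\ln|x-y| - 2P\int_\R \ln|z-y|\,d\mu_V(z)$; the mean-subtraction term in $k$ is exactly what makes $\mathcal{K}[w]$ land in the right space and cancels the constant coming from $\mathcal{W}$. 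I would also check $\mathcal{K}$ is Hilbert--Schmidt on $L^2(\mu_V)$: this uses the exponential decay of $\rho_V$ from Lemma \ref{lem:regularitedensite} (so $\iint |k(x,y)|^2 d\mu_V(x)d\mu_V(y) < \infty$, logarithmic singularities being square-integrable against the smooth bounded density and $\ln$-moments of $\mu_V$ being finite).

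With $\mathcal{A}^{-1}\circ 2P\mathcal{W} = -\mathcal{K}$ a Hilbert--Schmidt operator, the inversion of $\msf{id} - \mathcal{K}$ proceeds by the classical Fredholm theory for Hilbert--Schmidt operators: since $-\mathcal{L}$ is invertible by Theorem \ref{thm:InverL}, $1$ is not an eigenvalue of $\mathcal{K}$, so $\det_2(\msf{id}-\mathcal{K}) \neq 0$, and $(\msf{id}-\mathcal{K})^{-1} = \msf{id} + \mathcal{R}$ where the resolvent kernel $\mathcal{R}$ has the Fredholm series representation \eqref{eq:kernel R} (I would cite \cite{gohberg2012traces} for the $2$-determinant version of the formula, the diagonal zeros in the determinants being the characteristic feature of the regularized $2$-determinant). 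Then $(-\mathcal{L})^{-1} = (\msf{id} + \mathcal{R})\mathcal{A}^{-1} = \mathcal{A}^{-1} + \mathcal{R}\circ\mathcal{A}^{-1}$; a small point to be careful about is the order of composition — one should verify $\mathcal{R}$ commutes appropriately or rather rewrite $(\msf{id}-\mathcal{K})^{-1}\mathcal{A}^{-1}$ directly as $\mathcal{A}^{-1} - \mathcal{A}^{-1}\mathcal{R}[\,\cdot\,]$ using $\mathcal{K}\mathcal{A}^{-1}$ versus $\mathcal{A}^{-1}\mathcal{K}$, matching the stated $-\mathcal{L}^{-1}[u] = \mathcal{A}^{-1}[u] - \mathcal{A}^{-1}\mathcal{R}[u]$.

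The main obstacle I anticipate is the rigorous identification $2P\mathcal{W}\circ\mathcal{A}^{-1} = -\mathcal{K}$, specifically controlling the boundary/constant terms: the Hilbert transform computations are formal manipulations of principal-value integrals and one must justify the interchange of $\mathcal{H}$ with integration, track the additive constants coming both from the double-integration in $\mathcal{A}^{-1}$ and from the $\mu_V$-mean subtraction in $\mathcal{W}$, and confirm they conspire to produce precisely the symmetric mean-subtracted logarithmic kernel $k$. The integrability hypothesis on $x \mapsto \rho_V(x)^{-1}\int_x^{+\infty} u(t)\rho_V(t)\,dt$ is what legitimizes writing $\mathcal{A}^{-1}[u]$ as an honest function (rather than merely an $L^2_0(\mu_V)$ element defined spectrally), and I would want to check it is consistent with $u \in \msf{H}$ plus the decay of $\rho_V$; this is where Assumption \ref{assumptions} \ref{ass4} (integrability of $V'^{-2}$, control of $V''/V'$) enters to ensure $1/\rho_V$ does not grow too wildly relative to the tail integral. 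Everything else — Hilbert--Schmidt property, non-vanishing of $\det_2$, the series formula — is standard once the kernel is correctly identified.
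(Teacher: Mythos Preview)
Your approach is essentially the paper's: factor $-\mathcal{L}$, write $\mathcal{A}^{-1}$ as a double integral, identify $2P\mathcal{W}\circ\mathcal{A}^{-1}$ with the logarithmic kernel operator $-\mathcal{K}$, and invoke Fredholm theory for Hilbert--Schmidt operators from \cite{gohberg2012traces}. Two points where the paper is cleaner than your sketch. First, the factorization order: the paper writes $-\mathcal{L} = (\msf{id} + 2P\mathcal{W}\circ\mathcal{A}^{-1})\mathcal{A}$, \emph{not} $\mathcal{A}(\msf{id}+\mathcal{A}^{-1}\circ 2P\mathcal{W})$, so that $-\mathcal{L}^{-1} = \mathcal{A}^{-1}(\msf{id}-\mathcal{K})^{-1} = \mathcal{A}^{-1}(\msf{id}-\mathcal{R})$ follows directly and yields exactly $\mathcal{A}^{-1}[u]-\mathcal{A}^{-1}\mathcal{R}[u]$; this resolves the ``order of composition'' worry you flag (and note you oscillate between $\mathcal{A}^{-1}\circ 2P\mathcal{W}$ and $2P\mathcal{W}\circ\mathcal{A}^{-1}$ --- only the latter equals $-\mathcal{K}$). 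Second, for the kernel identification: rather than differentiating $\mathcal{H}[\phi'\rho_V]$ and ``integrating back'' (which, as you anticipate, makes the constants delicate), the paper applies the Sokhotski--Plemelj formula to $\fint \frac{ds}{s-x}\int_s^{+\infty}v(t)\rho_V(t)\,dt$ and integrates by parts in $s$, the boundary term $\ln|s-x|\int_s^{+\infty}v\rho_V$ being killed at $\pm\infty$ by Cauchy--Schwarz and the exponential decay of $\rho_V$ together with $\int v\rho_V=0$; this produces the kernel $k(x,y)$ in one stroke with all constants accounted for.
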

\begin{proof}
    Let $f\in\msf{H}$, there exists a unique $u\in\mcal{D}(\mcal{A})$ such that $\mcal{A}[u]=f$. Since $(u'\rho_V)'=-\rho_V\mcal{A}[u]\in L^2(\R)$, we get $u'\rho_V\in H^1(\R)$, so $u'(x)\rho_V(x)\underset{|x|\rightarrow+\infty}{\longrightarrow}0$. By definition, $-\dfrac{(u'\rho_V)'}{\rho_V}=f\text{ hence }$
       \begin{equation}\label{formule:dérivée}
        \mcal{A}^{-1}[f]'(x)\rho_V(x)=u'(x)\rho_V(x)=\int_x^{+\infty}f(t)\rho_V(t)dt.
    \end{equation}
    Using the fact that $\int_\R u(x)\rho_V(x)dx=0$, integrating again we get:
 
    $$u(x)=-\int_x^{+\infty}\dfrac{ds}{\rho_V(s)}\int_s^{+\infty}f(t)\rho_V(t)dt+C$$
    where $C=\displaystyle\int_\R\rho_V(x)dx\int_x^{+\infty}\dfrac{ds}{\rho_V(s)}\int_s^{+\infty}f(t)\rho_V(t)dt$.
    Now using that that for all $g\in \msf{H}$ $$-\mcal{L}^{-1}=\Big(\mcal{A}+2P\mcal{W}\Big)^{-1}=\mcal{A}^{-1}\Big(\msf{id}+2P\mcal{W}\circ\mcal{A}^{-1}\Big)^{-1}$$
    We only need to compute the expression of $\Big(\msf{id}+2P\mcal{W}\circ\mcal{A}^{-1}\Big)^{-1}$.
    Using (\ref{formule:dérivée}), let $v\in\mcal{A}\big(\mcal{D}(\mcal{L})\big)=\mathsf{H}$. We obtain:
    \begin{equation*}
        \mcal{W}\circ\mcal{A}^{-1}[v](x)=-\fint_\R\dfrac{ds}{s-x}\int_s^{+\infty}dtv(t)\rho_V(t)+\int_\R d\mu_V(y)\fint_\R\dfrac{ds}{s-y}\int_s^{+\infty}dtv(t)\rho_V(t).
    \end{equation*}
    By Sokhotski-Plejmel formula, we have:
    $$\fint_\R\dfrac{ds}{s-x}\int_s^{+\infty}dtv(t)\rho_V(t)=\lim_{\varepsilon\rightarrow{0}^+}\underset{M\rightarrow+\infty}{\lim}\int_{-M}^M\dfrac{ds}{2}\Big\{\dfrac{1}{s-x+\ii\varepsilon}+\dfrac{1}{s-x-\ii\varepsilon}\Big\}\int_s^{+\infty}dtv(t)\rho_V(t).$$
    We then proceed to an integration by part:
    \begin{multline*}
        \fint_\R\dfrac{ds}{s-x}\int_s^{+\infty}dtv(t)\rho_V(t)=\lim_{\varepsilon\rightarrow{0}^+}\underset{M\rightarrow+\infty}{\lim}\Big[\dfrac{\ln\big((s-x)^2+\varepsilon^2\big)}{2}\int_s^{+\infty}dtv(t)\rho_V(t)\Big]_{-M}^M\\+\int_\R ds\ln|x-s|v(s)\rho_V(s).
    \end{multline*}
    To conclude that $\mcal{W}\circ\mcal{A}^{-1}[v](x)=-\displaystyle\int_\R ds\ln|x-s|v(s)\rho_V(s)+\int_\R d\mu_V(y)\int_\R ds\ln|y-s|v(s)\rho_V(s)$, we just need to show that $$\ln|x|\int_x^{+\infty}dtv(t)\rho_V(t)\underset{|x|\rightarrow\infty}{\longrightarrow}0$$ which can be seen by Cauchy-Schwarz inequality:
    $$\Big|\ln(x)\int_x^{+\infty}dt v(t)\rho_V(t)\Big|\leq|\ln(x)|\|v\|_{L^2(\mu_V)}.\rho_V(x)^{1/4}\Big(\int_\R\rho_V(t)^{1/2}dt\Big)^{1/2}.$$
    In this inequality, we used that $\rho_V$ is decreasing exponentially fast in a neighborhood of $+\infty$, hence $$\ln(x)\int_x^{+\infty}dt v(t)\rho_V(t)\underset{x\rightarrow+\infty}{\longrightarrow}0.$$ 
    Using that $\displaystyle\int_\R v(t)\rho_V(t)dt =0$, we have $\displaystyle \int_x^{+\infty} v(t)\rho_V(t)dt=-\int_{-\infty}^x  v(t)\rho_V(t)dt$, therefore the exact same argument allows us to conclude when $x$ goes to $-\infty$. 
    We obtain the following equality:
    \begin{equation*}
       (\msf{id}+2P\mcal{W}\circ\mcal{A}^{-1})[v](x)=v(x)-2P\int_\R ds\ln|x-s|v(s)\rho_V(s)+2P\int_\R d\mu_V(y)\int_\R ds\ln|y-s|v(s)\rho_V(s).
    \end{equation*}
    With the operator $\mcal{K}$ given in \eqref{def:oper K}, by \cite[Section XII.2]{gohberg2012traces}, we can conclude that the inverse of the kernel operator $\msf{id}-\mcal{K}$ is $\msf{id}-\mcal{R}$ where $\mcal{R}$ is given in \eqref{eq:kernel R}. This concludes the proof.
\end{proof}

\section{Regularity of the inverse of \texorpdfstring{$\mathcal{L}$}{TEXT} and completion of the proof of Theorem \ref{thm:thmppal}}
\label{s7}
Since we have proven the central limit theorem for functions of the type $\mcal{L}[\phi]$ with $\phi$ regular enough and satisfying vanishing asymptotic conditions at infinity, we exhibit a class of functions $f$ for which $\mcal{L}^{-1}[f]$ is regular enough to satisfy conditions of Theorem \ref{thm:LaplaceGeneral}.
 Define the subset $\mcal{T}$ of $\msf{H}$ by
$$\mcal{T}\defi \left\{f\in\mcal{C}^2(\R)\,|\,f, f', f'' \text{ are bounded}, \int_\R f(x)d\mu_V(x)=0\right\}.$$

\begin{theorem}\label{thmreg}
For $f\in \mcal{T}$, there exists a unique $u\in \mcal{C}^3(\R)$ such that $u'\in H^2(\R)$ with $u^{(3)}$ bounded which verifies:
 \begin{itemize}
     \item $u'(x)=\underset{|x|\rightarrow\infty}{O}\left(V'(x)^{-1}\right)$
     \item $u''(x)=\underset{|x|\rightarrow\infty}{O}\left(V'(x)^{-1}\right)$
     \item $u^{(3)}(x)=\underset{|x|\rightarrow\infty}{O}\left(V'(x)^{-1}\right)$
 \end{itemize}
 and such that $f=\mcal{L}[u]$.
\end{theorem}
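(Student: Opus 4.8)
The plan is to combine the abstract inversion result Theorem \ref{thm:InverL} with a bootstrap argument on regularity. First I would set $u := (-\mcal{L})^{-1}[f]\in\mcal{D}(\mcal{L})=\mcal{D}(\mcal{A})\subset\msf{H}$, which exists and is unique by Theorem \ref{thm:InverL}; uniqueness in the claimed class then follows because any two solutions differ by an element of $\ker\mcal{L}\cap\msf{H}=\{0\}$. The equation to exploit is the pointwise identity $-\mcal{L}[u]=\mcal{A}[u]+2P\mcal{W}[u]=f$, i.e.
\begin{equation}
\label{eq:bootstrap}
-\frac{(u'\rho_V)'}{\rho_V}(x) - 2P\,\mcal{H}[u'\rho_V]'(x) + 2P\!\int_\R\mcal{H}[u'\rho_V](y)d\mu_V(y) = f(x)\,.
\end{equation}
Since $u\in\msf{H}$ we have $u'\rho_V\in L^2(\R)$, hence $\mcal{H}[u'\rho_V]\in L^2(\R)$; and since $u\in\mcal{D}(\mcal{A})$, $\mcal{W}[u]'=\mcal{H}[\rho_V\mcal{A}[u]]\in L^2(\R)$, so $\mcal{W}[u]\in H^1(\R)$, in particular bounded. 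Reading \eqref{eq:bootstrap} as $(u'\rho_V)'=-\rho_V\big(f+2P\mcal{W}[u]\big)$, the right-hand side is in $L^2(\R)$ (here one uses that $f$ is bounded and $\rho_V\in L^2$ by Lemma \ref{lem:regularitedensite}), so $u'\rho_V\in H^1(\R)$, which already gives $u'(x)\rho_V(x)\to 0$ at infinity and the integral representation
$$u'(x)\rho_V(x)=\int_x^{+\infty}\rho_V(t)\big(f(t)+2P\mcal{W}[u](t)\big)dt\,.$$

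The core of the argument is the decay estimates. From the last display and the bound $\rho_V(x)\le C^V_P(1+|x|)^{2P}e^{-V(x)}$ of Lemma \ref{lem:regularitedensite}, together with the asymptotics $\mcal{H}[\rho_V](x)\sim -x^{-1}$ from Lemma \ref{lemme:boundedHilbert}, I would estimate $\int_x^{+\infty}\rho_V$ via an integration-by-parts / Laplace-type argument: since $V'\to+\infty$ and $V''/V'^2\to 0$ (Assumption \ref{assumptions}\ref{ass4}), $\int_x^{+\infty}\rho_V(t)dt\sim \rho_V(x)/V'(x)$, so that $u'(x)=O(1/V'(x))$, using that $f+2P\mcal{W}[u]$ is bounded. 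Differentiating \eqref{eq:bootstrap} once more — legitimate because the right-hand side of $(u'\rho_V)'=-\rho_V(f+2P\mcal{W}[u])$ is itself differentiable: $f\in\mcal{C}^2$, $\rho_V\in\mcal{C}^3$, and $\mcal{W}[u]'=\mcal{H}[\rho_V\mcal{A}u]$ with $\rho_V\mcal{A}u\in H^1(\R)$ so $\mcal{H}[\rho_V\mcal{A}u]\in H^1\subset\mcal{C}^{1/2}$ — one solves algebraically for $u''$ in terms of $u'$, $\rho_V'/\rho_V=-(V'+2P\mcal{H}[\rho_V])$, $f$, and $\mcal{W}[u]$; since $\rho_V'/\rho_V\sim -V'(x)$ and $u'=O(1/V')$, the dominant balance forces $u''(x)=O(1/V'(x))$, where Assumption \ref{assumptions}\ref{ass4} ($V''/V'=O(1)$) is used to control the error terms. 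Repeating once more, differentiating the equation for $u''$ and using $V^{(3)}/V'=O(1)$ (Assumption \ref{assumptions}\ref{ass4}) together with $u'=O(1/V')$, $u''=O(1/V')$, gives $u^{(3)}(x)=O(1/V')$, and in particular $u^{(3)}$ is bounded since $1/V'\to 0$.

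The remaining point is that $u'\in H^2(\R)$: we already have $u'\rho_V\in H^1$; to upgrade to $u'\in H^2(\R)$ one checks $u',u'',u^{(3)}\in L^2(\R)$ directly from the decay estimates just obtained, since $1/V'(x)$ is square-integrable at infinity by Assumption \ref{assumptions}\ref{ass4} and $u',u'',u^{(3)}$ are continuous hence locally bounded; the smoothness $u\in\mcal{C}^3(\R)$ comes from the representation of $u'$ as $\rho_V^{-1}$ times an $H^1$ (hence $\mcal{C}^{1/2}$) function, bootstrapped through the differentiated equations exactly as in Lemma \ref{lem:regularitedensite}. I expect the main obstacle to be the decay estimates: making the Laplace-type asymptotic $\int_x^\infty\rho_V\sim\rho_V/V'$ rigorous and, more delicately, controlling the nonlocal term $\mcal{W}[u]$ and its derivatives at infinity — one must show $\mcal{W}[u]=-\mcal{H}[u'\rho_V]+\text{const}$ and its derivative $\mcal{H}[\rho_V\mcal{A}u]$ decay fast enough (using that $u'\rho_V$ and $\rho_V\mcal{A}u$ are integrable with controlled moments, via Lemma \ref{lemme:boundedHilbert}) so that they do not spoil the $O(1/V')$ bounds; this is where Assumptions \ref{assumptions}\ref{ass2} and \ref{ass4} enter most crucially.
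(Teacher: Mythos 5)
Your outline follows the paper's route: invert $-\mcal{L}$ abstractly via Theorem \ref{thm:InverL} to get $u\in\mcal{D}(\mcal{L})=\mcal{D}(\mcal{A})$, rewrite $-\mcal{L}[u]=f$ as the ODE $(u'\rho_V)'=-\rho_V\bigl(f-2P\mcal{W}[u]\bigr)$, integrate once, and then obtain pointwise decay by a Laplace-type integration by parts. The observations that $u'\rho_V\in H^1(\R)$, that $\mcal{W}[u]\in H^1(\R)$ hence bounded, and that the integral representation
$$u'(x)\rho_V(x)=\int_x^{+\infty}\rho_V(t)\bigl(f(t)+2P\mcal{H}[u'\rho_V](t)-\textstyle{\int}\mcal{H}[u'\rho_V]d\mu_V\bigr)\,dt$$
gives $u'=O(1/V')$ are all correct and in the paper.

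There is, however, a genuine gap at the step where you pass from $u'=O(1/V')$ to $u''=O(1/V')$. Writing the ODE as $u''=-\dfrac{\rho_V'}{\rho_V}u'-g$ with $g\defi f-2P\mcal{W}[u]=O(1)$, the naive substitution $u'=O(1/V')$, $\rho_V'/\rho_V\sim -V'$ gives only $u''=O(1)$: the two $O(1)$ terms on the right cancel to leading order, but an $O$-bound on $u'$ alone does not see that cancellation. To make "dominant balance'' rigorous you need the \emph{sharp} expansion of $u'$, not just its order of magnitude. The paper gets this by integrating by parts once more in the integral formula for $u'\rho_V$, producing
$$u'(x)=-\frac{\rho_V(x)}{\rho_V'(x)}\,g(x)-R_1(x),\qquad R_1(x)=O\bigl(V'(x)^{-2}\bigr),$$
and then reinjecting this expansion into the ODE; the $g$-terms cancel exactly, leaving $u''=\dfrac{\rho_V'}{\rho_V}R_1=O(1/V')$. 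The same mechanism (another integration by parts, introducing $R_2=O(V'^{-3})$, plus another exact cancellation) is needed for $u^{(3)}=O(1/V')$. Your plan, as written, omits the cancellation argument and therefore would not yield the decay rates needed to conclude $u',u'',u^{(3)}\in L^2(\R)$ via $V'^{-2}\in L^1$ at infinity. A secondary point: you suggest showing $\mcal{W}[u]$ "decays fast enough'' — it does not, and need not; $\mcal{W}[u]$ tends to a nonzero constant. What matters is only that $f+2P\mcal{W}[u]$ is bounded with bounded derivative, the decay being supplied entirely by the factor $\rho_V/\rho_V'$ and the iterated remainders $R_1,R_2$.
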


\begin{proof}
	Let $f\in \mcal{T}\subset \msf{H}$, then since $-\mcal{L}:\mcal{D}(\mcal{L})\rightarrow\msf{H}$ is bijective, there exists a unique $u\in \mcal{D}(\mcal{L})$ such that $-\mcal{L}[u]=f$ \textit{i.e}:
	\begin{equation}\label{equation resolution}
		-u''-\dfrac{\rho_V'}{\rho_V}u'-2P\mcal{H}[u'\rho_V]+2P\int_\R\mcal{H}[u'\rho_V](t)d\mu_V(t)=f
	\end{equation}
	Hence we have
	\begin{equation}\label{premiere derivée}
		-(u'\rho_V)'=\rho_V\Big(f+2P\mcal{H}[u'\rho_V]-2P\int_\R\mcal{H}[u'\rho_V](t)d\mu_V(t)\Big).
	\end{equation} 
 Since $u\in\mcal{D}(\mcal{L})=\mcal{D}(\mcal{A})=\{u\in \mathsf{H}\ |\ \mathcal{A}[u]\in \mathsf{H}\}$, the functions $u'\rho_V$ and its distributional derivatives $(u'\rho_V)'=-\rho_V\mcal{A}[u]$ and $(u'\rho_V)''=-\dfrac{\rho_V'}{\rho_V}\rho_V^{1/2}.\rho_V^{1/2}\mcal{A}[u]-\rho_V\mcal{A}[u]'$ are in $L^2(\R)$. In particular $u'\rho_V$ goes to zero at infinity, and $\mcal{H}[u'\rho_V]\in H^2(\R)\subset \mcal{C}^1(\R)$. So we can integrate (\ref{premiere derivée}) on $[x,+\infty[$ , since by Lemma \ref{lemme:boundedHilbert}, the right-hand side is a $\underset{|x|\rightarrow\infty}{O}\left(\rho_V(x)\right)$,  to get the following expression for $x$ large enough
	
	\begin{equation}
		u'(x)\rho_V(x)=\int_{x}^{+\infty}\dfrac{\rho_V(t)}{\rho_V'(t)}\Big(f+2P\mcal{H}[u'\rho_V]-2P\int_\R\mcal{H}[u'\rho_V](s)d\mu_V(s)\Big).\rho_V'(t)dt
	\end{equation}
From this expression, we see that $u'\in\mcal{C}^2(\R)$. We now check the integrability condition at infinity and the claimed boundedness properties of $u',u'',u^{(3)}$.
After proceeding to an integration by parts, which is permitted by the previous arguments, we obtain:
	\begin{equation}\label{decomp premiere derivee}
	u'(x)=-\dfrac{\rho_V(x)}{\rho_V'(x)}\Big(f(x)+2P\mcal{H}[u'\rho_V](x)-2P\int_\R\mcal{H}[u'\rho_V](t)d\mu_V(t)\Big)-R_1(x)
	\end{equation}
	where we set $R_1(x)\defi \displaystyle\dfrac{1}{\rho_{V}(x)}\int_{x}^{+\infty}\Bigg[\dfrac{\rho_{V}(t)}{\rho_{V}'(t)}\Big(f+2P\mcal{H}[u'\rho_V]-2P\int_\R\mcal{H}[u'\rho_V](s)d\mu_V(s)\Big)\Bigg]'\rho_{V}(t)dt$. We will need to show that $R_1$ is a remainder of order $\underset{x\rightarrow+\infty}{O}\left(V'(x)^{-2}\right)$ at infinity. In this case, we will have $u'(x)=\underset{x\rightarrow+\infty}{O}\left(V'(x)^{-1}\right)$ which will be useful for the following. If we reinject (\ref{decomp premiere derivee}) in (\ref{equation resolution}), we find:
	
	\begin{multline}\label{eq:ippreg}
		u''=-\Big(f+2P\mcal{H}[u'\rho_V]-2P\int_\R\mcal{H}[u'\rho_V](t)d\mu_V(t)\Big)
  \\-\dfrac{\rho'_V}{\rho_V}\Bigg[-\dfrac{\rho_{V}}{\rho_{V}'}\Big(f+2P\mcal{H}[u'\rho_V]-2P\int_\R\mcal{H}[u'\rho_V](t)d\mu_V(t)\Big)-R_1\Bigg]=\dfrac{\rho_{V}'}{\rho_{V}}R_1
	\end{multline}
	Hence
 \begin{multline*}
     u''(x)=\dfrac{\rho_{V}'}{\rho_{V}^2}(x)\int_{x}^{+\infty}\rho_V(t)dt\Bigg\{\underbrace{\Big(\dfrac{\rho_{V}}{\rho_{V}'}\Big)'(t)}_{=\underset{t\rightarrow+\infty}{O}\Big(\frac{V''(t)}{V'(t)^2}\Big)}\underbrace{\Big[f+2P\mcal{H}[u'\rho_V]-2P\int_\R\mcal{H}[u'\rho_V](s)d\mu_V(s)\Big](t)}_{=\underset{t\rightarrow+\infty}{O}(1)}
 \\+\underbrace{\dfrac{\rho_{V}}{\rho_{V}'}(t)}_{=\underset{t\rightarrow+\infty}{O}\big(V'(t)^{-1}\big)}\underbrace{\Big(f'-2P\mcal{H}[\rho_V\mcal{A}[u]]\Big)(t)}_{=\underset{t\rightarrow+\infty}{O}(1)}\Bigg\}.
 \end{multline*}
We recall that because of assumption  \textit{\ref{ass4}}, $\underset{t\rightarrow+\infty}{O}\Big(\dfrac{V''(t)}{V'(t)^2}\Big)=\underset{t\rightarrow+\infty}{O}\big(V'(t)^{-1}\big)$. The fact that $\mcal{H}[\rho_V\mcal{A}[u]]$ is bounded comes again from lemma \ref{lemme:boundedHilbert}.
	Differentiating yields

\begin{equation*}
    u^{(3)}(x)=\Big(\dfrac{\rho_V'}{\rho_V}\Big)'(x)R_1(x)-\Big(\dfrac{\rho_V'}{\rho_V}(x)\Big)^2R_1(x)-\dfrac{\rho_V'}{\rho_V}(x)g(x)
\end{equation*}
where we have set $g\defi \Bigg[\dfrac{\rho_{V}}{\rho_{V}'}\Big(f+2P\mcal{H}[u'\rho_V]-2P\int_\R\mcal{H}[u'\rho_V](t)d\mu_V(t)\Big)\Bigg]'$. Now, by the same integration by parts argument as in \eqref{eq:ippreg}, we obtain:

\begin{multline*}
    -\Big(\dfrac{\rho_V'(x)}{\rho_V(x)}\Big)^2R_1(x)-\dfrac{\rho_V'(x)}{\rho_V(x)}g(x)
    \\=-\dfrac{\rho_V'(x)^2}{\rho_V(x)^3}\Big[g(t)\dfrac{\rho_V(t)}{\rho_V'(t)}\rho_V(t)\Big]_x^{+\infty}+\dfrac{\rho_V'(x)^2}{\rho_V(x)^3}\int_\R\Big(g\dfrac{\rho_V}{\rho_V'}\Big)'(t)\rho_V(t)dt
    -\dfrac{\rho_V'(x)}{\rho_V(x)}g(x)
    \\=\Big(\dfrac{\rho_V'(x)}{\rho_V(x)}\Big)^2R_2(x)
\end{multline*}
where 
\begin{equation}\label{eq:def R_2}
    R_2(x)\defi \dfrac{1}{\rho_V(x)}\displaystyle\int_x^{+\infty}\Bigg\{\Bigg[\Big(f+2P\mcal{H}[u'\rho_V]-2P\int_\R\mcal{H}[u'\rho_V](t)d\mu_V(t)\Big)\dfrac{\rho_{V}}{\rho_{V}'}\Bigg]'\dfrac{\rho_V}{\rho_V'}\Bigg\}'(t)\rho_V(t)dt.
\end{equation}
The above integration by part is justified by the fact that $g$ goes indeed to zero at $+\infty$. Hence
$$u^{(3)}(x)=\Big(\dfrac{\rho_V'}{\rho_V}\Big)'(x)R_1(x)+\Big(\dfrac{\rho_V'}{\rho_V}(x)\Big)^2R_2(x).$$
Since $\left(\dfrac{\rho_V'}{\rho_V}\right)'(x)=\underset{x\rightarrow+\infty}{O}\Big(V'(x)\Big)$ using Lemma \ref{lem:regularitedensite} and assumption \textit{\ref{ass4}}, and $\dfrac{\rho_V'}{\rho_V}(x)=\underset{x\rightarrow+\infty}{O}\Big(V'(x)\Big)$, it just remains to check that $R_1(x)=\underset{x\rightarrow+\infty}{O}\Big(V'(x)^{-2}\Big)$ and that $R_2(x)=\underset{x\rightarrow+\infty}{O}\Big(V'(x)^{-3}\Big)$.

 Since the integrand in $R_1$ is a $\underset{t\rightarrow+\infty}{O}\Big(V'(t)^{-1}\Big)$, by a comparison argument, we get
	\begin{equation*}
		R_1(x)\defi \underset{x\rightarrow+\infty}{O}\big(I_1(x)\big)\hspace{0,5cm}\text{where}\hspace{0,5cm}I_1(x)\defi \dfrac{1}{\rho_V(x)}\int_{x}^{+\infty}\dfrac{\rho_V(t)}{V'(t)}dt
	\end{equation*}
Furthermore, by integration by parts:

\begin{multline}
	I_1(x)\defi -\dfrac{\rho_{V}(x)}{V'(x)\rho_{V}'(x)}-\dfrac{1}{\rho_V(x)}\int_{x}^{+\infty}\rho_{V}(t)\Big(\dfrac{\rho_{V}}{V'\rho_{V}'}\Big)'(t)dt
 \\=\underset{x\rightarrow+\infty}{O}\Big(V'(x)^{-2}\Big)-\dfrac{1}{\rho_V(x)}\int_{x}^{+\infty}\rho_{V}(t)dt\Bigg\{-\dfrac{V''(t)}{V'(t)^2}\dfrac{\rho_{V}}{\rho_{V}'}+\dfrac{1}{V'(t)}\Big(\dfrac{\rho_{V}}{\rho_{V}'}\Big)'(t)\Bigg\}
\end{multline}
By the same argument as before, the last integral is of the form $\dfrac{1}{\rho_V(x)}\displaystyle\int_{x}^{+\infty}\underset{t\rightarrow+\infty}{O}\Big(\dfrac{\rho_{V}(t)}{V'(t)^2}\Big)dt$ so we can conclude that
$$\dfrac{1}{\rho_V(x)}\int_{x}^{+\infty}\rho_{V}(t)dt\Bigg\{-\dfrac{V''(t)}{V'(t)^2}\dfrac{\rho_{V}}{\rho_{V}'}+\dfrac{1}{V'(t)}\Big(\dfrac{\rho_{V}}{\rho_{V}'}\Big)'(t)\Bigg\}=\underset{x\rightarrow+\infty}{O}\Bigg(\dfrac{1}{\rho_V(x)V'(x)^2}\int_{x}^{+\infty}\rho_{V}(t)dt\Bigg)$$
where we used that $t\mapsto V'(t)^{-2}$ is decreasing on $[x,+\infty[$ for $x$ big enough. In order to conclude that $R_1(x)=\underset{x\rightarrow+\infty}{O}\Big(V'(x)^{-2}\Big)$, one just needs to check that $\dfrac{1}{\rho_V(x)}\displaystyle\int_{x}^{+\infty}\rho_{V}(t)dt=\underset{x\rightarrow+\infty}{o}(1)$.
By two integration by parts, we obtain:
\begin{multline*}
    \dfrac{1}{\rho_V(x)}\displaystyle\int_{x}^{+\infty}\rho_{V}(t)dt=-\dfrac{\rho_V(x)}{\rho_V'(x)}+\Big(\dfrac{\rho_V}{\rho_V'}\Big)'(x)\dfrac{\rho_V(x)}{\rho_V'(x)}+\dfrac{1}{\rho_V(x)}\displaystyle\int_{x}^{+\infty}\rho_{V}(t)\Bigg[\Big(\dfrac{\rho_V}{\rho_V'}\Big)'\dfrac{\rho_V}{\rho_V'}\Bigg]'(t)dt
    \\=\underset{x\rightarrow+\infty}{o}(1)+\dfrac{1}{\rho_V(x)}\displaystyle\int_{x}^{+\infty}\underset{t\rightarrow+\infty}{O}\Big(\dfrac{\rho_{V}(t)}{V'(t)^2}\Big)dt
    \\=\underset{x\rightarrow+\infty}{o}(1)+\underset{x\rightarrow+\infty}{O}\Bigg(\dfrac{1}{\rho_V(x)}\displaystyle\int_{x}^{+\infty}\dfrac{\rho_{V}(t)}{V'(t)^2}dt\Bigg)=\underset{x\rightarrow+\infty}{o}(1)
\end{multline*}
since $t\mapsto\rho_V(t)$ is decreasing on $[x,+\infty[$ for $x$ big enough and that $t\mapsto V'(t)^{-2}$ is integrable at infinity by assumption \textit{\ref{ass4}}. This allows to conclude indeed that $R_1(x)=\underset{x\rightarrow+\infty}{O}\Big(V'(x)^{-2}\Big)$.

Now, we have to check that $R_2(x)=\underset{x\rightarrow+\infty}{O}\Big(V'(x)^{-3}\Big)$. By differentiating \eqref{eq:def R_2}, by the same arguments as before, 
\begin{equation*}
		R_2(x)\defi \underset{x\rightarrow+\infty}{O}\big(I_2(x)\big)\hspace{0,5cm}\text{where}\hspace{0,5cm}I_2(x)\defi \dfrac{1}{\rho_V(x)}\int_{x}^{+\infty}\dfrac{\rho_V(t)}{V'(t)^2}dt.
	\end{equation*}
Moreover, by integration by part, we get just as before:
\begin{multline}
I_2(x)\defi -\dfrac{\rho_{V}(x)}{V'(x)^2\rho_{V}'(x)}-\dfrac{1}{\rho_V(x)}\int_{x}^{+\infty}\rho_{V}(t)\Big(\dfrac{\rho_{V}}{V'^2\rho_{V}'}\Big)'(t)dt
 \\=\underset{x\rightarrow+\infty}{O}\Big(V'(x)^{-3}\Big)-\dfrac{1}{\rho_V(x)}\int_{x}^{+\infty}\underset{t\rightarrow+\infty}{O}\Big(\dfrac{\rho_{V}(t)}{V'(t)^3}\Big)dt\Bigg\}
 \\=\underset{x\rightarrow+\infty}{O}\Big(V'(x)^{-3}\Big)-\underset{x\rightarrow+\infty}{O}\Bigg(\dfrac{1}{\rho_V(x){V'(x)^3}}\int_{x}^{+\infty}\rho_V(t)dt\Bigg)=\underset{x\rightarrow+\infty}{O}\Big(V'(x)^{-3}\Big).
 \end{multline}
Finally, we can conclude that, by doing the same thing near $-\infty$, \begin{equation}\label{behaviour}
u'(x)=\underset{|x|\rightarrow+\infty}{O}\Big(V'(x)^{-1}\Big),\hspace{1cm} u''(x)=\underset{|x|\rightarrow+\infty}{O}\Big(V'(x)^{-1}\Big) \hspace{0,4cm}\text{and}\hspace{0,4cm} u^{(3)}(x)=\underset{|x|\rightarrow+\infty}{O}\Big(V'(x)^{-1}\Big)
\end{equation} which establishes that these functions are in $L^2$ in a neighborhood of $\infty$, again by assumption \textit{\ref{ass4}}. Since we already showed that $u\in\mcal{C}^3(\R)\subset H^3_{\text{loc}}(\R)$, it establishes that $u\in H^3(\R)\cap\mcal{C}^{3}(\R)$ with $u^{(3)}$ bounded. To complete the proof we just have to show that $(u')^2V^{(3)}$, $u'u''V''$, $(u')^2V''$ and $u'V'$ are bounded which is easily checked by (\ref{behaviour}) and assumption \textit{\ref{ass4}}.	
\end{proof}

\appendix

{
\section{Appendix: proof of Theorem \ref{thm:DiagA}}
\label{app A}

In order to analyze $\mathcal{A}$, we let, for $u\in L^2(\R)$, 
$$\mathcal{S}u \defi \rho_V^{1/2}\mathcal{A}\Big[\rho_V^{-1/2}u\Big]\,.$$
Note that $u \in \left(L^2(\R),\|.\|_{L^2(\R)}\right)\mapsto \rho_V^{-1/2}u \in \left(L^2(\mu_V),\|.\|_{L^2(\mu_V)}\right)$ is an isometry. It turns out that it will be easier to study first the operator $\mcal{S}$ in order to get the spectral properties of $\mcal{A}$.

\begin{proposition}
\label{prop:potentiel schrodinger}The operator $\mcal{S}$ is a Schrödinger operator: it admits the following expression for all $u\in C_c^2(\R)$: $\mcal{S}[u]=-u''+w_V u$ with 

\begin{equation}
    \label{eq:w_V}
    w_V=\frac{1}{2}\left( \frac{1}{2}V'^2-V'' + 2PV'\mathcal{H}[\rho_V]-2P\mathcal{H}[\rho_V']+2P^2\mathcal{H}[\rho_V]^2 \right)=\dfrac{1}{2}\Big[(\ln\rho_V)''+\dfrac{1}{2}(\ln\rho_V)'^2\Big]\,.
\end{equation}
	Furthermore, $w_V$ is continuous and we have $w_V(x)\underset{\infty}{\sim}\dfrac{V'(x)^2}{4}\underset{|x|\rightarrow\infty}{\longrightarrow}+\infty$.
\end{proposition}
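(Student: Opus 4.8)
The plan is to compute $\mathcal{S}[u] = \rho_V^{1/2}\mathcal{A}[\rho_V^{-1/2}u]$ directly from the definition $\mathcal{A}[\phi] = -(\phi''+\frac{\rho_V'}{\rho_V}\phi')$, substituting $\phi = \rho_V^{-1/2}u$. First I would set $g \defi \rho_V^{-1/2}$ and expand $\phi' = g'u + gu'$ and $\phi'' = g''u + 2g'u' + gu''$. Plugging into the formula for $\mathcal{A}$, the terms involving $u'$ are $-2g'u' - \frac{\rho_V'}{\rho_V}gu'$; since $g'/g = -\frac{1}{2}\rho_V'/\rho_V$ we have $2g' = -\frac{\rho_V'}{\rho_V}g$, so these cancel exactly, leaving no first-order term as required for a Schrödinger operator. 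Multiplying through by $\rho_V^{1/2} = g^{-1}$, the coefficient of $u$ becomes $w_V = -g^{-1}g'' - \frac{\rho_V'}{\rho_V}g^{-1}g'$, and using $g^{-1}g' = -\frac{1}{2}(\ln\rho_V)'$ and $g^{-1}g'' = \frac{3}{4}((\ln\rho_V)')^2 \cdot \frac{1}{2} \cdot 2 - \ldots$ — more cleanly, writing $g = e^{-\frac{1}{2}\ln\rho_V}$ gives $g^{-1}g' = -\frac{1}{2}(\ln\rho_V)'$ and $g^{-1}g'' = \frac{1}{4}((\ln\rho_V)')^2 - \frac{1}{2}(\ln\rho_V)''$, whence $w_V = -\frac{1}{4}((\ln\rho_V)')^2 + \frac{1}{2}(\ln\rho_V)'' + \frac{1}{2}((\ln\rho_V)')^2 = \frac{1}{2}(\ln\rho_V)'' + \frac{1}{4}((\ln\rho_V)')^2$, which is the claimed closed form $\frac{1}{2}[(\ln\rho_V)'' + \frac{1}{2}((\ln\rho_V)')^2]$.

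Next I would translate this into the expression in terms of $V$ and $\mathcal{H}[\rho_V]$. From Lemma \ref{lem:regularitedensite}, equation \eqref{deriv1}, we have $(\ln\rho_V)' = \rho_V'/\rho_V = -V' - 2P\mathcal{H}[\rho_V]$, and differentiating, $(\ln\rho_V)'' = -V'' - 2P\mathcal{H}[\rho_V]'$. Substituting these two identities into $w_V = \frac{1}{2}(\ln\rho_V)'' + \frac{1}{4}((\ln\rho_V)')^2$ and expanding the square $(V' + 2P\mathcal{H}[\rho_V])^2 = V'^2 + 4PV'\mathcal{H}[\rho_V] + 4P^2\mathcal{H}[\rho_V]^2$ gives exactly
\[
w_V = \frac{1}{2}\Big(\frac{1}{2}V'^2 - V'' + 2PV'\mathcal{H}[\rho_V] - 2P\mathcal{H}[\rho_V'] + 2P^2\mathcal{H}[\rho_V]^2\Big),
\]
using $\mathcal{H}[\rho_V]' = \mathcal{H}[\rho_V']$ from Lemma \ref{lemma:HilbertPropriétés}\textit{iii)}. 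This is a routine algebraic identity once the two derivative formulas for $\ln\rho_V$ are in hand.

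For the continuity and asymptotics: continuity of $w_V$ follows since $V \in \mathcal{C}^3$ (so $V', V''$ are continuous) and $\mathcal{H}[\rho_V] \in \mathcal{C}^{1/2}(\R)$ with $\mathcal{H}[\rho_V]' = \mathcal{H}[\rho_V'] \in \mathcal{C}^{1/2}(\R)$ as established in the proof of Lemma \ref{lem:regularitedensite}. For the growth at infinity, I would use Lemma \ref{lemme:boundedHilbert}, which gives $\mathcal{H}[\rho_V](x) \sim -x^{-1}$ as $|x|\to\infty$, so $\mathcal{H}[\rho_V]$ and $\mathcal{H}[\rho_V]^2$ are bounded; moreover $\mathcal{H}[\rho_V'](x) = \mathcal{H}[\rho_V]'(x)$ is also bounded (it is in $H^1(\R) \subset \mathcal{C}^{1/2}$, hence bounded, and actually decays). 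Since $|V'(x)|\to\infty$ by Assumption \ref{assumptions}\ref{ass1}, the dominant term in $w_V$ is $\frac{1}{4}V'^2$: indeed $V'\mathcal{H}[\rho_V] = O(V'/x) = o(V'^2)$ using $V'\to\infty$, and $V'' = o(V'^2)$ by Assumption \ref{assumptions}\ref{ass4} (which gives $V''/V' = O(1)$, hence $V''/V'^2 = O(1/V') \to 0$). Therefore $w_V(x) \sim \frac{1}{4}V'(x)^2 \to +\infty$. The only mild subtlety — the main (small) obstacle — is bookkeeping the relative sizes of the correction terms against $V'^2$ to justify that none of them interferes with the leading asymptotic; this is entirely controlled by Assumption \ref{assumptions}\ref{ass4} together with the decay of $\mathcal{H}[\rho_V]$ from Lemma \ref{lemme:boundedHilbert}.
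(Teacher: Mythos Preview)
Your proposal is correct and follows essentially the same route as the paper: a direct computation of $\rho_V^{1/2}\mathcal{A}[\rho_V^{-1/2}u]$ showing the first-order terms cancel and identifying $w_V=\tfrac{1}{2}[(\ln\rho_V)''+\tfrac{1}{2}((\ln\rho_V)')^2]$, then substituting $(\ln\rho_V)'=-V'-2P\mathcal{H}[\rho_V]$ from Lemma~\ref{lem:regularitedensite}, and finally invoking Lemma~\ref{lemme:boundedHilbert} and the assumptions on $V$ for the asymptotics. The only cosmetic difference is that you organize the derivative computation via $g=e^{-\frac{1}{2}\ln\rho_V}$ (slightly cleaner), and you cite Assumption~\ref{assumptions}\ref{ass4} rather than \ref{ass3} for $V''=o(V'^2)$; both yield the needed estimate.
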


\begin{proof}
	We compute directly
	\begin{align*}
		\dfrac{\Big(\rho_V\big(\rho_V^{-1/2}u\big)'\Big)'}{\rho_V}&=\big(\rho_V^{-1/2}u\big)''+\dfrac{\rho_V'}{\rho_V}\big(\rho_V^{-1/2}u\big)'
		\\&=\big(\rho_V^{-1/2}u'-\dfrac{1}{2}\rho_V^{-3/2}\rho_V'u\big)'+\rho_V'\rho_V^{-3/2}u'-\dfrac{1}{2}\rho_V^{-5/2}\big(\rho_V'\big)^2u
  \\&=\rho_V^{-1/2}u''+\dfrac{1}{4}\rho_V^{-5/2}\big(\rho_V'\big)^2u-\dfrac{1}{2}\rho_V^{-3/2}\rho_V''u
		\\&=\rho_V^{-1/2}\Big[u''+\dfrac{1}{4}\rho_V^{-2}\big(\rho_V'\big)^2u-\dfrac{1}{2}\rho_V^{-1}\rho_V''u\Big]
		\\&=\rho_V^{-1/2}\Bigg(u''-\dfrac{1}{2}\Big[\big(\dfrac{\rho_V''}{\rho_V}\big)-\dfrac{1}{2}\big(\dfrac{\rho_V'}{\rho_V}\big)^2\Big]u\Bigg)
		\\&=\rho_V^{-1/2}\Bigg(u''-\dfrac{1}{2}\Big[(\ln\rho_V)''+\dfrac{1}{2}(\ln\rho_V)'^2\Big]u\Bigg) = \rho_V^{-1/2}\Bigg(u''-w_V u\Bigg)\,.
	\end{align*}
Now, using Lemma \ref{lem:regularitedensite}, we have $$w_V=\frac{1}{2}\left(\frac{1}{2}V'^2-V''+2PV'\mathcal{H}[\rho_V]-2P\mathcal{H}[\rho_V']+2P^2\mathcal{H}[\rho_V]^2 \right)\,.$$ Notice that $\mathcal{H}[\rho_V']$ and $\mathcal{H}[\rho_V]$ are bounded since they belong to $H^1(\R)$, as we showed in Lemma \ref{lem:regularitedensite} that $\rho_V$ is $H^2(\R)$.
Along with assumption \textit{\ref{ass3}} and Lemma \ref{lemme:boundedHilbert}, we deduce $w_V(x)\underset{\infty}{\sim}\dfrac{1}{4}V'^2(x)$.
\end{proof}

\begin{remark}
	Note that the function $w_V$ need not be positive on $\R$. In fact, neglecting the terms involving the Hilbert transforms of $\rho_V$ and $\rho_V'$, $w_V$ would only be positive outside of a compact set. However, using the positivity of $\mathcal{A}$, which will be shown further in the article, we can show that the operator $-u''+w_Vu$ is itself positive on $L^2(\R)$. It can also be checked by integration by parts that $\mcal{S}$ is symmetric on $\mathcal{C}_c^\infty(\R)$ with respect to the inner product of $L^2(\R)$.
\end{remark}

We now introduce an extension of $\mathcal{S}$ by defining its associated bilinear form.

\begin{definition}[Quadratic form associated to $\mcal{S}$]~\\Let $\alpha\geq0$ such that $w_V+\alpha\geq1$. We define the quadratic form associated to $\mcal{S}+\alpha \msf{id}$, defined for all $u\in\mathcal{C}_c^\infty(\R)$ by
$$q_\alpha(u,u)\defi \int_\R u'^2(x)dx+\int_\R u^2(x)(w_V(x)+\alpha)dx$$
\end{definition}

This quadratic form can be extended to a larger domain denoted by $Q(\mcal{S}+\alpha \msf{id})$, called the \textit{form domain} of the operator $\mcal{S}+\alpha \msf{id}$. By the theory of Schrödinger operators, it is well-known (see  \cite[Theorem 8.2.1]{davies1996spectral}) that such a domain is given by $$Q(\mcal{S}+\alpha \msf{id})=\left\{u\in H^1(\R), u(w_V+\alpha)^{1/2}\in L^2(\R) \right\}=\left\{u\in H^1(\R), u V'\in L^2(\R)  \right\}=: H^1_{V'}(\R)\,.$$
The space $H^1_{V'}(\R)$ can be seen to be the completion of $\mathcal{C}_c^{\infty}(\R)$ under the norm $q_\alpha$.
Now that the quadratic form associated to $\mcal{S}+\alpha \msf{id}$ has been extended to its form domain, it is possible to go back to the operator and extend it by its Friedrichs extension.

\begin{theorem}[Friedrichs extension of $\mcal{S}+\alpha \msf{id}$]~\\
There exists an extension $(\mcal{S}+\alpha\msf{id})_F$ of the operator $\mcal{S}+\alpha \msf{id}$, called the \textit{Friedrichs extension} of $\mcal{S}+\alpha \msf{id}$ defined on $\mcal{D}\Big((\mcal{S}+\alpha\msf{id})_F\Big)=\Big\{u\in H^1_{V'}(\R),-u''+(w_V+\alpha)u\in L^2(\R)\Big\}$.
\end{theorem}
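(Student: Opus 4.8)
The plan is to invoke the general theory of Friedrichs extensions of semibounded symmetric operators, which applies directly because we have already identified the quadratic form $q_\alpha$ and its form domain $H^1_{V'}(\mathbb{R})$. First I would check the hypotheses: the operator $\mathcal{S}+\alpha\,\mathsf{id}$, acting on $\mathcal{C}_c^\infty(\mathbb{R})$, is symmetric with respect to the $L^2(\mathbb{R})$ inner product (this is an integration-by-parts computation, and the remark after Proposition \ref{prop:potentiel schrodinger} already notes it), and it is bounded below by $\mathsf{id}$ since by the choice of $\alpha$ one has $w_V+\alpha\geq 1$, whence $q_\alpha(u,u)\geq \|u\|_{L^2(\mathbb{R})}^2$ for all $u\in\mathcal{C}_c^\infty(\mathbb{R})$. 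The quadratic form $q_\alpha$ is therefore closable, and its closure is exactly the form with domain $H^1_{V'}(\mathbb{R})$, as recalled in the excerpt (citing \cite[Theorem 8.2.1]{davies1996spectral}).

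Next I would apply the standard representation theorem for closed, densely defined, semibounded quadratic forms (e.g. \cite[Theorem 4.4.2]{davies1996spectral} or the classical Friedrichs theorem): to the closed form $q_\alpha$ with form domain $Q(\mathcal{S}+\alpha\,\mathsf{id})=H^1_{V'}(\mathbb{R})$ there corresponds a unique self-adjoint operator, which we call $(\mathcal{S}+\alpha\,\mathsf{id})_F$, whose domain is
$$\mathcal{D}\big((\mathcal{S}+\alpha\,\mathsf{id})_F\big)=\Big\{u\in H^1_{V'}(\mathbb{R})\ \big|\ \exists\, f\in L^2(\mathbb{R}),\ \forall v\in H^1_{V'}(\mathbb{R}),\ q_\alpha(u,v)=\langle f,v\rangle_{L^2(\mathbb{R})}\Big\},$$
and on this domain $(\mathcal{S}+\alpha\,\mathsf{id})_F u=f$. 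It then remains to identify this abstract domain with the concrete set $\{u\in H^1_{V'}(\mathbb{R}) : -u''+(w_V+\alpha)u\in L^2(\mathbb{R})\}$. For the inclusion $\subseteq$: if $u$ lies in the abstract domain, then testing $q_\alpha(u,v)=\langle f,v\rangle$ against $v\in\mathcal{C}_c^\infty(\mathbb{R})$ shows that the distribution $-u''+(w_V+\alpha)u$ equals $f\in L^2(\mathbb{R})$ (here one uses that $w_V$ is continuous, so $w_V u\in L^1_{\mathrm{loc}}$ and the pairing makes sense). Conversely, if $u\in H^1_{V'}(\mathbb{R})$ with $-u''+(w_V+\alpha)u=:f\in L^2(\mathbb{R})$ in the distributional sense, then for $v\in\mathcal{C}_c^\infty(\mathbb{R})$ one has $q_\alpha(u,v)=\langle f,v\rangle_{L^2(\mathbb{R})}$ by integration by parts, and this extends to all $v\in H^1_{V'}(\mathbb{R})$ by density and the continuity estimate $|q_\alpha(u,v)|\le \|u\|_{q_\alpha}\|v\|_{q_\alpha}$ together with $|\langle f,v\rangle|\le\|f\|_{L^2}\|v\|_{L^2}\le\|f\|_{L^2}\|v\|_{q_\alpha}$; hence $u$ is in the abstract domain. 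This establishes the claimed description of $\mathcal{D}\big((\mathcal{S}+\alpha\,\mathsf{id})_F\big)$, and subtracting the bounded operator $\alpha\,\mathsf{id}$ gives the Friedrichs extension $\mathcal{S}_F$ of $\mathcal{S}$ on the same domain $\{u\in H^1_{V'}(\mathbb{R}): -u''+(w_V+\alpha)u\in L^2(\mathbb{R})\}=\{u\in H^1_{V'}(\mathbb{R}): -u''+w_V u\in L^2(\mathbb{R})\}$.

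The main obstacle is not any deep estimate — it is the careful bookkeeping needed to verify that the abstract form domain really coincides with $H^1_{V'}(\mathbb{R})$ and that the abstract operator domain unwinds to the stated distributional condition; in particular one must be slightly careful that $w_V$, while continuous and bounded below, is not assumed bounded, so the identity $q_\alpha(u,v)=\langle(-u''+(w_V+\alpha)u),v\rangle$ should be read in the sense of the (absolutely convergent, since $u,v\in H^1_{V'}$) integral $\int u'v' + \int (w_V+\alpha)uv$, and the representation theorem then guarantees $f\in L^2$. Once these identifications are in place, the existence and self-adjointness of the extension are immediate from the general theory, so the proof is short. I would write it as a two-paragraph argument: one paragraph invoking closability and the representation theorem to produce $(\mathcal{S}+\alpha\,\mathsf{id})_F$, and one paragraph carrying out the two-sided identification of the domain.
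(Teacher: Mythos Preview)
Your proposal is correct and follows essentially the same approach as the paper: both set up the abstract Friedrichs domain via the condition that $v\mapsto q_\alpha(\cdot,v)$ extends to a bounded functional on $L^2(\R)$, then identify it with the concrete set by testing against $\mathcal{C}_c^\infty(\R)$ for the forward inclusion and writing down the explicit $L^2$ pairing with $-u''+(w_V+\alpha)u$ for the converse. If anything, your density argument for the converse inclusion is slightly more carefully spelled out than in the paper.
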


\begin{proof}
We denote \begin{multline*}
    \mcal{D}\Big((\mcal{S}+\alpha\msf{id})_F\Big)\\\defi \Big\{v\in H^1_{V'}(\R),u\in H^1_{V'}(\R)\longmapsto q_\alpha(u,v) \text{ can be extended to a continuous linear functional on }L^2(\R)\Big\}
\end{multline*} 
If $v\in \mcal{D}\Big((\mcal{S}+\alpha\msf{id})_F\Big)$, by Riesz theorem, there exists a unique $f_v\in L^2(\R)$ such that $q_\alpha(u,v)=\braket{u,f_v}_{L^2(\R)}$ holds for all $u\in L^2(\R)$ and we can set $(\mcal{S}+\alpha\msf{id})_F[v]\defi f_v$. Note that it is indeed a way of extending $\mcal{S}+\alpha \msf{id}$ since for all $u,v\in\mathcal{C}_c^\infty(\R)$, $q_\alpha(u,v)=\braket{u,(\mcal{S}+\alpha\msf{id})[v]}_{L^2(\R)}$.

We want to show that $\mcal{D}\Big((\mcal{S}+\alpha\msf{id})_F\Big)=\Big\{u\in H^1_{V'}(\R),-u''+(w_V+\alpha)u\in L^2(\R)\Big\}$. Let $f\in \mcal{D}\Big((\mcal{S}+\alpha\msf{id})_F\Big)$ and $g\defi (\mcal{S}+\alpha\msf{id})_F[f]\in L^2(\R)$. By definition of $q_\alpha$, for all $u\in\mathcal{C}_c^\infty(\R)$:
\begin{equation*}
    \int_\R g(x)u(x)dx=\int_\R \Big[f'(x)u'(x)+(w_V(x)+\alpha)f(x)u(x)\Big]dx=\int_\R \Big[-f(x)u''(x)+(w_V(x)+\alpha)f(x)u(x)\Big] dx
\end{equation*}
Therefore in the sense of distributions, we get $-f''+(w_V+\alpha)=g$ which is a function in $L^2(\R)$, hence $f\in\Big\{u\in H^1_{V'}(\R),-u''+(w_V+\alpha)u\in L^2(\R)\Big\}$. Conversely, if $f\in H^1_{V'}(\R)$ such that $-f''+(w_V+\alpha)f\in L^2(\R)$, it is possible to extend $u\mapsto q_\alpha(f,u)$ to a continuous linear form on $L^2(\R)$ by
$$u\mapsto\int_\R u\Big[-f''(x)+f(x)(w_V(x)+\alpha)\Big]dx$$
which concludes the fact that $\mcal{D} \Big((\mcal{S}+\alpha\msf{id})_F\Big)=\Big\{u\in H^1_{V'}(\R),-u''+(w_V+\alpha)u\in L^2(\R)\Big\}$.
\end{proof}

\textbf{In the following, we will deal only with $(\mcal{S}+\alpha\msf{id})_F:\mcal{D} \Big((\mcal{S}+\alpha\msf{id})_F\Big)\longrightarrow L^2(\R)$ and denote it $\mcal{S}+\alpha \msf{id}:\mcal{D}(\mcal{S}+\alpha\msf{id})\longrightarrow L^2(\R)$.}

\begin{remark}
Note that in the previous proof, the application of Riesz theorem doesn't allow to say that $(\mcal{S}+\alpha\msf{id}):v\in \Big(\mcal{D}(\mcal{S}+\alpha\msf{id}),\|.\|_{q_\alpha}\Big)\mapsto f_v\in \Big(L^2(\R),\|.\|_{L^2(\R)}\Big)$, where $\|.\|_{q_\alpha}$ stands for the norm associated to the bilinear positive definite form $q_\alpha$, is continuous. It can be seen by the fact that \\$v\in \Big(\mcal{D}(\mcal{S}+\alpha\msf{id}),\|.\|_{q_\alpha}\Big)\mapsto q(.,v)\in \Big(L^2(\R)',\|.\|_{L^2(\R)'}\Big)$, where $L^2(\R)'$ stands for the topological dual of $L^2(\R)$ equipped with its usual norm, is not continuous. Indeed the $\|.\|_{q_\alpha}$ norm doesn't control the second derivative of $v$ and hence doesn't provide any module of continuity for the $L^2(\R)$-extended linear form $q(.,v)$.

Also note that, even though it would be convenient that $\mcal{D}\Big((\mcal{S}+\alpha \msf{id}\Big)=L^2(\R,(w_V+\alpha)^2dx)\cap H^2(\R)$ it is not true without more properties on $w_V$. Such a result holds, for example when $w_V$ belongs to $B_2$, the class of reverse Hölder weights, see \cite[Theorem 1.1]{auscher2007maximal}.
\end{remark}

\begin{theorem}[Inversion of $\mcal{S}+\alpha \msf{id}$]~\\
For every $f\in L^2(\R)$, there exists a unique $u\in \mcal{D}\Big(\mcal{S}+\alpha \msf{id}\Big)$ such that $(\mcal{S}+\alpha\msf{id})[u]=f$. Furthermore, the map $(\mcal{S}+\alpha\msf{id})^{-1}$ is continuous from $\big(L^2(\R),\|.\|_{L^2(\R)}\big)$ to $\big(\mcal{D}(\mcal{S}+\alpha\msf{id}),\|.\|_{q_\alpha}\big)$.
\end{theorem}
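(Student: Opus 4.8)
The plan is to obtain the result as a direct consequence of the Lax–Milgram theorem applied to the coercive, bounded bilinear form $q_\alpha$ on the Hilbert space $H^1_{V'}(\R)$. First I would observe that, by the choice of $\alpha$, we have $w_V+\alpha\geq 1$ on $\R$, so that
\[
q_\alpha(u,u)=\int_\R u'(x)^2\,dx+\int_\R u(x)^2\big(w_V(x)+\alpha\big)\,dx \geq \int_\R u'(x)^2\,dx + \int_\R u(x)^2\,dx = \|u\|_{H^1(\R)}^2,
\]
and moreover $q_\alpha(u,u)=\|u\|_{q_\alpha}^2$ defines a norm on $H^1_{V'}(\R)$ which, by the discussion preceding the statement, makes $H^1_{V'}(\R)$ a Hilbert space (it is the completion of $\mathcal{C}_c^\infty(\R)$ for this norm). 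Thus $q_\alpha$ is trivially coercive and bounded on $\big(H^1_{V'}(\R),\|\cdot\|_{q_\alpha}\big)$: it is the inner product itself.

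Next, given $f\in L^2(\R)$, I would consider the linear functional $\ell_f:u\mapsto \int_\R f(x)u(x)\,dx$ on $H^1_{V'}(\R)$. Since $\|u\|_{L^2(\R)}\leq \|u\|_{q_\alpha}$ (again because $w_V+\alpha\geq 1$), Cauchy–Schwarz gives $|\ell_f(u)|\leq \|f\|_{L^2(\R)}\|u\|_{L^2(\R)}\leq \|f\|_{L^2(\R)}\|u\|_{q_\alpha}$, so $\ell_f$ is continuous on $\big(H^1_{V'}(\R),\|\cdot\|_{q_\alpha}\big)$ with norm at most $\|f\|_{L^2(\R)}$. By the Riesz representation theorem there is a unique $u\in H^1_{V'}(\R)$ with $q_\alpha(u,v)=\ell_f(v)=\langle f,v\rangle_{L^2(\R)}$ for all $v\in H^1_{V'}(\R)$, and $\|u\|_{q_\alpha}=\|\ell_f\|\leq \|f\|_{L^2(\R)}$. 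By the very definition of $\mcal{D}(\mcal{S}+\alpha\msf{id})$ and of its Friedrichs extension recalled just above, this element $u$ belongs to $\mcal{D}(\mcal{S}+\alpha\msf{id})$ and satisfies $(\mcal{S}+\alpha\msf{id})[u]=f$. Uniqueness follows from coercivity: if $(\mcal{S}+\alpha\msf{id})[u]=0$ then $0=\langle (\mcal{S}+\alpha\msf{id})[u],u\rangle_{L^2(\R)}=q_\alpha(u,u)=\|u\|_{q_\alpha}^2$, hence $u=0$. Finally, the bound $\|(\mcal{S}+\alpha\msf{id})^{-1}f\|_{q_\alpha}=\|u\|_{q_\alpha}\leq \|f\|_{L^2(\R)}$ is exactly the asserted continuity from $\big(L^2(\R),\|\cdot\|_{L^2(\R)}\big)$ to $\big(\mcal{D}(\mcal{S}+\alpha\msf{id}),\|\cdot\|_{q_\alpha}\big)$.

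There is essentially no obstacle here beyond bookkeeping: the whole point is that the shift by $\alpha$ was chosen precisely so that $q_\alpha$ is an inner product equivalent to (in fact dominating, with the missing direction irrelevant) the natural norm on the form domain, reducing the statement to Riesz representation. The one place that requires care is to make sure the element produced by Riesz truly lies in the operator domain $\mcal{D}(\mcal{S}+\alpha\msf{id})$ and not merely in the form domain — but this is immediate from the characterization $\mcal{D}(\mcal{S}+\alpha\msf{id})=\{u\in H^1_{V'}(\R):-u''+(w_V+\alpha)u\in L^2(\R)\}$ established in the previous theorem, since $q_\alpha(u,v)=\langle f,v\rangle_{L^2(\R)}$ for all test functions $v$ says exactly that $-u''+(w_V+\alpha)u=f$ in the distributional sense. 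I would also remark, for later use in diagonalizing $\mcal{A}$, that this shows $(\mcal{S}+\alpha\msf{id})^{-1}$ is a bounded, self-adjoint, injective operator on $L^2(\R)$, and that compactness of this resolvent (hence the discrete spectrum of $\mcal{S}$, and of $\mcal{A}$) will follow from the fact that $w_V\to+\infty$ at infinity together with a Rellich-type argument, which is the content of the remaining steps of Appendix \ref{app A}.
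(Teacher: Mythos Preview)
Your proof is correct and follows essentially the same approach as the paper: both apply the Riesz representation theorem to the continuous linear functional $v\mapsto\langle f,v\rangle_{L^2(\R)}$ on the Hilbert space $\big(H^1_{V'}(\R),\|\cdot\|_{q_\alpha}\big)$, then use the characterization of $\mcal{D}(\mcal{S}+\alpha\msf{id})$ from the preceding Friedrichs-extension theorem to conclude that the Riesz representative lies in the operator domain and satisfies $(\mcal{S}+\alpha\msf{id})[u]=f$. Your write-up is slightly more detailed (explicit coercivity bound, explicit uniqueness argument), but the argument is the same.
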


\begin{proof}
Let $f\in L^2(\R)$, the map $u\longmapsto\braket{u,f}_{L^2(\R)}$ is continuous on $\big(H_{V'}^1(\R),\|.\|_{q_\alpha}\big)$ which is a Hilbert space. Therefore, by Riesz theorem, there exists a unique $v_f\in H^1_{V'}(\R)$ such that for all $u\in H^1_{V'}(\R)$, $$\braket{f,u}_{L^2(\R)}=q_\alpha(v_f,u)$$ from which we deduce that, in the sense of distributions, $f=-v_f''+(w_V+\alpha)v_f$. This implies that $v_f\in \mcal{D}(\mcal{S}+\alpha\msf{id})$. Since $v_f\in \mcal{D}(\mcal{S}+\alpha\msf{id})$, we have then for all $u\in L^2(\R)$, $$\braket{f,u}_{L^2(\R)}=q_\alpha(v_f,u)=\Braket{(\mcal{S}+\alpha)v_f,u}_{L^2(\R)},$$ hence $(\mcal{S}+\alpha\msf{id})[v_f]=f$. Finally, by Riesz theorem, $f\in L^2(\R)\mapsto v_f\in H^1_{V'}(\R)$ is continuous hence so is $(\mcal{S}+\alpha\msf{id})^{-1}$.
\end{proof}

\begin{remark}
It would be tempting to use Banach's isomorphism theorem to say that since $(\mcal{S}+\alpha\msf{id})^{-1}$ is bijective and continuous, so must be $\mcal{S}+\alpha \msf{id}$. But since $\big(\mcal{D}(\mcal{S}+\alpha\msf{id}),\|.\|_{q_\alpha}\big)$ is not a Banach space (it's not closed in $H^1_{V'}(\R)$) we can't apply it.
\end{remark}

We are now able to diagonalize the resolvent of $\mathcal{S}$.

\begin{theorem}[Diagonalization of $(\mcal{S}+\alpha\msf{id})^{-1}$]~\\
\label{thm:diagSchrodinger}
There exists a complete orthonormal set $(\psi_n)_{n\geq 0}$ of $L^2(\R)$ (meaning that $$\overline{\Span\{\psi_n,\ n\geq 0\}}^{\|.\|_{L^2(\R)}}=L^2(\R)$$ and $\langle \psi_i,\psi_j \rangle_{L^2(\R)}=\delta_{i,j}$), where each $\psi_n \in \mcal{D}(\mcal{S}+\alpha\msf{id})$ and $\big(\mu_n(\alpha)\big)_{n\geq0}\in [0,1]^\N$ with $\mu_n(\alpha)\underset{N\rightarrow\infty}{\longrightarrow}0$ such that $(\mcal{S}+\alpha\msf{id})^{-1}[\psi_n]=\mu_n(\alpha)\psi_n$ for all $n\geq0$. We also have for all $v\in L^2(\R)$, $$\Big\|\big(\mcal{S}+\alpha \msf{id}\big)^{-1}[v]\Big\|_{L^2(\R)}\leq \|v\|_{L^2(\R)}.$$
\end{theorem}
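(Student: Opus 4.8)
The statement to establish is the diagonalization of the resolvent $(\mcal{S}+\alpha\msf{id})^{-1}$ together with the norm bound $\|(\mcal{S}+\alpha\msf{id})^{-1}[v]\|_{L^2(\R)} \leq \|v\|_{L^2(\R)}$. The strategy is to combine the two ingredients already available in the excerpt: the previous theorem showing $(\mcal{S}+\alpha\msf{id})^{-1}$ is a well-defined bounded operator $L^2(\R)\to\big(\mcal{D}(\mcal{S}+\alpha\msf{id}),\|\cdot\|_{q_\alpha}\big)$, and the fact that $w_V(x)\underset{|x|\to\infty}{\sim}V'(x)^2/4\to+\infty$ from Proposition \ref{prop:potentiel schrodinger}. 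First I would record that $(\mcal{S}+\alpha\msf{id})^{-1}$ is self-adjoint and positive on $L^2(\R)$: positivity follows since for $u=(\mcal{S}+\alpha\msf{id})^{-1}[v]$ we have $\braket{v,u}_{L^2(\R)}=q_\alpha(u,u)\geq \|u\|_{L^2(\R)}^2\geq 0$, and symmetry follows from the symmetry of $q_\alpha$ (for $u_i=(\mcal{S}+\alpha\msf{id})^{-1}[v_i]$, $\braket{v_1,u_2}_{L^2(\R)}=q_\alpha(u_1,u_2)=q_\alpha(u_2,u_1)=\braket{v_2,u_1}_{L^2(\R)}$). The norm bound is then immediate from Cauchy--Schwarz: $\|u\|_{L^2(\R)}^2\leq q_\alpha(u,u)=\braket{v,u}_{L^2(\R)}\leq \|v\|_{L^2(\R)}\|u\|_{L^2(\R)}$.

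Next I would establish compactness of $(\mcal{S}+\alpha\msf{id})^{-1}$ as an operator on $L^2(\R)$. The key point is that the form domain $Q(\mcal{S}+\alpha\msf{id})=H^1_{V'}(\R)$ embeds compactly into $L^2(\R)$ because the potential $w_V+\alpha$ is confining: $w_V(x)\to+\infty$. Concretely, one invokes the standard criterion (e.g.\ \cite[Theorem XIII.67]{reed1978methods}) stating that if $w_V+\alpha\to+\infty$ then the resolvent of the Schrödinger operator $-\Delta+w_V+\alpha$ is compact; equivalently, a bounded set in the $q_\alpha$-norm is precompact in $L^2(\R)$, since on the complement of a large interval the weight $w_V+\alpha$ forces the $L^2$-mass to be small, while on a bounded interval one uses the Rellich--Kondrachov compact embedding $H^1\hookrightarrow L^2$. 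Since $(\mcal{S}+\alpha\msf{id})^{-1}$ maps the unit ball of $L^2(\R)$ into a bounded set of $\big(H^1_{V'}(\R),\|\cdot\|_{q_\alpha}\big)$, composing with this compact embedding shows $(\mcal{S}+\alpha\msf{id})^{-1}$ is compact on $L^2(\R)$.

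Having a compact, self-adjoint, positive operator on the separable Hilbert space $L^2(\R)$, the spectral theorem for compact self-adjoint operators yields a complete orthonormal set $(\psi_n)_{n\geq 0}$ of eigenfunctions with eigenvalues $\mu_n(\alpha)\to 0$; positivity gives $\mu_n(\alpha)\geq 0$, and the operator-norm bound $\|(\mcal{S}+\alpha\msf{id})^{-1}\|_{L^2(\R)\to L^2(\R)}\leq 1$ gives $\mu_n(\alpha)\in[0,1]$. Each $\psi_n$ lies in the range of $(\mcal{S}+\alpha\msf{id})^{-1}$, hence in $\mcal{D}(\mcal{S}+\alpha\msf{id})$ (and in fact $\mu_n(\alpha)>0$ since the operator is injective, so $(\mcal{S}+\alpha\msf{id})[\psi_n]=\mu_n(\alpha)^{-1}\psi_n$). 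The main obstacle is the compactness argument: one must handle the non-compact domain $\R$, where the usual Rellich lemma fails, and genuinely exploit the growth of $w_V$ --- but this is precisely the classical confining-potential argument, so it reduces to citing the appropriate theorem in \cite{reed1978methods} or \cite{davies1996spectral} together with the asymptotics $w_V\sim V'^2/4$ from Proposition \ref{prop:potentiel schrodinger} and Assumption \ref{assumptions} \textit{\ref{ass1}} (which guarantees $|V'|\to\infty$).
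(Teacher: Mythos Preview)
Your proposal is correct and follows essentially the same approach as the paper: both argue that $(\mcal{S}+\alpha\msf{id})^{-1}$ is compact by combining the continuity of $(\mcal{S}+\alpha\msf{id})^{-1}:L^2(\R)\to (H^1_{V'}(\R),\|\cdot\|_{q_\alpha})$ with the compact embedding of the form domain into $L^2(\R)$ coming from the Rellich criterion for confining potentials (the paper cites \cite[Theorem~XIII.65]{reed1978methods}), then apply the spectral theorem for compact self-adjoint operators. Your derivation of the norm bound via $\|u\|_{L^2(\R)}^2\leq q_\alpha(u,u)=\braket{v,u}_{L^2(\R)}$ and Cauchy--Schwarz is slightly more explicit than the paper's, which simply notes that $\braket{(\mcal{S}+\alpha\msf{id})\phi,\phi}_{L^2(\R)}\geq\|\phi\|_{L^2(\R)}^2$ forces the spectrum of the inverse into $[0,1]$.
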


\begin{proof}
By Proposition \ref{prop:potentiel schrodinger}, $w_V+\alpha$ is continuous and goes to infinity at infinity. By Rellich criterion \cite[Theorem XIII.65]{reed1978methods}, the unit ball of $\mcal{D}(\mcal{S}+\alpha\msf{id})$, \textit{i.e.} the set $$\Big\{u\in \mcal{D}(\mcal{S}+\alpha\msf{id}),\,\int_\R u'^2(x)dx+\int_\R (w_V(x)+\alpha)u^2(x)dx\leq 1\Big\}$$ considered as a subset of $L^2(\R)$ is relatively compact in $\big(L^2(\R),\|.\|_{L^2(\R)}\big)$. Hence, we can conclude that the injection $\iota:\big(\mcal{D}(\mcal{S}+\alpha\msf{id}),\|.\|_{q_\alpha}\big)\longrightarrow\big(L^2(\R),\|.\|_{L^2(\R)}\big)$ is a compact operator. Since $(\mcal{S}+\alpha\msf{id})^{-1}:\big(L^2(\R),\|.\|_{L^2(\R)}\big)\longrightarrow\big(\mcal{D}(\mcal{S}+\alpha\msf{id}),\|.\|_{q_\alpha}\big)$ is continuous then $(\mcal{S}+\alpha\msf{id})^{-1}$ is compact from $\big(L^2(\R),\|.\|_{L^2(\R)}\big)$ to itself. The fact that $(\mcal{S}+\alpha\msf{id})^{-1}$ is self-adjoint and positive allows us to apply the spectral theorem to obtain $\big(\mu_n(\alpha)\big)_{n\geq0}$ positive eigenvalues verifying $\mu_n(\alpha)\underset{N\rightarrow\infty}{\longrightarrow}0$ by compactness and a Hilbertian basis $(\psi_n)_{n\geq 0}\in L^2(\R)^\N$, such that for all $n\geq0$, $(\mcal{S}+\alpha\msf{id})^{-1}\psi_n=\mu_n(\alpha)\psi_n$. It is then easy to see that for all $n$, $\psi_n\in \mcal{D}(\mcal{S}+\alpha\msf{id})$ since they belong to the range of $(\mcal{S}+\alpha\msf{id})^{-1}$. Finally, since for all $\phi\in L^2(\R)$, $\Braket{(\mcal{S}+\alpha\msf{id})\phi,\phi}_{L^2(\R)}\geq \|\phi\|_{L^2(\R)}^2$, the spectrum of $(\mcal{S}+\alpha\msf{id})^{-1}$ is contained in $[0,1]$. It allows us to conclude that $\vertiii{(\mcal{S}+\alpha\msf{id})^{-1}}_{L^2(\R)}\leq 1$.
\end{proof}

Since for all $u\in H^1_{V'}(\R)$, $(\mcal{S}+\alpha)[u]\in L^2(\R)$ \textit{iff} $\mcal{S}[u]\in L^2(\R)$, if we define $\mcal{D}(\mcal{S})$ in the same manner that we did before, $\mcal{D}(\mcal{S})=\mcal{D}(\mcal{S}+\alpha\msf{id})$.
It is now straightforward to see that we can extend $\mcal{A}=\rho_V^{-1/2}\mcal{S}\Big[\rho_V^{1/2}\cdot\Big]$ on $\mcal{D}_{L^2(\mu_V)}(\mcal{A})\defi \rho_V^{-1/2} \mcal{D}(\mcal{S})$ equipped with the norm $\|.\|_{q_\alpha,\rho_V}$ defined for all $u\in \mcal{D}_{L^2(\mu_V)}(\mcal{A})$ by
$$\|u\|_{q_\alpha,\rho_V}=\int_\R u'^2(x)d\mu_V(x)+\int_\R u^2(x)(w_V(x)+\alpha)d\mu_V(x)\, .$$ 
It is easy to see that $(\mcal{A}+\alpha \msf{id})^{-1}$ is continuous.

\begin{remark}
The kernel of $\mcal{A}$ is generated by the function $\widetilde{1}$. Indeed if $\phi\in \mcal{D}_{L^2(\mu_V)}(\mcal{A})$ is in the kernel of $\mcal{A}$ then

$$0=-\dfrac{\big(\phi'\rho_V\big)'}{\rho_V}\Rightarrow\exists c\in\R,\,\phi'=\dfrac{c}{\rho_V}$$
But since $\phi'$ is in $L^2(\mu_V)$ then $c=0$ which implies that $\phi$ is constant. We must restrict $\mcal{A}$ to the orthogonal of $Ker\mcal{A}$ with respect to the inner product of $L^2(\mu_V)$, \textit{i.e.}
$$ \mcal{D}_{L^2_0(\mu_V)}(\mcal{A})\defi \left\{ u\in \mcal{D}_{L^2(\mu_V)}(\mcal{A})\,|\,\int_\R u(x)d\mu_V(x) = 0 \right\}\,.$$
Doing so makes $\mcal{A}$ injective.
\end{remark}

Before inverting $\mcal{A}$, we need the following lemma:

\begin{lemma} The following equality holds
\label{lemma:ortho}
$$(\mcal{A}+\alpha \msf{id})\Big(\mcal{D}_{L^2_0(\mu_V)}(\mcal{A})\Big)=L_0^2(\mu_V)\defi \Big\{u\in L^2(\mu_V),\,\int_\R u(x)d\mu_V(x)=0\Big\}$$
\end{lemma}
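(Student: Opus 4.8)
The inclusion $(\mcal{A}+\alpha\msf{id})\big(\mcal{D}_{L^2_0(\mu_V)}(\mcal{A})\big)\subset L^2_0(\mu_V)$ is the easy half. First I would check that $\mcal{A}$ (and hence $\mcal{A}+\alpha\msf{id}$ restricted to zero-mean functions) maps into $L^2_0(\mu_V)$: for $u\in\mcal{D}_{L^2(\mu_V)}(\mcal{A})$ one has, at least formally, $\int_\R \mcal{A}[u](x)\,d\mu_V(x)=-\int_\R (u'\rho_V)'(x)\,dx = \big[-u'\rho_V\big]_{-\infty}^{+\infty}=0$, since $u'\rho_V\in H^1(\R)$ forces $u'\rho_V$ to vanish at $\pm\infty$ (the argument already used in the proof of Theorem \ref{thmreg}); more cleanly, $\braket{\mcal{A}[u],\widetilde1}_{L^2(\mu_V)}=\braket{u',\widetilde1\,'}_{L^2(\mu_V)}=0$ by the self-adjointness/integration-by-parts identity recorded just after \eqref{def:operateurs A et W}. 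Since $\alpha\int u\,d\mu_V=0$ for $u$ of zero mean, $(\mcal{A}+\alpha\msf{id})[u]\in L^2_0(\mu_V)$, giving ``$\subset$''.

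For the reverse inclusion ``$\supset$'', I would transfer the problem to the Schrödinger side via the isometry $u\mapsto\rho_V^{1/2}u$. Let $f\in L^2_0(\mu_V)$; then $\rho_V^{1/2}f\in L^2(\R)$, so by Theorem \ref{thm:diagSchrodinger} there is a unique $w\in\mcal{D}(\mcal{S}+\alpha\msf{id})=\mcal{D}(\mcal{S})$ with $(\mcal{S}+\alpha\msf{id})[w]=\rho_V^{1/2}f$. Set $u\defi\rho_V^{-1/2}w\in\mcal{D}_{L^2(\mu_V)}(\mcal{A})$; then $(\mcal{A}+\alpha\msf{id})[u]=\rho_V^{-1/2}(\mcal{S}+\alpha\msf{id})[w]=f$. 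It remains to upgrade $u\in\mcal{D}_{L^2(\mu_V)}(\mcal{A})$ to $u\in\mcal{D}_{L^2_0(\mu_V)}(\mcal{A})$, i.e. to arrange $\int_\R u\,d\mu_V=0$. Here I would use that $\widetilde 1\in\ker\mcal{A}$ and that $\widetilde 1$ solves $(\mcal{A}+\alpha\msf{id})[\widetilde1]=\alpha\widetilde1$; subtracting a suitable multiple fixes the mean. Concretely, having just shown (first inclusion) that $(\mcal{A}+\alpha\msf{id})[u]=f\in L^2_0(\mu_V)$, and using that $f$ has zero mean, apply the self-adjoint identity $\braket{(\mcal{A}+\alpha\msf{id})[u],\widetilde1}_{L^2(\mu_V)}=\alpha\braket{u,\widetilde1}_{L^2(\mu_V)}$ — the $\mcal{A}$-part of the pairing vanishing as above — to get $\alpha\int_\R u\,d\mu_V=\int_\R f\,d\mu_V=0$, hence $\int_\R u\,d\mu_V=0$ since $\alpha$ can be taken $>0$ (and if $\alpha=0$ is allowed one instead replaces $u$ by $u-\big(\int u\,d\mu_V\big)\widetilde1$, which lies in $\mcal{D}_{L^2_0(\mu_V)}(\mcal{A})$ and has the same image under $\mcal{A}$). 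Thus $u\in\mcal{D}_{L^2_0(\mu_V)}(\mcal{A})$ with $(\mcal{A}+\alpha\msf{id})[u]=f$, proving the lemma.

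The main point requiring care — the likely obstacle — is the boundary term $\big[u'\rho_V\big]_{\pm\infty}=0$ and, relatedly, making the integration-by-parts identity $\braket{\mcal{A}[u],v}_{L^2(\mu_V)}=\braket{u',v'}_{L^2(\mu_V)}$ rigorous for all $u\in\mcal{D}_{L^2(\mu_V)}(\mcal{A})$ and $v=\widetilde1$ rather than just for compactly supported test functions. This is handled by the definition $\mcal{D}_{L^2(\mu_V)}(\mcal{A})=\rho_V^{-1/2}\mcal{D}(\mcal{S})$ together with the fact (established in Section \ref{section:regularity_eq_measure} and used in Theorem \ref{thmreg}) that for $u$ in this domain, $u'\rho_V$ and $(u'\rho_V)'=-\rho_V\mcal{A}[u]$ are both in $L^2(\R)$, so $u'\rho_V\in H^1(\R)$ and hence tends to $0$ at $\pm\infty$; the constant function $\widetilde1$ is in $L^2(\mu_V)$ because $\mu_V$ is a probability measure, and the exponential decay of $\rho_V$ from Lemma \ref{lem:regularitedensite} makes all the integrals absolutely convergent. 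With these regularity facts in hand the computation is routine.
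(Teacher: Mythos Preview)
Your proof is correct and takes essentially the same approach as the paper: both arguments rest on the self-adjointness of $\mcal{A}+\alpha\msf{id}$, its bijectivity on $L^2(\mu_V)$ (inherited from the Schr\"odinger picture), and the fact that $(\mcal{A}+\alpha\msf{id})[\widetilde 1]=\alpha\widetilde 1$, which together force the orthogonal complement $L^2_0(\mu_V)=(\R\widetilde 1)^\perp$ to be preserved. The paper phrases this slightly more abstractly---``$\R\widetilde 1$ is invariant, hence so is its orthogonal''---while you unpack the same identity $\braket{(\mcal{A}+\alpha\msf{id})[u],\widetilde 1}_{L^2(\mu_V)}=\alpha\braket{u,\widetilde 1}_{L^2(\mu_V)}$ explicitly and take extra care with the boundary term $u'\rho_V\to 0$, but the substance is identical.
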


\begin{proof}
\label{prooflemmaa.10}
Let $\phi=\widetilde{c}$ for $c\in\R$, $(\mcal{A}+\alpha \msf{id})[\phi]=\widetilde{\alpha c}$ then $(\mcal{A}+\alpha\msf{id})(\R.\widetilde{1})=\R \widetilde{1}$. Hence since $\mcal{A}+\alpha \msf{id}$ is self-adjoint with respect to the inner product of $L^2(\mu_V)$ and that $\R\widetilde{1}$ is stable by $\mcal{A}+\alpha \msf{id}$, then  $(\mcal{A}+\alpha\msf{id})\Big((\R.\widetilde{1})^\perp \cap \mcal{D}_{L^2(\mu_V)}(\mcal{A})\Big)\subset(\R.\widetilde{1})^\perp$. For the converse, let $u\in(\R.\widetilde{1})^\perp$, since $\mcal{A}+\alpha \msf{id}$ is bijective, there exists $v\in \mcal{D}_{L^2(\mu_V)}(\mcal{A})$ such that $u=(\mcal{A}+\alpha\msf{id})[v]$. For all $w\in \R.\widetilde{1}$,
$$0=\Braket{u,w}_{L^2(\mu_V)}=\Braket{(\mcal{A}+\alpha\msf{id})[v],w}_{L^2(\mu_V)}=\Braket{v,(\mcal{A}+\alpha\msf{id})[w]}_{L^2(\mu_V)}$$
Hence $v\in \big[(\mcal{A}+\alpha\msf{id})(\R\widetilde{1})\big]^\perp=\R\widetilde{1}^\perp$ and so $(\R.\widetilde{1})^\perp\subset(\mcal{A}+\alpha\msf{id})\Big((\R.\widetilde{1})^\perp\Big)$.
\end{proof}

\begin{remark}
    It is easy to see that $L_0^2(\mu_V)$ is a closed subset of $L^2(\mu_V)$ as it is the kernel of the linear form $\phi\in L^2(\mu_V)\mapsto\Braket{\phi,\widetilde{1}}_{L^2(\mu_V)}$, making it a Hilbert space.
\end{remark}

We now are able to diagonalize $\mcal{A}$ and deduce that it is invertible.

\begin{proposition}[Diagonalization and invertibility of $\mathcal{A}$]
\label{prop:diagoL2}
    Let $L_0^2(\mu_V)$ be given in Lemma \eqref{lemma:ortho}.
     There exists a complete orthonormal set of $\left(L_0^2(\mu_V),\Braket{.,.}_{L^2(\mu_V)}\right)$, $(\phi_n)_{n\in\N}\in \mcal{D}_{L^2_0(\mu_V)}(\mcal{A})^\N$ (meaning that $$\overline{\Span\{\phi_n,\ n\geq 0\}}^{\|.\|_{L^2(\mu_V)}}=L_0^2(\mu_V)$$ and $\langle \phi_i,\phi_j \rangle_{L^2(\mu_V)}=\delta_{i,j}$), and an increasing sequence $(\lambda_n)_{n\geq 1}$ such that $\mcal{A}[\phi_n]=\lambda_n\phi_n$. Furthermore, $\mcal{A}:\mcal{D}_{L^2_0(\mu_V)}(\mcal{A})\longrightarrow L_0^2(\mu_V)$ is bijective, $\mcal{A}^{-1}$ is continuous and verifies for all $v\in L_0^2(\mu_V)$, $$\|\mcal{A}^{-1}[v]\|_{L^2(\mu_V)}\leq\lambda_1^{-1}\|v\|_{L^2(\mu_V)}.$$
\end{proposition}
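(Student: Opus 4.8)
The plan is to pull the spectral resolution of the Schrödinger operator $\mcal{S}+\alpha\msf{id}$ through the isometry $U\colon L^2(\R)\to L^2(\mu_V)$, $Uf=\rho_V^{-1/2}f$ (which is onto since $\rho_V>0$), then to strip off the one-dimensional kernel of $\mcal{A}$ and invert on its complement. Concretely, by the very definition $\mcal{A}=U\,\mcal{S}\,U^{-1}$ on $\mcal{D}_{L^2(\mu_V)}(\mcal{A})=U\mcal{D}(\mcal{S})$, so $(\mcal{A}+\alpha\msf{id})^{-1}=U(\mcal{S}+\alpha\msf{id})^{-1}U^{-1}$ is a well-defined, compact, self-adjoint, positive operator on $L^2(\mu_V)$ of norm $\leq 1$. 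Theorem \ref{thm:diagSchrodinger} then transfers to a complete orthonormal system $(\phi_n)_{n\geq0}$ of $L^2(\mu_V)$, $\phi_n=U\psi_n$, with $\phi_n\in\mcal{D}_{L^2(\mu_V)}(\mcal{A})$ and $(\mcal{A}+\alpha\msf{id})^{-1}\phi_n=\mu_n(\alpha)\phi_n$, where the $\mu_n(\alpha)$ are $>0$ (injectivity of a resolvent) and tend to $0$. Hence $\mcal{A}\phi_n=\lambda_n\phi_n$ with $\lambda_n:=\mu_n(\alpha)^{-1}-\alpha\to+\infty$, and $\lambda_n\geq0$ because $\Braket{\mcal{A}\phi,\phi}_{L^2(\mu_V)}=\int_\R(\phi')^2\,d\mu_V\geq0$.

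Next I would pin down the kernel. Since $\mcal{S}\rho_V^{1/2}=\rho_V^{1/2}\mcal{A}[\widetilde 1]=0$ and $\rho_V^{1/2}\in H^1_{V'}(\R)$ — the decay of $\rho_V$ from Lemma \ref{lem:regularitedensite}, together with Assumptions \ref{assumptions} \ref{ass2} and \ref{ass4}, makes $\rho_V^{1/2}$, $(\rho_V^{1/2})'$ and $V'\rho_V^{1/2}$ square-integrable — we get $\rho_V^{1/2}\in\mcal{D}(\mcal{S})$, i.e. $\widetilde 1\in\mcal{D}_{L^2(\mu_V)}(\mcal{A})$ with $\mcal{A}\widetilde 1=0$; moreover $\rho_V^{1/2}$ is an eigenfunction of $(\mcal{S}+\alpha\msf{id})^{-1}$ for the top eigenvalue $\alpha^{-1}$ (top because $\mcal{S}\geq0$), and $\|\rho_V^{1/2}\|_{L^2(\R)}^2=\int_\R\rho_V=1$, so we may arrange the basis so that $\psi_0=\rho_V^{1/2}$, whence $\phi_0=\widetilde 1$ and $\lambda_0=0$. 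By the remark after Lemma \ref{lemma:ortho}, $\ker\mcal{A}=\R\widetilde 1$, so none of the $\phi_n$ with $n\geq1$ can have eigenvalue $0$; relabelling the $(\phi_n)_{n\geq1}$ in non-decreasing order of eigenvalue gives $0<\lambda_1\leq\lambda_2\leq\cdots$, $\lambda_n\to\infty$, and $(\phi_n)_{n\geq1}$ is a complete orthonormal system of $L^2_0(\mu_V)=\{\widetilde 1\}^\perp$ contained in $\mcal{D}_{L^2_0(\mu_V)}(\mcal{A})$.

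For the bijectivity of $\mcal{A}\colon\mcal{D}_{L^2_0(\mu_V)}(\mcal{A})\to L^2_0(\mu_V)$, injectivity is immediate since $\ker\mcal{A}=\R\widetilde 1$ meets $L^2_0(\mu_V)$ only at $0$. For surjectivity, given $v=\sum_{n\geq1}c_n\phi_n\in L^2_0(\mu_V)$ I would set $u=\sum_{n\geq1}\lambda_n^{-1}c_n\phi_n$, which converges in $L^2(\mu_V)$ because $0<\lambda_n^{-1}\leq\lambda_1^{-1}$. To check that $u$ really lies in the Friedrichs domain and solves $\mcal{A}u=v$ I would go back through $U$: the function $g:=\sum_{n\geq1}(1+\alpha\lambda_n^{-1})c_n\psi_n$ lies in $L^2(\R)$ (its coefficients are bounded by $1+\alpha\lambda_1^{-1}$ times those of $v$) and $(\mcal{S}+\alpha\msf{id})^{-1}g=\sum_{n\geq1}(\lambda_n+\alpha)^{-1}(1+\alpha\lambda_n^{-1})c_n\psi_n=\sum_{n\geq1}\lambda_n^{-1}c_n\psi_n=\rho_V^{1/2}u$. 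Since $(\mcal{S}+\alpha\msf{id})^{-1}$ maps $L^2(\R)$ into $\mcal{D}(\mcal{S})$, this forces $\rho_V^{1/2}u\in\mcal{D}(\mcal{S})$, hence $u\in\mcal{D}_{L^2_0(\mu_V)}(\mcal{A})$, and $\mcal{S}(\rho_V^{1/2}u)=g-\alpha\rho_V^{1/2}u=\sum_{n\geq1}c_n\psi_n=\rho_V^{1/2}v$, i.e. $\mcal{A}u=v$. This also gives the formula $\mcal{A}^{-1}v=\sum_{n\geq1}\lambda_n^{-1}\Braket{v,\phi_n}_{L^2(\mu_V)}\phi_n$, and Parseval yields $\|\mcal{A}^{-1}v\|_{L^2(\mu_V)}^2=\sum_{n\geq1}\lambda_n^{-2}|c_n|^2\leq\lambda_1^{-2}\|v\|_{L^2(\mu_V)}^2$, which is the stated bound and gives the continuity of $\mcal{A}^{-1}$.

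The delicate point is precisely this last domain check: making sure the formal eigenfunction expansion of $u$ belongs to $\mcal{D}_{L^2_0(\mu_V)}(\mcal{A})$ and that $\mcal{A}u=v$ holds in the strong (Friedrichs) sense, not merely on $\Span(\phi_n)$ — this is why the argument is routed through $(\mcal{S}+\alpha\msf{id})^{-1}$, whose range is by construction contained in $\mcal{D}(\mcal{S})$. An alternative route is to invoke Lemma \ref{lemma:ortho}, which already provides that $\mcal{A}+\alpha\msf{id}\colon\mcal{D}_{L^2_0(\mu_V)}(\mcal{A})\to L^2_0(\mu_V)$ is a bijection with continuous inverse, and then combine this with the spectral gap $\lambda_1>0$ and the identity $\mcal{A}=(\mcal{A}+\alpha\msf{id})-\alpha\msf{id}$. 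A minor additional verification is the membership $\rho_V^{1/2}\in H^1_{V'}(\R)$ used in the second step, which is a direct consequence of the decay estimates for $\rho_V$ established in Lemma \ref{lem:regularitedensite}.
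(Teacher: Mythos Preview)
Your proof is correct and follows essentially the same route as the paper: transfer the compactness and spectral resolution of $(\mcal{S}+\alpha\msf{id})^{-1}$ through the isometry $U$, identify the kernel $\R\widetilde 1$, and invert via the eigenfunction expansion with the Parseval bound. The only notable variation is in the surjectivity/domain check: where you route explicitly through the resolvent (writing $\rho_V^{1/2}u=(\mcal{S}+\alpha\msf{id})^{-1}g$ to force $u\in\mcal{D}_{L^2_0(\mu_V)}(\mcal{A})$), the paper instead invokes the abstract spectral characterization $\mcal{D}_{L^2_0(\mu_V)}(\mcal{A})=\{u\in L^2_0(\mu_V):\sum_{n\geq1}\lambda_n^2\,|\langle u,\phi_n\rangle_{L^2(\mu_V)}|^2<\infty\}$, which makes the membership of $u=\sum_{n\geq1}\lambda_n^{-1}c_n\phi_n$ immediate; the two arguments are equivalent, and your explicit identification $\phi_0=\widetilde 1$ (via $\rho_V^{1/2}\in H^1_{V'}(\R)$) is in fact more careful than the paper's treatment of that point.
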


\begin{proof}
Since $(\mcal{S}+\alpha\msf{id})^{-1}$ considered as an operator of $L^2(\R)$, is compact so is $(\mcal{A}+\alpha\msf{id})^{-1}$ on $L^2(\mu_V)$ and since $\mcal{A}$ is self-adjoint, by the spectral theorem, $(\mcal{A}+\alpha\msf{id})^{-1}$ is diagonalizable. With the notations of Theorem \ref{thm:diagSchrodinger}, $(\mcal{A}+\alpha\msf{id})^{-1}$ has eigenvalues $\big(\mu_n(\alpha)\big)_{n\geq0}$ and corresponding eigenfunctions $\phi_n=\rho_V^{-1/2}\psi_n\in \mcal{D}_{L^2(\mu_V)}(\mcal{A})$. Hence for all $n\in\N$, $\mcal{A}[\phi_n]=\lambda_n\phi_n$ with $\lambda_n\defi \big(\dfrac{1}{\mu_n(\alpha)}-\alpha\big)$. Now,
$$\lambda_n\|\phi_n\|^2_{L^2(\mu_V)}=\int_\R\mcal{A}[\phi_n](x)\phi_n(x)d\mu_V(x)=-\int_\R(\rho_V\phi_n')'(x)\phi_n(x)dx=\int_\R\phi_n'(x)^2d\mu_V(x)\geq\,0\,.$$
Furthermore, the kernel of $\mcal{A}$ is $\R.\widetilde{1}$, thus $\mcal{A}$ restricted to $\mcal{D}_{L^2_0(\mu_V)}(\mcal{A}) $ is positive. Finally, $\mcal{A}$ is surjective in $L^2_0(\mu_V)$:
First, notice that since $\mcal{A}$ is self-adjoint
\begin{equation}
    \label{egalitedomaines}
    \mcal{D}_{L^2_0(\mu_V)}(\mcal{A})=\left\{ u\in L^2_0(\mu_V)\ |\ \sum_{n=1}^{+\infty}\lambda_n^2\braket{u,\phi_n}_{L^2(\mu_V)}^2 <+\infty \right\}\,.
\end{equation}
Now, for $v\in L^2_0(\mu_V)$, we have
$$\mcal{A}[u]=v\text{,\quad where \quad}u\defi \sum_{n\geq1}\dfrac{\braket{v,\phi_n}_{L^2(\mu_V)}}{\lambda_n}\phi_n.$$
$u$ is indeed an element of $\mcal{D}_{L^2_0(\mu_V)}(\mcal{A})$ because of \eqref{egalitedomaines}, the fact that $v\in L^2(\mu_V)$ and that $(\lambda_n)_n$ is bounded from below. Finally, for all $n\in\N$, by orthonormality of $(\phi_n)_n$, $$\braket{\mcal{A}[u],\phi_n}_{L^2(\mu_V)}=\lambda_n\braket{u,\phi_n}_{L^2(\mu_V)}=\braket{v,\phi_n}_{L^2(\mu_V)}$$
which allows to conclude on the bijectivity of $\mcal{A}:\mcal{D}_{L^2_0(\mu_V)}(\mcal{A})\longrightarrow L_0^2(\mu_V)$. Finally, it is easy to see that
$$\|\mcal{A}^{-1}[v]\|_{L^2(\mu_V)}^2=\sum_{n\geq1}\dfrac{|\braket{v,\phi_n}_{L^2(\mu_V)}|^2}{\lambda_n^2}\leq\lambda_1^{-2}\|v\|_{L^2(\mu_V)}^2.$$

\end{proof}

\bibliographystyle{alpha}

\bibliography{bibAbel3}

\end{document}